\documentclass[10pt,a4paper,final]{amsart}

\usepackage{ifdraft}

\ifdraft{\usepackage[draft]{showkeys}}{\usepackage[final]{showkeys}}

\usepackage{fontenc}
\usepackage[utf8]{inputenc}

\usepackage{amsmath,amsfonts,mathscinet,eucal,amsthm,amssymb,bm,extarrows,upgreek,tensor,mathrsfs,textcomp,comment,mathtools,bbm,braket}

\mathtoolsset{mathic}
\frenchspacing

\usepackage[shortlabels]{enumitem}

\usepackage[usenames,dvipsnames]{xcolor} \usepackage{tikz}
\usetikzlibrary{patterns,matrix,through,arrows,decorations.pathreplacing,decorations.markings,decorations.pathmorphing,shadows,shapes.geometric,positioning,calc,backgrounds,fit}

\def \myweightstyle {}
\def \myweightx {0.09}
\def \myweighty {0.2}
\newcommand{\mydrawdown}[1]%
  {\draw [\myweightstyle] #1 -- ++(\myweightx,\myweighty);%
    \draw [\myweightstyle] #1 -- ++(-\myweightx,\myweighty);}
\newcommand{\mydrawup}[1]%
  {\draw [\myweightstyle] #1 -- ++(-\myweightx,-\myweighty);%
    \draw [\myweightstyle] #1 -- ++(\myweightx,-\myweighty);}

\tikzset{anchorbase/.style={baseline={([yshift=-0.5ex]current bounding box.center)}},
  int/.style={thick},
  cross line/.style={preaction={draw=white,line width=6pt,-}},
  wall/.style={thin,double,blue},
  middlearrow/.style={postaction=decorate,decoration={markings,mark=at
    position .55 with {\arrow{stealth};}}},
  middlearrowrev/.style={postaction=decorate,decoration={markings,mark=at
    position .55 with {\arrowreversed{stealth};}}},
  smallnodes/.style={every node/.style={font=\normalsize}},
  ev/.style={shape=rectangle, draw}
}

\usepackage{todonotes} 

\usepackage[bbgreekl]{mathbbol}

\DeclareSymbolFontAlphabet{\mathbb}{AMSb}
\DeclareSymbolFontAlphabet{\mathbbol}{bbold}
\DeclareMathAlphabet{\mathpzc}{OT1}{pzc}{m}{it}

\DeclareSymbolFont{usualmathcal}{OMS}{cmsy}{m}{n}
\DeclareSymbolFontAlphabet{\mathucal}{usualmathcal}

\setcounter{secnumdepth}{2}
\numberwithin{equation}{section}

\newtheoremstyle{myplain} {6pt plus 6pt minus 2pt}
{6pt plus 6pt minus 2pt}
{\itshape}
{}
{\bfseries}
{.}
{.5em}
{}

\theoremstyle{myplain}
\newtheorem{theorem}{Theorem}[section]
\newtheorem*{theorem*}{Theorem}
\newtheorem{lemma}[theorem]{Lemma}
\newtheorem{prop}[theorem]{Proposition}
\newtheorem{corollary}[theorem]{Corollary}
\newtheorem{conjecture}[theorem]{Conjecture}

\newtheoremstyle{mydefinition} {6pt plus 6pt minus 2pt}
{6pt plus 6pt minus 2pt}
{\itshape}
{}
{\bfseries}
{.}
{.5em}
{}

\theoremstyle{mydefinition}
\newtheorem{definition}[theorem]{Definition}

\newtheoremstyle{myexample} {6pt plus 6pt minus 2pt}
{6pt plus 6pt minus 2pt}
{}
{}
{\scshape}
{.}
{.5em}
{}

\theoremstyle{myexample}
\newtheorem{example}[theorem]{Example}

\newtheoremstyle{myremark} {6pt plus 6pt minus 2pt}
{6pt plus 6pt minus 2pt}
{}
{}
{\scshape}
{.}
{.5em}
{}

\theoremstyle{myremark}
\newtheorem{remark}[theorem]{Remark}

\newcommand*{\MyDef}{\mathrm{def}}
\newcommand*{\eqdefU}{\ensuremath{\mathop{\overset{\MyDef}{=}}}}
\newcommand*{\eqdef}{\mathop{\overset{\MyDef}{\resizebox{\widthof{\eqdefU}}{\heightof{=}}{=}}}}
\newcommand{\abs}[1]{\left|#1\right|}
 
\newcommand{\canonicalHecke}{{\underline H}}

\newcommand{\frakg}{\mathfrak{g}}

\newcommand{\adjunction}{\dashv}
\newcommand{\bbB}{{\mathbb{B}}}
\newcommand{\bbS}{{\mathbb{S}}}

\newcommand{\bolda}{{\boldsymbol{a}}}
\newcommand{\boldb}{{\boldsymbol{b}}}
\newcommand{\C}{\mathbb{C}}
\newcommand{\R}{\mathbb{R}}
\newcommand{\calA}{{\mathcal{A}}}
\newcommand{\calC}{{\mathcal{C}}}
\newcommand{\calD}{{\mathcal{D}}}
\newcommand{\calE}{{\mathcal{E}}}
\newcommand{\calF}{{\mathcal{F}}}
\newcommand{\calL}{{\mathcal{L}}}
\newcommand{\calP}{{\mathcal{P}}}
\newcommand{\calQ}{{\mathcal{Q}}}
\newcommand{\calS}{{\mathcal{S}}}
\newcommand{\calT}{{\mathcal{T}}}
\newcommand{\catO}{\mathcal{O}}
\DeclareMathOperator{\coker}{coker}
\newcommand{\down}{{\mathord\vee}}
\renewcommand{\emptyset}{\varnothing}
\renewcommand{\epsilon}{\varepsilon}
\DeclareMathOperator{\End}{End}
\DeclareMathOperator{\Ext}{Ext}
\newcommand{\fraka}{{\mathfrak{a}}}
\newcommand{\frakb}{{\mathfrak{b}}}
\newcommand{\frakf}{{\mathfrak{f}}}
\newcommand{\frakh}{{\mathfrak{h}}}
\newcommand{\fraki}{{\mathfrak{i}}}
\newcommand{\frakj}{{\mathfrak{j}}}
\newcommand{\frakl}{{\mathfrak{l}}}
\newcommand{\frakn}{{\mathfrak{n}}}
\newcommand{\frakp}{{\mathfrak{p}}}
\newcommand{\frakq}{{\mathfrak{q}}}
\newcommand{\frakQ}{{\mathfrak{Q}}}
\newcommand{\fraks}{{\mathfrak{s}}}
\newcommand{\frakz}{{\mathfrak{z}}}
\newcommand{\frakE}{{\mathfrak{E}}}

\newcommand{\gl}{\mathfrak{gl}}
\newcommand{\gmod}[1]{#1\mathrm{-gmod}}
\newcommand{\id}{\mathrm{id}}
\DeclareMathOperator{\image}{Im} 
\DeclareMathOperator{\Ind}{Ind}
\newcommand{\into}{\hookrightarrow}
\newcommand{\K}{\mathbb{K}}
\newcommand{\len}{\ell}
\newcommand{\lmod}[1]{#1\mathrm{-mod}}
\newcommand{\mapto}{\rightarrow}
\newcommand{\N}{\mathbb{N}}
\renewcommand{\phi}{\varphi}
\DeclareMathOperator{\rad}{rad}
\newcommand{\rgmod}[1]{\mathrm{gmod-}#1}
\newcommand{\rmod}[1]{\mathrm{mod-}#1}

\newcommand{\scrP}{{\mathscr{P}}}
\newcommand{\sfb}{{\mathsf{b}}}

\newcommand{\sfi}{{\mathsf{i}}}
\newcommand{\sfj}{{\mathsf{j}}}
\newcommand{\sfP}{{\mathsf{P}}}
\newcommand{\sfQ}{{\mathsf{Q}}}
\newcommand{\sfz}{{\mathsf{z}}}
\newcommand{\suchthat}{\,|\,} 
\newcommand{\surto}{\twoheadrightarrow}
\newcommand{\up}{{\mathord\wedge}}
\DeclareMathOperator{\Hom}{Hom} \DeclareMathOperator{\vectorspan}{span}
\newcommand{\Z}{\mathbb{Z}}

\newcommand{\acts}{\mathop{\,\;\raisebox{1.7ex}{\rotatebox{-90}{\(\circlearrowright\)}}\;\,}}
\newcommand{\actsb}{\mathop{\,\;\raisebox{0ex}{\rotatebox{90}{\(\circlearrowleft\)}}\;\,}}

\newcommand{\ucalH}{{\mathucal H}}
\newcommand{\ucalM}{{\mathucal M}}
\newcommand{\ucalN}{{\mathucal N}}

\newcommand{\ucalD}{{\mathucal D}}

\newcommand{\trasT}{{\mathbb{T}}}

\newcommand{\Uqgl}{U_q}
\newcommand{\Uqgle}{U_q(\mathfrak{gl}(1|1))}

\newcommand{\compm}{{\mathbbol{m}}}
\newcommand{\compn}{{\mathbbol{n}}}

\newcommand{\blank}{{\mathord{\bullet}}}

\newcommand{\webjoin}{\tikz[baseline,scale=0.9]{\draw (0,0) -- (1ex,1ex) -- (2ex,0); \draw (1ex,1ex) -- (1ex,2ex);}}
\newcommand{\webjoinmultiple}{\tikz[baseline,scale=0.9]{\draw (0,0) -- (1ex,1ex); \draw (2ex,0) -- (1ex,1ex); \draw (0.5ex,0) -- (1ex,1ex); \draw (1ex,0) -- (1ex,1ex);\draw (1.5ex,0) -- (1ex,1ex);\draw[thick] (1ex,1ex) -- (1ex,2ex);}}
\newcommand{\websplit}{\tikz[baseline,scale=0.9]{\draw (0,2ex) -- (1ex,1ex) -- (2ex,2ex); \draw (1ex,1ex) -- (1ex,0);}}
\newcommand{\websplitmultiple}{\tikz[baseline,scale=0.9]{\draw (0,2ex) -- (1ex,1ex); \draw (0.5ex,2ex) -- (1ex,1ex); \draw (1ex,2ex) -- (1ex,1ex);\draw (1.5ex,2ex) -- (1ex,1ex);\draw (2ex,2ex) -- (1ex,1ex); \draw[thick] (1ex,1ex) -- (1ex,0);}}

\newcommand{\short}{{\mathrm{short}}}
\newcommand{\longrep}{{\mathrm{long}}}

\renewcommand{\epsilon}{\varepsilon}

\DeclareMathOperator{\Trace}{Tr}
\DeclareMathOperator{\Add}{Add}
\newcommand{\der}{\scr D}
\newcommand{\derL}{\mathbb L}

\newcommand{\canon}{{\mathbin{\diamondsuit}}}
\newcommand{\dualcanon}{{\mathbin{\heartsuit}}}
\newcommand{\dualstan}{{\mathbin{\clubsuit}}}

\newcommand{\STL}{{\mathrm{STL}}}

\newcommand{\quantumq}{{\boldsymbol{\mathrm{q}}}}
\newcommand{\qbin}[2]{\genfrac{[}{]}{0pt}{}{#1}{#2}}

\newcommand{\catRep}{{\mathsf{Rep}}}
\newcommand{\catWeb}{{\mathsf{Web}}}
\newcommand{\Cat}{{\catO\mathsf{Cat}}}
\newcommand{\funcT}{{\mathscr{T}}}
\newcommand{\funcF}{{\mathscr{F}}}
\newcommand{\bK}{{\boldsymbol{\mathrm{K}}}}

\newcommand{\symm}{\odot}
\newcommand{\pres}{\text{-}\mathrm{pres}}
\newcommand{\oDelta}{{\overline{\Delta}}}
\newcommand{\catOZ}{\prescript{\Z}{}{\catO}}

\newcommand{\boldeta}{{\boldsymbol{\eta}}}
\newcommand{\boldgamma}{{\boldsymbol{\gamma}}}

\newcommand{\sgnmodule}{{\mathrm{sgn}}}
\newcommand{\trvmodule}{{\mathrm{trv}}}

\newcommand{\counit}{\boldsymbol{\mathrm{u}}}

\newcommand{\twoheadlongrightarrow}{\relbar\joinrel\twoheadrightarrow}

\hyphenation{Grothen-dieck}

\usepackage{wrapfig} \usepackage{subcaption}

\usepackage{hyperref,bookmark}

\hypersetup{
  colorlinks = false,
  urlcolor = false,
  pdfauthor = {Antonio Sartori},
  pdfkeywords = {Categorification, Lie superalgebras, gl(1|1), Category O, induced Hecke modules, properly stratified algebras},
  pdftitle = {Categorification of tensor powers of the vector representation of Uq(gl(1|1))},
  pdfsubject = {},
  pdfpagemode = UseNone,
  bookmarksopen = true,
  bookmarksopenlevel = 3,
  pdfdisplaydoctitle = true }

\title[Categorification of representations of $U_q(\gl(1|1))$]{Categorification of tensor powers of the vector representation of $U_q(\gl(1|1))$}

\author{Antonio Sartori}
\address{Mathematisches Institut\\%
Albert-Ludwigs-Universität Freiburg\\%
Eckerstraße 1\\%
79104 Freiburg im Breisgau\\%
Germany}
\email{antonio.sartori@math.uni-freiburg.de}
\urladdr{http://home.mathematik.uni-freiburg.de/asartori}
\date{\today}
\keywords{Categorification, Lie superalgebras, \(\gl(1|1)\), Category \(\catO\), Induced Hecke modules, Properly stratified algebras.}
\subjclass[2010]{Primary 17B10; Secondary 17B37, 20C08}
\thanks{This work has been supported by the Graduiertenkolleg 1150, funded by the Deutsche Forschungsgemeinschaft.}

\begin{document}

\begin{abstract}
  We consider the monoidal subcategory of finite dimensional
  representations of \(\Uqgle\) generated by the vector representation,
  and we provide a diagram calculus for the intertwining operators,
  which allows to compute explicitly the canonical basis. We construct
  then a categorification of these representations and of the action
  of both \(\Uqgle\) and the intertwining operators using subquotient
  categories of the BGG category \(\catO(\gl_n)\).
\end{abstract}

\maketitle

\setcounter{tocdepth}{1}
\tableofcontents

\section{Introduction}
\label{sec:introduction}

The \emph{Jones polynomial} is a classical invariant of
links in \(\R^3\) defined using the vector representation of
the Lie algebra \(\fraks\frakl_2\) (or more precisely of the
quantum algebra \(U_q(\fraks\frakl_2)\)). In his fundamental
paper
\cite{MR1740682}, Khovanov constructed a graded
homology theory for links whose graded Euler characteristic
is the Jones polynomial. Khovanov homology has two main
advantages over the Jones polynomial: first, it has been
proven to be a finer invariant and second, it has values in
a category of complexes and it also assigns to cobordisms
between links chain maps between chain complexes.  This
categorical approach to classical invariants is often called
\emph{categorification}. Khovanov's work raised great
interest in categorification, and since then a
categorification program for  representations of more
general semisimple Lie algebras and even Kac-Moody algebras
has been developed by several authors and motivated various
generalizations (see for example \cite{MR2305608},
\cite{MR2567504}, \cite{2013arXiv1309.3796W},
\cite{MR2525917}, \cite{MR2763732},
\cite{2008arXiv0812.5023R}). The main
tools in all these works come from \emph{representation
  theory} and geometry related to it.

Another very important invariant of knots is the \emph{Alexander
  polynomial}  \cite{MR1501429}, which is much older than the Jones
polynomial. Originally defined using the topology of the knot
complement, the Alexander polynomial is not the quantum invariant
corresponding to some complex semisimple Lie algebra, like the Jones
polynomial. Instead, it can be defined using the representation theory
of the general Lie \emph{super}algebra \(\gl(1|1)\) (or, more precisely,
its quantum enveloping superalgebra \(U_q(\gl(1|1))\), see
\cite{MR1881401}, \cite{MR2255851}, \cite{2013arXiv1308.2047S}; alternatively, one can use the
quantum enveloping algebra \(U_q(\mathfrak{sl}_2)\) where \(q\) is a root
of unity, but we will not consider this approach). A categorification
of the Alexander polynomial exists, but comes from a very different
area of mathematics: a homology theory, known as Heegard-Floer
homology, whose Euler characteristic gives the Alexander polynomial,
has been developed using symplectic geometry
\cite{2005math.....12286O}, \cite{MR2372850}.  This homology
theory, however, does not have an interpretation or a counterpart in
representation theory yet.

The present work is motivated by the attempt to
construct/understand categorifications of Lie superalgebras
(and hopefully a categorification of the Alexander
polynomial) using tools from representation theory. In fact,
there are only a few other recent works studying
representation theoretical categorifications of Lie
superalgebras and related structures
\cite{2010arXiv1007.3517K}, \cite{MR3280041},
\cite{2014arXiv1406.1676K}, \cite{MR3255459}. We hope that this paper can be a starting point for
a categorification program for \(\gl(1|1)\), beginning with a
categorification of tensor powers of the vector
representation and of their subrepresentations. We point out
that a counterpart of our construction in the setting of symplectic and contact 
geometry has been developed by Tian
\cite{2012arXiv1210.5680T}, \cite{2013arXiv1301.3986T}.

Our main result can be summarized as follows:

\begin{theorem*}[See Theorems~\ref{thm:1} and \ref{thm:2}]
  Let \(V\) be the vector representation of \(\Uqgle\), fix \(n>0\) and consider the commuting actions of \(\Uqgle\) and of the Hecke algebra \(\ucalH_n = \ucalH(\bbS_n)\) on \(V^{\otimes n}\):
  \begin{equation}\tag{\maltese}\label{eq:172}
    \Uqgle \acts V^{\otimes n} \actsb \ucalH_n.
  \end{equation}
  For each \(n > 0\) there exists a triangulated category \(\calD^\nabla \calQ(\compn)\) whose Grothendieck group is isomorphic to \(V^{\otimes n}\) and two families of endofunctors \(\{\calE, \calF\}\) and \(\{\calC_i \suchthat i=1,\ldots,n-1\}\) which commute with each other and which on the Grothendieck group level give the actions \eqref{eq:172} of \(\Uqgle\) and of the Hecke algebra \(\ucalH_n\) on \(V^{\otimes n}\) respectively:
  \begin{equation*}
    [\calE], [\calF] \acts \bK^{\C(q)} (\calD^\nabla \calQ(\compn)) \actsb [\calC_i].
  \end{equation*}
\end{theorem*}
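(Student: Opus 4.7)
My plan is to split the statement into two independent constructions—one for the quantum group action and one for the Hecke action—and then check that the two families of functors commute. Since $V$ is two-dimensional with a nontrivial $\Z$-grading, a basis of $V^{\otimes n}$ is naturally indexed by sequences in $\{\up,\down\}^n$ (equivalently, by compositions of $n$ into two parts), so one expects $\calQ(\compn)$ to be a subquotient of $\catO(\gl_n)$ pinned down by $\compn$: most likely the parabolic subcategory associated to a Levi $\gl_{n_1}\oplus\gl_{n_2}$, restricted to a fixed integral central block. The symbol $\calD^\nabla$ suggests working in the homotopy/derived category of complexes of $\nabla$-filtered (costandard-filtered) objects; this is natural because the paper's keywords emphasize properly stratified algebras, where the appropriate Grothendieck group is extracted from one of the standard/proper-standard/costandard filtrations rather than from all simple objects. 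The first step is therefore to identify a basis of $\bK^{\C(q)}(\calD^\nabla\calQ(\compn))$ with classes $[\nabla(\lambda)]$ indexed by the correct weight set and to match this with the natural basis of $V^{\otimes n}$, with the grading shift $\langle 1\rangle$ categorifying multiplication by $q$.

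Next, to obtain $\calE$ and $\calF$, I would use translation functors across walls—tensoring with the natural/dual $\gl_n$-representation and projecting to the appropriate block. These are biadjoint up to shift and they exchange blocks whose weight sequences differ in one position, which is exactly how $E$ and $F$ move basis vectors of $V^{\otimes n}$. The Hecke generators $\calC_i$ should come from derived twisting or shuffling functors associated to the simple reflection $s_i$; passing to the $\nabla$-derived category is crucial, as twisting functors are exact on $\nabla$-filtered objects and their Grothendieck classes satisfy the quadratic Hecke relation. Commutativity $[\calE,\calC_i]=0$ should then follow functorially, since translation through a wall commutes with projective/twisting functors attached to a different simple reflection, mirroring classical Schur–Weyl.

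The main obstacle, in my view, is twofold. First, computing the action of $\calE$, $\calF$ and $\calC_i$ on the classes $[\nabla(\lambda)]$ so that the graded multiplicities reproduce the explicit coefficients of $E$, $F$ and $T_i$ on $V^{\otimes n}$; because $V$ is a nontypical $\Uqgle$-representation, odd degree shifts and sign conventions must be tracked with care, and the natural and dual vector representations of $\gl_n$ will not on their own give the asymmetric action of $E$ versus $F$—some additional bookkeeping by duality or parity is needed. Second, one must control the $\nabla$-derived category of a merely properly stratified (rather than highest weight) algebra: translation and shuffling functors must be shown to preserve this subcategory, to induce well-defined maps on $\bK^{\C(q)}$, and to be flat enough that the induced $K$-theoretic formulas are computable from short exact sequences of $\nabla$-filtered modules. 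Once these two technical points are in place, Theorems \ref{thm:1} and \ref{thm:2} follow by matching the matrix entries of the induced endomorphisms of $\bK^{\C(q)}(\calD^\nabla\calQ(\compn))$ with the known formulas for the commuting $\Uqgle$- and $\ucalH_n$-actions on $V^{\otimes n}$.
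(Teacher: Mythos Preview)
Your proposal has the roles of the two functor families reversed, and several of the underlying identifications are off in ways that would prevent the argument from going through.

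First, $\calQ_k(\compn)$ is not a parabolic block $\catO^{\frakp}_\lambda$ for a single Levi $\gl_{n_1}\oplus\gl_{n_2}$. It is the subquotient $\catOZ^{\frakp,\frakq\pres}_0$: one first takes the parabolic subcategory for $\frakp$ (corresponding to $\bbS_{n-k}$), and then passes to the Serre quotient by the $\frakq$-presentable condition (corresponding to $\bbS_k$). This second step is essential: it is what makes the endomorphism algebra merely properly stratified rather than quasi-hereditary, and it is exactly what distinguishes the $\gl(1|1)$ picture from the $\mathfrak{sl}_2$ picture. A plain parabolic block would give you the $\mathfrak{sl}_2$ categorification of \cite{MR2305608}, not this one. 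Relatedly, the basis of the Grothendieck group you should match with the standard basis of $V^{\otimes n}$ is given by the \emph{proper standard} modules $\oDelta$ (or the standard modules $\Delta$, up to the scalar $[k]_0!$), not costandard modules; the symbol $\calD^\nabla$ does not refer to $\nabla$-filtered complexes at all, but to the Achar--Stroppel truncation of the unbounded derived category $\calD^-$, introduced precisely because the properly stratified algebras here have infinite global dimension and one needs a well-behaved Grothendieck group in that setting.

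Second, and more seriously, you have swapped the two actions. In the paper the Hecke generators $\calC_i$ are categorified by \emph{translation functors} $\theta_i=\trasT_{\hat\bolda_i}^{\compn}\circ\trasT_{\compn}^{\hat\bolda_i}$ through the $i$-th wall (Theorem~\ref{thm:1}); no twisting or shuffling functors appear. Conversely, $\calE$ and $\calF$ are \emph{not} translation functors between blocks: they are built from the Zuckermann functor $\frakz$, the coapproximation $\frakQ$, and the inclusions $\fraki,\frakj$, moving between the categories $\calQ_k(\compn)$ and $\calQ_{k+1}(\compn)$ by changing the pair $(\frakp,\frakq)$ while staying in the same block (Theorem~\ref{thm:2}). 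The commutation of the two families then comes for free from Lemma~\ref{lem:21}, which says translation functors commute with $\frakj,\frakz,\fraki,\frakQ$. Your proposed translation-functor construction of $\calE,\calF$ would instead categorify divided powers of an $\mathfrak{sl}_2$-type action, and would not reproduce the key feature $E^2=F^2=0$ of $\Uqgle$ (categorified in Proposition~\ref{lem:20} via $\frakz^2=0$ across consecutive parabolics).
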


A remarkable property (and also a complication) of the
finite-dimensional representations of \(\gl(1|1)\) (and more
generally of \(\gl(m|n)\)) is that they need not be
semisimple. For example, if \(V\) is the vector
representation of \(\gl(1|1)\), then \(V \otimes V^*\) is a
four-dimensional indecomposable non-irreducible
representation. It is not clear how the lack of
semisimplicity should affect the categorification, but it is
plausible that this provides additional difficulties. What
we can categorify in the present work is indeed only a
semisimple monoidal subcategory of the representations of
\(\gl(1|1)\), that contains the vector representation \(V\),
but not its dual \(V^*\). We remark that we will develop all
the details for the quantum version, but in order to keep
this introduction technically clean we avoid to introduce the quantum
enveloping algebra now.

Our categorification relies on a very careful analysis of the representation theory of \(\gl(1|1)\) and its \emph{canonical basis} (based on \cite{MR2491760}). In the categorification, indecomposable projective modules correspond to canonical basis elements, that we can compute explicitly via a diagram calculus, analogous to the diagram calculus developed in \cite{MR1446615} for \(\fraks\frakl_2\). The key-tool for our construction is the so-called \emph{super Schur-Weyl duality}  \eqref{eq:172} (originally studied in \cite{MR884183} and \cite{MR735715}): the symmetric group algebra \(\C[\bbS_n]\) acts on the tensor power \(V^{\otimes n}\), and this action commutes with the action of \(\gl(1|1)\).
The weight spaces of \(V^{\otimes n}\) are modules for \(\C[\bbS_n]\), and explicitly they are isomorphic to mixed induced modules of the form
\begin{equation}\tag{\dag}\label{eq:177}
(  \trvmodule_{\bbS_k} \boxtimes \sgnmodule_{\bbS_{n-k}}) \otimes_{\C[\bbS_k \times \bbS_{n-k}]} \C[\bbS_n].
\end{equation}
In particular, they can be equipped with a canonical basis coming from the action of symmetric group algebra.
A crucial point is the following observation:
\begin{theorem*}[See Proposition \ref{prop:19}]
  Lusztig's canonical basis of \(V^{\otimes n}\), defined using the action of \(\gl(1|1)\), agrees with the canonical basis defined in term of the symmetric group action.
\end{theorem*}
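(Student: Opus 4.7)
The plan is to prove the statement by showing both bases satisfy the same uniqueness characterization. Recall that the canonical basis of a free $\Z[q,q^{-1}]$-module (equipped with an appropriate ``standard'' basis and a bar involution) is uniquely determined by bar invariance together with unitriangularity modulo $q\Z[q]$ with respect to the standard basis. So the task reduces to identifying, on each weight space, a common standard basis and verifying that the two bar involutions coincide.

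First I would make explicit the isomorphism between the $k$-th weight space $(V^{\otimes n})_k$ and the induced module $M_k \eqdef (\trvmodule_{\bbS_k} \boxtimes \sgnmodule_{\bbS_{n-k}}) \otimes_{\C[\bbS_k \times \bbS_{n-k}]} \C[\bbS_n]$ from \eqref{eq:177}. The natural monomial basis of $V^{\otimes n}$ consisting of pure tensors $v_{i_1} \otimes \cdots \otimes v_{i_n}$ (with $i_j \in \{0,1\}$ corresponding to the even/odd generators of $V$) restricts on each weight space to a basis indexed by subsets of $\{1,\ldots,n\}$ of size $k$, equivalently by minimal length coset representatives in $\bbS_n / (\bbS_k \times \bbS_{n-k})$. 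On the Hecke algebra side, the module $M_k$ admits the parabolic standard basis $\{m_w\}$ indexed by the same set of minimal coset representatives. The first key step is to verify that Schur--Weyl duality matches these two bases up to an overall normalization by a power of $q$ (coming from the super version of the $R$-matrix); this is a computation on pure tensors.

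Next I would compare the two bar involutions. The $\gl(1|1)$-side bar involution on $V^{\otimes n}$ is Lusztig's, built from the quasi-$\mathcal{R}$-matrix $\Theta$ of $\Uqgle$; its action on a pure tensor is given by $\overline{v \otimes w} = \Theta \cdot (\bar v \otimes \bar w)$. The Hecke side bar involution on $M_k$ is inherited from the standard bar involution of $\ucalH_n$ (twisted by the sign/trivial characters on the parabolic subgroup). Using the explicit formula for $\Theta$ — which is especially simple in the $\gl(1|1)$ case since the only positive root contributes — one computes directly the action of the bar involution on a pure tensor in $V^{\otimes n}$ and matches it with the Hecke algebra bar involution on the corresponding element of $M_k$. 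Equivalently, one can show that both bar involutions fix the monomial at the longest coset representative and intertwine the action of the braid generators; this characterizes the involution uniquely.

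With the standard bases identified and the bar involutions matched, the uniqueness theorem for canonical bases (in both settings one asks for a bar-invariant lift that is congruent to the standard basis element modulo $q \Z[q]$ spanned by the remaining standard basis vectors) yields the claim. The main obstacle I anticipate is the bookkeeping of signs and $q$-powers in the super setting: the odd tensor factors introduce signs in the braiding and in the $R$-matrix, and the parabolic induction uses a \emph{mixed} trivial$\,\boxtimes\,$sign character, so the matching of conventions — which $q$ corresponds to which parameter, where the sign $(-1)^{\len(w)}$ enters — must be carried out carefully. Once the normalizations are fixed, however, the argument is structurally transparent because uniqueness does all the heavy lifting.
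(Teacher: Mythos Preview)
Your approach is essentially the one the paper takes: reduce to uniqueness of canonical bases, match the standard bases under the explicit isomorphism \(\Psi\colon N_w\mapsto v_{\boldeta_{\min}\cdot w}\), and then show the two bar involutions coincide by characterizing each via (i) fixing a distinguished basis vector and (ii) the compatibility \(\overline{X\cdot H_i}=\overline X\cdot H_i^{-1}\) with the Hecke generators. Two small corrections: the distinguished bar-fixed vector is the \emph{minimal} element \(v_{\boldeta_{\min}}\) (corresponding to the identity coset representative \(N_e\)), not the longest one; and the matching of standard bases under \(\Psi\) is exact, with no stray power of \(q\) to chase.
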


This Schur-Weyl duality is strictly related to a version of super skew Howe
duality that connects representations of \(\gl(1|1)\), or more generally
\(\gl(m|n)\), with representations of \(\gl_N\) \cite{MR1847665}.
In fact, the whole categorification process we develop works more generally for tensor powers of the vector representation of \(\gl(m|n)\). We will sketch the main ideas for the general case in Section~\ref{sec:howe-duality-glm}, while in the rest of the paper we will restrict to \(\gl(1|1)\) and work out the details in this case; this turns out to be already very laborious. To develop the \(\gl(1|1)\)--categorification theory we will use super Schur-Weyl duality instead of Howe duality, and hence reduce the problem to symmetric group categorification. The two approaches are equivalent, but we personally prefer to work out the detail based on the first one.

The fundamental tool used in our construction is the BGG category \(\catO\) \cite{MR2428237}, which plays already an important role in many other representation theoretical categorifications. In particular, we will construct a categorification of tensor powers of \(V\) and of their subrepresentations using some subquotient categories of \(\catO(\gl_n)\). These categories are built in two steps: first one takes a parabolic subcategory and then a ``\(\frakq\)--presentable'' quotient; the two steps can be reversed, and one gets the same result. The process is sketched by the following picture, which is also helpful to remember how we index our categories:

\begin{equation*}
\scalebox{1.5}{\begin{tikzpicture}[quoziente/.style={->,decorate,decoration={coil,aspect=0,segment length=1.4ex,amplitude=0.4ex}},processo/.style={sloped,scale=0.5, font=\sffamily}]
  \node (usO) at (-13.5em,0) {\(\catO\)};

  \node (cO) [inner sep=0pt]  at (-9em,0) {\(\catO\)};
  \node (clambda) [base right= -0.1ex of cO, inner sep=0pt, yshift=-0.6ex] {\(\scriptstyle \lambda\)};

  \node (cc) [fit= (cO) (clambda)] {};

  \node (aO) [inner sep=0pt]  at (-4.5em,3em) {\(\catO\)};
  \node (ap) [ base right = -0.1ex of aO, inner sep=0pt, yshift=1.3ex] {\(\scriptstyle\frakp\)}; 
  \node (alambda) [base right= -0.1ex of aO, inner sep=0pt, yshift=-0.6ex] {\(\scriptstyle \lambda\)};
  \node (aa) [fit =(aO) (ap) (alambda)] {};

  \node (bO) [inner sep=0pt]  at (-4.5em,-3em) {\(\catO\)};
  \node (bq) [ base right= -0.1ex of bO, inner sep=0pt,yshift=1.3ex] {\(\scriptstyle \frakq\pres\)};
  \node (blambda) [base right= -0.1ex of bO, inner sep=0pt, yshift=-0.6ex] {\(\scriptstyle \lambda\)};
  \node (bb) [fit = (bO) (bq) (blambda)] {};

  \node [scale=0.66,anchor=west] (pt) at (3.5em,2.5 em) {subcategory};
  \node [scale=0.66,anchor=west] (qt) at (3.5em,1.5em)  {quotient category};
  \node [scale=0.66,anchor=west] (lambdat) at (3.5em,-1.5em) {block};
  \node (O) [inner sep=0pt]  at (0,0) {\(\catO\)};
  \node (p) [ base right = -0.1ex of O, inner sep=0pt, yshift=1.3ex] {\(\scriptstyle\frakp\)}; 
  \node (p1) [ base right= -0.2ex of p, inner sep=0pt] {\(\scriptstyle,\)};
  \node (q) [ base right= 0 of p1, inner sep=0pt] {\(\scriptstyle \frakq\pres\)};
  \node (lambda) [base right= -0.1ex of O, inner sep=0pt, yshift=-0.6ex] {\(\scriptstyle \lambda\)};
  \node (OO) [fit = (O) (p) (p1) (q) (lambda)] {};
  \draw[->] (pt.west) .. controls +(left:1.4em) and +(up:1.4em) .. ([yshift=0.4ex]p.north) ;
  \draw[->] (qt.west) .. controls +(left:0.7em) and +(up:0.7em) ..  ([yshift=0.4ex]q.north) ;
  \draw[->] (lambdat.west) .. controls +(left:0.5em) and +(down:1em) ..  ([yshift=-0.4ex]lambda.south) ;

  \draw[quoziente] (usO) to node[above=0.8ex,processo] {take block} (cc);
  \draw[quoziente] (cc) to node[above=0.6ex,processo] {subcat.} (aa);
  \draw[quoziente] (aa) to node[above=0.6ex,processo] {quotient} (OO);
  \draw[quoziente] (cc) to node[below=0.6ex,processo] {quotient} (bb);
  \draw[quoziente] (bb) to node[below=0.6ex,processo] {subcat.} (OO);
\end{tikzpicture}}
\end{equation*}
We will give the precise setup and definitions and discuss the technical Lie-theoretical details in Section~\ref{sec:category-cato}.

The construction of these subquotient categories is
motivated by the following. Usually a semisimple module
\(M\) is categorified via some abelian category
\(\calC\). Now, \(M\) decomposes as direct sum of simple
modules, but the category \(\calC\) is not supposed to
decompose into blocks according to the decomposition of
\(M\). This is indeed one of the main points of the
categorification: we want \(\calC\) to have more structure
than \(M\). When \(M\) is equipped with some \emph{canonical
  basis}, the submodules generated by canonical basis
elements in \(M\) give a filtration of \(M\) (but not a
decomposition!); this corresponds to the filtration of
\(\calC\) with subcategories. This principle has been
applied in \cite{MR2450613} to categorify induced modules
for the symmetric group: the category \(\catO_0(\gl_n)\) is
well-known to be a categorification of the regular
representation of the symmetric group \(\bbS_n\); these
induced modules for the symmetric group are direct summands
of the regular representation of \(\C[\bbS_n]\); hence they
can be categorified via subquotient categories of
\(\catO_0(\gl_n)\).

In particular, \cite{MR2450613} provide some categories,
which we denote by \(\calQ_k(\compn)\), categorifying the
induced modules \eqref{eq:177}, and define on them a
categorical action of \(\C[\bbS_n]\) using translation
functors. To categorify \(V^{\otimes n}\) we take the direct
sum of all these categories \(\calQ_k(\compn)\) for
\(k=0,\ldots,n\).  In addition, we consider also the
corresponding singular blocks \(\calQ_k(\bolda)\) of the
same subquotients categories. Note that singular blocks do
not appear in \cite{MR2450613} since they do not provide
categorifications of \(\C[\bbS_n]\)--modules; in our
picture, they categorify subrepresentations of \(V^{\otimes
  n}\). The translation functors of category
\(\catO(\gl_n)\) restrict to all these subcategories
\(\calQ_k(\bolda)\) and categorify the action of the
intertwining operators of the \(\gl(1|1)\)--action.

We remark that the categories \(\calQ_k(\bolda)\) have a natural grading (inherited from the Koszul grading on \(\catO(\gl_n)\)) and all the functors we consider are actually graded functors between these categories. In fact, this grading was used in \cite{MR2450613} to get an action of the Hecke algebra instead of the symmetric group algebra for the induced modules \eqref{eq:177}. As a result, the categorification lifts to a categorification of representations of the quantum enveloping superalgebra \(\Uqgle\). We will work out all the details in the graded setting.

What is left to complete the picture is to define functors
that categorify the action of \(\gl(1|1)\) itself. There is
a natural way to define adjoint functors \(\calE\) and
\(\calF\) between \(\calQ_k(\bolda)\) and
\(\calQ_{k+1}(\bolda)\), which portend to categorify the
action of the generators \(E\) and \(F\) of
\(U(\gl(1|1))\). Although \(\calE\) is exact, \(\calF\) is
only right exact in general, and we need to derive our
categories and functors in order to have an action on the
Grothendieck groups. However, the following problem
arises. The categories we consider are equivalent to
categories of modules over some finite-dimensional
algebras. Unfortunately, these algebras are not always
quasi-hereditary; in general they are only \emph{properly
  stratified} (the definition of standardly and properly
stratified algebras has been modeled to describe the
properties of some generalized parabolic subcategories of
\(\catO\), introduced by \cite{MR1921761}, that include as
particular cases the categories that we consider). A
properly stratified algebra does not have in general finite
global dimension (this happens if and only if the algebra is
quasi-hereditary). As a consequence, finite projective
resolutions do not always exist, and we are forced to
consider unbounded derived categories. But the Grothendieck
groups of these unbounded derived categories vanish by some
Eilenberg-swindle argument \cite{MR2223266}.  A workaround
to this problem has been developed in \cite{pre06137854},
using the additional structure of a \emph{mixed Hodge structure},
which in our case is given by the grading. Given a graded
abelian category, \cite{pre06137854} define a proper
subcategory of the left unbounded derived category of graded
modules; this subcategory is big enough to contain
projective resolutions, but small enough to prevent the
Grothendieck group to vanish. In particular, the
Grothendieck group of this triangulated subcategory is a
\(q\)--adic completion of the Grothendieck group of the
original graded abelian category. We will describe in detail
how the categories we consider and the functors \(\calE\)
and \(\calF\) can be derived using these techniques.

Of course at this point one would like to understand and describe the involved categories \(\calQ_k(\bolda)\) explicitly. Very surprisingly (at least for us), this is indeed possible. To give an idea, let us present the categorification of \(V^{\otimes 2}\). First, we notice that \(V^{\otimes 2}\) has a weight space decomposition as given by the following picture:
\begin{equation*}
\begin{tikzpicture}
  \node (Z1) at (0,-1) {\(\sgnmodule_{\bbS_2}\)};
  \node (Z2) at (4,-1) {\(\C[\bbS_2]\)};
  \node (Z3) at (8,-1) {\(\trvmodule_{\bbS_2}\)};
  \node[rotate=270] at (0,-0.5) {\(\cong\)};
  \node[rotate=270] at (4,-0.5) {\(\cong\)};
  \node[rotate=270] at (8,-0.5) {\(\cong\)};
  \node (A1) at (0,0) {\(\big(V^{\otimes 2}\big)_0\)};
  \node (A2) at (4,0) {\(\big(V^{\otimes 2}\big)_1\)};
  \node (A3) at (8,0) {\(\big(V^{\otimes 2}\big)_2\)};
  \draw[->] ([yshift=0.4ex]A1.east) [bend left=15] to node[above] {\(E\)} ([yshift=0.4ex]A2.west);
  \draw[->] ([yshift=0.4ex]A2.east) [bend left=15] to node[above] {\(E\)} ([yshift=0.4ex]A3.west);
  \draw[->] ([yshift=-0.4ex]A3.west) [bend left=15] to node[below] {\(F \)} ([yshift=-0.4ex]A2.east);
  \draw[->] ([yshift=-0.4ex]A2.west) [bend left=15] to node[below] {\(F\)} ([yshift=-0.4ex]A1.east);
\end{tikzpicture}
\end{equation*}
where \(E\) and \(F\) are generators of \(\gl(1|1)\) and the vertical isomorphisms are isomorphisms of \(\C[\bbS_2]\)--modules. We let \(R=\C[x]/(x^2)\) and \(A = \End_R(\C \oplus R)\). The algebra \(A\) can be identified with the path algebra of the quiver
\begin{equation*}
  \begin{tikzpicture}[baseline=(A.base)]
    \node (A)  at (0,0) {\(1\)};
    \node (B)  at (1,0) {\(2\)};
    \draw[->] (A) [bend left] to node[above,font=\small] {\(a\)} (B);
    \draw[->] (B) [bend left] to node[below,font=\small] {\(b\)} (A);
  \end{tikzpicture}
  \qquad \text{with the relation } ba=0.
\end{equation*}
We denote by \(e_1\) and \(e_2\) the two idempotents corresponding to the vertices of the quiver. Let us identify  \(\C\) with \(A/Ae_1A\) and notice that \(\C\) becomes then naturally an \((A,\C)\)--bimodule. Moreover, notice that \(R\) is naturally isomorphic to the endomorphism ring of the projective module \(Ae_2\), so that we can consider \(A e_2\) as an \((A,R)\)--bimodule. The categorification of \(V^{\otimes 2}\) is then given by the following picture:
\begin{equation*}
\begin{tikzpicture}
  \node (Z1) at (0,1) {\(\catO^\frakp_0(\gl_2)\)};
  \node (Z2) at (4,1) {\(\catO_0(\gl_2)\)};
  \node (Z3) at (8,1) {\(\catO^{\frakp\pres}_0(\gl_2)\)};
  \node[rotate=270] at (0,0.5) {\(\cong\)};
  \node[rotate=270] at (4,0.5) {\(\cong\)};
  \node[rotate=270] at (8,0.5) {\(\cong\)};
  \node (A1) at (0,0) {\(\lmod{\C}\)};
  \node (A2) at (4,0) {\(\lmod{A}\)};
  \node (A3) at (8,0) {\(\lmod{R}\)};
  \draw[->] ([yshift=0.4ex]A1.east) [bend left=15] to node[above,font=\small] {\(\C \otimes \blank\)} ([yshift=0.4ex]A2.west);
  \draw[->] ([yshift=0.4ex]A2.east) [bend left=15] to node[above,font=\small] {\(\Hom_A(Ae_2, \blank)\)} ([yshift=0.4ex]A3.west);
  \draw[->] ([yshift=-0.4ex]A3.west) [bend left=15] to node[below,font=\small] {\(Ae_2 \otimes_R \blank \)} ([yshift=-0.4ex]A2.east);
  \draw[->] ([yshift=-0.4ex]A2.west) [bend left=15] to node[below,font=\small] {\(\Hom_A(\C,\blank)\)} ([yshift=-0.4ex]A1.east);
\end{tikzpicture}
\end{equation*}
where \(\frakp = \gl_2\).
This should be compared with the standard categorification of \(W^{\otimes 2}\) (see \cite{MR2305608}), where \(W\) is the vector representation of \(\mathfrak{sl}_2\):
\begin{equation*}
\begin{tikzpicture}
  \node (Z1) at (0,1) {\(\catO^\frakp_0(\gl_2)\)};
  \node (Z2) at (4,1) {\(\catO_0(\gl_2)\)};
  \node (Z3) at (8,1) {\(\catO^{\frakp}_0(\gl_2)\)};
  \node[rotate=270] at (0,0.5) {\(\cong\)};
  \node[rotate=270] at (4,0.5) {\(\cong\)};
  \node[rotate=270] at (8,0.5) {\(\cong\)};
  \node (A1) at (0,0) {\(\lmod{\C}\)};
  \node (A2) at (4,0) {\(\lmod{A}\)};
  \node (A3) at (8,0) {\(\lmod{\C}\)};
  \draw[->] ([yshift=0.4ex]A1.east) [bend left=15] to node[above,font=\small] {\(\C \otimes \blank\)} ([yshift=0.4ex]A2.west);
  \draw[->] ([yshift=0.4ex]A2.east) [bend left=15] to node[above,font=\small] {\(\Hom_A(\C, \blank)\)} ([yshift=0.4ex]A3.west);
  \draw[->] ([yshift=-0.4ex]A3.west) [bend left=15] to node[below,font=\small] {\(\C \otimes \blank \)} ([yshift=-0.4ex]A2.east);
  \draw[->] ([yshift=-0.4ex]A2.west) [bend left=15] to node[below,font=\small] {\(\Hom_A(\C, \blank)\)} ([yshift=-0.4ex]A1.east);
\end{tikzpicture}
\end{equation*}
In particular, note that the first and the second leftmost
weight spaces are categorified in the same way for \(\gl(1|1)\) and for \(\mathfrak{sl}_2\). This will
hold for all tensor powers \(V^{\otimes n}\) and
\(W^{\otimes n}\) and is due to the fact that these weight
spaces for \(\gl(1|1)\) and for \(\mathfrak{sl}_2\) agree as
 modules for the symmetric group. The second leftmost
weight space, in particular, is categorified using the
well-known category of modules over the path algebra of the
Khovanov-Seidel quiver \cite{1035.53122}
\begin{equation*}
  \begin{tikzpicture}[baseline=(A.base)]
    \node (A)  at (0,0) {\(1\)};
    \node (B)  at (1,0) {\(2\)};
    \node (C) at (2,0) {\hphantom{1}};
    \node (D) at (2.5,0) {\hphantom{1}};
    \node at (2.25,0) {\(\cdots\)};
    \node (E) at (3.5,0) {\(n\)};
    \draw[->] (A) [bend left] to node[above,font=\small] {\(a_1\)} (B);
    \draw[->] (B) [bend left] to node[below,font=\small] {\(b_1\)} (A);
    \draw[->] (B) [bend left] to node[above,font=\small] {\(a_2\)} (C);
    \draw[->] (C) [bend left] to node[below,font=\small] {\(b_2\)} (B);
    \draw[->] (D) [bend left] to node[above,font=\small] {\(a_{n-1}\)} (E);
    \draw[->] (E) [bend left] to node[below,font=\small] {\(b_{n-1}\)} (D);
  \end{tikzpicture}
  \qquad \parbox{6cm}{with relations \(b_1a_1 = 0\) and\\
    \(b_ia_i  = a_{i-1} b_{i-1}\) for all \(i=2,\ldots,n-1\).}
\end{equation*}
One should however notice the remarkable difference in the rightmost weight space of our example. Here our categorification differs from the \(\mathfrak{sl}_2\) picture and leaves the world of highest weight categories. This is evident, since \(R\) has infinite global dimension.

In general, the description of our categories is slightly
more involved, but still explicit. A graded diagram algebra
\(A_{n,k}\) which is graded isomorphic to the endomorphism
ring of a projective generator of the category
\(\calQ_k(\compn)\) is constructed in
\cite{2013arXiv1311.6968S}. In particular, we have an
equivalence of categories \(\calQ_k(\compn) \cong
\gmod{A_{n,k}}\) and hence a complete description of the
former category.  Using this, we are able to prove that the
functors \(\calE\) and \(\calF\) are indecomposable.  We
remark that one could expect an action of a KLR algebra on
powers of \(\calE\) and \(\calF\). However, notice that
since \(\calE^2=0\) and \(\calF^2=0\) it does not make sense
to investigate the endomorphism spaces \(\End(\calE^k)\) and
\(\End(\calF^k)\) for \(k > 1\). At the moment it is not
clear to us how one could get a 2-categorification for
\(\gl(1|1)\)--representations.

\subsubsection*{Outline of the paper}
\label{sec:outline-paper}
After this introduction, the paper contains a general section on super Howe duality and applications to categorification. In this section we recall the statement of super Howe duality and we show how it can be used to deduce a categorification of \(\gl(m|n)\)--representations from a categorification of \(\gl_k\)--representations.

The rest of the paper is concerned with the case of \(\gl(1|1)\). In Section~\ref{sec:hecke-algebra-hecke} we recall some facts about the Hecke algebra and the Hecke modules appearing \eqref{eq:177}. In Section~\ref{sec:representations-uqgl} we define the quantum superalgebra \(U_q(\gl(1|1))\) and a semisimple subcategory \(\catRep\) of representations.
In Section~\ref{reps:sec:diagr-intertw-oper} we provide a diagram calculus for the intertwining operators of representations using webs, similar to the \(\fraks\frakl_2\)--diagram calculus of \cite{MR1446615},
which allows to compute explicitly the canonical bases and the action of \(U_q(\gl(1|1))\). Section~\ref{sec:category-cato} is the technical heart of the paper and contains the definitions of the subquotient categories \(\calQ_k(\bolda)\) of \(\catO(\gl_n)\). In Section~\ref{sec:categorification} we then show how they can be used to construct a categorification of the representations in \(\catRep\).

\subsubsection*{Acknowledgements} The present work is part of the author's PhD thesis. The author would like to thank his advisor Catharina Stroppel for her help and support. The author would also like to thank the anonymous referee for many helpful comments.

\section{Categorification of \texorpdfstring{$\gl(m|n)$}{gl(m|n)}-representations}
\label{sec:howe-duality-glm}

As we mentioned in the introduction, the categorification construction that we present in the paper for \(U_q(\gl(1|1))\) can be extended to the case of \(U_q(\gl(m|n))\) for \(m,n \geq 1\). However the combinatorics for \(U_q(\gl(1|1))\) is already quite involved; developing the analogous combinatorics for general \(U_q(\gl(m|n))\) would make this work unreadable.

Nevertheless, in order to be complete, we want to present in this preliminary section  a categorification result for \(\gl(m|n)\), avoiding some of the technicalities. In order to do that, we make the following simplifications:
\begin{itemize}
  \item we consider the classical (non-quantum) version;
  \item we consider only tensor powers of the vector representation (and not their subrepresentations);
  \item we categorify only the action of the intertwining operators (and not of \(\gl(m|n)\).
\end{itemize}
We derive the categorification from super skew Howe duality instead of from Schur-Weyl duality, although the two approaches are equivalent.

\subsection{Super Howe duality}
\label{sec:super-howe-duality-1}

Let \(I_{m|n}= \{1,\ldots,m+n\}\) with a parity function \(\abs{\cdot}: I_{m|n} \mapto \Z/2\Z\) defined by
\begin{equation}
  \label{eq:116}
  \abs{i} =
  \begin{cases}
    0 & \text{if } i \leq m \\
    1 & \text{if } i > m
  \end{cases}
\end{equation}
for each \(i \in I_{m|n}\). Let also \(\C^{m|n}\) be the \(m+n\)--dimensional super vector space on basis \(\{e_i \suchthat i \in I_{m|n}\}\) such that \(\abs{e_i} = \abs{i}\), where as usual \(\abs{v}\) denotes the degree of an homogeneous element \(v \in \C^{m|n}\). Then the Lie superalgebra \(\gl(m|n)\) is the super vector space of matrices \(\End(\C^{m|n})\) equipped with the Lie super bracket
\begin{equation}
  \label{eq:117}
  [x,y] = xy - (-1)^{\abs{x}\abs{y}} yx.
\end{equation}
In particular note that \(\gl(m|0) \cong \gl(0|m) \cong \gl_m\). The Lie superalgebra \(\gl(m|n)\) acts by matrix multiplication on \(\C^{m|n}\): this is the vector representation of \(\gl(m|n)\).

If \(V\) is a super vector space, we define an action of the symmetric group \(\bbS_N\) on the tensor power \(\bigotimes^N V\)
by setting
\begin{equation}
  \label{eq:118}
  s_\ell \cdot (x_{1} \otimes \cdots \otimes x_{N}) =  (-1)^{\abs{x_\ell}\abs{x_{\ell+1}}} x_{1} \otimes \cdots \otimes x_{\ell+1} \otimes x_\ell \otimes \cdots \otimes x_N
\end{equation}
for every simple reflection \(s_\ell \in \bbS_N\). Let \(\pi^{\operatorname{S}},\pi^{\bigwedge} \in \C[\bbS_n]\) be the idempotents projecting onto the trivial and sign representations respectively. We set then
\begin{equation}
  \label{eq:119}
  \operatorname{S}^N V = \pi^{\operatorname{S}}\cdot (\textstyle\bigotimes^N V) \qquad \text{and} \qquad \textstyle\bigwedge^N V = \pi^{\bigwedge} \cdot (\bigotimes^N V).
\end{equation}
In particular, notice that if \(V\) is a vector space concentrated in  degree zero then this definitions coincide with the usual symmetric and exterior powers of \(V\).

\begin{remark}
  \label{rem:O6}
  Notice that \(\operatorname{S}^N(\C^{m|n}) \cong \bigwedge^N(\C^{n|m})\). It follows in particular that, in contrast to the classical case, \(\bigwedge^N V\) can be non-zero also for \(N\gg 0\).
\end{remark}

If \(v_1,\ldots,v_r\) is a basis of \(V\), then a basis of \(\bigwedge^N V\) is given by
\begin{equation}
  \label{eq:121}
  v_{i_1} \wedge \cdots \wedge v_{i_N} = \pi^{\bigwedge} \cdot (v_{i_1} \otimes \cdots \otimes v_{i_N})
\end{equation}
for all sequences \((i_1,\ldots,i_N)\) of indices \(i_\ell \in \{1,\ldots,r\}\) such that \(i_1 \leq i_2 \leq \cdots \leq i_N\) and if \(i_\ell = i_{\ell+1}\) then \(\abs{v_{i_\ell}} = 1\). Moreover a basis of \(\operatorname S^N V\) is given by
\begin{equation}
  \label{eq:123}
  v_{i_1} \symm \cdots \symm v_{i_N} = \pi^{\operatorname S} \cdot (v_{i_1} \otimes \cdots \otimes v_{i_N})
\end{equation}
for all sequences \((i_1,\ldots,i_N)\) of indices \(i_\ell \in \{1,\ldots,r\}\) such that \(i_1 \leq i_2 \leq \cdots \leq i_N\) and if \(i_\ell = i_{\ell+1}\) then \(\abs{v_{i_\ell}} = 0\).

We have the following result (cf.\ \cite{MR1847665}, \cite{2010arXiv1001.0074C}):

\begin{prop}[Super Howe duality]
  \label{prop:5}
  Let \(p,m,N \in \Z_{>0}\) be positive integers and \(q,n \in \Z_{\geq0}\). The natural actions of \(\gl(p|q)\) and \(\gl(m|n)\) on \(\bigwedge^N ( \C^{p|q} \otimes \C^{m|n} )\) commute with each other and generate each other's centralizer.
  As a \(\gl(m|n)\)--module, \(\bigwedge^N ( \C^{p|q} \otimes \C^{m|n} )\) decomposes as the direct sum
  \begin{equation}
    \label{eq:120}
    \displaystyle \bigoplus_{i_1 + \cdots + i_{p+q} = N} \textstyle \bigwedge^{i_1} \C^{m|n} \otimes \cdots \otimes \bigwedge^{i_p} \C^{m|n} \otimes \operatorname{S}^{i_{p+1}} \C^{m|n} \otimes \cdots \otimes \operatorname{S}^{i_{p+q}} \C^{m|n}.
  \end{equation}
\end{prop}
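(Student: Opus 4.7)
The plan is to establish the three claims in turn. Commutation of the two actions is built into the construction: on $\C^{p|q} \otimes \C^{m|n}$, any $X \in \gl(p|q)$ acts as $X \otimes \id$ and any $Y \in \gl(m|n)$ acts (with the Koszul sign on $v \otimes w$) as $\id \otimes Y$, and these super-commute. Both actions extend diagonally to $(\C^{p|q}\otimes\C^{m|n})^{\otimes N}$ via the sign rule in \eqref{eq:118} and restrict to the antisymmetric subspace $\bigwedge^N(\C^{p|q}\otimes\C^{m|n})$; since super-commutation on generators propagates to the entire tensor power, it persists on the exterior power.

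For the decomposition as a $\gl(m|n)$-module, fix a basis vector $e_i$ of $\C^{p|q}$: the subspace $e_i \otimes \C^{m|n}$ is $\gl(m|n)$-stable. If $i \leq p$ then $\abs{e_i}=0$ and $e_i \otimes \C^{m|n} \cong \C^{m|n}$ as graded $\gl(m|n)$-modules, while if $i > p$ then $\abs{e_i}=1$ and the parity of $e_i \otimes e_j$ is $1 + \abs{e_j}$, so the subspace is the parity-shift of $\C^{m|n}$, naturally identified with $\C^{n|m}$. Thus, as $\gl(m|n)$-modules, $\C^{p|q} \otimes \C^{m|n} \cong (\C^{m|n})^{\oplus p} \oplus (\C^{n|m})^{\oplus q}$. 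Iterating the super analog of the identity $\bigwedge^N(U \oplus W) \cong \bigoplus_k \bigwedge^k U \otimes \bigwedge^{N-k} W$ across the $p+q$ summands, and then invoking Remark~\ref{rem:O6} to rewrite $\bigwedge^{i_k}\C^{n|m}$ as $\operatorname{S}^{i_k}\C^{m|n}$ for $k > p$, produces precisely \eqref{eq:120}.

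The double centralizer property is the main obstacle. My plan is to reduce it to super Schur--Weyl duality between $\C[\bbS_N]$ and $\gl(m|n)$ on $(\C^{m|n})^{\otimes N}$ (Sergeev--Berele--Regev). One realizes $\bigwedge^N(\C^{p|q}\otimes\C^{m|n})$ as the $\pi^{\bigwedge}$-isotypic component of $(\C^{p|q}\otimes\C^{m|n})^{\otimes N}$, which via the super shuffle isomorphism becomes $(\C^{p|q})^{\otimes N} \otimes (\C^{m|n})^{\otimes N}$ with $\gl(p|q)$ acting on the first factor, $\gl(m|n)$ on the second, and $\bbS_N$ diagonally. Super Schur--Weyl identifies the $\gl(m|n)$-centralizer in $\End((\C^{m|n})^{\otimes N})$ with the image of $\C[\bbS_N]$, and symmetrically for $\gl(p|q)$. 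A diagram chase through the antisymmetrizer projection should then identify the $\gl(m|n)$-centralizer on $\bigwedge^N(\C^{p|q}\otimes\C^{m|n})$ with the image of $U(\gl(p|q))$. The delicate step is this final identification: because $\gl(p|q)$ is not semisimple, one cannot simply count dimensions. However, the summands in \eqref{eq:120} are polynomial representations of $\gl(m|n)$, hence semisimple, so a multiplicity computation---comparing the $\gl(m|n)$-isotypic components in \eqref{eq:120} against the symmetric decomposition with the roles of $(p|q)$ and $(m|n)$ interchanged---should close the argument.
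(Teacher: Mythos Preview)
Your argument is largely correct, but takes a different route from the paper in one part and goes beyond it in another.

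For the decomposition \eqref{eq:120}, the paper does not use the direct-sum formula for exterior powers. Instead it fixes the ordered basis $e_i \otimes f_j$ of $\C^{p|q}\otimes\C^{m|n}$, writes down the resulting basis of $\bigwedge^N(\C^{p|q}\otimes\C^{m|n})$ as in \eqref{eq:121}, and defines an explicit bijection $\Psi$ to the basis of \eqref{eq:120} by grouping the factors according to the value of the first index $i_\ell$. Your argument via $\C^{p|q}\otimes\C^{m|n} \cong (\C^{m|n})^{\oplus p} \oplus (\Pi\C^{m|n})^{\oplus q}$ followed by the super version of $\bigwedge^N(U\oplus W)\cong\bigoplus_k\bigwedge^k U\otimes\bigwedge^{N-k}W$ is more conceptual and equally valid; the paper's explicit map is its unpacking. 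One small point: your invocation of Remark~\ref{rem:O6} needs the isomorphism $\bigwedge^k(\Pi\C^{m|n})\cong\operatorname{S}^k(\C^{m|n})$ to be $\gl(m|n)$-equivariant, not just an isomorphism of super vector spaces; this is true (parity shift twists the $\bbS_k$-action by the sign character while leaving the $\gl(m|n)$-action unchanged), but Remark~\ref{rem:O6} as stated only asserts the vector-space statement, so a word of justification would not hurt.

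For the double centralizer, the paper does not argue at all: it simply cites \cite[Theorem~3.3 and Corollary~3.2]{MR1847665}. Your Schur--Weyl reduction is a reasonable strategy, but as you yourself note, the final step is not closed; if you want a self-contained proof you would need to complete that multiplicity computation, whereas if you are content to cite Cheng--Wang (as the paper does) the sketch is unnecessary.
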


Note that inverting the roles of \(p|q\) and \(m|n\) we have a similar decomposition \eqref{eq:120} as a \(\gl(p|q)\)--module.

\begin{proof}
  The first part is \cite[Theorem~3.3 and Corollary~3.2]{MR1847665}. We check the decomposition \eqref{eq:120}.

  Let \(\{e_1,\ldots,e_{p+q}\}\) and \(\{f_1,\ldots,f_{m+n}\}\) be the standard bases of \(\C^{p|q}\) and \(\C^{m|n}\) respectively. We fix the following ordered basis of \(\C^{p|q} \otimes \C^{m|n}\):
  \begin{equation}
    \label{eq:125}
    e_1 \otimes f_1 , \ldots ,e_1 \otimes f_{m+n}, \ldots , e_{p+q} \otimes f_1 , \ldots, e_{p+q} \otimes f_{m+n}.
  \end{equation}
We get then a basis of \(\bigwedge^N ( \C^{p|q} \otimes \C^{m|n} )\) as in \eqref{eq:121}. Let \(M\) be equal to \eqref{eq:120}. We define an isomorphism \(\Psi\) from \(\bigwedge^N(\C^{p|q} \otimes \C^{m|n})\) to \(M\) in the following way. Given a basis vector \(w=(e_{i_1} \otimes f_{j_1}) \wedge \cdots \wedge (e_{i_N} \otimes f_{j_N})\)  of \(\bigwedge^N(\C^{p|q} \otimes \C^{m|n})\), define functions \(a,b : \{1,\ldots,p+q\} \mapto \{\bullet,1,\ldots,N\}\) by \(a(h) = \min\{\ell \suchthat i_\ell = h\}\) and \(b(h) = \max\{\ell \suchthat i_\ell = h\}\) or \(a(h)=b(h)=\bullet\) if this set is empty. Set also \(c(h)=b(h)-a(h)+1\), with the convention \( \bullet - \bullet = -1\). Then we define
\begin{equation}
\Psi(w) \in \textstyle \bigwedge^{c(1)} \C^{m|n} \otimes \cdots \otimes \bigwedge^{c(p)} \C^{m|n} \otimes \operatorname S^{c(p+1)} \otimes \cdots \otimes \operatorname S^{c(q)} \C^{m|n}\label{eq:126}
\end{equation}
to be the element
  \begin{multline}
    \label{eq:124}
      (f_{j_{a(1)}} \wedge \cdots \wedge f_{j_{b(1)}})
       \otimes  \cdots
       \otimes (f_{j_{a(m)}} \wedge \cdots \wedge f_{j_{b(m)}})\\
       \otimes (f_{j_{a(m+1)}} \symm \cdots \symm f_{j_{b(m+1)}})
       \otimes   \cdots
       \otimes (f_{j_{a(m+n)}} \symm \cdots \symm f_{j_{b(m+n)}}).
  \end{multline}
It is straightforward to check that this is indeed an element of the basis, and  that \(\Psi\) is bijective and \(\gl(m|n)\)--equivariant.
\end{proof}

\begin{remark}
  \label{catO:rem:9}
  Another kind of duality, called super Schur-Weyl duality, relates \(\gl(m|n)\) and the symmetric group \(\bbS_N\): the natural action of \(\C[\bbS_N]\) on \(V^{\otimes N}\) is \(\gl(m|n)\)--equivariant; moreover, the map \(\C[\bbS_N] \mapto \End_{\gl(m|n)}(V^{ \otimes N})\) is always surjective, and it is injective if and only if \(N \leq (m+1)(n+1)\) (see \cite{MR884183}, \cite{MR735715}).
\end{remark}

\subsection{Categorification of \texorpdfstring{$\gl(m|n)$}{gl(m|n)}}
\label{sec:super-howe-duality}

Set now \(V= \C^{m|n}\). Our goal is to construct a categorification of  \(V^{\otimes N}\) for \(N>0\).

Set \(p=N\) and \(q=0\) in Proposition~\ref{prop:5}. We have then that \(\bigwedge^N (\C^N \otimes V)\) decomposes as a \(\gl(m|n)\)--module as
\begin{equation}
  \label{eq:127}
  \bigoplus_{i_1+\cdots+i_N = N} \textstyle \bigwedge^{i_1} V \otimes \cdots \otimes \bigwedge^{i_N} V
\end{equation}
and as a \(\gl_N\)--module as
\begin{equation}
  \label{eq:128}
  \bigoplus_{j_1+\cdots + j_{m+n}=N} \textstyle \bigwedge^{j_1} \C^N \otimes \cdots \otimes \bigwedge^{j_m} \C^N \otimes \operatorname S^{j_{m+1}} \C^N \otimes \cdots \otimes \operatorname S^{j_{m+n}} \C^N.
\end{equation}
Notice that one summand of \eqref{eq:127} is in particular \(V^{\otimes N}\). A categorification of the \(\gl_N\)--module \eqref{eq:128} using the BGG category \(\catO\) has been constructed in \cite{2014arXiv1407.4267S}; we are going to use it in order to categorify the \(\gl(m|n)\)--module \eqref{eq:127}.

In order to state the categorification theorem, we need some notation.
Let us fix the standard basis \(\{v_1,\ldots,v_{m+n}\}\) of \(V=\C^{m|n}\), with
\begin{equation}
\abs{v_i}=
\begin{cases}
  0 & \text{for }i=1,\ldots,m,\\
  1 & \text{for }i=m+1,\ldots,n.
\end{cases}
\label{eq:187}
\end{equation}
Let \(\frakh \subset \gl(m|n)\) be the subalgebra of diagonal matrices. Then \(V^{\otimes N}\) decomposes as direct sum of weight spaces for the action of \(\frakh\). Let \(\Lambda\) be the set of compositions \(\lambda=(\lambda_1,\ldots,\lambda_{m+n})\) of \(N\) with at most \(m+n\) parts (that is, we allow \(\lambda_i=0\) for some indices \(i\)). Then the weight spaces of \(V^{\otimes N}\) are indexed by \(\Lambda\), and the correspondence is given by
\begin{equation}
  \label{eq:188}
  (V^{\otimes N})_\lambda = \vectorspan \{ v_{\sigma(a^\lambda_1)} \otimes \cdots \otimes v_{\sigma(a^\lambda_{N})} \suchthat \sigma \in \bbS_N \},
\end{equation}
where
\begin{equation}
  \label{eq:189}
  a^\lambda = (\underbrace{1,\ldots,1}_{\lambda_1}, \underbrace{2,\ldots,2}_{\lambda_2},\ldots, \underbrace{m+n,\ldots,m+n}_{\lambda_{m+n}}).
\end{equation}

We can now state our main result:

\begin{theorem}
  \label{thm:3}
  Given \(\lambda \in \Lambda\), let \(\frakq_\lambda \subset \gl_N\) be the standard parabolic subalgebra corresponding to the composition \((\lambda_1,\ldots,\lambda_m,1,\ldots,1)\) and \(\frakp_\lambda \subset \gl_N\) be the standard parabolic subalgebra corresponding to the composition \((1,\ldots,1,\lambda_{m+1},\ldots,\lambda_{m+n})\). Then there is an isomorphism
  \begin{equation}\label{eq:190}
      \C \otimes_\Z K(\catO^{\frakp_\lambda,\frakq_\lambda \pres}_0(\gl_N)) \longmapsto (V^{\otimes N})_\lambda
\end{equation}
sending equivalence classes of standard modules to standard basis vectors.

For each index \(i=1,\ldots,N-1\) choose a singular weight \(\lambda_i\) for \(\gl_N\) whose stabilizer under the dot action is generated by the simple reflection \(s_i\). Then defining \(\theta_i = \trasT_{\lambda_i}^0 \circ \trasT_0^{\lambda_i}\) we get a categorical action of the generators \(s_i + 1\) of \(\C[\bbS_N]\), which descends to the action \eqref{eq:118}.
\end{theorem}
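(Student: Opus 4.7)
The plan is to transport the categorification result for $\gl_N$-representations proved in \cite{2014arXiv1407.4267S} (via the same subquotient categories $\catO^{\frakp,\frakq\pres}_0(\gl_N)$) to the $\gl(m|n)$-side, using the super Howe duality of Proposition~\ref{prop:5} as the dictionary.

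First I would set up the bridge. Applied with $p=N$ and $q=0$, Proposition~\ref{prop:5} identifies $\bigwedge^N(\C^N\otimes V)$ as a $\gl_N\times\gl(m|n)$-bimodule via both \eqref{eq:127} and \eqref{eq:128}. The summand $V^{\otimes N}$ in \eqref{eq:127} (the one with all $i_\ell=1$) is precisely the $\gl_N$-weight space for weight $(1,\ldots,1)$, so intersecting with the $\gl(m|n)$-weight space for $\lambda$ gives a canonical identification of $(V^{\otimes N})_\lambda$ with the $(1,\ldots,1)$-weight subspace of the unique summand in \eqref{eq:128} with $(j_1,\ldots,j_{m+n})=(\lambda_1,\ldots,\lambda_{m+n})$, produced explicitly by the map $\Psi$ of \eqref{eq:124}. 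The standard basis vectors of $(V^{\otimes N})_\lambda$ from \eqref{eq:188} transport to the natural standard basis of this weight space, indexed by ordered set partitions of $\{1,\ldots,N\}$ of shape $\lambda$.

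Second I would invoke the main theorem of \cite{2014arXiv1407.4267S}: the Grothendieck group $\C\otimes_\Z K(\catO^{\frakp_\lambda,\frakq_\lambda\pres}_0(\gl_N))$ is canonically isomorphic to exactly this $(1,\ldots,1)$-weight subspace, with classes of standard modules mapping to standard basis vectors. Here the parabolic $\frakp_\lambda$ of type $(1,\ldots,1,\lambda_{m+1},\ldots,\lambda_{m+n})$ is responsible for the exterior factors at positions $m+1,\ldots,m+n$, while the $\frakq_\lambda$-presentability with $\frakq_\lambda$ of type $(\lambda_1,\ldots,\lambda_m,1,\ldots,1)$ produces the symmetric factors at positions $1,\ldots,m$; the two conditions are transverse because the non-trivial blocks of $\frakp_\lambda$ and $\frakq_\lambda$ sit in disjoint positions. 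Composing with the bridge from Step~1 yields the isomorphism \eqref{eq:190}. For the second assertion, it is classical (see e.g.\ \cite{MR2428237}) that on the regular block $\catO_0(\gl_N)$ the endofunctor $\theta_i=\trasT_{\lambda_i}^0\circ\trasT_0^{\lambda_i}$ categorifies right multiplication by $s_i+1$ on $\Z[\bbS_N]$, and this action descends to every subquotient $\catO^{\frakp,\frakq\pres}_0$ because translation functors commute with the Zuckerman and coapproximation functors used to form the subquotient. Via Howe duality the resulting right $\C[\bbS_N]$-action on the weight space matches the super Schur-Weyl action \eqref{eq:118} on $(V^{\otimes N})_\lambda$.

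The main technical obstacle will be verifying that $\Psi$ of \eqref{eq:124} is $\bbS_N$-equivariant when one side carries the Weyl group action of $\bbS_N\subset\gl_N$ and the other side carries the super Schur-Weyl action \eqref{eq:118}. A priori these two $\bbS_N$-structures differ by a sign character supported on the symmetric-group factors indexed by the odd positions of $\lambda$; one must check that the super signs in \eqref{eq:118} are absorbed precisely by the wedge/odot signs in \eqref{eq:124}. This reduces to a direct combinatorial computation once the conventions for ordering the tensor factors of \eqref{eq:125} and for reducing sequences of repeated indices are fixed. Once this sign bookkeeping is in place, the remaining ingredients---the categorification of \eqref{eq:128} from \cite{2014arXiv1407.4267S} and the standard behaviour of translation functors on $\catO$---combine immediately to give the theorem.
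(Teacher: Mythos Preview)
Your proposal is correct and aligns with the paper's setup: the paper introduces super Howe duality precisely as the bridge you describe and explicitly cites \cite{2014arXiv1407.4267S} for the $\gl_N$-side categorification just before stating the theorem. The paper's own proof is extremely terse: for the first claim it says only that the isomorphism ``follows from the definition of the categories $\catO^{\frakp_\lambda,\frakq_\lambda\pres}_0(\gl_N)$'' (pointing to Section~\ref{sec:category-cato}), and for the second it says one should generalize the proof of Theorem~\ref{thm:1}. Your first step is a more explicit unpacking of the same idea.

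For the second claim there is a slight difference in emphasis worth noting. The paper points toward a direct computation on proper standard modules in the spirit of Theorem~\ref{thm:1}, i.e.\ checking the effect of $\theta_i$ basis element by basis element inside $\calQ_k(\bolda)$. You instead argue via the classical fact that $\theta_i$ categorifies $s_i+1$ on $\catO_0$, together with commutation of translation functors with the Zuckerman and coapproximation functors, and then transport the result through the Howe duality identification. Both routes are valid. Yours is cleaner conceptually but genuinely requires the $\bbS_N$-equivariance check you flag (matching the super sign in \eqref{eq:118} with the Weyl group action through $\Psi$); the paper's computational route sidesteps Howe duality in the proof itself and verifies the action directly, so the sign bookkeeping is absorbed into the case analysis rather than isolated as a separate lemma.
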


We refer to Section~\ref{sec:category-cato} for the definitions of the categories appearing in \eqref{eq:190} and of the translation functors \(\trasT_{\lambda_i}^0\) and \(\trasT_0^{\lambda_i}\).

\begin{proof}
  The first claim follows from the definition of the categories \(\catO^{\frakp_\lambda,\frakq_\lambda \pres}_0(\gl_N)\) (cf.\ Section~\ref{sec:category-cato}). The second claim can be proved generalizing the proof of Theorem~\ref{thm:1}.
\end{proof}

\begin{remark}
Combining Zuckermann's/coapproximation functors and their adjoints (see \textsection\ref{sec:funct-betw-categ} for the definitions) one can define functors \(\calE_j\), \(\calF_j\) for \(j=1,\ldots,m+n-1\) between some opportune unbounded derived categories, as in \textsection\ref{sec:functors-scr-e}. These functors
commute with the functors \(\theta_i\) and give an action of \(\gl(m|n)\) at the level of the Grothendieck groups.
\end{remark}

We remark that for \(n=0\) Theorem \ref{thm:3} gives exactly the categorification of \((\C^m)^{\otimes N}\) developed in \cite{MR2567504}.

\section{The Hecke algebra and Hecke modules}
\label{sec:hecke-algebra-hecke}

In this section, which can be skipped at a first reading and used as a reference, we recall the definition of the bar involution and the canonical basis of the Hecke algebra for the symmetric group \(\bbS_n\). We then study in detail induced sign/trivial modules.

\subsection{Hecke algebra}
\label{sec:hecke-algebra}
Let \(\bbS_n\) denote the symmetric group of permutations of \(n\)
elements, generated by the simple reflections \(s_i\) for
\(i=1,\ldots,n-1\). For \(w \in \bbS_n\) we denote by \(\len(w)\) the length
of \(w\). Moreover, we denote by \(\prec\) the Bruhat order on \(\bbS_n\).

The Hecke algebra of the symmetric group \(W=\bbS_n\) is the
unital associative \(\C(q)\)--algebra \(\ucalH_n\) generated by \(\{H_i
\suchthat i=1,\ldots,n-1\}\) with relations
\begin{subequations}
  \begin{align}
    H_i H_j &= H_j H_i \qquad \text{if } \vert i-j\vert>2,\label{eq:7} \\
    H_i H_{i+1} H_i & = H_{i+1} H_i H_{i+1}, \label{eq:8}\\
    H_i^2&=(q^{-1}-q)H_i+1.\label{eq:9}
  \end{align}
\end{subequations}
Notice that we use Soergel's normalization \cite{MR1445511}, instead of the original one. However, we use the letter \(q\) as parameter in analogy with the quantum parameter of \(\Uqgl\).
It follows from \eqref{eq:9} that the elements \(H_i\) are invertible
with \(H_i^{-1}=H_i +q-q^{-1}\). For \(w \in \bbS_n\) such that \(w=s_{i_1} \cdots s_{i_r}\) is a
reduced expression, we define \(H_w=H_{i_1}\cdots H_{i_r}\). Thanks to
\eqref{eq:8}, this does not depend on the chosen reduced
expression. The elements \(H_w\) for \(w \in W\) form a basis of
\(\ucalH_n\), called \emph{standard basis}.

We can define on \(\ucalH_n\) a \emph{bar involution} by
\(\overline{H_w}=H^{-1}_{w^{-1}}\) and \(\overline{q}=q^{-1}\);
in particular \(\overline{H_i}=H_i+q-q^{-1}\). We also have a {\em
  bilinear form} \(\langle -,- \rangle\) on \(\ucalH_n\) such that the
standard basis elements are orthonormal:
\begin{equation}\label{eq:165}
  \langle H_w , H_{w'} \rangle = \delta_{w, w'} \qquad \text{for all }w, w' \in W.
\end{equation}

By standard arguments one can prove the following:

\begin{prop}[\cite{MR560412}, in the normalization of \cite{MR1445511}]\label{prop:KLBasisOnHeckeAlgebra}
  There exists a unique basis \(\{\underline H_w \suchthat w \in
  W\}\) of \(\ucalH_n\) consisting of bar-invariant elements such that
  \begin{equation}
    \underline H_w = H_w + \sum_{w' \prec w} \mathcal{P}_{w',w}(q) H_{w'}
  \end{equation}
  with \(\mathcal{P}_{w',w} \in q\Z[q]\) for every \(w' \prec w\).
\end{prop}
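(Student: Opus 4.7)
The strategy is the classical Kazhdan--Lusztig argument \cite{MR560412}, adapted to Soergel's normalization: prove uniqueness via a ``smallness'' lemma for bar-invariant elements, then construct the basis by induction on length. For uniqueness, the difference of any two candidates lies in $\sum_{w' \prec w} q\Z[q] H_{w'}$ and is bar-invariant. Pick a $\prec$-minimal index $w'$ with nonzero coefficient $p(q) \in q\Z[q]$. Because the bar involution acts triangularly on the standard basis, with $\overline{H_{w'}} = H_{w'} + \sum_{w'' \prec w'} R_{w'',w'}(q) H_{w''}$, comparing the coefficient of $H_{w'}$ on both sides of the bar-invariance equation forces $p(q) = \overline{p(q)}$. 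But $q\Z[q] \cap q^{-1}\Z[q^{-1}] = 0$, so $p = 0$; inducting upward in $\prec$ gives vanishing of the difference.

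For existence, I induct on $\len(w)$, the base case being $\underline H_e = 1$. For the inductive step, pick a simple reflection $s_i$ with $\len(s_i w) = \len(w)-1$, set $v = s_i w$, and consider
\[
  C := (H_i + q)\, \underline H_v.
\]
In Soergel's normalization one checks $\overline{H_i + q} = (H_i + q - q^{-1}) + q^{-1} = H_i + q$, so the factor $H_i + q$ is bar-invariant; hence $C$ is bar-invariant. Expanding via \eqref{eq:9} and the inductive formula for $\underline H_v$, one obtains $C = H_w + \sum_{w' \prec w} c_{w'}(q)\, H_{w'}$ with $c_{w'}(q) \in \Z[q,q^{-1}]$.

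It remains to correct the coefficients to lie in $q\Z[q]$. I proceed by downward induction on $\prec$: let $w'$ be a $\prec$-maximal element below $w$ for which $c_{w'}(q) \notin q\Z[q]$, all strictly larger coefficients having already been made to lie in $q\Z[q]$. The bar-invariance of the current element, combined with the triangularity of the bar involution on the standard basis and the already-controlled higher coefficients, forces $c_{w'}(q)$ to be bar-invariant modulo $q\Z[q] \oplus q^{-1}\Z[q^{-1}]$, so we may write $c_{w'}(q) = \alpha(q) + \beta(q)$ with $\alpha(q) \in q\Z[q]$ and $\beta(q) \in \Z$ bar-invariant. Since $\len(w') < \len(w)$, the element $\underline H_{w'}$ exists by the inductive hypothesis, and subtracting $\beta(q)\, \underline H_{w'}$ preserves bar-invariance, leaves the coefficient of $H_w$ equal to $1$, corrects the coefficient at $w'$, and only modifies coefficients at $w'' \prec w'$. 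Iterating terminates after finitely many steps and yields the desired $\underline H_w$.

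The main obstacle is the bookkeeping in the correction step, specifically verifying that the ``defect'' $c_{w'}(q) - \alpha(q)$ is bar-invariant and therefore really can be absorbed by a single bar-invariant element $\underline H_{w'}$ from the inductive hypothesis, without disturbing the already-corrected higher coefficients. This rests on the fact that both the standard basis expansion of $\underline H_{w'}$ and the bar involution $\overline{H_x}$ are upper-triangular in the Bruhat order $\prec$, so the correction at $w'$ only propagates strictly downward.
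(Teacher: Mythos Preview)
The paper does not give a proof; it cites Kazhdan--Lusztig and Soergel, and Remark~\ref{remark:ConstructionOfCanonicalBasis} records the same inductive construction you use (multiplying by $\underline H_i$ on the right rather than the left, which is immaterial). Your argument is the standard one and is essentially correct, but two points need repair.

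First, in the uniqueness argument you must pick a $\prec$-\emph{maximal} (not minimal) index $w'$ with nonzero coefficient: only then does triangularity guarantee that no other term contributes to the coefficient of $H_{w'}$ in $\overline X$, forcing $p(q)=\overline{p(q)}$. The induction is then downward in $\prec$, not upward.

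Second, the correction step has a genuine gap. You assert that $c_{w'}(q)=\alpha(q)+\beta$ with $\alpha\in q\Z[q]$ and $\beta\in\Z$, i.e.\ that $c_{w'}$ has no negative-degree part; but your stated reason (``bar-invariant modulo $q\Z[q]\oplus q^{-1}\Z[q^{-1}]$'') is a tautology---$c-\overline c$ always has vanishing constant term---and does not exclude a $q^{-1}\Z[q^{-1}]$ contribution. The fix is to observe that the standard-basis coefficients of $C=(H_i+q)\,\underline H_v$ already lie in $\Z[q]$: for $v'\preceq v$ with $s_iv'<v'$ one computes $(H_i+q)H_{v'}=H_{s_iv'}+q^{-1}H_{v'}$, and $q^{-1}p_{v',v}\in\Z[q]$ since $p_{v',v}\in q\Z[q]$; the remaining cases give coefficients in $q\Z[q]$ directly. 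Each subsequent correction subtracts an integer multiple of some $\underline H_{w''}$, whose standard-basis coefficients lie in $\Z[q]$, so the property $c_{w'}\in\Z[q]$ persists throughout and your decomposition becomes legitimate.
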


The basis \(\underline H_w\) is called \emph{Kazhdan-Lusztig basis} or \emph{canonical  basis} of \(\ucalH_n\).

\begin{remark}\label{remark:ConstructionOfCanonicalBasis}
  There is an inductive way to construct the canonical basis
  elements. First, note that by definition \(\canonicalHecke_e = H_e\). Then
  set \(\canonicalHecke_i = H_i + q\): since
  \(\canonicalHecke_i\) is bar invariant, we must have
  \(\canonicalHecke_{s_i} = \canonicalHecke_i\). Now suppose
  \(w= w's_i \succ w'\): then \(\canonicalHecke_{w'} \canonicalHecke_i\) is
  bar invariant and is equal to \( H_{w}\) plus a
  \(\Z[q,q^{-1}]\)--linear combination of some \(H_{w''}\) for
  \(w'' \prec w\). It follows that
  \begin{equation}
    \label{eq:10}
    \underline H_{w'}\canonicalHecke_i = \underline H_w + p \quad \text{ for some } p \in \bigoplus_{w'' \prec w} \Z \underline H_{w''} .
  \end{equation}
\end{remark}

\subsection{Induced Hecke modules}
\label{sec:induc-hecke-modul}
We will consider induced Hecke modules which are a mixed version of induced sign and induced trivial modules studied in  \cite{MR1445511}. In the following, all modules over the Hecke algebra will be right modules.

Let \(W_\frakp, W_\frakq\) be two parabolic subgroups\footnote{We use this notation because \(W_\frakp\) and \(W_\frakq\)  will correspond later to two parabolic subalgebras \(\frakp,\frakq \subset \gl_n\).} of \(W=\bbS_n\) (that is, they are generated by simple transpositions) such that the elements of \(W_\frakp\) commute with the elements of \(W_\frakq\). Note that \(W_{\frakp+\frakq}=W_\frakp \times W_\frakq\) is also a parabolic subgroup of
\(W\). Let  \(\ucalH_\frakp\), \(\ucalH_\frakq\) and \(\ucalH_{\frakp+ \frakq}\) be the corresponding Hecke algebras; they are all naturally subalgebras of \(\ucalH_n\). 
We denote by
  \(\sgnmodule_\frakp\) the \emph{sign representation} of
  \(\ucalH_\frakp\); this is the one-dimensional
  \(\C(q)\)--vector space on which each generator \(H_i \in
  \ucalH_{\frakp}\) acts as \(-q\). Similarly we denote by
  \(\trvmodule_\frakq\) the \emph{trivial
    representation} of \(\ucalH_\frakq\), which is the one-dimensional \(\C(q)\)--vector space on which each
  generator \(H_i \in \ucalH_{\frakq}\) acts as
  \(q^{-1}\). We define the \emph{mixed induced Hecke module}
\begin{equation}\label{eq:67}
\ucalM^\frakp_\frakq = \Ind_{\ucalH_{\frakp+\frakq}}^{\ucalH_n}( \sgnmodule_\frakp \boxtimes \trvmodule_\frakq) = ( \sgnmodule_\frakp \boxtimes \trvmodule_\frakq) \otimes_{\ucalH_{\frakp+\frakq}} \ucalH_n.
\end{equation}
If \(W_\frakp\) is trivial, we omit \(\frakp\) from the notation and we write \(\ucalM_\frakq\). Analogously, if \(W_\frakq\) is trivial we omit \(\frakq\) and we write \(\ucalM^\frakp\). Note that in \(\ucalM_\frakq\) and \(\ucalM^\frakp\)  are denoted respectively \(\ucalM^\frakq\) and \(\ucalN^\frakp\) in \cite{MR1445511}.

Let \(W^\frakp\), \(W^\frakq\) and \(W^{\frakp+\frakq}\) be the set of shortest coset representatives for the left quotients \(W_p \backslash W\), \(W_\frakq \backslash W\) and \(W_{\frakp+\frakq}\backslash W\) respectively. Then a basis of \(\ucalM^\frakp_\frakq\) is given by
\begin{equation}
\{N_w = 1 \otimes H_w \suchthat w \in W^{\frakp+\frakq}\}\label{eq:178}
\end{equation}
 (where \(1\)  is some chosen generator of the \(\C(q)\)--vector space \(\sgnmodule_\frakp \boxtimes \trvmodule_\frakq\)).

The action of \(\ucalH_n\) on \(\ucalM^\frakp_\frakq\) is given explicitly by the following lemma:

\begin{lemma}\label{lemma:ActionOfHOnM}
  For all \(w \in W^{\frakp+\frakq}\) we have
  \begin{equation}\label{eq:ActionOfHOnM}
    N_w \cdot H_i =
    \begin{cases}
      N_{ w s_i} & \text{if }  w s_i \in W^{\frakp+\frakq} \text{ and } \len (w s_i) > \len(w),\\
      N_{w s_i} + (q^{-1}-q) H_w & \text{if } w s_i \in W^{\frakp+\frakq} \text{ and } \len (w s_i) < \len(w),\\
      -q N_{w} &  \text{if }  w s_i=s_j w \text{ for } s_j \in W_\frakp,\\
      q^{-1} N_{w} & \text{if }  w s_i= s_j w  \text{ for } s_j \in W_\frakq.
    \end{cases}
  \end{equation}
\end{lemma}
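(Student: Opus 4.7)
The plan is to compute $H_w \cdot H_i$ in $\ucalH_n$ and then transport the answer across the tensor product defining $\ucalM^\frakp_\frakq$, using the $\ucalH_{\frakp+\frakq}$-balance to move elements of $\ucalH_{\frakp+\frakq}$ to the left factor where they act by known scalars.

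First I would invoke the standard combinatorial dichotomy for minimal coset representatives: for $w \in W^{\frakp+\frakq}$ and a simple reflection $s_i$, either (a) $ws_i$ is again a minimal coset representative in $W^{\frakp+\frakq}$, or (b) there exists a simple reflection $s_j \in W_{\frakp+\frakq}$ with $ws_i = s_j w$, and in the latter situation necessarily $\ell(ws_i)=\ell(w)+1$ because $s_j w > w$ by minimality of $w$. This dichotomy (standard, see e.g.\ Humphreys or Geck-Pfeiffer) is what determines the four cases of the lemma.

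In case (a), if additionally $\ell(ws_i) > \ell(w)$ then by definition of the standard basis $H_w H_i = H_{ws_i}$, which gives the first line. If instead $\ell(ws_i) < \ell(w)$, I write $H_w = H_{ws_i} H_i$ and apply the quadratic relation \eqref{eq:9} to obtain
\begin{equation*}
H_w H_i = H_{ws_i} H_i^2 = H_{ws_i}\bigl((q^{-1}-q)H_i + 1\bigr) = H_{ws_i} + (q^{-1}-q) H_w,
\end{equation*}
producing the second line after tensoring with $1$ on the left.

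In case (b), since $\ell(s_j w) = \ell(w)+1$ we have $H_{ws_i} = H_{s_j w} = H_j H_w$, hence $H_w H_i = H_j H_w$. Now I exploit that $H_j \in \ucalH_{\frakp+\frakq}$ and the tensor product in \eqref{eq:67} is taken over $\ucalH_{\frakp+\frakq}$, so
\begin{equation*}
N_w \cdot H_i = 1 \otimes H_w H_i = 1 \otimes H_j H_w = (1 \cdot H_j) \otimes H_w.
\end{equation*}
If $s_j \in W_\frakp$ then $1 \cdot H_j = -q$ by definition of $\sgnmodule_\frakp$, giving the third line; if $s_j \in W_\frakq$ then $1 \cdot H_j = q^{-1}$ by definition of $\trvmodule_\frakq$, giving the fourth line.

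The only nontrivial ingredient is the dichotomy above; the rest is a direct computation using the quadratic relation and the one-dimensional $\ucalH_\frakp$- and $\ucalH_\frakq$-module structures on $\sgnmodule_\frakp$ and $\trvmodule_\frakq$. I would therefore either cite the dichotomy from a standard reference or include a brief lemma separating a prior discussion of $W^{\frakp+\frakq}$; in either case this is the only step requiring any real argument, and once it is in place the four cases follow routinely.
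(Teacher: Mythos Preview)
The paper states this lemma without proof, treating it as a routine computation; your argument is correct and is exactly the standard one. The dichotomy you cite (sometimes called Deodhar's lemma) is indeed the only nontrivial input, and the rest is, as you say, the quadratic relation together with the $\ucalH_{\frakp+\frakq}$-balance of the tensor product.
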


The module \(\ucalM^\frakp_\frakq\) inherits a bar involution by setting \(\overline {N_w} =1 \otimes \overline {H_w}\). Moreover, the bilinear form \eqref{eq:165} induces a bilinear form on \(\ucalM^{\frakp}_{\frakq}\).
A canonical basis can be defined on \(\ucalM^\frakp_\frakq\) by the following generalization of Proposition~\ref{prop:KLBasisOnHeckeAlgebra}:

\begin{prop}\label{prop:18}
  There exists a unique basis \(\{\underline N_w \suchthat w \in
  W^{\frakp+\frakq}\}\) of \(\ucalM^\frakp_\frakq\) consisting of bar-invariant elements satisfying
  \begin{equation}
    \underline N_w = N_w + \sum_{w' \prec w} \mathcal{R}_{w',w}(q) N_{w'}
  \end{equation}
  with \(\mathcal{R}_{w',w} \in q\Z[q]\) for all \(w' \prec w\).
\end{prop}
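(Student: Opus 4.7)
The plan is to follow the pattern of Proposition~\ref{prop:KLBasisOnHeckeAlgebra} in the induced setting: a standard Lusztig-lemma style argument that reduces to checking that the bar involution acts upper-triangularly on the basis $\{N_w\}_{w \in W^{\frakp+\frakq}}$.

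First, I would verify that $\overline{N_w} = 1 \otimes \overline{H_w}$ defines a bar involution on $\ucalM^\frakp_\frakq$. The only nontrivial check is compatibility with the tensor relation over $\ucalH_{\frakp+\frakq}$, which reduces to verifying the one-dimensional characters: for $H_i \in \ucalH_\frakp$, the action of $\overline{H_i} = H_i + q - q^{-1}$ on $\sgnmodule_\frakp$ equals $-q + q - q^{-1} = -q^{-1} = \overline{(-q)}$, and similarly $q^{-1} + q - q^{-1} = q = \overline{q^{-1}}$ on $\trvmodule_\frakq$.

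Next, I would prove by induction on $\len(w)$ the triangularity formula
\begin{equation*}
\overline{N_w} = N_w + \sum_{\substack{w' \prec w \\ w' \in W^{\frakp+\frakq}}} r_{w',w}(q)\, N_{w'}, \qquad r_{w',w} \in \Z[q,q^{-1}].
\end{equation*}
For the inductive step I would factor $w = w' s_i$ with $w' \in W^{\frakp+\frakq}$ and $\len(w') = \len(w) - 1$, using the standard fact that the set of shortest coset representatives is closed under shortening by right multiplication with a simple reflection (if a left simple reflection $t \in W_{\frakp+\frakq}$ descended $ws_i$, then $\len(tw) \geq \len(w)+1$ while $\len(tws_i) \leq \len(w)-2$, contradicting that right multiplication by $s_i$ changes length by one). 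Then $\overline{N_w} = \overline{N_{w'}}\cdot \overline{H_i}$, and unfolding via the inductive hypothesis and the four cases of Lemma~\ref{lemma:ActionOfHOnM} gives the claim.

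Once triangularity is established, the argument of Remark~\ref{remark:ConstructionOfCanonicalBasis} transfers verbatim. For uniqueness, any two candidate canonical elements differ by a bar-invariant element of $\sum_{w' \prec w} q\Z[q]\, N_{w'}$; the triangular bar formula then forces all coefficients into $q\Z[q] \cap q^{-1}\Z[q^{-1}] = 0$, read top down in Bruhat order. For existence, I would build $\underline{N}_w$ inductively from $N_w$ by successively subtracting bar-invariant corrections of the form $c_{w''}(q)\, \underline{N}_{w''}$ with $w'' \prec w$, where $c_{w''} \in \Z[q,q^{-1}]$ is chosen to cancel the non-$q\Z[q]$ part of the coefficient of $N_{w''}$.

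The main obstacle is the triangularity step: the four cases of Lemma~\ref{lemma:ActionOfHOnM} combined with the mixed sign/trivial nature of the induction make the bookkeeping for $\overline{N_w}$ more delicate than in the pure sign or pure trivial cases treated in \cite{MR1445511}. Everything after that is purely formal.
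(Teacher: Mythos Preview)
Your proposal is correct and follows the standard Lusztig-lemma approach; the paper itself does not give a proof of Proposition~\ref{prop:18}, stating it only as the evident generalization of Proposition~\ref{prop:KLBasisOnHeckeAlgebra}. Your outline supplies exactly the missing details (well-definedness of the bar involution on the induced module, Bruhat-triangularity via Lemma~\ref{lemma:ActionOfHOnM}, then the usual descent argument), so there is nothing to compare against and nothing to correct.
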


As described in Remark \ref{remark:ConstructionOfCanonicalBasis}, one
can construct inductively the canonical basis of \(\ucalM^\frakp_\frakq\). In particular,
for \( W^{\frakp+\frakq} \ni w s_i \succ w\) one always has
\begin{equation}
  \label{eq:17}
  \underline N_{w} \underline H_i = \underline N_{w s_i} + p
\end{equation}
where \(p\) is a \(\Z\)--linear combination of \(\underline N_{w'}\) for \(w'
\prec w s_i\).

\subsubsection{Maps between Hecke modules I}
\label{sec:maps-between-hecke}
We will now construct maps between induced modules \(\ucalM^{\frakp}_{\frakq}\) corresponding to different pairs of parabolic subgroups \(W_\frakp\), \(W_\frakq\). First, we consider the case in which we change the subgroup \(W_\frakq\).

Let \(W_{\frakq'} \subset W_{\frakq}\) be also a parabolic subgroup of \(W\). Let us define a map \(\sfi = \sfi_{\frakq}^{\frakq'}: \ucalM^\frakp_\frakq \rightarrow \ucalM^\frakp_{\frakq'}\) by
\begin{equation}\label{eq:162}
  \sfi: N_w \longmapsto \sum_{x \in W^{\frakq'} \cap W_\frakq} q^{\len(w^{\frakq'}_\frakq) -\len(x)} N_{xw}
\end{equation}
where \(w^{\frakq'}_\frakq\) is the longest element of \(W^{\frakq'} \cap W_\frakq = (W_{\frakq'} \backslash W_\frakq)^{\short}\). Note that for \(w \in
W^{\frakp+\frakq}\) and \(x \in W^{\frakq'} \cap W_{\frakq}\) the product \(xw\) is an element of \(W^{\frakp+\frakq'}\).

The map \eqref{eq:162} is natural, in the sense that if \(W_{\frakq''} \subset W_{\frakq'}\) is another subgroup of \(W\) generated by simple reflections then \(\sfi_{\frakq}^{\frakq''}= \sfi_{\frakq'}^{\frakq''} \circ \sfi_{\frakq}^{\frakq'}\); this follows because each element of \((W_{\frakq''}\backslash W_\frakq)^{\short}\) factors in a unique way as the product of an element of \((W_{\frakq''}\backslash W_{\frakq'})^{\short}\) and an element of \((W_{\frakq'}\backslash W_\frakq)^{\short}\).

\begin{lemma}\label{lemma:FunctionBetweenM1}
  The map \(\sfi\) just defined is an injective homomorphism of
  \(\ucalH_n\)--modules that commutes with the bar involution. Moreover
  it sends the canonical basis element \(\underline N_w\) to the
  canonical basis element \(\underline N_{w^{\frakq'}_\frakq w}\).
\end{lemma}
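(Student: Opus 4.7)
The plan is to realize $\sfi$ as the $\ucalH_n$-module map induced from a natural embedding of $\ucalH_\frakq$-modules, and to deduce the four claimed properties from general properties of induction combined with the uniqueness in Proposition~\ref{prop:18}.

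Since $W_\frakp$ commutes with $W_\frakq$, the target module decomposes by stages as
$$\ucalM^\frakp_{\frakq'} \cong \bigl(\sgnmodule_\frakp \boxtimes \Ind^{\ucalH_\frakq}_{\ucalH_{\frakq'}}(\trvmodule_{\frakq'})\bigr) \otimes_{\ucalH_{\frakp+\frakq}} \ucalH_n,$$
so to construct an $\ucalH_n$-linear map $\sfi$ it suffices to exhibit a copy of the one-dimensional $\ucalH_\frakq$-module $\trvmodule_\frakq$ inside $\Ind^{\ucalH_\frakq}_{\ucalH_{\frakq'}}(\trvmodule_{\frakq'})$. The candidate dictated by \eqref{eq:162} is the line spanned by
$$C = \sum_{x \in W^{\frakq'} \cap W_\frakq} q^{\len(w^{\frakq'}_\frakq)-\len(x)} N_x,$$
and the first step is to verify $C \cdot H_i = q^{-1} C$ for every simple reflection $s_i \in W_\frakq$. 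I would do this by direct case analysis using Lemma~\ref{lemma:ActionOfHOnM}: for each $x \in W^{\frakq'}\cap W_\frakq$ either $xs_i$ also lies in $W^{\frakq'}\cap W_\frakq$ (producing a swap of adjacent summands with a $(q^{-1}-q)$-correction) or $xs_i = s_j x$ for some $s_j \in W_{\frakq'}$ (producing an immediate $q^{-1}$-eigenvalue), and the weighting $q^{\len(w^{\frakq'}_\frakq)-\len(x)}$ is precisely what is needed for the corrections to cancel in pairs. Equivalently, $C$ can be identified with the canonical basis element $\underline N_{w^{\frakq'}_\frakq}$ inside the induced trivial module, and the identity $C \cdot H_i = q^{-1} C$ is the classical fact that the longest canonical basis element of a parabolic acts as a $q^{-1}$-eigenvector under right multiplication by its generators.

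With the homomorphism property in hand, injectivity follows from triangularity of \eqref{eq:162}. By the length-additivity $\len(w^{\frakq'}_\frakq w) = \len(w^{\frakq'}_\frakq)+\len(w)$ (which holds because $w \in W^\frakq$ is a shortest representative), the summand $N_{w^{\frakq'}_\frakq w}$ is the unique term of maximal length in $\sfi(N_w)$, and $w \mapsto w^{\frakq'}_\frakq w$ injects $W^{\frakp+\frakq}$ into $W^{\frakp+\frakq'}$; hence the matrix of $\sfi$ in the standard bases is upper-triangular with unit diagonal. For bar-invariance, I would use the uniqueness in Proposition~\ref{prop:18} applied to the induced trivial module to characterize $C$ as the unique bar-invariant element whose standard-basis expansion has leading term $N_{w^{\frakq'}_\frakq}$ with all other coefficients in $q\Z[q]$; in particular $\overline C = C$. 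Combined with the compatibility $\overline{N_e \cdot h} = N_e \cdot \overline h$ of the bar involution on $\ucalM^\frakp_{\frakq'}$ with the right Hecke action, this yields $\overline{\sfi(N_w)} = \sfi(\overline{N_w})$.

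The canonical basis claim again follows from the uniqueness in Proposition~\ref{prop:18}: since $\sfi(\underline N_w)$ is bar-invariant by the previous step, it suffices to check that its standard-basis expansion has coefficient $1$ on $N_{w^{\frakq'}_\frakq w}$ and coefficients in $q\Z[q]$ on basis elements $N_{w'}$ with $w' \prec w^{\frakq'}_\frakq w$. This last verification is where I expect the main obstacle to lie: one has to combine the $q$-positivity of the Kazhdan-Lusztig coefficients $\mathcal R_{w'',w}$ from Proposition~\ref{prop:18} with the Bruhat-order inequality $xw'' \prec w^{\frakq'}_\frakq w$ whenever $x \in W^{\frakq'}\cap W_\frakq$, $w'' \preceq w$ and $(x,w'') \neq (w^{\frakq'}_\frakq, w)$. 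The latter is a standard but delicate property of parabolic coset products, amounting to the fact that multiplication on the left by a shortest coset representative is compatible with the Bruhat order. Once both are established, uniqueness in Proposition~\ref{prop:18} forces $\sfi(\underline N_w) = \underline N_{w^{\frakq'}_\frakq w}$, completing the proof.
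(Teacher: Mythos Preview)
Your proposal is correct and takes a somewhat different, more conceptual route than the paper's proof. For the homomorphism property, the paper first reduces to the special case \(\frakq'=\frakb\) via the commutative triangle of inclusions \(\sfi_\frakq=\sfi_{\frakq'}\circ\sfi_\frakq^{\frakq'}\), and then checks \(\sfi(N_w H_i)=\sfi(N_w)H_i\) by a direct case analysis against Lemma~\ref{lemma:ActionOfHOnM}. Your use of transitivity of induction, reducing everything to the single verification that \(C\) spans a copy of \(\trvmodule_\frakq\) inside \(\Ind_{\ucalH_{\frakq'}}^{\ucalH_\frakq}\trvmodule_{\frakq'}\), packages the same computation more cleanly and works uniformly for all \(\frakq'\). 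For bar-invariance the paper again reduces to \(\frakq'=\frakb\), identifies \(\sfi(N_e)\) with \(\underline N_{w_\frakq}\), and then runs an induction over canonical basis elements using \eqref{eq:17}; your argument from \(\overline C=C\) together with the multiplicativity \(\overline{X\cdot h}=\overline X\cdot\overline h\) is shorter and yields commutation with bar on the whole module in one stroke.

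One comment on your final paragraph: the Bruhat inequality \(xw''\prec w^{\frakq'}_\frakq w\) that you flag as the main obstacle is not needed. The uniqueness behind Proposition~\ref{prop:18} has the stronger form: any bar-invariant element \(Y\) with \(Y-N_v\in\bigoplus_{w'\neq v}q\Z[q]\,N_{w'}\) (no Bruhat restriction on the support) already equals \(\underline N_v\), since \(Y-\underline N_v\) is then bar-invariant with all standard-basis coefficients in \(q\Z[q]\), and looking at a Bruhat-maximal term of its support forces that coefficient to be bar-fixed, hence zero. This is exactly what the paper does: it checks only that the non-leading coefficients lie in \(q\Z[q]\). (If you do want the Bruhat inequality, it is immediate from the subword criterion: \(x\preceq w^{\frakq'}_\frakq\) and \(w''\preceq w\), combined with length-additivity of the products, give \(xw''\preceq w^{\frakq'}_\frakq w''\preceq w^{\frakq'}_\frakq w\).)
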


\begin{proof}
  The injectivity is clear, because \(\sfi(N_w)\) is a linear
  combination of \(N_{w'}\) for \(w' \prec w^{\frakq'}_\frakq w\) and the
  coefficient of \(N_{w^{\frakq'}_\frakq w}\) is \(1\). To prove that \(\sfi\) is a homomorphism of \(\ucalH_n\)--modules, it is sufficient to consider the case \(W_{\frakq'}=\{e\}\). In fact, we have a commutative diagram of injective maps
  \begin{equation}
    \begin{tikzpicture}[baseline=(current bounding box.center)]
      \matrix (m) [matrix of math nodes, row sep=3em, column
      sep=2.5em, text height=1.5ex, text depth=0.25ex] {
        & \ucalM^\frakp & \\
        \ucalM^\frakp_\frakq &  & \ucalM^{\frakp}_{\frakq'}\\};
      \path[right hook->] (m-2-1) edge node[auto] {\( \sfi_{\frakq}\)} (m-1-2);
      \path[right hook->] (m-2-1) edge node[auto] {\( \sfi_{\frakq}^{\frakq'}\)} (m-2-3);
      \path[left hook->] (m-2-3) edge node[above right] {\( \sfi_{\frakq'}\)} (m-1-2);
    \end{tikzpicture} \label{eq:163}
  \end{equation}
  and if \(\sfi_\frakq\) and \(\sfi_{\frakq'}\) are both \(\ucalH_n\)--equivariant then so is \(\sfi_{\frakq}^{\frakq'}\).

  Hence let \(\sfi=\sfi_\frakq\) and let us show using
  \eqref{eq:ActionOfHOnM} that \(\sfi(N_w H_i) = \sfi(N_w) H_i\) for all
  \(i=1,\ldots,n-1\) and for each basis element \(N_w \in
  \ucalM^\frakp_\frakq\). Note first that \(W^{\frakp+\frakq} \subset
  W^{\frakp}\); moreover, if \( w s_i \in W^{\frakp+\frakq}\) then \(x w
  s_i \in W^{\frakp}\) for every \(x \in W_\frakq\), so that the first
  two cases of \eqref{eq:ActionOfHOnM} are clear. Suppose then that we
  are in the fourth case, that is \(w s_i =s_j w \) for some \(s_j \in
  W_\frakp\); then \(xws_i = x s_j w = s_j x w\) for every \(x \in
  W_\frakq\), because elements of \(W_\frakp\) commute with elements of
  \(W_\frakq\). In the third case of \eqref{eq:ActionOfHOnM} an explicit
  computation (see \cite{miophd2} for the details) shows that
  \(\sfi(N_w)H_i=\sfi(N_w H_i)\).

  It remains to show the bar invariance. Again, by the same argument as before it is sufficient to consider the case \(W_{\frakq'}=\{e\}\). It is enough to
  check it for a basis; in fact we will prove by induction that
  \(\sfi(\underline N_w)\) is bar invariant for every \(w \in
  W^{\frakp+\frakq}\). For \(w=e\), we have \(\sfi(\underline N_e) = \sfi(
  N_e) = \sum_{x \in W_\frakq} q^{\len(w_\frakq) -\len(x)} N_{x}\), that is
  well-known to be the canonical basis element for \(\ucalH_{\frakq}\)
  corresponding to the longest element of \(W_\frakq\): hence it is bar
  invariant. For the inductive step, suppose \( w s_i \succ w\) and use
  \eqref{eq:17}:
    \begin{equation}
      \label{eq:18}
      \begin{split}
        \overline{\sfi(\underline N_{ws_i })} & = \overline{\sfi(
          \underline N_w \underline H_i- p)} = \overline{\sfi(\underline N_w)\underline H_i -
          \sfi(p)} \\ &=  \sfi(\underline N_w)\underline H_i - \sfi(p) = \sfi(
        \underline N_w \underline H_i- p) = \sfi(\underline N_{ ws_i}).
      \end{split}
    \end{equation}

    The last claim follows by the uniqueness of the canonical basis
    elements, because \(\sfi(\underline N_w)\) is bar invariant and the
    coefficient of \(N_{w'}\) in its standard basis expression is
    \begin{itemize}
    \item \(1\) if \(w'=w^{\frakq'}_\frakq w\),
    \item a multiple of \(q\) if \(w' = xw''\) for some \(x \in W^{\frakq'} \cap W_{\frakq}\) and \(w'' \in W^\frakq\) with \(w'' \preceq w\) (but \(w' \neq w^{\frakq'}_\frakq w)\),
    \item \(0\) otherwise. \qedhere
    \end{itemize}
\end{proof}

Now we define a left inverse \(\sfQ: \ucalM^\frakp_{\frakq'} \rightarrow \ucalM^{\frakp}_\frakq\) of \(\sfi\) by setting
\begin{equation}
  \sfQ(N_e) = \frac{1}{c_{\frakq}^{\frakq'}} N_e, \qquad \text{where} \qquad  c_{\frakq}^{\frakq'} = \sum_{x \in W^{\frakq'} \cap W_{\frakq}} q^{\len(w^{\frakq'}_\frakq) - 2 \len(x)} .\label{eq:164}
\end{equation}
It is easy to show that \(\sfQ\) is indeed well-defined (since \(\ucalM^\frakp_\frakq\) is a quotient of \(\ucalM^\frakp_{\frakq'}\), and \(\sfQ\) is, up to a multiple, the quotient map).
Moreover
\begin{equation}
  \label{eq:21}
    \sfQ \circ \sfi(N_w) = \sfQ \bigg( \sum_{x \in W^{\frakq'} \cap
      W_\frakq}
    q^{\len(w^{\frakq'}_\frakq)-\len(x)} N_{xw} \bigg)
    = N_w
\end{equation}
for all basis elements \(N_w \in \ucalM^\frakp_\frakq\).

\subsubsection{Maps between Hecke modules II}
\label{sec:maps-between-hecke-1}

Now let us examine the case in which we change the subgroup \(W_\frakp\). Namely let \(W_{\frakp'} \subset W_{\frakp}\) be a parabolic subgroup of \(W\), and define a linear map \(\sfj = \sfj_{\frakp}^{\frakp'}:
\ucalM^\frakp_\frakq \rightarrow \ucalM^{\frakp'}_\frakq\) by
\begin{equation}
  \label{eq:19}
 \sfj: N_w \longmapsto \sum_{x \in W^{\frakp'} \cap W_\frakp} (-q)^{\len(x)} N_{xw}
\end{equation}

As for Lemma~\ref{lemma:FunctionBetweenM1} it is easy to prove that
\(\sfj\) is an injective homomorphism of \(\ucalH_n\)--modules, anyway it does
not commute with the bar involution and it does not send canonical
basis elements to canonical basis elements. Instead, \(\sfj\) sends the dual canonical basis (defined to be the basis that is dual to the canonical basis with respect to the bilinear form) to the dual canonical basis.

Define also a \(\ucalH_n\)--modules homomorphism \(\sfz: \ucalM^{\frakp'}_\frakq
\rightarrow \ucalM^\frakp_\frakq\) by setting \(\sfz(N_e)=N_e\). This gives a well-defined
homomorphism because \(\ucalM^\frakp_\frakq\) is a quotient of \(\ucalM^{\frakp'}_\frakq\) and it is cyclic.

\begin{lemma}
  The map \(\sfz\) is bar invariant and sends the canonical basis
  element \(\underline N_w \in \ucalM^{\frakp'}_\frakq\) to \(\underline
  N_w \in \ucalM^\frakp_\frakq\) if \(w \in W^{\frakp+\frakq}\) and to
  \(0\) otherwise. Moreover \(\sfz \circ \sfj = \sum_{x \in W^{\frakp'}
    \cap W_{\frakp}} q^{2 \len(x)} \id\).
\end{lemma}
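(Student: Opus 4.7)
The plan has three parts, one for each assertion. Bar invariance is essentially free: since the bar involutions on both modules fix $N_e$ and are $\ucalH_n$-semilinear (that is, $\overline{m \cdot h} = \overline{m} \cdot \overline{h}$), and since $\sfz$ is $\ucalH_n$-equivariant with $\sfz(N_e) = N_e$, the equality $\overline{\sfz(m)} = \sfz(\overline{m})$ follows by comparing both sides on the cyclic generator.

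For the image of the canonical basis, the first step is to establish the unique factorization $W^{\frakp'+\frakq} = (W^{\frakp'} \cap W_\frakp) \cdot W^{\frakp+\frakq}$ with additive lengths, which I would verify using that $W_\frakp$ and $W_\frakq$ commute (being generated by disjoint sets of simple reflections). Writing $w = y v$ accordingly and using that each $H_i$ with $s_i \in W_\frakp$ acts on $N_e \in \ucalM^\frakp_\frakq$ as $-q$, one computes
\[
\sfz(N_w) = N_e \cdot H_y H_v = (-q)^{\len(y)} N_v.
\]
Substituting $\underline N_w = \sum_{w' \preceq w} \mathcal{R}_{w',w}(q) N_{w'}$ (with the convention $\mathcal{R}_{w,w} = 1$) yields
\[
\sfz(\underline N_w) = \sum_{v \in W^{\frakp+\frakq}} C_{v,w}(q) \, N_v, \qquad C_{v,w}(q) = \sum_{y} (-q)^{\len(y)} \mathcal{R}_{yv,w}(q) \in \Z[q],
\]
where the inner sum ranges over $y \in W^{\frakp'} \cap W_\frakp$ with $yv \in W^{\frakp'+\frakq}$ and $yv \preceq w$.

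For $w \in W^{\frakp+\frakq}$, I would check that $C_{w,w} = 1$ (only $y=e$, $v=w$ contributes) and that $C_{v,w}(q) \in q\Z[q]$ for $v \prec w$: in each summand either $\len(y) \geq 1$ or $yv \prec w$, so either $(-q)^{\len(y)}$ or $\mathcal{R}_{yv,w}$ lies in $q\Z[q]$. The uniqueness part of Proposition~\ref{prop:18} then identifies $\sfz(\underline N_w) = \underline N_w$. For $w \notin W^{\frakp+\frakq}$, I would argue by contradiction: assume the set $\{v : C_{v,w}(q) \neq 0\}$ is nonempty and pick a Bruhat-maximal element $v^*$. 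Bar invariance of $\sfz(\underline N_w)$, combined with the triangularity $\overline{N_v} = N_v + \sum_{v' \prec v} R_{v',v}(q) N_{v'}$ of the bar involution on $\ucalM^\frakp_\frakq$, and the maximality of $v^*$, force $C_{v^*,w}(q) = C_{v^*,w}(q^{-1})$. Hence $C_{v^*,w} \in \Z[q] \cap \Z[q^{-1}] = \Z$ is an integer constant; but its value at $q=0$ vanishes, because any nonzero $q^0$-contribution would require $y=e$ together with $\mathcal{R}_{v^*,w}(0) \neq 0$, forcing $v^* = w \in W^{\frakp+\frakq}$, contrary to hypothesis. Thus $C_{v^*,w} = 0$, contradicting the choice of $v^*$, so $\sfz(\underline N_w) = 0$.

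Finally, the composition formula follows from a one-line computation: any $xw$ with $x \in W^{\frakp'} \cap W_\frakp$ and $w \in W^{\frakp+\frakq}$ is already the factorization from step two, so $\sfz(N_{xw}) = (-q)^{\len(x)} N_w$ and
\[
\sfz(\sfj(N_w)) = \sum_{x \in W^{\frakp'} \cap W_\frakp} (-q)^{\len(x)} \sfz(N_{xw}) = \bigg(\sum_{x \in W^{\frakp'} \cap W_\frakp} q^{2\len(x)}\bigg) N_w,
\]
and the identity on all of $\ucalM^\frakp_\frakq$ follows by $\ucalH_n$-equivariance. I expect the main obstacle to be the case $w \notin W^{\frakp+\frakq}$ above, where the combination of bar invariance, integrality, and a Bruhat-maximum argument is the technical core; the other assertions reduce to direct computations once the $yv$-decomposition is in hand.
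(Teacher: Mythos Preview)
Your proposal is correct and follows essentially the same approach as the paper: bar invariance from the cyclic generator, the image of canonical basis elements via the triangularity-plus-uniqueness argument (the paper simply asserts ``it is easily seen that $\sfz(\underline N_w) \in N_w + \sum_{w' \prec w} q \Z[q] N_{w'}$'' and handles the case $w \notin W^{\frakp+\frakq}$ ``by the same reasoning''), and the identical direct computation for $\sfz \circ \sfj$. Your explicit factorization $w = yv$ and the Bruhat-maximal contradiction argument for the vanishing case are exactly what underlies the paper's terse phrases, so you have filled in the details the paper leaves implicit rather than taken a different route.
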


\begin{proof}
  The map \(\sfz\) is bar invariant by definition: in fact obviously
  \(\sfz(N_e) = \overline{\sfz(N_e)}\), and then by multiplying with
  the \(C_i\)'s one can see that \(\sfz\) is bar invariant on a set of
  generators.

  If \(w \in W^{\frakp+\frakq}\) then it is easily seen that \(\sfz(\underline N_w) \in N_w + \sum_{w' \prec w} q \Z[q] N_{w'} \).
  By uniqueness of the canonical basis elements it has to be \(\sfz(\underline N_w) = \underline
  N_w \in \ucalM^\frakp_\frakq\). If \(w \notin W^{\frakp+\frakq}\) then by the same reasoning \(\sfz
  (\underline N_w) = 0\).

  Moreover
  \begin{equation}
      \sfz \circ \sfj(N_w)  = \sfz \bigg( \sum_{x \in W^{\frakp'} \cap W_\frakp} (-q)^{\len(x)} N_{xw} \bigg) = \sum_{x \in W^{\frakp'} \cap W_\frakp} q^{2\len( x)}  N_w,    \label{eq:20}
  \end{equation}
  hence the last assertion follows as well.
\end{proof}

\section{Representations of \texorpdfstring{$\Uqgle$}{Uq(gl(1|1))}}
\label{sec:representations-uqgl}

We recall the definition of the quantum enveloping algebra \(\Uqgle\) and of its braided structure. 
We introduce then a subcategory consisting of semisimple
representations of \(\Uqgle\), that contains the tensor powers of the
vector representation \(V\). We will study in detail the intertwining
operators using a super version of Schur-Weyl duality.

In the following, as usual, by a \emph{super} object (for
example vector space, algebra, Lie algebra, module) we
mean a \(\Z/2\Z\)--graded object. If \(X\) is such a super object
we will use the notation \(\abs{x}\) to indicate the degree of
a homogeneous element \(x \in X\).
Elements of degree \(0\) are
called \emph{even}, while elements of degree \(1\) are called
\emph{odd}.  We stress that whenever we write \(\abs{x}\) we
will always be assuming \(x\) to be homogeneous.

\subsection{The quantum enveloping superalgebra \texorpdfstring{$\Uqgle$}{Uq(gl(1|1))}}
\label{sec:quant-envel-super}

The quantum enveloping superalgebra \(\Uqgl=\Uqgle\)
is a \(q\)--deformed version of the universal enveloping algebra of
the Lie superalgebra \(\mathfrak{gl}(1|1)\).
Let \(\sfP=\Z\epsilon_1 \oplus \Z\epsilon_2\) be the
\emph{weight lattice} of \(\mathfrak{gl}(1|1)\) (see also \textsection\ref{sec:super-howe-duality-1}) and \(\sfP^*=\Z h_1 \oplus \Z h_2\) its dual with the
natural bilinear pairing \(\langle h_i, \epsilon_j \rangle=
\delta_{i,j}\), and set \(\alpha=\epsilon_1-\epsilon_2\) to be the {\em
  simple root} of \(\mathfrak{gl}(1|1)\). Then \(\Uqgl\) is defined to be
the unital superalgebra over \(\C (q)\) with generators \(E,F,\quantumq^h\,
(h \in \sfP^*)\)  in degrees \(\abs{\quantumq^h}=0\),
\(\abs{E}=\abs{F}=1\) subject to the relations
\begin{equation}
  \begin{aligned}
    \quantumq^0&=1, & \quantumq^h \quantumq^{h'} &= \quantumq^{h+h'}, &&\text{for }h,h' \in \sfP^*,\\
    \quantumq^h E&=q^{\langle h, \alpha \rangle} E \quantumq^h, & \quantumq^h F&=q^{-\langle h,\alpha \rangle}F\quantumq^h, && \text{for }h \in \sfP^*,\\
    E^2&=F^2=0, &EF+FE&= \frac{K-K^{-1}}{q-q^{-1}}, &&\text{where
    }K=\quantumq^{h_1+h_2}.
  \end{aligned}\label{reps:eq:1}
\end{equation}

The superalgebra \(\Uqgl\) is made into a Hopf superalgebra
via
the comultiplication \(\Delta\), counit \(\counit\) and
antipode \(S\)
  defined by
\begin{equation}
\begin{aligned}
  \Delta(E)&= E \otimes K^{-1}+1 \otimes E, & \Delta(F)&=F \otimes 1 + K \otimes F,\\
  S(E)&=-EK, & S(F)&=- K^{-1}F,\\
  \Delta(\quantumq^h)&=\quantumq^h \otimes \quantumq^h, &   S(\quantumq^h)&=\quantumq^{-h},\\
  \counit(E)& =\counit(F)=0, & \counit(\quantumq^h)&=1.
\end{aligned}\label{reps:eq:2}
\end{equation}
Moreover, it possesses a \emph{bar involution} defined on the generators by
\begin{equation}
  \label{reps:eq:3}
  \overline E=E, \qquad \overline F=F, \qquad \overline { \quantumq^h}  = \quantumq^{-h},
  \qquad \overline q=q^{-1}.
\end{equation}

\subsection{Representations of \texorpdfstring{$\Uqgl$}{Uq(gl(1|1))}}\label{reps:sec:representations}

It is easy to classify all simple weight representations of
\(\Uqgl\); indeed, they are all finite-dimensional (since
\(E^2=F^2=0\)). Anyway, the category of
\(\Uqgl\)--representations is not semisimple. We refer to
\cite{2013arXiv1308.2047S} for more details in general, and
we restrict in this paper to a semisimple subcategory.

For any integer \(a\) let \(V(a)\) denote the simple
\(\Uqgl\)--representation with highest weight \(a \epsilon_1\). We fix the
grading on \(V(a)\) by letting the highest weight space be in degree 0. 
Then \(V(0)\) is the \(1\)--dimensional trivial representation, and for \(a>0\), the module \(V(a)\) is a \(2\)--dimensional vector space with basis vectors \(v^a_0\) in degree \(0\) and \(v^a_1\) in degree \(1\); the action of \(\Uqgl\) is given by
\begin{equation}
\begin{aligned}
  Ev^a_0&=0, &  Fv^a_0&= v^a_1, & \quantumq^h v^a_0
  &=q^{\langle h,a \epsilon_1\rangle} v^a_0, &  Kv^a_0 & = q^{a} v^a_0,\\
  Ev^a_1&= [a]v^a_0, & Fv^a_1&=0,      & \quantumq^h v^a_1 &=q^{\langle
    h, a \epsilon_1 - \alpha \rangle} v^a_1, &  Kv^a_1 & = q^{a} v^a_1.
\end{aligned}\label{eq:36}
\end{equation}
where, as usual, \([k]\) is the quantum number defined by
\begin{equation}\label{eq:173} [k]=\frac{q^k-q^{-k}}{q-q^{-1}}. 
\end{equation}
In particular, \(V=V(1)\) is the \emph{vector representation}. 

For a sequence
\(\bolda=(a_1,\ldots,a_\ell)\) of nonnegative integers let us denote
\begin{equation}
V(\bolda)=V(a_1) \otimes \cdots \otimes V(a_\ell). \label{eq:141}
\end{equation}
Let \(\catRep\) be
the monoidal subcategory of the category of \(\Uqgl\)--representations
generated by the \(V(a)\)'s for \(a \in \N\): the objects of \(\catRep\) are
exactly \(\{V(\bolda) \suchthat \bolda \in \bigcup_{\ell \geq 0}
\N^\ell\}\). Note that this category is not abelian (it is not even
additive). Anyway, by adding all direct sums and kernels we would get
a monoidal abelian semisimple category

Since \(V(0)\) is the trivial one-dimensional representation and hence the unit of the monoidal structure, it is enough to consider sequences \(\bolda\) not containing \(0\); so, from now on, we will always suppose that our sequences consist of strictly positive integer numbers. If \(a_1+\dots+a_\ell=n\), we will often call the sequence \(\bolda\) a \emph{composition} of \(n\).
 The sequence
\begin{equation}
\compn=(\underbrace{1,\ldots,1}_{n})\label{eq:167}
\end{equation}
will be called the \emph{regular composition} of \(n\). All other
compositions of \(n\) will be called \emph{singular}. Notice that \(V(\compn)=V(1)^{\otimes n}=V^{\otimes n}\) is a tensor power of the vector representation.

\subsubsection{Projections and embeddings}
\label{sec:proj-embedd}
Let \(a,b\) be positive integers. An easy computation (see \cite[Lemma 3.3]{2013arXiv1308.2047S}) shows that \(V(a+b)\) appears as
 a direct summand of \(V(a) \otimes V(b)\) with multiplicity one.
Let us define a projection \(\Phi_{a,b}: V(a) \otimes V(b) \mapto V(a+b)\) by
\begin{equation}
  \label{eq:37}
  \begin{aligned}
    \Phi_{a,b}: v^a_1 \otimes v^b_1 & \longmapsto 0,  &
    \Phi_{a,b}: v^a_1 \otimes v^b_0 & \longmapsto q^{-b} \qbin{a+b-1}{b}  v^{a+b}_1, \\
    \Phi_{a,b}: v^a_0 \otimes v^b_1  & \longmapsto \qbin{a+b-1}{a} v^{a+b}_1, &
    \Phi_{a,b}: v^a_0 \otimes v^b_0 & \longmapsto \qbin{a+b}{a} v^{a+b}_0
  \end{aligned}
\end{equation}
and an inclusion \(\Phi^{a,b}: V(a+b) \mapto V(a) \otimes V(b)\) by
\begin{equation}
  \label{eq:38}
    \Phi^{a,b}:v^{a+b}_1  \longmapsto v^a_1 \otimes v^b_0 + q^{a} v^a_0 \otimes v^b_1, \qquad\qquad
    \Phi^{a,b}:v^{a+b}_0  \longmapsto v^a_0 \otimes v^b_0,
\end{equation}
where as usual we set
\begin{align}
  \label{eq:136}
  [k]! & = [k][k-1] \cdots [1] &&\text{for all } k\geq 1,\\
  \qbin{n}{k} & = \frac{[n]!}{[k]![n-k]!} &&\text{for all } n\geq 1,\, 1 \leq k \leq n.
\end{align}
One can check that the two maps \(\Phi_{a,b}\) and \(\Phi^{a,b}\) are indeed \(\Uqgl\)--equivariant and
\begin{equation}
  \label{eq:83}
  \Phi_{a,b} \Phi^{a,b} = \qbin{a+b}{a} \text{id}.
\end{equation}
One can also check that \(\Phi_{a,b}\) and \(\Phi^{a,b}\) commute with the bar involution (see \textsection{}\ref{reps:sec:luszt-bar-invol} below).

\subsection{Standard and canonical basis}
\label{reps:sec:luszt-bar-invol}

We briefly recall from \cite{MR2759715} some facts about the bar involution and based
modules. For a more detailed introduction see also
\cite[\textsection1.5]{MR1446615}.

\begin{definition}
  A \emph{bar involution} on a \(\Uqgl\)--module \(W\) is an involution
  \(\overline{\phantom x}\) such that \(\overline{x v}= \overline x \cdot
  \overline v \) for all \(x \in \Uqgl\), \(v \in W\).
\end{definition}

Note that \(\overline{v^a_0}=v^a_0\), \(\overline{v^a_1}=v^a_1\)  define a bar involution on \(V(a)\).

Assume we have bar involutions on \(\Uqgl\)--modules \(W,W'\). Let
 \( \Theta'  = 1 + (q^{-1} - q) E \otimes F \in \Uqgl \otimes \Uqgl\)
 and define on \(W \otimes W'\)
\begin{equation}
  \label{reps:eq:30}
  \overline{w \otimes w'} = \Theta'( \overline w \otimes \overline{w'}).
\end{equation}
Since the element \(\Theta'\) satisfies \(  \Theta' \overline \Delta(x) = \Delta(x) \Theta'\) for all \(x \in \Uqgl\), 
it follows 
that \eqref{reps:eq:30} defines a bar involution on \(W \otimes W'\). Moreover, the
identity \( (\Delta \otimes 1) ( \Theta') \Theta'_{12} = (1 \otimes
\Delta) (\Theta') \Theta'_{23}\) 
allows us to repeat
the construction for bigger tensor products, and the result is
independent of the bracketing.

Let \(a \in \Z_{>0}\). We call \(\mathbb{B}_a=\{v^a_0,v^a_1\}\) the
\emph{standard basis} of \(V(a)\). Let moreover
\(\bolda=(a_1,\ldots,a_\ell)\) be a sequence of (strictly) positive
numbers. For any sequence \(\boldeta=(\eta_1,\ldots,\eta_\ell) \in
\{0,1\}^\ell\) we let \(v^\bolda_{\boldeta} = v^{a_1}_{\eta_1} \otimes
\cdots \otimes v^{a_\ell}_{\eta_\ell}\).  The elements of
\begin{equation}
\bbB_{\bolda} = \mathbb{B}_{a_1} \otimes \cdots \otimes \mathbb{B}_{a_\ell}=\{v^\bolda_\boldeta \suchthat \boldeta \in \{0,1\}^\ell\}\label{reps:eq:31}
\end{equation}
are called
the \emph{standard basis vectors} of \(V(\bolda)\).

On the elements of \eqref{reps:eq:31} we fix a partial ordering
induced from the Bruhat ordering on permutations, as follows. The
symmetric group \(\bbS_\ell\) acts from the right on the set of
sequences \(\{0,1\}^\ell\), hence on \(\bbB_\bolda\). The weight space of
\(V(a_1) \otimes \cdots \otimes V(a_\ell)\) of weight
\((a_1 +\cdots+a_\ell )\epsilon_1 - (\ell- k) \alpha\) is spanned by the
subset \((\bbB_\bolda)_k\) of the standard basis \eqref{reps:eq:31}
consisting of elements such that \(\sum_i \eta_i= k\). The action of
\(\bbS_\ell\) on each subset \((\bbB_\bolda)_k\) is transitive;
mapping the identity \(e \in \bbS_\ell\) to the \emph{minimal element}
\begin{equation}\label{eq:168}
\underbrace{v^{a_1}_0 \otimes \cdots \otimes v^{a_k}_0}_k
\otimes \underbrace{v^{a_{k+1}}_1 \otimes \cdots \otimes v^{a_\ell}_1}_{\ell-k}
\end{equation}
determines a bijection 
\begin{equation}
((\bbS_k \times \bbS_{\ell-k})\backslash \bbS_\ell)^{\short}  \xleftrightarrow{\,1 - 1\,} (\bbB_\bolda)_k,\label{eq:206}
\end{equation}
where \(((\bbS_k \times \bbS_{\ell-k})\backslash \bbS_\ell)^{\short}\)
is the set of shortest coset representatives for \((\bbS_k \times
\bbS_{\ell-k})\backslash \bbS_\ell\).  The Bruhat order of the latter
induces a partial order on \((\bbB_\bolda)_k\) and hence on
\(\bbB_\bolda\).  Notice that the minimal element \eqref{eq:168} is
bar invariant.

We have then the following analogue of \cite[Theorem~27.3.2]{MR2759715}:
\begin{theorem}
  \label{reps:thm:2}
  In \(V(\bolda)\) for each
  standard basis element \(v^\bolda_\boldeta\)
  there is a unique bar-invariant element
  \begin{equation}
v^{\canon \bolda}_\boldeta=v^{a_1}_{\eta_1}
  \canon \cdots \canon v^{a_\ell}_{\eta_\ell}\label{eq:169}
  \end{equation}
 such that
  \(v^{\canon \bolda}_\boldeta- v^{\bolda}_\boldeta\)
 is a
  \(q\Z[q]\)--linear combination of elements of the standard basis that
  are smaller than \(v^\bolda_\boldeta\).
\end{theorem}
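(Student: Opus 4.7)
The strategy is the standard one due to Lusztig: verify that the bar involution acts on standard basis vectors by an upper-unitriangular matrix with respect to the Bruhat order, then invoke the Lusztig lemma to extract a unique bar-invariant basis with coefficients in $q\Z[q]$ off the diagonal.

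The first step is the base case $\ell=2$. Because $E^2=F^2=0$, the quasi-$R$-matrix collapses to the single term $\Theta' = 1 + (q^{-1}-q) E\otimes F$, and since each $V(a)$ has the simple bar involution fixing $v^a_0,v^a_1$, a direct computation using \eqref{eq:36} gives
\begin{equation*}
\overline{v^a_i \otimes v^b_j} = v^a_i \otimes v^b_j + (q^{-1}-q)\, E v^a_i \otimes F v^b_j,
\end{equation*}
which vanishes as a correction unless $(i,j)=(1,0)$. In that case only the single smaller element $v^a_0\otimes v^b_1$ appears as a correction, with coefficient $(q^{-1}-q)[a]\in \Z[q,q^{-1}]$. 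This establishes unitriangularity with respect to the Bruhat order in the binary case.

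For general $\ell$, I would use the coassociativity identity $(\Delta\otimes 1)(\Theta')\Theta'_{12}=(1\otimes\Delta)(\Theta')\Theta'_{23}$ recalled in the paper, which guarantees that the iterated bar involution on $V(\bolda)$ may be built up by nested application of the binary case. Proceeding by induction on $\ell$ and writing $V(\bolda) = V(a_1)\otimes V(a_2,\ldots,a_\ell)$, each factor $\Theta'_{ij}$ acts as identity plus a term that strictly decreases the $\eta_i$ coordinate (turning $1\mapsto 0$) while strictly increasing the $\eta_j$ coordinate (turning $0\mapsto 1$); such a move is precisely a Bruhat-decreasing elementary transposition under the bijection \eqref{eq:206}. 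Hence $\overline{v^\bolda_\boldeta} - v^\bolda_\boldeta$ is a $\Z[q,q^{-1}]$-linear combination of strictly smaller standard basis vectors, so the matrix of the bar involution in the standard basis is upper unitriangular in each weight space.

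Given upper unitriangularity plus $\overline{\,\overline{\phantom v}\,}=\id$, the Lusztig lemma (see \cite[\textsection1.2]{MR1446615} or the proof of \cite[Theorem~27.3.2]{MR2759715}) produces, for each $\boldeta$, a unique bar-invariant element $v^{\canon\bolda}_\boldeta$ of the form $v^\bolda_\boldeta + \sum_{\boldeta'<\boldeta} p_{\boldeta',\boldeta}(q)\, v^\bolda_{\boldeta'}$ with $p_{\boldeta',\boldeta}\in q\Z[q]$. The main obstacle is the inductive bookkeeping in the last step above: one must check that the chains of elementary transpositions induced by iterated application of $\Theta'$ really refine the Bruhat order on $((\bbS_k\times\bbS_{\ell-k})\backslash\bbS_\ell)^{\short}$, rather than produce extraneous comparisons. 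Once this is verified, the existence and uniqueness in Theorem~\ref{reps:thm:2} follow formally.
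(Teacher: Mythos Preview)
Your approach is exactly the one the paper invokes: the paper's proof consists of the single sentence ``completely analogous to \cite[Theorem~27.3.2]{MR2759715}'', and you have correctly unpacked what that means---upper unitriangularity of the bar involution in the standard basis, followed by the standard Kazhdan--Lusztig/Lusztig lemma. One small slip: in your $\ell=2$ computation you omitted the Koszul sign coming from the super tensor product (since $|F|=|v^a_1|=1$), so the coefficient of $v^a_0\otimes v^b_1$ is $(q-q^{-1})[a]$ rather than $(q^{-1}-q)[a]$ (compare \eqref{eq:150}); this does not affect the unitriangularity argument.
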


\begin{proof}
  The proof is completely analogous to \cite[Theorem~27.3.2]{MR2759715}.
\end{proof}

\begin{definition}\label{def:4}
  The elements \eqref{eq:169} constitute the \emph{canonical basis} of \(V(\bolda)\).
\end{definition}

\begin{example}
  \label{reps:ex:1}
  On the two-dimensional weight space of \(V \otimes V\), the bar
  involution is given by
  \begin{equation}\label{eq:150}
      \overline{v_0^1 \otimes v_1^1} = v_0^1 \otimes v_1^1, \qquad\qquad
      \overline{v_1^1 \otimes v_0^1} = v_1^1 \otimes v_0^1 + (q - q^{-1}) v_0^1 \otimes v_1^1.
  \end{equation}
  The canonical basis is then
  \begin{equation}
      v_0^1 \canon v_1^1 = v_0^1 \otimes v_1^1, \qquad\qquad
      v_1^1 \canon v_0^1 = v_1^1 \otimes v_0^1 + q  v_0^1 \otimes v_1^1.
  \end{equation}
\end{example}

\subsubsection{The bilinear form}
\label{sec:bilinear-form}
Fix a sequence of positive integers \(\bolda=(a_1,\ldots,a_\ell)\).
We define a symmetric bilinear form on \(V(\bolda)\) by setting
\begin{equation}
  \label{eq:53}
  (v^\bolda_\boldeta,v^\bolda_\boldgamma)_\bolda = q^{\sum_{i \neq j} \beta^\boldeta_i \beta^\boldeta_j} \qbin{\beta^\boldeta_1 + \cdots + \beta^\boldeta_\ell}{ \beta^\boldeta_1, \ldots, \beta^\boldeta_\ell} \delta_{\eta_1}^{\gamma_1}\cdots \delta_{\eta_\ell}^{\gamma_\ell}
\end{equation}
where \(\delta_i^j\) is the Kronecker delta,
\begin{equation}
  \label{eq:55}
  \beta^\boldeta_j = a_j - \eta_j =
  \begin{cases}
    a_j - 1 & \text{if } \eta_j=1,\\
    a_j & \text{otherwise}
  \end{cases}
\end{equation}
and
\begin{equation}
  \qbin{k_1+\cdots+k_\ell}{k_1,\ldots,k_\ell} = \frac{[k_1+\cdots+k_\ell]!}{[k_1]! \cdots [k_\ell]!}.\label{eq:5}
\end{equation}
Note that \(q^{\sum_{i \neq j} \beta^\boldeta_i \beta^\boldeta_j}\) in \eqref{eq:53} is exactly the factor needed so that the value of \eqref{eq:53} is a polynomial in \(q\) with constant term \(1\). We introduce the following non-standard notation:
\begin{equation}
  \label{eq:34}
  \begin{aligned}
    {[k]}_0 &= q^{k-1} [k],  & \qbin{a+b}{a}_0&= q^{ab} \qbin{a+b}{a},\\
    {[k]}_0! &= q^{\frac{k(k-1)}{2}} [k]! &
    \qbin{k_1+\cdots+k_\ell}{k_1,\ldots,k_\ell}_0& =
    q^{\sum_{i \neq j} k_i k_j}
    \qbin{k_1+\cdots+k_\ell}{k_1,\ldots,k_\ell}.
  \end{aligned}
\end{equation}
These are rescaled versions of the quantum numbers and
factorials and of the quantum binomial and multinomial
coefficients so that they are actual polynomials in \(q\)
with constant term \(1\).
Hence we can rewrite \eqref{eq:53} as
\begin{equation}
  \label{eq:99}
  (v^\bolda_\boldeta,v^\bolda_\boldgamma)_\bolda = \qbin{\beta^\boldeta_1 + \cdots + \beta^\boldeta_\ell}{ \beta^\boldeta_1, \ldots, \beta^\boldeta_\ell}_0 \delta_{\eta_1}^{\gamma_1}\cdots \delta_{\eta_\ell}^{\gamma_\ell} .
\end{equation}
Notice that we have
\begin{equation}\label{eq:98}
    {[k]}_0 ! = {[k]}_0 {[k-1]}_0 \cdots {[2]}_0 \quad \text{and} \quad
    \qbin{k_1+\cdots+k_\ell}{k_1,\ldots,k_\ell}_0 =
    \frac{{[k_1+\cdots+k_\ell]}_0!}{{[k_1]}_0! \cdots {[k_\ell]}_0!}.
\end{equation}

\begin{lemma}
   \label{lem:7}
     For all \(v \in V(a) \otimes V(b)\) and \(v' \in V(a+b)\) we have
   \begin{equation}
     \label{eq:56}
     (\Phi_{a,b} (v),v')_{(a+b)} = (v, q^{-ab} \Phi^{a,b}(v'))_{(a,b)}.
   \end{equation}
\end{lemma}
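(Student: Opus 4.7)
The plan is to reduce the statement by bilinearity to a finite case check. Both sides of \eqref{eq:56} are \(\C(q)\)--bilinear in \((v,v')\), so it is enough to verify the identity on the standard basis vectors \(v = v^a_\eta \otimes v^b_\gamma\) with \(\eta,\gamma \in \{0,1\}\) and \(v' \in \{v^{a+b}_0, v^{a+b}_1\}\), yielding eight cases. The bilinear forms \((\cdot,\cdot)_{(a+b)}\) and \((\cdot,\cdot)_{(a,b)}\) vanish on pairs supported in different weight spaces, so the cases where \(v\) and \(v'\) lie in different weight spaces (relative to the identifications induced by \(\Phi_{a,b}\) and \(\Phi^{a,b}\)) collapse to \(0=0\), leaving only four substantive identities.

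The first step is to read off the values of the forms on diagonal pairs from \eqref{eq:99}. For \(V(a+b)\), both \(v^{a+b}_0\) and \(v^{a+b}_1\) are self-paired to \(1\). For \(V(a)\otimes V(b)\), the defining formula gives \((v^a_\eta \otimes v^b_\gamma, v^a_\eta \otimes v^b_\gamma)_{(a,b)} = \qbin{\beta^\boldeta_1+\beta^\boldeta_2}{\beta^\boldeta_1,\beta^\boldeta_2}_{0}\), which unpacks to \(q^{ab}\qbin{a+b}{a}\), \(q^{(a-1)(b-1)}\qbin{a+b-2}{a-1}\), \(q^{a(b-1)}\qbin{a+b-1}{a}\), and \(q^{(a-1)b}\qbin{a+b-1}{b}\) for the four choices of \((\eta,\gamma)\).

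Next I would substitute the formulas \eqref{eq:37} for \(\Phi_{a,b}\) and \eqref{eq:38} for \(\Phi^{a,b}\) into the two sides. The nontrivial cases correspond to \((v,v')\) chosen such that both sides have a chance to be nonzero; in each one the verification reduces to a single identity among powers of \(q\) times an ordinary quantum binomial, e.g.\ for \(v=v^a_1 \otimes v^b_0\), \(v'=v^{a+b}_1\) one gets
\begin{equation*}
q^{-b}\qbin{a+b-1}{b} \;=\; q^{-ab}\cdot q^{(a-1)b}\qbin{a+b-1}{b},
\end{equation*}
and for \(v=v^a_0\otimes v^b_1\), \(v'=v^{a+b}_1\), the \(q^a\) coefficient of \(\Phi^{a,b}\) combines with the rescaling \(q^{a(b-1)}\) of the form to match \(\qbin{a+b-1}{a}\) on the nose.

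The main obstacle, though entirely routine, is bookkeeping of the various \(q\)--powers coming from the rescaling conventions in \eqref{eq:34}: one must keep track simultaneously of the prefactors in \eqref{eq:37}, \eqref{eq:38}, and the factor \(q^{\sum_{i\neq j}\beta_i\beta_j}\) built into \eqref{eq:99}. The factor \(q^{-ab}\) on the right-hand side of \eqref{eq:56} is precisely what compensates the difference between the rescaling \(q^{ab}\) attached to the length-two composition \((a,b)\) and the trivial rescaling attached to the length-one composition \((a+b)\), which is the conceptual reason behind the lemma.
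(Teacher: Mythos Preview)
Your proposal is correct and is exactly the approach the paper takes: the paper's proof reads in full ``This is a straightforward calculation on the basis vectors, which we omit.'' You have simply carried out that omitted calculation, with correct bookkeeping of the $q$--powers and quantum binomials.
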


\begin{proof}
  This is a straightforward calculation on the basis vectors, which we omit.
\end{proof}

\begin{lemma}
  \label{lem:4}
  For all standard basis vectors \(v_\boldeta^\bolda,v_\boldgamma^\bolda \in V(\bolda)\) we have
  \begin{equation}
    \label{eq:75}
    (Fv_\boldeta^\bolda,v_\boldgamma^\bolda)_{\bolda} =\frac{q^{a_1+\cdots+a_\ell-1}}{[\beta^\boldeta_1+\cdots+\beta^\boldeta_\ell]_0} (v_\boldeta^\bolda,Ev_\boldgamma^\bolda)_{\bolda}.
  \end{equation}
\end{lemma}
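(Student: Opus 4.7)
The plan is to reduce both sides of \eqref{eq:75} to explicit scalar computations on single standard basis vectors, exploiting the fact that the bilinear form $(-,-)_\bolda$ is diagonal on $\bbB_\bolda$. Iterating $\Delta(F) = F \otimes 1 + K \otimes F$ gives
\[
  \Delta^{(\ell)}(F) = \sum_{j=1}^{\ell} K^{\otimes (j-1)} \otimes F \otimes 1^{\otimes (\ell-j)},
\]
and analogously $\Delta^{(\ell)}(E) = \sum_{j=1}^{\ell} 1^{\otimes (j-1)} \otimes E \otimes (K^{-1})^{\otimes(\ell-j)}$. Since $F v_0^{a_j} = v_1^{a_j}$, $F v_1^{a_j} = 0$, $E v_1^{a_j} = [a_j] v_0^{a_j}$, $E v_0^{a_j} = 0$, and $K v_\eta^a = q^a v_\eta^a$, each expansion becomes a sum over $j$ of standard basis vectors in which exactly one coordinate of $\boldeta$ (resp.\ $\boldgamma$) has been flipped from $0$ to $1$ (resp.\ from $1$ to $0$).

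By diagonality of $(-,-)_\bolda$, both sides of \eqref{eq:75} vanish unless $\boldgamma$ is obtained from $\boldeta$ by flipping a single index $j$ from $\eta_j = 0$ to $\gamma_j = 1$. Fix such a $j$. The Koszul signs obtained by commuting the odd operators $F$ and $E$ past the first $j-1$ tensor factors agree (both yield $(-1)^{\eta_1+\cdots+\eta_{j-1}}$) and therefore cancel. A direct reading of the two surviving summands gives
\[
  (Fv_\boldeta^\bolda, v_\boldgamma^\bolda)_\bolda = q^{a_1 + \cdots + a_{j-1}}\, (v_\boldgamma^\bolda, v_\boldgamma^\bolda)_\bolda, \qquad (v_\boldeta^\bolda, Ev_\boldgamma^\bolda)_\bolda = [a_j]\, q^{-a_{j+1} - \cdots - a_\ell}\, (v_\boldeta^\bolda, v_\boldeta^\bolda)_\bolda.
\]

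Writing $N = \beta^\boldgamma_1+\cdots+\beta^\boldgamma_\ell$ so that $\beta^\boldeta_1+\cdots+\beta^\boldeta_\ell = N+1$, and observing that $\beta^\boldeta_i = \beta^\boldgamma_i$ for $i\neq j$ while $\beta^\boldeta_j = a_j$ and $\beta^\boldgamma_j = a_j-1$, the elementary identity
\[
  \qbin{N+1}{\beta^\boldeta_1,\ldots,\beta^\boldeta_\ell}_0 = \frac{[N+1]_0}{[a_j]_0}\, \qbin{N}{\beta^\boldgamma_1,\ldots,\beta^\boldgamma_\ell}_0
\]
relates the two diagonal entries. Substituting this together with the definitional equality $[a_j]_0 = q^{a_j-1}[a_j]$ into the two formulas above collapses \eqref{eq:75} into the trivial identity $[a_j]_0/[a_j]_0 = 1$ after cancelling the common factor $q^{a_1+\cdots+a_{j-1}}$. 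The only bookkeeping difficulty is tracking the super-signs arising from the Koszul rule applied to the iterated comultiplications of the odd generators $F$ and $E$; but because the positions $1,\ldots,j-1$ carry the same parities on both sides, those signs cancel, and no further sign sources appear.
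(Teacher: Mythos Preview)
Your proof is correct and follows essentially the same route as the paper: reduce to the case where \(\boldeta\) and \(\boldgamma\) differ at a single index, compute both pairings explicitly using the iterated comultiplication and the diagonal form \eqref{eq:99}, and then compare via the multinomial identity (the paper takes the ratio and simplifies directly, you substitute the identity and cancel, but this is the same computation). Incidentally, your orientation \(\eta_j=0,\ \gamma_j=1\) is the correct one; the paper's opening sentence ``$\eta_r=1$, $\gamma_r=0$'' is a typo, as is clear from its subsequent use of \(\beta^\boldeta_r=a_r\).
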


\begin{proof}
  Suppose that there exists an index \(r\) such that \(\eta_i=\gamma_i\) for all \(i \neq r\) and \(\eta_r=1\), \(\gamma_r=0\) (otherwise both sides of \eqref{eq:75} are zero). Up to a sign (that we ignore, because it is the same in both formulas), we have
  \begin{equation*}
    (Fv_\boldeta^\bolda,v_\boldgamma^\bolda)_\bolda = (q^{a_1 + \cdots + a_{r-1}} v_\boldgamma^\bolda,v_\boldgamma^\bolda)_\bolda =  q^{a_1 + \cdots + a_{r-1}} \qbin{\beta^\boldgamma_1 + \cdots + \beta^\boldgamma_\ell}{ \beta^\boldgamma_1, \ldots, \beta^\boldgamma_\ell}_0
  \end{equation*}
  and
  \begin{equation*}
    (v_\boldeta^\bolda,Ev_\boldgamma^\bolda)_\bolda = ( v_\boldeta^\bolda,[a_r] q^{-a_{r+1}-\cdots -a_\ell} v_\boldeta^\bolda)_\bolda
    = [a_r] q^{-a_{r+1}-\cdots -a_\ell} \qbin{\beta^\boldeta_1 + \cdots + \beta^\boldeta_\ell}{ \beta^\boldeta_1, \ldots, \beta^\boldeta_\ell}_0.
  \end{equation*}
Since \(\beta^\boldeta_i=\beta^\boldgamma_i\) for \(i\neq r\) while \(\beta^\boldeta_r=\beta^\boldgamma_r+1=a_r\), we have
  \begin{equation}
    \label{eq:78}
    \begin{aligned}
      \frac {(v_\boldeta^\bolda,Ev_\boldgamma^\bolda)_\bolda}{(Fv_\boldeta^\bolda,v_\boldgamma^\bolda)_\bolda} & = [a_r]\frac{[\beta^\boldeta_1 + \cdots +
        \beta^\boldeta_\ell]}{[\beta^\boldeta_r]} q^{\beta^\boldeta_1+ \cdots +\hat \beta^\boldeta_r +
        \cdots + \beta^\boldeta_\ell} q^{-a_1 - \cdots- \hat a_r - \cdots - a_\ell}\\
      & = [\beta^\boldeta_1 + \cdots +
        \beta^\boldeta_\ell]_0 \, q^{1-a_1 - \cdots- a_\ell},
    \end{aligned}
  \end{equation}
  that proves the claim.
\end{proof}

\begin{remark}
  \label{rem:3}
  If we enlarge \(\Uqgl\) with a new generator \(E'\) such that
  \begin{equation}
    \label{eq:79}
    E = q  \frac{q^{2h_1} - 1}{q^2-1}E'  K^{-1}
  \end{equation}
  then we get an adjunction between \(F\) and \(E'\).
\end{remark}

\subsubsection{Dual standard and dual canonical basis}
\label{sec:dual-standard-dual}

We define the \emph{dual standard basis} \(\{v^{\dualstan\bolda}_\boldeta  \suchthat \eta \in \{0,1\}^\ell\}\) of \(V(\bolda)\) to be the basis dual to the standard basis with respect to the bilinear form \((\cdot,\cdot)_\bolda\):
\begin{equation}
  \label{eq:153}
  (v^\bolda_\boldeta,v^{\dualstan\bolda}_\boldgamma)_\bolda =
  \begin{cases}
    1 & \text{if } \boldeta=\boldgamma,\\
    0 & \text{otherwise}.
  \end{cases}
\end{equation}
Of course, since the standard basis is already orthogonal, each \(v^{\dualstan\bolda}_\boldeta\) is a multiple of \(v^\bolda_\boldeta\). In particular, one has
\begin{equation}
  \label{eq:155} \qbin{\beta^\boldeta_1+\cdots+\beta^\boldeta_\ell}{\beta^\boldeta_1,\ldots,\beta^\boldeta_\ell}_0 v^{\dualstan\bolda}_\boldeta =  v^\bolda_\boldeta.
\end{equation}

Moreover, we define the \emph{dual canonical basis} \(\{v^{\dualcanon\bolda}_\boldeta  \suchthat \eta \in \{0,1\}^\ell\}\) of \(V(\bolda)\) to be the basis dual to the canonical basis with respect to the bilinear form \((\cdot,\cdot)_\bolda\):
\begin{equation}
  \label{eq:154}
  (v^{\canon\bolda}_\boldeta,v^{\dualcanon\bolda}_\boldgamma)_\bolda =
  \begin{cases}
    1 & \text{if } \boldeta=\boldgamma,\\
    0 & \text{otherwise}.
  \end{cases}
\end{equation}

\subsection{Super Schur-Weyl duality for \texorpdfstring{$V^{\otimes n}$}{tensor powers of V}}
\label{sec:super-schur-weyl}
We now study in more detail the tensor powers of the vector representation. Let us define a linear endomorphism \(\check H\) of \(V \otimes V\) by
\begin{equation}
\begin{aligned}
  \check H (v^1_1 \otimes v^1_1) & = -q v^1_1 \otimes v^1_1, & \check H (v^1_1 \otimes v^1_0) &= v^1_0 \otimes v^1_1 +(q^{-1} -q) v^1_1 \otimes v^1_0, \\
  \check H (v^1_0 \otimes v^1_1) &= v^1_1 \otimes v^1_0, & \check H(v^1_0 \otimes v^1_0) &= q^{-1} v^1_0 \otimes
  v^1_0.
\end{aligned}\label{reps:eq:19}
\end{equation}
By an explicit computation it can be checked that \(\check H\) can be expressed in terms of a projection \eqref{eq:37} and an embedding \eqref{eq:38}:
\begin{equation}
  \label{eq:84}
  \Phi^{1,1}\Phi_{1,1} = \check H + q.
\end{equation}
It follows in particular that \(\check H\) is \(\Uqgl\)--equivariant.

We can consider on \(V^{\otimes n}\) the operators
\begin{equation}
\check H_{i} = \id^{\otimes i-1} \otimes \check H \otimes \id^{\otimes  n-i-1}.\label{eq:158}
\end{equation}
By definition, they are intertwiners for the action of \(\Uqgl\).
One can easily check that
\(\check H_i^2 = (q^{-1}-q)\check H_i + \id\).
  The category of \(\Uqgl\)--representation is braided (see \cite{2013arXiv1308.2047S}), and the endomorphism \(\check H\) is just the inverse of the braiding. From this it follows directly that \(\check H\) is equivariant and that the braid relation \(\check H_i \check H_{i+1} \check H_i = \check H_{i+1} \check H_i \check H_{i+1}\) holds for all \(i=1,\ldots,n-1\). Since clearly \(\check H_i \check H_j = \check H_j \check H_i\) for \(\abs{i-j}>1\), it follows that the operators \(\check H_i\) define on \(V^{\otimes n}\) an action of the Hecke algebra \(\ucalH_n\), which we regard as a right action.
The following result is also known as super Schur-Weyl duality. The non quantized version was originally proved by Berele and Regev (\cite{MR884183}) and independently by Sergeev (\cite{MR735715}).

\begin{prop}[\cite{MR2251378}]
  \label{prop:4}
  The map \( \ucalH_n  \rightarrow \End_{\Uqgl}(V^{\otimes n})\) defined by
 \(     H_i  \longmapsto \check H_{i}\)
  is surjective. As a module for \(\ucalH_n\) we have
  \begin{equation}
V^{\otimes n}=\bigoplus_{k=1}^n \big( S( \mu_{n,k}) \oplus S( \mu_{n,k}) \big),\label{eq:91}
\end{equation}
where \(\mu_{n,k}\)
  is the hook partition \((k,1^{n-k})\) and \(S(\mu_{n,k})\) is the \(q\)--version of the
  corresponding Specht module.
\end{prop}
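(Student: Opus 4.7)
The plan is to record the Hecke relations, decompose $V^{\otimes n}$ by weight into known induced Hecke modules, apply Pieri's rule to obtain the hook decomposition, and finally deduce surjectivity by reduction to the classical case.

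First I would verify that the $\check H_i$ satisfy the defining relations \eqref{eq:7}, \eqref{eq:8}, \eqref{eq:9} of $\ucalH_n$. The $\Uqgl$-equivariance of $\check H$ is immediate from \eqref{eq:84}; the quadratic relation $\check H_i^2 = (q^{-1}-q)\check H_i + \id$ follows by direct inspection of \eqref{reps:eq:19}; commutation for $\abs{i-j}>1$ is obvious; and the braid relation $\check H_i \check H_{i+1} \check H_i = \check H_{i+1}\check H_i \check H_{i+1}$ holds because $\check H$ is (up to a scalar) the inverse of the $R$-matrix braiding on the braided monoidal category $\catRep$. This yields a well-defined algebra homomorphism $\ucalH_n \to \End_{\Uqgl}(V^{\otimes n})$.

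Next, to determine the module structure I would decompose $V^{\otimes n}$ into $\frakh$-weight spaces, each of which is preserved by $\ucalH_n$ since $\check H$ is $\Uqgl$-equivariant. The weight space $(V^{\otimes n})_j$ of weight $n\epsilon_1 - j\alpha$ has dimension $\binom{n}{j}$, and contains the \emph{minimal element} $v^1_0 \otimes \cdots \otimes v^1_0 \otimes v^1_1 \otimes \cdots \otimes v^1_1$ (with $n-j$ copies of $v_0^1$ and $j$ copies of $v_1^1$). Using \eqref{reps:eq:19}, the generators $H_i$ with $i < n-j$ act on this element by $q^{-1}$ and those with $i > n-j$ by $-q$, matching the eigenvalues of $\trvmodule_{n-j} \boxtimes \sgnmodule_j$. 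Moreover, suitable products of $\check H_i$'s permute factors and reach every basis vector of the weight space, so the minimal element generates it cyclically. These observations together yield a surjective morphism
\begin{equation*}
\Ind_{\ucalH_{n-j} \otimes \ucalH_j}^{\ucalH_n}(\trvmodule_{n-j} \boxtimes \sgnmodule_j) \twoheadrightarrow (V^{\otimes n})_j,
\end{equation*}
which must be an isomorphism by equality of dimensions.

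I would then invoke Pieri's rule applied to the Frobenius characteristic $h_{n-j} e_j$ of the above induced module: its irreducible constituents are indexed by the partitions obtained from $(n-j)$ by adjoining a vertical strip of $j$ boxes, namely $(n-j+1, 1^{j-1})$ and $(n-j, 1^j)$, with the natural boundary conventions at $j=0$ and $j=n$. In the hook notation $\mu_{n,k}=(k,1^{n-k})$ these are $\mu_{n,n-j+1}$ and $\mu_{n,n-j}$. Summing over $j = 0, \ldots, n$, each hook $\mu_{n,k}$ receives exactly two contributions, producing the asserted decomposition $V^{\otimes n} \cong \bigoplus_{k=1}^n (S(\mu_{n,k}) \oplus S(\mu_{n,k}))$.

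The main obstacle is surjectivity, since $V^{\otimes n}$ is not semisimple as a $\Uqgl$-module and the classical double centralizer theorem does not apply naively. My plan is to reduce to the classical case by specialization: the homomorphism $\ucalH_n \to \End_{\Uqgl}(V^{\otimes n})$ can be defined integrally over $\C[q,q^{-1}]$, and at $q=1$ it becomes the classical map $\C[\bbS_n] \to \End_{\gl(1|1)}(V^{\otimes n})$ whose surjectivity is the Berele--Regev--Sergeev theorem. Since the dimension of the image of $\ucalH_n$ (computed directly from the Hecke-module decomposition just established as $\sum_{k=1}^n \dim S(\mu_{n,k})^2$) is independent of $q$, and since the dimension of $\End_{\Uqgl}(V^{\otimes n})$ is upper semicontinuous and specializes correctly at $q=1$, equality of dimensions and surjectivity persist for generic $q$.
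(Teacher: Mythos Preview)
The paper does not prove this proposition at all: it is stated with a citation to Mitsuhashi, and no argument is given in the text. So there is no ``paper's own proof'' to compare against; you are effectively reconstructing the result.

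Your treatment of the Hecke relations and of the weight-space decomposition is correct. The identification of $(V^{\otimes n})_j$ with the induced module $\Ind(\trvmodule_{n-j}\boxtimes\sgnmodule_j)$ is exactly what the paper establishes immediately afterwards as Proposition~\ref{prop:20}, so you are in effect anticipating that result; your cyclicity-plus-dimension argument is the same as the paper's. The Pieri computation is correct and gives the hook decomposition with the right multiplicities (this part is standard and not spelled out in the paper).

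The only point that deserves more care is the surjectivity paragraph. Your specialization strategy is the right one, but as written it is loose: to speak of ``the map defined integrally over $\C[q,q^{-1}]$'' and of ``upper semicontinuity of $\dim\End_{\Uqgl}(V^{\otimes n})$'' you must fix an integral form of $\Uqgl$ and an $\C[q,q^{-1}]$-lattice in $V^{\otimes n}$ on which the integral Hecke algebra acts, so that base change to $q=1$ genuinely recovers the Berele--Regev--Sergeev situation. Once that is done, the dimension of the $\Uqgl$-invariants is indeed upper semicontinuous in $q$, the dimension of the Hecke image is constant (you computed it as $\sum_k \dim S(\mu_{n,k})^2$ from the semisimple module structure), and equality at $q=1$ forces equality generically. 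This is routine but should be stated rather than asserted. Alternatively, since you already know the $\ucalH_n$-module structure, you could avoid specialization entirely by computing $\dim\End_{\Uqgl}(V^{\otimes n})$ directly from the explicit decomposition of $V^{\otimes n}$ into irreducible $\Uqgl$-modules, but that is more work.
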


As often occurs with the Hecke algebra, it is more convenient to choose generators \(C_i = H_i + q\). We introduce the Super Temperley-Lieb Algebra as follows:
\begin{subequations}
\begin{definition}\label{def:6}
  For \(n\geq 1\), the \emph{Super Temperley-Lieb Algebra}
  \(\STL_n\) is the unital associative \(\C(q)\)--algebra generated by
  \(\{C_i \mid i=1,\ldots,n-1\}\) subjected to the relations
  \begin{align}
        C_i^2 & =(q+q^{-1}) C_i, \label{eq:131}\\
        C_i C_j& =C_j C_i,\label{eq:132}\\
        C_i C_{i+1} C_i - C_i & = C_{i+1} C_i C_{i+1} - C_{i+1},\label{eq:133}
      \end{align}
for \(\abs{i-j}>1\), and
      \begin{align}
        C_{i-1} C_{i+1} C_i ((q+q^{-1})-C_{i-1}) ((q+q^{-1})-C_{i+1}) & = 0,\label{eq:134}\\
        ((q+q^{-1})-C_{i-1}) ((q+q^{-1})-C_{i+1}) C_i C_{i-1} C_{i+1}
        & = 0.\label{eq:135}
      \end{align}
\end{definition}
\end{subequations}

Since the first three relations are just the relations that the generators \(C_i = H_i + q\) satisfy in the Hecke algebra, it follows that \(\STL_n\) is a quotient of \(\ucalH_n\). Moreover, one can prove that \eqref{eq:134} and \eqref{eq:135} generate the kernel of the action on \(V^{\otimes n}\) (cf.~\cite{miophd2}), and hence we have
 \( \STL_n \cong \End_{\Uqgl}(V^{\otimes n})\).

Consider the weight space decomposition
\(  V^{\otimes n}= \bigoplus_{k=0}^n (V^{\otimes n})_k\),
where
\begin{equation}
(V^{\otimes n})_k = \{v \in V^{\otimes n} \suchthat \quantumq^h v = q^{\langle h, k\epsilon_1 + (n-k) \epsilon_2 \rangle } v   \}.\label{eq:89}
\end{equation}
Clearly, every weight space is a module for the Hecke algebra. We have:
\begin{prop}
  \label{prop:20}
  Let \(W_{\frakq}=\langle s_1,\ldots,s_{k-1}\rangle\) and \(W_{\frakp} = \langle s_{k+1},\ldots,s_{n-1}\rangle\) as subgroups of\/ \(\bbS_n\). With the notation of \textsection\ref{sec:induc-hecke-modul} we have
  \begin{equation}
    \label{eq:90}
    (V^{\otimes n})_k \cong \ucalM^\frakp_\frakq
  \end{equation}
  as right \(\ucalH_n\)--modules.
The isomorphism is given explicitly by
\begin{equation}
  \label{eq:4}
    \Psi: \ucalM^\frakp_\frakq \longrightarrow (V^{\otimes n})_k, \qquad
   \Psi: N_w  \longmapsto v^\bolda_{\boldeta_{\mathrm{min}} \cdot w},
\end{equation}
where
  \begin{equation}
\boldeta_{\mathrm{min}} = (\underbrace{ 0, \ldots, 0}_{k} , \underbrace{1,\ldots,1}_{n-k}).\label{eq:95}
\end{equation}
and \(\bbS_n\) acts on sequences of \(\{0,1\}^n\) from the right by permutations.
\end{prop}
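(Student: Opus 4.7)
The plan is to verify that the explicit map $\Psi$ given in \eqref{eq:4} is both a bijection of vector spaces and $\ucalH_n$-equivariant. The key observation is that the stabilizer of $\boldeta_{\mathrm{min}}$ under the right action of $\bbS_n$ on $\{0,1\}^n$ is precisely $\bbS_k \times \bbS_{n-k} = W_\frakq \times W_\frakp = W_{\frakp+\frakq}$; thus the orbit of $\boldeta_{\mathrm{min}}$ consisting of sequences with exactly $k$ zeros is in bijection with $W_{\frakp+\frakq}\backslash\bbS_n$. Choosing shortest coset representatives, the rule $w \mapsto \boldeta_{\mathrm{min}}\cdot w$ gives a bijection $W^{\frakp+\frakq} \leftrightarrow (\bbB_{\compn})_{n-k}$, which is exactly the standard basis of the weight space $(V^{\otimes n})_k$. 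This shows $\Psi$ is an isomorphism of $\C(q)$-vector spaces, sending basis to basis.

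It remains to verify equivariance for the generators $H_i$, which amounts to comparing Lemma~\ref{lemma:ActionOfHOnM} with the explicit formulas \eqref{reps:eq:19} for $\check H_i$. Setting $\boldeta = \boldeta_{\mathrm{min}}\cdot w$, the four cases of Lemma~\ref{lemma:ActionOfHOnM} will be matched with the four cases of \eqref{reps:eq:19} according to the pair $(\eta_i,\eta_{i+1})$. Concretely: if $w, ws_i \in W^{\frakp+\frakq}$ then $\boldeta\cdot s_i \neq \boldeta$ forces $\eta_i \neq \eta_{i+1}$, and the sign of $\ell(ws_i) - \ell(w)$, i.e.\ the comparison of $w(i)$ and $w(i+1)$, matches the position of the $1$ relative to the $0$ in $(\eta_i,\eta_{i+1})$ because of the shape of $\boldeta_{\mathrm{min}}$; this handles the first two cases. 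If instead $ws_i = s_j w$ with $s_j$ simple, then conjugation gives $\{w(i),w(i+1)\} = \{j,j+1\}$, so $s_j \in W_\frakp$ (respectively $W_\frakq$) corresponds to $j \geq k+1$ (respectively $j \leq k-1$), i.e.\ to $\eta_i = \eta_{i+1} = 1$ (respectively $\eta_i = \eta_{i+1} = 0$), giving the eigenvalues $-q$ and $q^{-1}$ on $v^\compn_\boldeta$ in agreement with the first and fourth lines of \eqref{reps:eq:19}.

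The main (minor) technical point is convincing oneself that the cases in Lemma~\ref{lemma:ActionOfHOnM} really are exhaustive: one needs the standard combinatorial fact that if $w \in W^{\frakp+\frakq}$ and $ws_i \notin W^{\frakp+\frakq}$, then $ws_i = s_j w$ for some \emph{simple} reflection $s_j \in W_{\frakp+\frakq}$, equivalently $w(i+1) = w(i) \pm 1$. Once this is in place, the four cases of the lemma partition the possibilities for $(\eta_i,\eta_{i+1})$ in one-to-one correspondence with the four formulas \eqref{reps:eq:19}, and the identity $\Psi(N_w \cdot H_i) = \Psi(N_w)\cdot \check H_i$ is verified mechanically in each case. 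Since the $H_i$ generate $\ucalH_n$, this concludes equivariance and hence the proposition.
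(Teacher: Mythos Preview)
Your proof is correct and follows essentially the same approach as the paper: the paper's proof simply asserts that $v_{\boldeta_{\mathrm{min}}}\cdot H_w = v_{\boldeta_{\mathrm{min}}\cdot w}$ for $w\in W^{\frakp+\frakq}$ (giving the bijection) and then says equivariance ``follows comparing \eqref{eq:ActionOfHOnM} and \eqref{reps:eq:19}'', whereas you spell out that comparison case by case. Your observation that $w\in W^{\frakp+\frakq}$ with $\eta_i=\eta_{i+1}$ forces $w(i+1)=w(i)\pm 1$ (so that $ws_i=s_jw$ with $s_j$ \emph{simple}) is exactly the combinatorial content hidden in the paper's terse ``compare''.
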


\begin{proof}
It is straightforward to check that, by the definition of the action of \(\ucalH_n\) on \(V^{\otimes n}\) \eqref{reps:eq:19}, we have \(v_{\boldeta_{\mathrm{min}}} \cdot H_w = v_{\boldeta_{\mathrm{min}} \cdot w}\) whenever \(w \in W^{\frakp+\frakq}\). In particular, \eqref{eq:4} is a bijection. We need to show that the action of the Hecke algebra is the same on both sides. This follows comparing \eqref{eq:ActionOfHOnM} and \eqref{reps:eq:19}.
\end{proof}

As a consequence, there is a second notion of canonical basis on \((V^{\otimes n})_k\), defined using the Hecke algebra action from Section~\ref{sec:hecke-algebra-hecke}. Not surprisingly, this coincides with Lusztig canonical basis (compare with \cite[Theorem~2.5]{MR1657524}, that deals with the case of \(\mathfrak{sl}_2\)):

\begin{prop}
  \label{prop:19}
  Under the isomorphism \(\Psi\) \eqref{eq:4}, the canonical basis element \(\underline N_w\) of \(\ucalM^{\frakp}_{\frakq}\) is mapped to the canonical basis element \(v^{\canon \bolda}_{\boldeta_\text{min} \cdot w}\).
\end{prop}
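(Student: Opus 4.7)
The plan is to verify that $\Psi(\underline{N}_w)$ satisfies the two characterizing properties of the canonical basis element $v^{\canon\bolda}_{\boldeta_{\mathrm{min}}\cdot w}$ from Theorem~\ref{reps:thm:2}, namely bar invariance and a triangular expansion in the standard basis with coefficients in $q\Z[q]$.

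For bar invariance the strategy is to show that $\Psi$ intertwines the two bar involutions. Since $\boldeta_{\mathrm{min}}$ is a bar-invariant standard basis vector of $V(\bolda)$ (by the remark following \eqref{eq:206}) and $\Psi(N_w) = v^{\bolda}_{\boldeta_{\mathrm{min}}}\cdot H_w$ by $\ucalH_n$-equivariance, it is enough to establish the right-module compatibility
\begin{equation*}
\overline{v\cdot H_i} \;=\; \overline{v}\cdot \overline{H_i}
\end{equation*}
on $V^{\otimes n}$. This can be checked by a direct calculation on $V\otimes V$ using the explicit formulas \eqref{reps:eq:19} for $\check H$, the identity $\overline{H_i}=H_i+q-q^{-1}$, and the definition \eqref{reps:eq:30} of the bar on a tensor product; the extension to $V^{\otimes n}$ then follows from the coassociativity identity $(\Delta\otimes 1)(\Theta')\,\Theta'_{12} = (1\otimes\Delta)(\Theta')\,\Theta'_{23}$ and the naturality of the $\check H_i$'s.

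Given bar compatibility of $\Psi$, applying $\Psi$ to the defining expansion $\underline N_w = N_w + \sum_{w'\prec w} \mathcal R_{w',w}(q) N_{w'}$ from Proposition~\ref{prop:18} yields
\begin{equation*}
\Psi(\underline N_w) \;=\; v^{\bolda}_{\boldeta_{\mathrm{min}}\cdot w} \;+\; \sum_{w'\prec w} \mathcal R_{w',w}(q)\, v^{\bolda}_{\boldeta_{\mathrm{min}}\cdot w'}.
\end{equation*}
The partial order on $\bbB_\bolda$ was defined precisely via the bijection \eqref{eq:206} with $W^{\frakp+\frakq}$ and its Bruhat order, so $w'\prec w$ translates to $v^{\bolda}_{\boldeta_{\mathrm{min}}\cdot w'}$ being strictly smaller than $v^{\bolda}_{\boldeta_{\mathrm{min}}\cdot w}$; since the $\mathcal R_{w',w}$ lie in $q\Z[q]$, the uniqueness part of Theorem~\ref{reps:thm:2} identifies $\Psi(\underline N_w)$ with $v^{\canon\bolda}_{\boldeta_{\mathrm{min}}\cdot w}$.

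The main obstacle is the bar-Hecke compatibility above. The correct identity is $\overline{v\cdot H_i} = \overline{v}\cdot \overline{H_i}$ rather than the naive $\overline{v\cdot H_i} = \overline{v}\cdot H_i$, and checking it amounts to showing that the correction $(q-q^{-1})\overline{v}$ produced by $\overline{H_i} = H_i + q - q^{-1}$ exactly absorbs the discrepancy between $\overline{\check H_i(v)}$ and $\check H_i(\overline v)$ coming from the quasi-$R$-matrix $\Theta'$; this is the super analogue of Lusztig's compatibility between the braiding and the bar involution on tensor products, and once it is pinned down the remainder of the argument is essentially formal.
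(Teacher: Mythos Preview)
Your proposal is correct and follows essentially the same approach as the paper: both arguments reduce to showing that $\Psi$ intertwines the two bar involutions, verify this via the compatibility $\overline{v\cdot H_i}=\overline v\cdot \overline{H_i}$ (equivalently $\overline{\check H_i(v)}=\check H_i^{-1}(\overline v)$, since $\overline{H_i}=H_i^{-1}$), and then invoke the uniqueness of canonical basis elements. The paper compresses the triangularity step into a citation of the uniqueness results, while you spell it out explicitly, but the content is the same.
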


\begin{proof}
  By the uniqueness results (Proposition~\ref{prop:18} and Theorem~\ref{reps:thm:2}), it is enough to show that the bar involution of \(\ucalM^{\frakp}_{\frakq}\) is mapped to the bar involution of \((V^{\otimes n})_k\) under \eqref{eq:4}. On \(\ucalM^{\frakp}_\frakq\) the bar involution is uniquely determined by \(\overline{N_e}=N_e\) and \(\overline{X H_i}  = \overline{X} \overline{H_i} = \overline{X } H^{-1}_i\) for all \(X \in \ucalM^{\frakp}_\frakq\). It is enough to show that the same holds for Lusztig bar involution on \((V^{\otimes n})_k\). Of course \(\overline{v_{\boldeta_{\text{min}}}}=v_{\boldeta_{\text{min}}}\), and one can show by standard methods (cf.\ \cite[Lemma~2.3]{MR2491760}) that
 \(   \overline{\check H_{i}(v_\boldeta)} = \check H^{-1}_i (\overline{v_\boldeta})\)
for all standard basis elements \(v_\boldeta\).
\end{proof}

The form \(\langle \cdot,\cdot \rangle\) on \(\ucalM^\frakp_\frakq\) and the form \((\cdot,\cdot)_{\bolda}\) on \((V^{\otimes n})_k\) are proportional under \(\Psi\):

\begin{lemma}
  \label{lem:19}
  Let \(\Psi\) be the isomorphism \eqref{eq:4}. Then
  \begin{equation}\label{eq:159}
    (\Psi(X),\Psi(Y))_\compn = [k]_0!\, \langle X,Y \rangle \qquad \text{for all } X,Y \in \ucalM^{\frakp}_\frakq.
  \end{equation}
\end{lemma}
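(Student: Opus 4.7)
The plan is to verify the identity directly on the standard basis $\{N_w : w \in W^{\frakp+\frakq}\}$ of $\ucalM^{\frakp}_{\frakq}$. By $\C(q)$-bilinearity in both arguments it suffices to prove
\[
  (\Psi(N_w),\Psi(N_{w'}))_{\compn} \;=\; [k]_0!\,\delta_{w,w'}
  \qquad\text{for all } w,w'\in W^{\frakp+\frakq},
\]
where the induced form on $\ucalM^{\frakp}_{\frakq}$ has been normalized so that $\langle N_w,N_{w'}\rangle = \delta_{w,w'}$ (this is the natural induced form from \eqref{eq:165}, and is the first thing I would check carefully against the conventions of \textsection\ref{sec:induc-hecke-modul}).

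By the definition \eqref{eq:4} of $\Psi$, we have $\Psi(N_w) = v^{\compn}_{\boldeta_{\min}\cdot w}$, and the bijection \eqref{eq:206} guarantees that distinct $w \in W^{\frakp+\frakq}$ are sent to distinct standard basis vectors of $(V^{\otimes n})_k$. Since the standard basis is orthogonal with respect to $(\cdot,\cdot)_{\compn}$ (as is visible from \eqref{eq:99}), the off-diagonal case $w\neq w'$ yields $0$ on both sides, handling one direction automatically.

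For the diagonal case $w=w'$, write $\boldeta = \boldeta_{\min}\cdot w$. Since $\bolda = \compn$, all $a_j = 1$, so from \eqref{eq:55} each $\beta^{\boldeta}_j = 1-\eta_j \in \{0,1\}$. As $\boldeta$ belongs to the weight space of weight $k\epsilon_1+(n-k)\epsilon_2$, exactly $n-k$ entries $\eta_j$ equal $1$ and the remaining $k$ entries equal $0$; equivalently, exactly $k$ of the $\beta^{\boldeta}_j$ equal $1$ and the rest equal $0$. Substituting into \eqref{eq:99},
\[
  (v^{\compn}_\boldeta, v^{\compn}_\boldeta)_{\compn} \;=\; \qbin{k}{\beta^{\boldeta}_1,\ldots,\beta^{\boldeta}_n}_0 \;=\; \frac{[k]_0!}{\bigl([1]_0!\bigr)^{k}\bigl([0]_0!\bigr)^{n-k}} \;=\; [k]_0!,
\]
using $[1]_0! = [0]_0! = 1$ together with the formula \eqref{eq:98} for the normalized multinomial.

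The only subtle point is pinning down the normalization of the induced form so that $\{N_w\}_{w\in W^{\frakp+\frakq}}$ is indeed orthonormal; once this is in place, the entire argument is a short computation unwinding the definitions of $\Psi$, of $(\cdot,\cdot)_{\compn}$, and of the normalized quantum factorial $[\cdot]_0!$. No genuine obstacle arises beyond this bookkeeping.
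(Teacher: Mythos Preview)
Your proof is correct and follows essentially the same approach as the paper: both verify the identity on the standard basis $\{N_w\}$, using orthogonality of the standard basis vectors for the off-diagonal case and computing the diagonal value to be $[k]_0!$. The paper states the computation \eqref{eq:160} more tersely, while you unwind the multinomial formula explicitly; your worry about the normalization of the induced form on $\ucalM^\frakp_\frakq$ is unfounded, since the paper defines it so that the $N_w$ are orthonormal (inherited from \eqref{eq:165}).
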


\begin{proof}
  It is enough to check \eqref{eq:159} on the standard basis \(\{N_w \suchthat w \in W^{\frakp+\frakq}\}\) of \(\ucalM^{\frakp}_\frakq\). We have
  \begin{equation}
    \label{eq:160}
    (\Psi(N_w),\Psi(N_z))_\compn = (v_{\boldeta_\text{min} \cdot w} ,v_{\boldeta_{\text{min}} \cdot z} )_\compn =
    \begin{cases}
      [k]_0! & \text{if } w=z,\\
      0 & \text{otherwise}.
    \end{cases}
  \end{equation}
  By definition, this is the same as \([k]_0!\, \langle N_w,N_z \rangle\).
\end{proof}

\section{Diagrams for the intertwining operators}
\label{reps:sec:diagr-intertw-oper}

In this section we will provide a diagram calculus for the
intertwining operators in the category \(\catRep\).

\subsection{The category \texorpdfstring{$\catWeb$}{Web}}
\label{sec:category-catweb}

We start by defining a diagrammatical category \(\catWeb\). We remark that the category \(\catWeb\) is similar to the category of \(\mathrm{SL}_n\)--spiders (see \cite{MR1403861}, \cite{MR2704398}, \cite{MR2710589} and \cite{MR3263166}) which describes intertwining operators of representations of \(U_q(\gl_n)\).

A \emph{web
diagram} is an oriented plane graph with edges labeled by positive
integers. For simplicity we suppose that the edges do not have maxima or minima, and the orientation is then uniquely determined by orienting all edges upwards. Only single and triple vertices are allowed. Single vertices
must lie on the bottom (respectively, top) line if they are sources
(respectively, targets) for the corresponding edge. Around a triple vertex,
the sum of the labels of the ingoing edges must agree with the sum of the labels of the outgoing vertices; this means that
only the following labelings are allowed for arbitrary strictly positive
numbers \(a,b\):
\begin{equation}
      \begin{tikzpicture}[>=angle 45,scale=1,yscale=0.5,anchorbase]
        \draw[int] (0.5,0) node [below] {\(a\)} -- (1,1);
        \draw[int] (1.5,0) node [below] {\(b\)} -- (1,1);
        \draw[int] (1,1) -- (1,2) node [above] {\(a+b\)} ;
      \end{tikzpicture}      \qquad \qquad
      \begin{tikzpicture}[anchorbase]
        \draw[very thick,->,color=black!50!white,double] (0,0) -- (0,1);
      \end{tikzpicture}
      \qquad \qquad
      \begin{tikzpicture}[>=angle 45,scale=1,yscale=0.5,anchorbase]
        \draw[int] (0.5,2) node [above] {\(a\)} -- (1,1);
        \draw[int] (1.5,2) node [above] {\(b\)} -- (1,1);
        \draw[int] (1,1) -- (1,0) node [below] {\(a+b\)} ;
      \end{tikzpicture}
\label{eq:45}
\end{equation}
We will not draw the orientation of the edges, because they are all oriented upwards. The \emph{source} of a web is the sequence \(\bolda=(a_1,\ldots,a_\ell)\) of labels on the bottom line. The \emph{target} is the sequence \(\bolda'=(a_1',\ldots,a_s')\) on the top line.

If we have two webs \(\psi,\phi\) and the target of \(\phi\) is the same as the source of \(\psi\), then we can compose \(\psi\) and \(\phi\) by concatenating vertically. Additionally, we can always concatenate two webs \(\phi,\psi\) horizontally, putting the second on the right of the first; in this case we use a tensor product symbol:
\begin{equation*}
  \psi \circ \phi =
  \begin{aligned}
    \begin{tikzpicture}
      \draw (0,0) rectangle node {\(\phi\)}(1.5,1) rectangle node {\(\psi\)} (0,2);
    \end{tikzpicture}
  \end{aligned}
\qquad \qquad
  \phi \otimes \psi =
  \begin{aligned}
    \begin{tikzpicture}
      \draw (0,0) rectangle node {\(\phi\)} (1.5,1.5) rectangle node {\(\psi\)}(3,0);
    \end{tikzpicture}
  \end{aligned}
\end{equation*}

The category \(\catWeb'\) is the monoidal category whose objects are sequences \(\bolda=(a_1,\ldots,a_\ell)\) of strictly positive integers; a morphism from \(\bolda\) to \(\bolda'\) is a \(\C(q)\)--linear
combination of web diagrams with source \(\bolda\) and target \(\bolda'\).
Composition of morphisms corresponds to vertical concatenation of web diagrams. Horizontal concatenation of web diagrams gives, on the other side, a monoidal structure on \(\catWeb'\), whose unit is the empty sequence \(()\).

\begin{definition}\label{def:1}
We define the category \(\catWeb\) to be the quotient of \(\catWeb'\) by the
 following relations:
\begin{subequations}
\begin{align}
  \parbox{6cm}{\centering Orientation preserving isotopy\\(with source and target points fixed)} \hspace{-3.5cm} & \displaybreak[0]\\
      \begin{tikzpicture}[>=angle 45,scale=1,yscale=0.5,anchorbase]
        \draw[int] (1,-1) -- (0.5,0) node [left] {\(\vphantom{b}a\)} -- (1,1);
        \draw[int] (1,-1) -- (1.5,0) node [right] {\(b\)} -- (1,1);
        \draw[int] (1,1) -- (1,2) node [above] {\(a+b\)} ;
        \draw[int] (1,-1) -- (1,-2) node [below] {\(a+b\)} ;
      \end{tikzpicture} &= \qbin{a+b}{a}\mspace{-10mu}
      \begin{tikzpicture}[>=angle 45,scale=1,yscale=0.5,anchorbase]
        \draw[int] (0,-2) node[below] {\(a+b\)} -- (0,2) node[above] {\(a+b\)};
      \end{tikzpicture}\label{eq:O53}\displaybreak[0]\\
      \begin{tikzpicture}[>=angle 45,scale=0.7,yscale=0.5,anchorbase]
        \draw[int] (0.5,0) node [below] {\(\vphantom{b}a\)} -- (1,1);
        \draw[int] (1.5,0) node [below] {\(b\)} -- (1,1);
        \draw[int] (1,1) -- (1.5,2);
        \draw[int] (2.5,0) node [below] {\(\vphantom{b}c\)} -- (1.5,2);
        \draw[int] (1.5,2) -- (1.5,3) node [above] {\(a+b+c\)};
      \end{tikzpicture}  =
      \begin{tikzpicture}[>=angle 45,x=-1cm,scale=0.7,yscale=0.5,anchorbase]
        \draw[int] (0.5,0) node [below] {\(\vphantom{b}c\)} -- (1,1);
        \draw[int] (1.5,0) node [below] {\(b\)} -- (1,1);
        \draw[int] (1,1) -- (1.5,2);
        \draw[int] (2.5,0) node [below] {\(\vphantom{b}a\)} -- (1.5,2);
        \draw[int] (1.5,2) -- (1.5,3) node [above] {\(a+b+c\)};
      \end{tikzpicture} \quad &\text{and} \quad 
      \begin{tikzpicture}[>=angle 45,scale=0.7,y=-1cm,yscale=0.5,anchorbase]
        \draw[int] (0.5,0) node [above] {\(a\)} -- (1,1);
        \draw[int] (1.5,0) node [above] {\(b\)} -- (1,1);
        \draw[int] (1,1) -- (1.5,2);
        \draw[int] (2.5,0) node [above] {\(c\)} -- (1.5,2);
        \draw[int] (1.5,2) -- (1.5,3) node [below] {\(a+b+c\)};
      \end{tikzpicture}  =
      \begin{tikzpicture}[>=angle 45,x=-1cm,y=-1cm,scale=0.7,yscale=0.5,anchorbase]
        \draw[int] (0.5,0) node [above] {\(c\)} -- (1,1);
        \draw[int] (1.5,0) node [above] {\(b\)} -- (1,1);
        \draw[int] (1,1) -- (1.5,2);
        \draw[int] (2.5,0) node [above] {\(a\)} -- (1.5,2);
        \draw[int] (1.5,2) -- (1.5,3) node [below] {\(a+b+c\)};
      \end{tikzpicture}\label{eq:44}\displaybreak[0]\\
      \begin{tikzpicture}[>=angle 45,xscale=0.5,yscale=0.5,anchorbase,smallnodes]
        \draw[int] (0,0) node[below] {\(1\)} -- (0.5,0.5) -- (0.5,1) -- (0,1.5) -- (0,3) -- (0.5,3.5) -- (0.5,4) -- (0,4.5) node [above] {\(1\)};
        \draw[int] (1,0) node[below] {\(1\)} -- (0.5,0.5) -- (0.5,1) -- (1,1.5) -- (1.5,2) -- (1.5,2.5) -- (1,3) -- (0.5,3.5) -- (0.5,4) -- (1,4.5) node [above] {\(1\)};
        \draw[int] (2,0) node[below] {\(1\)}  -- (2,1.5) -- (1.5,2) -- (1.5,2.5) -- (2,3) -- (2,4.5) node [above] {\(1\)};
        \draw[very thick] (1.5,2) -- node[right] {\(2\)} ++(0,0.5);
        \draw[very thick] (0.5,0.5) -- node[left] {\(2\)} ++(0,0.5);
        \draw[very thick] (0.5,3.5) -- node[left] {\(2\)} ++(0,0.5);
      \end{tikzpicture} + 
      \begin{tikzpicture}[>=angle 45,xscale=0.5,yscale=0.5,anchorbase,smallnodes]
        \draw[int] (0,0) node[below] {\(1\)} -- (0,4.5) node [above] {\(1\)};
        \draw[int] (1,0) node[below] {\(1\)} -- (1,1.5) -- (1.5,2) -- (1.5,2.5) -- (1,3) -- (1,4.5) node [above] {\(1\)};
        \draw[int] (2,0) node[below] {\(1\)}  -- (2,1.5) -- (1.5,2) -- (1.5,2.5) -- (2,3) -- (2,4.5) node [above] {\(1\)};
        \draw[very thick] (1.5,2) -- node[right] {\(2\)} ++(0,0.5);
      \end{tikzpicture}  
&=
      \begin{tikzpicture}[>=angle 45,yscale=0.5,anchorbase,x=-0.5cm,smallnodes]
        \draw[int] (0,0) node[below] {\(1\)} -- (0.5,0.5) -- (0.5,1) -- (0,1.5) -- (0,3) -- (0.5,3.5) -- (0.5,4) -- (0,4.5) node [above] {\(1\)};
        \draw[int] (1,0) node[below] {\(1\)} -- (0.5,0.5) -- (0.5,1) -- (1,1.5) -- (1.5,2) -- (1.5,2.5) -- (1,3) -- (0.5,3.5) -- (0.5,4) -- (1,4.5) node [above] {\(1\)};
        \draw[int] (2,0) node[below] {\(1\)}  -- (2,1.5) -- (1.5,2) -- (1.5,2.5) -- (2,3) -- (2,4.5) node [above] {\(1\)};
        \draw[very thick] (1.5,2) -- node[left] {\(2\)} ++(0,0.5);
        \draw[very thick] (0.5,0.5) -- node[right] {\(2\)} ++(0,0.5);
        \draw[very thick] (0.5,3.5) -- node[right] {\(2\)} ++(0,0.5);
      \end{tikzpicture} + 
      \begin{tikzpicture}[>=angle 45,yscale=0.5,x=-0.5cm,anchorbase,smallnodes]
        \draw[int] (0,0) node[below] {\(1\)} -- (0,4.5) node [above] {\(1\)};
        \draw[int] (1,0) node[below] {\(1\)} -- (1,1.5) -- (1.5,2) -- (1.5,2.5) -- (1,3) -- (1,4.5) node [above] {\(1\)};
        \draw[int] (2,0) node[below] {\(1\)}  -- (2,1.5) -- (1.5,2) -- (1.5,2.5) -- (2,3) -- (2,4.5) node [above] {\(1\)};
        \draw[very thick] (1.5,2) -- node[left] {\(2\)} ++(0,0.5);
      \end{tikzpicture}\label{eq:54}  
\end{align}
\end{subequations}
\end{definition}

We define the two \emph{elementary webs} \(\webjoin_{\bolda,i}\) and \(\websplit^{\bolda,i}\) by the diagrams
\begin{align}
  \label{catO:eq:194}
  \webjoin_{\bolda,i} & =
      \begin{tikzpicture}[every node/.style={font=\normalsize},>=angle 45,xscale=1.3,yscale=0.45,baseline =(a.base)]
        \draw[int] (-1,0) node[below] {\(a_1\)} -- ++(0,2) node[above] {\(a_1\)};
        \node (a) at (-0.5,1) {\(\cdots\)};
        \draw[int] (0,0) node[below] {\(a_{i-1}\)} -- ++(0,2) node[above] {\(a_{i-1}\)};
        \draw[int] (0.65,0) node [below] {\(a_i\)} -- (1,1);
        \draw[int] (1.35,0) node [below] {\(a_{i+1}\)} -- (1,1);
        \draw[int] (1,1) -- (1,2) node[above] {\(a_{i}+a_{i+1}\)};
        \draw[int] (2,0)  node[below] {\(a_{i+2}\)} -- ++(0,2) node[above] {\(a_{i+2}\)};
        \node at (2.5,1) {\(\cdots\)};
        \draw[int] (3,0) node[below] {\(a_\ell\)} -- ++(0,2) node[above] {\(a_\ell\)};
      \end{tikzpicture}\displaybreak[0]\\
  \websplit^{\bolda,i} & =
      \begin{tikzpicture}[every node/.style={font=\normalsize},>=angle 45,yscale=0.45,xscale=1.3,baseline =(a.base)]
        \draw[int] (-1,0) node[below] {\(a_1\)} -- ++(0,2) node[above] {\(a_1\)};
        \node (a) at (-0.5,1) {\(\cdots\)} ;
        \draw[int] (0,0) node[below] {\(a_{i-1}\)} -- ++(0,2) node[above] {\(a_{i-1}\)};
        \draw[int] (0.65,2) node [above] {\(a_i\)} -- (1,1);
        \draw[int] (1.35,2) node [above] {\(a_{i+1}\)} -- (1,1) ;
        \draw[int] (1,0) node[below] {\(a_i+a_{i+1}\)} -- (1,1);
        \draw[int] (2,0) node[below] {\(a_{i+2}\)} -- ++(0,2) node[above] {\(a_{i+2}\)};
        \node at (2.5,1) {\(\cdots\)};
        \draw[int] (3,0) node[below] {\(a_\ell\)} -- ++(0,2) node[above] {\(a_\ell\)};
      \end{tikzpicture}\label{catO:eq:198}
\end{align}
and notice that the category \(\catWeb\) is generated by such elementary web diagrams. Given a sequence \(\bolda=(a_1,\ldots,a_\ell)\) we let also
\begin{equation}
  \label{catO:eq:196}
  \boldsymbol{\hat a}_i = (a_1,\ldots,a_{i-1},a_i+a_{i+1},a_{i+2},\ldots,a_\ell)
\end{equation}
be the target of \(\webjoin_{\bolda,i}\) (and the source of \(\websplit^{\bolda,i}\)).

It will be useful to consider also multivalent vertices with only one outgoing (respectively, ingoing) edge: we define them to be equal to concatenations of trivalent vertices \eqref{eq:44}. For example:
\begin{equation}
  \label{catO:eq:180}
        \begin{tikzpicture}[>=angle 45,scale=0.5,anchorbase]
        \draw[int] (0,0) node [above] {\(a\)} -- (1.5,-1.5);
        \draw[int] (1,0) node [above] {\(b\)} -- (1.5,-1.5);
        \draw[int] (2,0) node [above] {\(c\)} -- (1.5,-1.5);
        \draw[int] (3,0) node [above] {\(d\)} -- (1.5,-1.5);
        \draw[int] (1.5,-1.5) -- (1.5,-2) node [below] {\(a+b+c\)};
      \end{tikzpicture} \eqdef
        \begin{tikzpicture}[>=angle 45,scale=0.5,anchorbase]
        \draw[int] (0,0) node [above] {\(a\)} -- (1.5,-1.5);
        \draw[int] (1,0) node [above] {\(b\)} -- (0.5,-0.5);
        \draw[int] (2,0) node [above] {\(c\)} -- (1,-1);
        \draw[int] (3,0) node [above] {\(d\)} -- (1.5,-1.5);
        \draw[int] (1.5,-1.5) -- (1.5,-2) node [below] {\(a+b+c\)};
      \end{tikzpicture} =
        \begin{tikzpicture}[>=angle 45,scale=0.5,anchorbase]
        \draw[int] (0,0) node [above] {\(a\)} -- (1.5,-1.5);
        \draw[int] (1,0) node [above] {\(b\)} -- (0.5,-0.5);
        \draw[int] (2,0) node [above] {\(c\)} -- (2.5,-0.5);
        \draw[int] (3,0) node [above] {\(d\)} -- (1.5,-1.5);
        \draw[int] (1.5,-1.5) -- (1.5,-2) node [below] {\(a+b+c\)};
      \end{tikzpicture} =
        \begin{tikzpicture}[>=angle 45,scale=0.5,anchorbase]
        \draw[int] (0,0) node [above] {\(a\)} -- (1.5,-1.5);
        \draw[int] (1,0) node [above] {\(b\)} -- (2,-1);
        \draw[int] (2,0) node [above] {\(c\)} -- (2.5,-0.5);
        \draw[int] (3,0) node [above] {\(d\)} -- (1.5,-1.5);
        \draw[int] (1.5,-1.5) -- (1.5,-2) node [below] {\(a+b+c\)};
      \end{tikzpicture}
\end{equation}
Notice that this is well-defined by \eqref{eq:44}.

Let us define the web diagrams
\begin{equation}
  \label{eq:68}
  \webjoinmultiple^{n}=
  \begin{tikzpicture}[>=angle 45,scale=0.5,anchorbase,smallnodes]
    \draw[int] (0,0) node [below] {\(1\)} -- (1.5,1.5);
    \draw[int] (1,0) node [below] {\(1\)} -- (1.5,1.5);
    \node at (2,0) {\(\dotsb\)};
    \draw[int] (3,0) node [below] {\(1\)} -- (1.5,1.5);
    \draw[very thick] (1.5,1.5) --  (1.5,2) node [above] {\(n\)};  
  \end{tikzpicture} \qquad \text{and} \qquad
  \websplitmultiple_n=
  \begin{tikzpicture}[>=angle 45,scale=0.5,anchorbase,smallnodes]
    \draw[int] (1.5,-1.5) -- (0,0) node [above] {\(1\)};
    \draw[int] (1.5,-1.5) -- (1,0) node [above] {\(1\)};
    \node at (2,0) {\(\dotsb\)};
    \draw[int] (1.5,-1.5) -- (3,0) node [above] {\(1\)};
    \draw[very thick] (1.5,-2) node [below] {\(n\)} --  (1.5,-1.5);  
  \end{tikzpicture}
\end{equation}

From \eqref{eq:O53} it follows in particular that
\begin{equation}
  \label{eq:66}
  \webjoinmultiple^n \circ \websplitmultiple_n = 
        \begin{tikzpicture}[>=angle 45,scale=0.5,anchorbase,smallnodes]
        \draw[int] (1.5,-1.5) -- (0,0) node [left=-0.1cm] {\(1\)} -- (1.5,1.5);
        \draw[int] (1.5,-1.5) -- (1,0) node [left=-0.1cm] {\(1\)} -- (1.5,1.5);
        \node at (2,0) {\(\dotsb\)};
        \draw[int] (1.5,-1.5) -- (3,0) node [right=-0.1cm] {\(1\)} -- (1.5,1.5);
        \draw[very thick] (1.5,-2) node [below] {\(n\)} --  (1.5,-1.5);  
        \draw[very thick] (1.5,1.5) --  (1.5,2) node [above] {\(n\)};  
      \end{tikzpicture} = [n]!
      \begin{tikzpicture}[>=angle 45,scale=0.5,anchorbase,smallnodes]
        \draw[very thick] (1.5,-2) node [below] {\(n\)} --  (1.5,2) node [above] {\(n\)};  
      \end{tikzpicture}
\end{equation}

Given a composition \(\bolda=(a_1,\dotsc,a_\ell)\) of \(n\), notice that we have a \emph{standard inclusion}
\begin{equation}
\websplitmultiple_{a_1} \otimes \dotsb \otimes \websplitmultiple_{a_\ell}\colon \bolda \mapto \compn\label{eq:142}
\end{equation}
and a \emph{standard projection}
\begin{equation}
\webjoinmultiple^{a_1} \otimes \dotsb \otimes \webjoinmultiple^{a_\ell}\colon  \compn \mapto \bolda.\label{eq:144}
\end{equation}

\subsection{Webs as intertwiners}
\label{sec:webs-as-intertw}

Now we are going to define a monoidal functor \(\funcT: \catWeb \mapto \catRep\). On objects we set \(\funcT(\bolda) = V(\bolda)\). To define \(\funcT\) on morphisms,
it suffices to consider the two diagrams \eqref{eq:45}, which monoidally generate \(\catRep\).
We set:
  \begin{equation}
    \label{eq:29}
\funcT\left(
      \begin{tikzpicture}[>=angle 45,scale=1,yscale=0.4,anchorbase]
        \draw[int] (0.5,0) node [below] {\(a\)} -- (1,1);
        \draw[int] (1.5,0) node [below] {\(b\)} -- (1,1);
        \draw[int] (1,1) -- (1,2) node [above] {\(a+b\)} ;
      \end{tikzpicture} \right) =
      \begin{tikzpicture}[>=angle 45,scale=1,yscale=0.4,anchorbase]
      \draw[->] (3,0) node[below] {\(V(a) \otimes V(b)\)} -- node[right] {\(\Phi_{a,b}\)} ++(up:2cm)
        node[above] {\(V(a+b)\)};
      \end{tikzpicture}
 \quad \text{and} \quad\funcT\left(
      \begin{tikzpicture}[>=angle 45,scale=1,yscale=0.4,anchorbase]
        \draw[int] (0.5,2) node [above] {\(a\)} -- (1,1);
        \draw[int] (1.5,2) node [above] {\(b\)} -- (1,1);
        \draw[int] (1,1) -- (1,0) node [below] {\(a+b\)} ;
      \end{tikzpicture} \right) =
      \begin{tikzpicture}[>=angle 45,scale=1,yscale=0.4,anchorbase]
      \draw[->] (3,0) node[below] {\(V(a+b)\)} -- node[right] {\(\Phi^{a,b}\)} ++(up:2cm) node[above] {\(V(a) \otimes V(b)\)};
      \end{tikzpicture}
  \end{equation}

  \begin{prop}
    \label{prop:2}
    The assignment \eqref{eq:29} defines a dense full monoidal functor \(\funcT\colon \catWeb \mapto \catRep\).
  \end{prop}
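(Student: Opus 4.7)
The plan is to verify in turn that $\funcT$ is well-defined, monoidal, dense, and full.

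For well-definedness, since $\funcT$ is specified on the generators \eqref{eq:45} by \eqref{eq:29}, it suffices to check that the defining relations of $\catWeb$ (Definition~\ref{def:1}) are sent to identities in $\catRep$. The implicit isotopy relations follow from coherence of the tensor product. The bigon relation \eqref{eq:O53} is exactly \eqref{eq:83}. The associativity relations \eqref{eq:44} and the remaining relation \eqref{eq:54} are verified by direct computation on the standard basis using the explicit formulas \eqref{eq:37} and \eqref{eq:38}; each relation lives in a low-dimensional weight space, so the check is finite.

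Monoidality is essentially tautological: horizontal concatenation of webs is sent to tensor product of $\Uqgl$-equivariant maps, and the empty source maps to the trivial representation. Density is immediate, since every object of $\catRep$ is by definition of the form $V(\bolda)$, and $\funcT(\bolda) = V(\bolda)$.

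The main content is fullness. Given $f \in \Hom_{\Uqgl}(V(\bolda), V(\bolda'))$, weight considerations force $\bolda$ and $\bolda'$ to be compositions of the same integer $n$. Denote by $\iota_\bolda$ the standard inclusion \eqref{eq:142} and by $\pi_\bolda$ the standard projection \eqref{eq:144}, both in the image of $\funcT$ by construction; define $\iota_{\bolda'}, \pi_{\bolda'}$ similarly. Iterating \eqref{eq:83} (equivalently, applying \eqref{eq:66} on each block), the composition $\pi_\bolda \circ \iota_\bolda$ is a nonzero scalar multiple of $\id_{V(\bolda)}$, and likewise for $\bolda'$. Hence $f$ equals, up to a nonzero scalar, $\pi_{\bolda'} \circ (\iota_{\bolda'} \circ f \circ \pi_\bolda) \circ \iota_\bolda$, so it suffices to show $\End_{\Uqgl}(V^{\otimes n})$ lies in the image of $\funcT$. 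By super Schur--Weyl duality (Proposition~\ref{prop:4}), this algebra is generated as a unital algebra by the operators $\check H_i$, and by \eqref{eq:84} each $\check H_i$ equals the image under $\funcT$ of the local bigon web on strands $i, i+1$ minus $q$ times the identity.

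The primary technical obstacle is the verification of relation \eqref{eq:54}, a finite but somewhat intricate computation on the weight spaces of $V^{\otimes 3}$ requiring careful bookkeeping of the quantum binomial coefficients in \eqref{eq:37}--\eqref{eq:38}.
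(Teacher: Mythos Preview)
Your proof is correct and follows essentially the same strategy as the paper: verify the defining relations of $\catWeb$, observe density on the nose, and reduce fullness to $\End_{\Uqgl}(V^{\otimes n})$ via the standard inclusions and projections, then invoke super Schur--Weyl duality (Proposition~\ref{prop:4}) together with \eqref{eq:84}.

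The one place where the paper's argument differs is precisely where you flag the ``primary technical obstacle'': the verification of relation~\eqref{eq:54}. Rather than computing directly on weight spaces of $V^{\otimes 3}$, the paper observes that, via \eqref{eq:84}, the local bigon web on two adjacent strands acts by $C_i = \check H_i + q$. Relation~\eqref{eq:54} then translates exactly into the relation \eqref{eq:133}, which is just the Hecke braid relation $H_i H_{i+1} H_i = H_{i+1} H_i H_{i+1}$ rewritten in terms of the generators $C_i$. Since the $\check H_i$ are known to define a Hecke algebra action on $V^{\otimes n}$ (they arise from the braiding), this relation holds automatically. This replaces your finite-but-tedious computation by a one-line conceptual step; your direct approach of course also works, but the paper's shortcut is worth knowing.
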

  \begin{proof}
  First, we have to check that \(\funcT\) satisfies the relations defining \(\catWeb\). It is straightforward to check that \(\funcT\) respects relations \eqref{eq:44}. Relation \eqref{eq:O53} is satisfied thanks to \eqref{eq:83}. Relation \eqref{eq:54} is satisfied thanks to \eqref{eq:133}.

The functors \(\funcT\) is dense since, by definition, the objects of \(\catRep\) are exactly the \(V(\bolda)\) for all sequences \(\bolda\) of positive integer numbers. We prove now that \(\funcT\) is full. By Proposition~\ref{prop:4} and \eqref{eq:84} the map \(\Hom_\catWeb(\compn,\compn) \mapto \Hom_\catRep(V^{\otimes n}, V^{\otimes n})\) induced by \(\funcT\) is surjective. Since \(\Hom_{\Uqgl}(V^{\otimes m}, V^{\otimes n})=0\) unless \(m = n\), it follows more in general that the map \(\Hom_\catWeb(\compm,\compn) \mapto \Hom_\catRep(V^{\otimes m} , V^{\otimes n})\) induced by \(\funcT\) is surjective for all \(m,n\). Now each representation \(V(\bolda) \in \catRep\) embeds in some \(V^{\otimes n}\), and the corresponding inclusion and projection are images under \(\funcT\) of the standard inclusion \eqref{eq:142} and of the standard projection \eqref{eq:144}. Hence \(\funcT\) induces a surjective map \(\Hom_\catWeb(\bolda,\bolda') \mapto \Hom_\catRep(V(\bolda),V(\bolda'))\) for all sequences \(\bolda,\bolda'\).
  \end{proof}

  \begin{remark}
    \label{rem:12}
    It is actually possible to describe explicitly additional relations on the category \(\catRep\) so that \(\funcT\) descends to an equivalence of categories (cf.\ \cite[Theorem~3.3.12]{miophd2}).
  \end{remark}

In what follows, we are often going to omit to
  write the functor \(\funcT\) and consider a web just as a
  homomorphism of the corresponding representations.

\subsubsection{Matrix coefficients}
\label{reps:sec:matrix-coefficients-1}

Let \(\phi\) be a web from \(\bolda=(a_1,\ldots,a_\ell)\) to \(\bolda'=(a_1',\ldots,a_{\ell'}')\). Given \(\boldeta \in \{0,1\}^\ell, \boldgamma \in \{0,1\}^{\ell'}\), we can consider the matrix coefficient
\begin{equation}
  \label{reps:eq:42}
  \langle \phi (v^\bolda_\boldeta), v^{\bolda'}_\boldgamma \rangle,
\end{equation}
which is the coefficient of \(v^{\bolda'}_\boldgamma\) in \(\phi(v^\bolda_\boldeta)\) when expressed in the standard basis. We
 represent it  by a labeled web diagram
  \begin{equation}\label{reps:eq:56}
    \begin{tikzpicture}[>=angle 45,scale=1,yscale=0.9,anchorbase]

      \draw[int] (1,0) -- ++(0,0.5);
      \draw[int] (1.5,0.2) -- ++(0,0.3);
      \draw[int] (3,0) -- ++(0,0.5);

      \draw[int] (1,1.5) -- ++(0,0.5);
      \draw[int] (1.5,1.5) -- ++(0,0.5);
      \draw[int] (3,1.5) -- ++(0,0.3);

      \node at (2.25,0.25) {\(\cdots\)};
      \node at (2.25,1.75) {\(\cdots\)};

      \draw (0.5,0.5) rectangle node {\(\phi\)}(3.5,1.5);

      \mydrawdown{(1,0)};

      \mydrawup{(1.5,0.2)};
      \mydrawdown{(3,0)};
      \mydrawup{(1,2)};
      \mydrawup{(1.5,2)};
      \mydrawdown{(3,1.8)};
      \draw[decorate,decoration={brace,mirror,raise=0.1cm}] (1,0) -- node[below=0.3cm] {\(\ell\)} (3,0);
      \draw[decorate,decoration={brace,raise=0.1cm}] (1,2) -- node[above=0.3cm] {\(\ell'\)} (3,2);
    \end{tikzpicture}
  \end{equation}
where the \(i\)--th line below is labeled by \(\up\) if
\(\eta_i=0\) and by \(\down\) if \(\eta_i=1\), and the \(i\)--th line above is labeled by \(\up\) if \(\gamma_i=0\) and by \(\down\) if \(\gamma_i=1\).

Diagrams provide a convenient way to compute matrix coefficients, as we are going to explain. Fix a diagram \(\phi\)
and suppose that we want to compute the coefficient \eqref{reps:eq:42}.
We start with the picture \eqref{reps:eq:56}. Then we label every edge of
the graph with \(\up\) and \(\down\), in all possible ways. Such a
``completely labeled'' graph is evaluated according to the
local rules in Figure~\ref{reps:fig:evaluations} (the missing label possibilities are evaluated to zero,
and the total evaluation is obtained via multiplication).
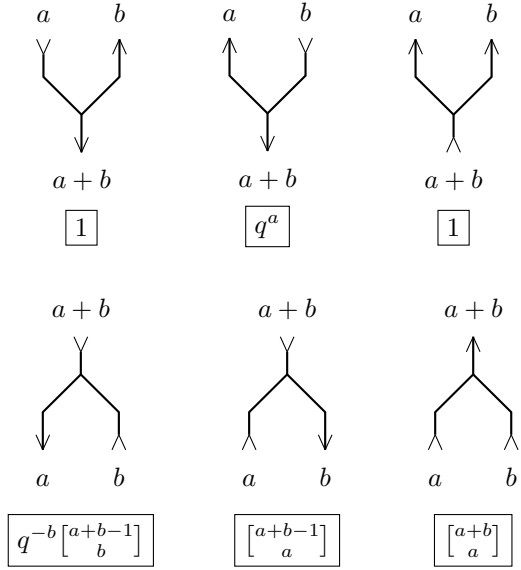
\begin{figure}
\tikzset{
    every picture/.style={
        show background rectangle,
        inner frame sep=4pt,
        background rectangle/.style={
            draw=none
        }
    }
}
  \begin{equation*}
  \begin{gathered}
    \begin{aligned}
      \begin{tikzpicture}
        \draw[int] (1.5,1) node[below=1mm] {$a+b$}--
        (1.5,1.5) -- (1,2) -- ++(0,0.3) node[above=3mm] {$a$};
        \draw[int] (1.5,1.5) -- (2,2) -- ++(0,0.5) node[above=1mm] {$b$};
        \mydrawdown{(1.5,1)}; \mydrawdown{(1,2.3)};
        \mydrawup{(2,2.5)};
        \node[ev] at (1.5,0) {$1$};
      \end{tikzpicture}
    \end{aligned}\qquad
    \begin{aligned}
      \begin{tikzpicture}
        \draw[int] (1.5,1) node[below=1mm] {$a+b$}-- (1.5,1.5) -- (1,2) -- ++(0,0.5) node[above=1mm] {$a$};
        \draw[int] (1.5,1.5) -- (2,2) -- ++(0,0.3) node[above=3mm] {$b$};
        \mydrawdown{(1.5,1)}; \mydrawup{(1,2.5)};
        \mydrawdown{(2,2.3)}; \node[ev] at (1.5,0) {$q^a$};
      \end{tikzpicture}
    \end{aligned}\qquad
    \begin{aligned}
      \begin{tikzpicture}
        \draw[int] (1.5,1.2) node[below=3mm] {$a+b$}--
        (1.5,1.5) -- (1,2) -- ++(0,0.5) node[above=1mm] {$a$};
        \draw[int] (1.5,1.5) -- (2,2) -- ++(0,0.5) node[above=1mm] {$b$};
        \mydrawup{(1.5,1.2)}; \mydrawup{(1,2.5)}; \mydrawup{(2,2.5)};
        \node[ev] at (1.5,0) {$1$};
      \end{tikzpicture}
    \end{aligned}\\
    \begin{aligned}
      \begin{tikzpicture}
        \draw[int] (1.5,2.3) node[above=3mm]
        {$a+b$}-- (1.5,2) -- (1,1.5) -- ++(0,-0.5)
        node[below=1mm] {$\vphantom{b}a$};
        \draw[int] (1.5,2) -- (2,1.5) --
        ++(0,-0.3) node[below=3mm] {$b$};
        \mydrawdown{(1.5,2.3)};
        \mydrawdown{(1,1)};
        \mydrawup{(2,1.2)};
        \node[ev] at (1.5,-0.2) {$q^{-b}\qbin{a+b-1}{b}$};
      \end{tikzpicture}
    \end{aligned}\qquad
    \begin{aligned}
      \begin{tikzpicture}
        \draw[int] (1.5,2.3) node[above=3mm]
        {$a+b$}-- (1.5,2) -- (1,1.5) -- ++(0,-0.3)
        node[below=3mm] {$\vphantom{b}a$}; \draw[int] (1.5,2) -- (2,1.5) --
        ++(0,-0.5) node[below=1mm] {$b$};
        \mydrawdown{(1.5,2.3)};
        \mydrawup{(1,1.2)};
        \mydrawdown{(2,1)};
        \node[ev] at (1.5,-0.2) {$\qbin{a+b-1}{a}$};
      \end{tikzpicture}
    \end{aligned}\qquad
    \begin{aligned}
      \begin{tikzpicture}
        \draw[int] (1.5,2.5) node[above=1mm]
        {$a+b$}-- (1.5,2) -- (1,1.5) -- ++(0,-0.3)
        node[below=3mm] {$\vphantom{b}a$}; \draw[int] (1.5,2) -- (2,1.5) --
        ++(0,-0.3) node[below=3mm] {$b$};
        \mydrawup{(1.5,2.5)};
        \mydrawup{(1,1.2)};
        \mydrawup{(2,1.2)};
        \node[ev] at (1.5,-0.2) {$\qbin{a+b}{a}$};
      \end{tikzpicture}
    \end{aligned}
  \end{gathered}
\end{equation*}
\caption{Evaluation of elementary diagrams.}\label{reps:fig:evaluations}
\end{figure}
To evaluate the initial picture, sum the evaluations over all possible ``complete labeling''.

\subsubsection{Canonical basis}
\label{reps:sec:canonical-basis}

Fix a sequence \(\bolda=(a_1,\ldots,a_\ell)\) and consider a standard basis element \(v^\bolda_\boldeta\) of \(V(\bolda)\). This standard basis element is represented by a (trivial) diagram, obtained as follows: take the identity web \(\bolda \mapto \bolda\) and label the edges from the left to the right with an \(\up\) if \(\eta_i=0\)  and a \(\down\) if \(\eta_i=1\), (as in Figure~\ref{fig:standard-basis-diagram}). We call it the \emph{standard basis diagram} corresponding to \(v^\bolda_\boldeta\).

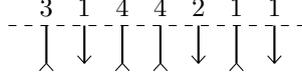
\begin{figure}
  \centering
  \begin{tikzpicture}[yscale=0.5]
    \draw[dashed] (0,1) -- ++(4,0);
    \draw[int] (0.5,0)  -- ++(0,1) node[above] {\(3\)};
    \draw[int] (1,0)  -- ++(0,1) node[above] {\(1\)};
    \draw[int] (1.5,0)  -- ++(0,1) node[above] {\(4\)};
    \draw[int] (2,0)  -- ++(0,1) node[above] {\(4\)};
    \draw[int] (2.5,0)  -- ++(0,1) node[above] {\(2\)};
    \draw[int] (3,0)  -- ++(0,1) node[above] {\(1\)};
    \draw[int] (3.5,0)  -- ++(0,1) node[above] {\(1\)};
    \mydrawup{(0.5,0)};    
    \mydrawdown{(1,0)};    
    \mydrawup{(1.5,0)};  
    \mydrawup{(2,0)};    
    \mydrawdown{(2.5,0)};
    \mydrawup{(3,0)};
    \mydrawdown{(3.5,0)};
  \end{tikzpicture}
  \caption{The standard basis diagram for \(v^{(3,1,4,4,2,1,1)}_{(0,1,0,0,1,0,1)}\).}
  \label{fig:standard-basis-diagram}
\end{figure}

Starting from this standard basis diagram, one
can obtain the corresponding canonical basis element as follows.  For every
consecutive \(\down\up\) (in this order), join the corresponding two
edges as follows:
\begin{equation}\label{reps:eq:61}
  \begin{aligned}
    \begin{tikzpicture}[yscale=0.8]
      \draw[int] (0.25,1) -- ++(0,1);
      \draw[int] (1,1) -- ++(0,1);
      \mydrawdown{(0.25,1)}
      \mydrawup{(1,1)}
    \end{tikzpicture}
  \end{aligned}\qquad\leadsto\qquad
  \begin{aligned}
    \begin{tikzpicture}[yscale=0.8]
      \draw[int] (1.5,1) --
      (1.5,1.375) -- (1.125,2) ; \draw[int] (1.5,1.375) -- (1.875,2);
      \mydrawdown{(1.5,1)}
    \end{tikzpicture}
  \end{aligned}
\end{equation}
At the same time, as in the picture, replace the labeling \(\down\up\)  at the bottom  with a  \(\down\).
Repeat this process using at each step also the
\(\down\)'s created in the previous steps, until no more \(\down\up\) is
left.  At the end, we will obtain some diagram
\(C(v^\bolda_\boldeta)\) that we call the \emph{canonical basis
  diagram} corresponding to \(v^\bolda_\boldeta\) (see Figure \ref{fig:canonical-basis-diagram} and Example~\ref{ex:1} below). Notice that the diagram \(C(v^\bolda_\boldeta)\) is a web diagram together with a labeling at the bottom (but no labeling at the top).

\begin{remark}
  Note that this canonical basis diagram is obtained joining
  recursively each edge labeled by a \(\down\) with all immediately
  following edges labeled by \(\up\)'s. If we use multiple vertices (as
  defined by \eqref{catO:eq:180}), we can construct the canonical basis
  diagram in just one step. In particular, the construction is
  independent of the order in which we consider the pairs \(\down\up\).\label{rem:10}
\end{remark}

\begin{figure}
  \centering
  \begin{tikzpicture}[yscale=1]
    \draw[dashed] (0,1) -- ++(4,0);
    \draw[int] (0.5,0)  -- ++(0,1) node[above] {\(3\)};
    \draw[int] (1.25,0.75)  -- (1,1) node[above] {\(1\)};
    \draw[int] (1.625,0.25) -- (1.25,0.5) -- (1.25,0.75)  -- (1.5,1) node[above] {\(4\)};
    \draw[int] (1.625,0) -- ++(0,0.25) -- (2,0.5)  -- (2,1) node[above] {\(4\)};
    \draw[int] (2.75,0) -- (2.75,0.75)  -- (2.5,1) node[above] {\(2\)};
    \draw[int] (2.75,0.75)  -- (3,1) node[above] {\(1\)};
    \draw[int] (3.5,0)  -- ++(0,1) node[above] {\(1\)};
    \mydrawup{(0.5,0)};    
    \mydrawdown{(1.625,0)};    
    \mydrawdown{(2.75,0)};
    \mydrawdown{(3.5,0)};
  \end{tikzpicture}
  \caption{The canonical basis diagram for \(v^{(3,1,4,4,2,1,1)}_{(0,1,0,0,1,0,1)}\).}
  \label{fig:canonical-basis-diagram}
\end{figure}
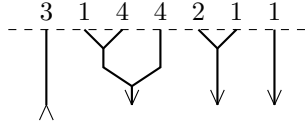

We claim that canonical basis diagrams correspond to canonical basis elements via \(\funcT\). Indeed,
the diagram \(C(v^\bolda_\boldeta)\) has  an underlying web \(W(v^\bolda_\boldeta)\) (by removing the labeling at the bottom) that
represents some embedding \(V(\bolda')\mapto V(\bolda)\), where \(\bolda'\) is some
composition that is refined by \(\bolda\). This web carries on the bottom
the labels of a standard basis element \(v^{\bolda'}_{\boldeta'}\) of \(V(\bolda')\). Then we have:

\begin{prop}
  \label{prop:14}
  The diagram \(C(v^\bolda_\boldeta)\) gives the canonical basis element \(v^{\canon \bolda}_\boldeta\) of \(V(\bolda)\), in the sense that
  \begin{equation}
    \label{eq:151}
    \funcT(W(v^\bolda_\boldeta))(v^{\bolda'}_{\boldeta'}) = v^{\canon \bolda}_{\boldeta}.
  \end{equation}
\end{prop}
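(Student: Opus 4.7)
The plan is to verify the two defining properties of the canonical basis element stated in Theorem~\ref{reps:thm:2} for the vector $\Psi := \funcT(W(v^\bolda_\boldeta))(v^{\bolda'}_{\boldeta'})$: bar invariance, and an expansion of the form $\Psi = v^\bolda_\boldeta + \sum_{\boldeta''\prec\boldeta} c_{\boldeta''} v^\bolda_{\boldeta''}$ with each $c_{\boldeta''} \in q\Z[q]$.

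For the bar invariance I would first observe that $\boldeta'$ is always of the shape $(0,\ldots,0,1,\ldots,1)$: by construction each $\down$ of $\boldeta$ starts a block (labeled $\down$ in $\boldeta'$) absorbing all subsequent $\up$s up to the next $\down$, while every $\up$ occurring before any $\down$ becomes a singleton block (labeled $\up$ in $\boldeta'$). Thus for every adjacent pair $(\eta'_i,\eta'_{i+1})$ one has $\eta'_i = 0$ or $\eta'_{i+1} = 1$, so $(E \otimes F)(v^{a'_i}_{\eta'_i} \otimes v^{a'_{i+1}}_{\eta'_{i+1}}) = 0$ and the correction term in $\Theta'$ vanishes on each adjacent pair; iterating this shows that $v^{\bolda'}_{\boldeta'}$ is bar invariant. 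Next, the map $\funcT(W(v^\bolda_\boldeta))$ is an iterated tensor product and composition of maps $\Phi^{a,b}$ (multivalent vertices being, by \eqref{catO:eq:180}, themselves compositions of trivalent ones); each $\Phi^{a,b}$ commutes with the bar involution by the remark after \eqref{eq:83}, and bar-equivariance is preserved under composition and tensor product of $\Uqgl$-module maps since $\Theta'$ commutes with such maps. Hence $\Psi$ is bar invariant.

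For the standard-basis expansion I would iteratively apply the formula \eqref{eq:38}. Concretely, on each $\down$-block $(a_{i_1},\ldots,a_{i_r})$ of $\boldeta'$, the iterated split sends $v^{a_{i_1}+\cdots+a_{i_r}}_1$ to $\sum_{s=1}^r q^{a_{i_1}+\cdots+a_{i_{s-1}}} v^{a_{i_1}}_0 \otimes \cdots \otimes v^{a_{i_s}}_1 \otimes \cdots \otimes v^{a_{i_r}}_0$, while each $\up$-block simply outputs $v^{a_i}_0$. Choosing $s=1$ in every block yields, with coefficient $1$, precisely $v^\bolda_\boldeta$, since by construction $\boldeta$ places its $\down$s at the leftmost positions of the blocks. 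Any other choice moves some $\down$ strictly to the right within its block, past at least one $\up$, which strictly decreases the number of inversions of the associated coset representative in $((\bbS_k \times \bbS_{n-k}) \backslash \bbS_n)^{\short}$ and hence produces a Bruhat-smaller standard basis element; moreover each such non-leading term picks up a strictly positive power of $q$. The uniqueness statement of Theorem~\ref{reps:thm:2} then identifies $\Psi$ with $v^{\canon\bolda}_\boldeta$.

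The main obstacle will be the bookkeeping in the Bruhat comparison: one must check that shifting a $\down$ rightwards within a block both remains within the set of shortest coset representatives and strictly decreases the length of that representative. Both claims follow straightforwardly from the bijection \eqref{eq:206}, but are the only place requiring a careful case analysis; the iterated formula for $\Phi^{a,b}$ is a routine induction on block size from \eqref{eq:38}, and the bar-invariance step is purely formal once the shape of $\boldeta'$ is recorded.
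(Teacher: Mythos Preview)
Your proposal is correct and follows essentially the same strategy as the paper's proof: verify bar-invariance of $v^{\bolda'}_{\boldeta'}$ (the paper simply notes it is the minimal element \eqref{eq:168}, which you reprove directly via $\Theta'$), use bar-equivariance of the $\Phi^{a,b}$, and then read off from the evaluation rules that the only coefficient outside $q\Z[q]$ is the one in front of $v^\bolda_\boldeta$. Your additional Bruhat check is correct (and follows exactly as you outline, via the standard initial-segment criterion for Grassmannian Bruhat order), but note that it is not strictly required: a bar-invariant element whose standard-basis coefficients all lie in $q\Z[q]$ except for a single coefficient equal to $1$ is automatically the corresponding canonical basis element, by the usual unitriangularity argument. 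This is what the paper is tacitly using when it omits the Bruhat comparison.
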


\begin{proof}
  By the construction, the labeling at the
  bottom of \(C(v^{\bolda}_\boldeta)\) is a sequence of \(\up\)'s
  followed by a sequence of \(\down\)'s.
  Hence \(v^{\bolda'}_{\boldeta'}\)
  is the minimal element (see \eqref{eq:168}), and therefore
  it is also a canonical basis element.  Now, the image
  \(\funcT(W(v^\bolda_\boldeta)) (v^{\bolda'}_{\boldeta'})\)
  under \(\funcT(W(v^\bolda_\boldeta))\)
  of \(v^{\bolda'}_{\boldeta'}\)
  is a bar-invariant element of \(V(\bolda)\)
  (since \(\funcT(\phi)\)
  sends bar-invariant elements to bar-invariant elements for
  all webs \(\phi\)).
  Examining the evaluation rules
  (Figure~\ref{reps:fig:evaluations}), one sees immediately
  that, when expressed in terms of the standard basis of
  \(V(\bolda)\),
  the coefficients of
  \(\funcT(W(v^\bolda_\boldeta)) (v^{\bolda'}_{\boldeta'})\)
  are all in \(q\Z[q]\)
  except for the coefficient of \(v^\bolda_\boldeta\),
  which is \(1\), whence the claim.
\end{proof}

\begin{example}
  \label{ex:1}
  Let \(\bolda=(3,1,4,4,2,1,1)\) and consider the element \(v^{\bolda}_{(0,1,0,0,1,0,1)} \in V(\bolda)\). The corresponding standard  and canonical basis diagrams are pictured in Figures \ref{fig:standard-basis-diagram} and \ref{fig:canonical-basis-diagram}. In particular, evaluating the canonical basis diagram according to the rules in Figure~\ref{reps:fig:evaluations}, we get the corresponding canonical basis element
  \begin{multline}
v^{\canon \bolda}_{(0,1,0,0,1,0,1)}=
      v^{\bolda}_{(0,1,0,0,1,0,1)}
      +  q v^{\bolda}_{(0,0,1,0,1,0,1)}
      + q^5 v^{\bolda}_{(0,0,0,1,1,0,1)}\\
      + q^2 v^{\bolda}_{(0,1,0,0,0,1,1)}
      + q^3 v^{\bolda}_{(0,0,1,0,0,1,1)}
      + q^7 v^{\bolda}_{(0,0,0,1,0,1,1)}.    \label{eq:140}
  \end{multline}
  For example, the coefficients  \(1\) of \(v^{\bolda}_{(0,1,0,0,1,0,1)}\) and \(q^3\) of \( v^{\bolda}_{(0,0,1,0,0,1,1)}\) in \eqref{eq:140} are obtained by evaluating the diagrams
  \begin{equation*}
\begingroup
\tikzset{every node/.style={font=\small}}
   \begin{tikzpicture}[yscale=1,anchorbase]
    \draw[dashed] (0,1.5) -- ++(4,0);
    \draw[int] (0.5,0)  -- ++(0,1);
    \draw[int] (1.25,0.75)  -- (1,1);
    \draw[int] (1.625,0.25) -- (1.25,0.5) -- (1.25,0.75)  -- (1.5,1);
    \draw[int] (1.625,0) -- ++(0,0.25) -- (2,0.5)  -- (2,1);
    \draw[int] (2.75,0) -- (2.75,0.75)  -- (2.5,1);
    \draw[int] (2.75,0.75)  -- (3,1);
    \draw[int] (3.5,0)  -- ++(0,1);
    \begin{scope}[xshift=-0.15cm,yshift=-0.1cm]
    \path (0.5,0)  -- ++(0,1) node[above] {\(3\)};
    \path (1.25,0.75)  -- (1,1) node[above] {\(1\)};
    \path (1.625,0.25) -- (1.25,0.5) -- (1.25,0.75)  -- (1.5,1) node[above] {\(4\)};
    \path (1.625,0) -- ++(0,0.25) -- (2,0.5)  -- (2,1) node[above] {\(4\)};
    \path (2.75,0) -- (2.75,0.75)  -- (2.5,1) node[above] {\(2\)};
    \path (2.75,0.75)  -- (3,1) node[above] {\(1\)};
    \path (3.5,0)  -- ++(0,1) node[above] {\(1\)};
    \end{scope}
    \draw[int] (0.5,1) -- ++(0,0.5);
    \draw[int] (1,1) -- ++(0,0.5);
    \draw[int] (1.5,1) -- ++(0,0.5);
    \draw[int] (2,1) -- ++(0,0.5);
    \draw[int] (2.5,1) -- ++(0,0.5);
    \draw[int] (3,1) -- ++(0,0.5);
    \draw[int] (3.5,1) -- ++(0,0.5);
    \mydrawup{(0.5,0)};    
    \mydrawdown{(1.625,0)};    
    \mydrawdown{(2.75,0)};
    \mydrawdown{(3.5,0)};
    \mydrawup{(0.5,1.5)};
    \mydrawdown{(1,1.5)};
    \mydrawup{(1.5,1.5)};
    \mydrawup{(2,1.5)};
    \mydrawdown{(2.5,1.5)};
    \mydrawup{(3,1.5)};
    \mydrawdown{(3.5,1.5)};
  \end{tikzpicture}\quad \text{and} \quad
   \begin{tikzpicture}[yscale=1,anchorbase]
    \draw[dashed] (0,1.5) -- ++(4,0);
    \draw[int] (0.5,0)  -- ++(0,1);
    \draw[int] (1.25,0.75)  -- (1,1);
    \draw[int] (1.625,0.25) -- (1.25,0.5) -- (1.25,0.75)  -- (1.5,1);
    \draw[int] (1.625,0) -- ++(0,0.25) -- (2,0.5)  -- (2,1);
    \draw[int] (2.75,0) -- (2.75,0.75)  -- (2.5,1);
    \draw[int] (2.75,0.75)  -- (3,1);
    \draw[int] (3.5,0)  -- ++(0,1);
    \begin{scope}[xshift=-0.15cm,yshift=-0.1cm]
    \path (0.5,0)  -- ++(0,1) node[above] {\(3\)};
    \path (1.25,0.75)  -- (1,1) node[above] {\(1\)};
    \path (1.625,0.25) -- (1.25,0.5) -- (1.25,0.75)  -- (1.5,1) node[above] {\(4\)};
    \path (1.625,0) -- ++(0,0.25) -- (2,0.5)  -- (2,1) node[above] {\(4\)};
    \path (2.75,0) -- (2.75,0.75)  -- (2.5,1) node[above] {\(2\)};
    \path (2.75,0.75)  -- (3,1) node[above] {\(1\)};
    \path (3.5,0)  -- ++(0,1) node[above] {\(1\)};
    \end{scope}
    \draw[int] (0.5,1) -- ++(0,0.5);
    \draw[int] (1,1) -- ++(0,0.5);
    \draw[int] (1.5,1) -- ++(0,0.5);
    \draw[int] (2,1) -- ++(0,0.5);
    \draw[int] (2.5,1) -- ++(0,0.5);
    \draw[int] (3,1) -- ++(0,0.5);
    \draw[int] (3.5,1) -- ++(0,0.5);
    \mydrawup{(0.5,0)};    
    \mydrawdown{(1.625,0)};    
    \mydrawdown{(2.75,0)};
    \mydrawdown{(3.5,0)};
    \mydrawup{(0.5,1.5)};
    \mydrawup{(1,1.5)};
    \mydrawdown{(1.5,1.5)};
    \mydrawup{(2,1.5)};
    \mydrawup{(2.5,1.5)};
    \mydrawdown{(3,1.5)};
    \mydrawdown{(3.5,1.5)};
  \end{tikzpicture}
\endgroup
  \end{equation*}
  respectively.
\end{example}

\begin{remark}
  \label{rem:1}
  Notice that for the regular composition \(\compn\) of \(n\) and for some standard basis element \(v^\compn_\boldeta\), the underlying web diagram \(W(v^\compn_\boldeta)\) of the corresponding canonical basis diagram \(C(v^\compn_\boldeta)\) is a diagram of the type \(\websplitmultiple_{a_1} \otimes \dotsb \otimes \websplitmultiple_{a_\ell}\), where \(\bolda\) is some composition of \(\compn\) depending on \(\boldeta\), and it is labeled at the bottom by a sequence of \(\up\)'s followed by a sequence of \(\down\)'s.
\end{remark}

\subsubsection{Action of \texorpdfstring{$E$}{E} and \texorpdfstring{$F$}{F}}
\label{reps:sec:action-e-f}

Using our diagram calculus we can easily compute the action of \(F\) on canonical basis elements (in an analogous way as \cite{MR1446615} for \(\mathfrak{sl}_2\)).

\begin{prop}
  \label{prop:15}
  Fix some representation \(V(\bolda)\) and consider a canonical basis element \(v^{\canon\bolda}_\boldeta\). We have
  \begin{equation}
    \label{eq:70}
    F(v^{\canon\bolda}_\boldeta) =
    \begin{cases}
      v^{a_1}_{1} \canon v^{a_1}_{\eta_2} \canon \cdots \canon v^{a_\ell}_{\eta_\ell} & \text{if\/ } \eta_1=0,\\
      0 & \text{otherwise}.
    \end{cases}
  \end{equation}
\end{prop}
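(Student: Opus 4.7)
The plan is to combine Proposition~\ref{prop:14}, which identifies \(v^{\canon\bolda}_\boldeta\) with the image of a minimal standard basis element \(v^{\bolda'}_{\boldeta'}\) under the web \(\funcT(W(v^\bolda_\boldeta))\), with the \(\Uqgl\)-equivariance of webs (Proposition~\ref{prop:2}) and the coproduct formula \(\Delta(F) = F \otimes 1 + K \otimes F\). The key preliminary observation is that the bottom labeling \(\boldeta'\) consists of \(k\) zeros followed by ones, where \(k\) is the position of the first \(\down\) in \(\boldeta\) minus one (or \(\ell\) if no \(\down\) exists): a \(\up\) at position \(i\) remains unclaimed by any preceding joint vertex exactly when no \(\down\) precedes it.

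When \(\eta_1 = 1\), one has \(k=0\), hence \(v^{\bolda'}_{\boldeta'} = v^{a'_1}_1 \otimes \cdots \otimes v^{a'_s}_1\). Iterated application of the coproduct together with \(Fv^{a}_1 = 0\) for every \(a\) shows \(F v^{\bolda'}_{\boldeta'} = 0\), and equivariance of \(\funcT(W)\) then yields \(Fv^{\canon\bolda}_\boldeta = 0\).

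When \(\eta_1 = 0\), let \(j \in \{2,\ldots,\ell+1\}\) be the first index with \(\eta_j = 1\) (\(j=\ell+1\) if all entries are \(0\)). Since positions \(1,\ldots,j-1\) are isolated \(\up\)'s in the canonical basis diagram, the underlying web factors as \(\id_{V(a_1)} \otimes \cdots \otimes \id_{V(a_{j-1})} \otimes W^{\mathrm{rest}}\), and one obtains \(v^{\canon\bolda}_\boldeta = v^{a_1}_0 \otimes \cdots \otimes v^{a_{j-1}}_0 \otimes u\), where \(u = v^{\canon(a_j,\ldots,a_\ell)}_{(\eta_j,\ldots,\eta_\ell)}\) (understood as the empty factor if \(j = \ell+1\)). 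Expanding \(F\) via the coproduct, and using the previous case to get \(Fu = 0\) (the leading index of the shorter sequence being \(\eta_j = 1\)), yields
\[
F v^{\canon\bolda}_\boldeta = \left(\sum_{i=1}^{j-1} q^{a_1+\cdots+a_{i-1}}\, v^{a_1}_0 \otimes \cdots \otimes v^{a_i}_1 \otimes \cdots \otimes v^{a_{j-1}}_0\right) \otimes u.
\]
The parenthesized sum equals \(\Phi^{a_1,\ldots,a_{j-1}}(v^{a_1+\cdots+a_{j-1}}_1)\), either by iterating \eqref{eq:38} directly or, more conceptually, by equivariance of \(\Phi^{a_1,\ldots,a_{j-1}}\) applied to the identity \(v^{a_1+\cdots+a_{j-1}}_1 = Fv^{a_1+\cdots+a_{j-1}}_0\). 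Hence \(F v^{\canon\bolda}_\boldeta\) is the image of a minimal element under the web \(\Phi^{a_1,\ldots,a_{j-1}} \otimes W^{\mathrm{rest}}\), which is precisely the underlying web of the canonical basis diagram for \(\boldeta^{\mathrm{new}} = (1,\eta_2,\ldots,\eta_\ell)\): the new \(\down\) at position \(1\) joins with the \(\up\)'s at positions \(2,\ldots,j-1\), and the sub-diagram on positions \(j,\ldots,\ell\) is unchanged. Another application of Proposition~\ref{prop:14} identifies the result as \(v^{a_1}_1 \canon v^{a_2}_{\eta_2} \canon \cdots \canon v^{a_\ell}_{\eta_\ell}\).

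The only point that requires care is the bookkeeping identifying the sum coming from the iterated coproduct with the inclusion \(\Phi^{a_1,\ldots,a_{j-1}}\) applied to the lowest-weight vector; everything else is driven by equivariance of the web functor and the combinatorial description of which \(\up\)'s survive to the bottom of the canonical basis diagram.
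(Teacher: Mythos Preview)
Your proof is correct and follows essentially the same strategy as the paper: represent \(v^{\canon\bolda}_\boldeta\) via its canonical basis diagram, apply \(F\) to the minimal element at the bottom using \(\Uqgl\)-equivariance of the web, and recognize the result as the canonical basis diagram for \((1,\eta_2,\ldots,\eta_\ell)\). The only difference is in the order of operations: the paper first uses the web relations to merge the \(j-1\) parallel \(\up\)-strands into a single multivalent split (so the bottom labeling becomes \((0,1,\ldots,1)\) with a single \(\up\)), and then \(F\) acts by simply flipping that one \(\up\) to a \(\down\); you instead compute \(F\) on the longer tensor product via the coproduct and identify the resulting sum with \(\Phi^{a_1,\ldots,a_{j-1}}(v^{a_1+\cdots+a_{j-1}}_1)\) afterwards. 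The paper's diagrammatic shortcut avoids your explicit bookkeeping step, but the content is the same.
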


\begin{proof}
  Suppose that \(\eta_i=0\) for \(i=1,\ldots,h\), while \(\eta_{h+1}=1\) (possibly \(h=0\)). The canonical basis diagram \(C(v^{\bolda}_\boldeta)\) is
  \begin{equation}
    \label{eq:71}
    \begin{tikzpicture}[yscale=0.8]
      \draw[dashed] (0,0) -- ++(4.5,0);
      \draw[int] (0.5,-2) -- ++(0,2);
      \draw[int] (1.5,-2) -- ++(0,2);
      \node at (1,-1) {\(\cdots\)};
      \draw[decorate,decoration={brace,raise=0.1cm}] (0.5,0) -- node[above=0.1cm] {\(h\)} (1.5,0);
      \draw[int] (2,-0.5) -- ++(0,0.5);
      \draw[int] (4,-0.5) -- ++(0,0.5);
      \node at (3,-0.25) {\(\cdots\)};
      \draw[int] (2.5,-2) -- ++(0,0.5);
      \draw[int] (3.5,-2) -- ++(0,0.5);
      \node at (3,-1.75) {\(\cdots\)};
      \draw (1.75,-1.5) rectangle (4.25,-0.5);
      \mydrawup{(0.5,-2)};
      \mydrawup{(1.5,-2)};
      \mydrawdown{(2.5,-2)};
      \mydrawdown{(3.5,-2)};
    \end{tikzpicture}
  \end{equation}
  where there are some vertices in the box. We can also represent it as
  \begin{equation}
    \label{eq:72}
    \begin{tikzpicture}[yscale=0.8]
      \draw[dashed] (0,0) -- ++(4.5,0);
      \draw[int] (1,-2) -- ++(0,1) -- (0.5,0);
      \draw[int] (1,-1) -- (1.5,0);
      \node at (1,-0.5) {\(\cdots\)};
      \draw[decorate,decoration={brace,raise=0.1cm}] (0.5,0) -- node[above=0.1cm] {\(h\)} (1.5,0);
      \draw[int] (2,-0.5) -- ++(0,0.5);
      \draw[int] (4,-0.5) -- ++(0,0.5);
      \node at (3,-0.25) {\(\cdots\)};
      \draw[int] (2.5,-2) -- ++(0,0.5);
      \draw[int] (3.5,-2) -- ++(0,0.5);
      \node at (3,-1.75) {\(\cdots\)};
      \draw (1.75,-1.5) rectangle (4.25,-0.5);
      \mydrawup{(1,-2)};
      \mydrawdown{(2.5,-2)};
      \mydrawdown{(3.5,-2)};
    \end{tikzpicture}
  \end{equation}
  because this is the same element according to our diagrammatic calculus. On the bottom we read the labels of \(v^{\bolda'}_{\boldgamma}\) for some composition \(\bolda'\) refining \(\bolda\), where \(\boldgamma=(0,1,\ldots,1)\). We can easily compute
  \begin{equation}
    \label{eq:73}
    F(v^{a'_1}_0 \otimes v^{a'_2}_1 \otimes \cdots \otimes v^{a'_{r'}}_1) = v^{a'_1}_1 \otimes v^{a'_2}_1 \otimes \cdots \otimes v^{a'_{r'}}_1.
  \end{equation}
  Hence
  \begin{equation}\label{eq:149}
    F \left(
      \begin{aligned}
        \begin{tikzpicture}[yscale=0.8]
          \draw[dashed] (0,0) -- ++(4.5,0);
          \draw[int] (1,-2) -- ++(0,1) -- (0.5,0);
          \draw[int] (1,-1) -- (1.5,0);
          \node at (1,-0.5) {\(\cdots\)};
          \draw[decorate,decoration={brace,raise=0.1cm}] (0.5,0) -- node[above=0.1cm] {\(h\)} (1.5,0);
          \draw[int] (2,-0.5) -- ++(0,0.5);
          \draw[int] (4,-0.5) -- ++(0,0.5);
          \node at (3,-0.25) {\(\cdots\)};
          \draw[int] (2.5,-2) -- ++(0,0.5);
          \draw[int] (3.5,-2) -- ++(0,0.5);
          \node at (3,-1.75) {\(\cdots\)};
          \draw (1.75,-1.5) rectangle (4.25,-0.5);
          \mydrawup{(1,-2)};
          \mydrawdown{(2.5,-2)};
          \mydrawdown{(3.5,-2)};
        \end{tikzpicture}
      \end{aligned}
\right) =
      \begin{aligned}
        \begin{tikzpicture}[yscale=0.8]
          \draw[dashed] (0,0) -- ++(4.5,0);
          \draw[int] (1,-2) -- ++(0,1) -- (0.5,0);
          \draw[int] (1,-1) -- (1.5,0);
          \node at (1,-0.5) {\(\cdots\)};
          \draw[decorate,decoration={brace,raise=0.1cm}] (0.5,0) -- node[above=0.1cm] {\(h\)} (1.5,0);
          \draw[int] (2,-0.5) -- ++(0,0.5);
          \draw[int] (4,-0.5) -- ++(0,0.5);
          \node at (3,-0.25) {\(\cdots\)};
          \draw[int] (2.5,-2) -- ++(0,0.5);
          \draw[int] (3.5,-2) -- ++(0,0.5);
          \node at (3,-1.75) {\(\cdots\)};
          \draw (1.75,-1.5) rectangle (4.25,-0.5);
          \mydrawdown{(1,-2)};
          \mydrawdown{(2.5,-2)};
          \mydrawdown{(3.5,-2)};
        \end{tikzpicture}
      \end{aligned}
  \end{equation}
  that is exactly our assertion.
\end{proof}

By \eqref{eq:75} it follows that \(E\) sends the dual canonical basis to the dual canonical basis (up to a multiple):

\begin{prop}
  \label{prop:21}
  Fix some representation \(V(\bolda)\) and consider a dual canonical basis element \(v^{\dualcanon\bolda}_\boldeta\). We have
  \begin{equation}
    \label{eq:166}
    E(v^{\dualcanon\bolda}_\boldeta) =
    \begin{cases}
      \displaystyle \frac{[\beta^\boldeta_1+\cdots+\beta^\boldeta_\ell]_0}{q^{a_1+\cdots+a_\ell-1}}v^{a_1}_{0} \dualcanon v^{a_1}_{\eta_2} \dualcanon \cdots \dualcanon v^{a_\ell}_{\eta_\ell} & \text{if\/ } \eta_1=1,\\
      0 & \text{otherwise}.
    \end{cases}
  \end{equation}
\end{prop}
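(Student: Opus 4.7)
The plan is to deduce the action of $E$ on the dual canonical basis from Proposition~\ref{prop:15} (which describes $F$ on the canonical basis) by exploiting the near-adjointness of $E$ and $F$ given by Lemma~\ref{lem:4}, together with the defining pairing between canonical and dual canonical bases.

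First, since the form $(\cdot,\cdot)_\bolda$ pairs $v^{\canon\bolda}_\bullet$ and $v^{\dualcanon\bolda}_\bullet$ orthonormally and $E$ decreases weight by one, I will expand
\begin{equation*}
E v^{\dualcanon\bolda}_\boldeta = \sum_\boldgamma c_\boldgamma\, v^{\dualcanon\bolda}_\boldgamma, \qquad c_\boldgamma = (v^{\canon\bolda}_\boldgamma,\, E v^{\dualcanon\bolda}_\boldeta)_\bolda,
\end{equation*}
where the sum runs over $\boldgamma$ with $\sum_i \gamma_i = \sum_i \eta_i - 1$. Thus it suffices to compute each $c_\boldgamma$.

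Second, I will invoke Lemma~\ref{lem:4}. Although it is stated for standard basis vectors, the scalar $q^{a_1+\cdots+a_\ell-1}/[\beta^\boldeta_1+\cdots+\beta^\boldeta_\ell]_0$ depends only on the weight of the vector in the first argument. Since $v^{\canon\bolda}_\boldgamma$ lies in a single weight space, extending by linearity and using the symmetry of $(\cdot,\cdot)_\bolda$ yields
\begin{equation*}
c_\boldgamma = \frac{[\beta^\boldgamma_1+\cdots+\beta^\boldgamma_\ell]_0}{q^{a_1+\cdots+a_\ell-1}}\,(F v^{\canon\bolda}_\boldgamma,\, v^{\dualcanon\bolda}_\boldeta)_\bolda.
\end{equation*}
By Proposition~\ref{prop:15}, the factor $F v^{\canon\bolda}_\boldgamma$ vanishes unless $\gamma_1=0$, in which case it equals the canonical basis element $v^{\canon\bolda}_{(1,\gamma_2,\ldots,\gamma_\ell)}$. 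Pairing this against $v^{\dualcanon\bolda}_\boldeta$ via the defining duality \eqref{eq:154} collapses the sum to a single Kronecker delta, forcing $\eta_1 = 1$ and $\gamma_i = \eta_i$ for $i \geq 2$.

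Consequently, $E v^{\dualcanon\bolda}_\boldeta = 0$ when $\eta_1 = 0$, and when $\eta_1 = 1$ only the single term with $\boldgamma = (0,\eta_2,\ldots,\eta_\ell)$ survives, giving the claimed formula after rewriting $\beta^\boldgamma$ in terms of $\beta^\boldeta$ (which differ only in the first entry, by $\gamma_1 - \eta_1$). The only point requiring genuine care is bookkeeping: one must apply Lemma~\ref{lem:4} with the $\beta$-index referring to the element in the first slot of the pairing (namely $\boldgamma$), and then translate the resulting scalar back into $\boldeta$-data via $\beta^\boldgamma_1 = \beta^\boldeta_1 + 1$.
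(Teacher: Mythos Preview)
Your proposal is correct and follows exactly the approach the paper indicates: the paper's ``proof'' is the single sentence preceding the proposition, namely that the result follows from \eqref{eq:75} (Lemma~\ref{lem:4}) together with Proposition~\ref{prop:15}, and you have simply spelled out this dualization argument in detail. Your final remark about carefully tracking whether the $\beta$-index refers to $\boldgamma$ or $\boldeta$ is well taken, since $\sum_i \beta^{\boldgamma}_i = \sum_i \beta^{\boldeta}_i + 1$.
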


\section{Subquotient categories of \texorpdfstring{$\catO$}{O}}
\label{sec:category-cato}

We now move to  the technical core of the paper, in which we construct the categorification using subquotient categories of \(\catO\). We have tried to separate the categorification itself (in Section \ref{sec:categorification}) from the Lie theoretical tools, which we collect now. The reader who is interested in the categorification but not in all the Lie theoretical details may want to skim quickly the following notions and pass then to Section \ref{sec:categorification}.

We  start with a quick reminder about Serre quotient categories (\textsection\ref{sec:serre-quotients}). We will then give two equivalent definitions of the categories \(\catO^{\frakp,\frakq\pres}_\lambda\) (\textsection\ref{sec:gener-parab-subc} and \textsection\ref{sec:categ-frakp-pres}) and describe their properly stratified structure. Finally, in \textsection\ref{sec:funct-betw-categ} we introduce the functors between these categories that we will use for categorifying the action of \(\Uqgl\) and of the intertwining operators in the next section.

\subsection{Serre quotients and projectively presented modules}
\label{sec:serre-quotients}

Let \(\calA\) be some abelian category which is equivalent to
the category of finite-dimensional modules over some
finite-dimensional \(\C\)--algebra. Let \(\{L(\lambda)
\suchthat \lambda \in \Lambda\}\) be the simple objects of
\(\calA\) up to isomorphism. For all \(\lambda \in \Lambda\) let
\(P(\lambda)\) be the projective cover of \(L(\lambda)\). Let
\(P= \bigoplus_{\lambda \in \Lambda} P(\lambda)\) be a minimal
projective generator and let \(R= \End_{\calA}(P)\). Then we
have an equivalence of categories
\(\calA \cong \rmod{R}\)
via the functor \(\Hom_\calA(P,\blank)\).

\subsubsection{Serre subcategories}
\label{sec:serre-subcategories}

A non-empty full subcategory \(\calS \subset \calA\) is called
a \emph{Serre subcategory} if it is closed under subobjects,
quotients and extensions. For a subset \(\Gamma \subseteq
\Lambda\) define \(\calS_\Gamma\) to be the full subcategory of
\(\calA\) consisting of the modules with all composition
factors of type \(L(\gamma)\) for \(\gamma \in \Gamma\). Then
\(\calS_\Gamma\) is obviously a Serre subcategory of
\(\calA\). Let \(I_\Gamma\) be the two-sided ideal of
\(R=\End_\calA(P)\) generated by all endomorphisms which
factor through some \(P(\eta)\) for \(\eta \notin
\Gamma\). Notice that if we let \(e_\lambda\) for \(\lambda \in
\Lambda\) be the idempotent projecting onto
\(\End_\calA(P(\lambda)) \subset R\) and
\(e^\perp_\Gamma=\sum_{\eta \notin \Gamma} e_\eta\) then
\(I_\Gamma = R e_\Gamma^\perp R\). Then
\begin{equation}
  \label{eq:13}
  \calS_\Gamma \cong \rmod{R/I_\Gamma}.
\end{equation}

A complete set of pairwise non-isomorphic simple objects in \(\calS_\Gamma\) is given by the \(L(\gamma)\)'s for \(\gamma \in \Gamma\) and each of them has a projective cover \(P^{\calS_\Gamma}(\gamma)\) in \(\calS_\Gamma\), which is the biggest quotient of \(P(\gamma)\) which lies in \(\calS_\Gamma\).

\subsubsection{Serre quotients}
\label{sec:serre-quotients-1}

Given a Serre subcategory \(\calS \subset \calA\) one defines the \emph{quotient category} \(\calA/\calS\) to be the category with the same objects of \(\calA\) and with morphisms
 \( \Hom_{\calA/\calS} ( M,N) = \varinjlim \Hom_\calA (M',N/N')\),
where the direct limit is taken over all pairs \(M' \subseteq M\), \(N' \subseteq N\) such that \(M/M' \in \calS\) and \(N' \in \calS\). The Serre quotient comes with an exact quotient functor \(Q\colon \calA \to \calA/\calS\) (see \cite{MR0232821}).

Also in this case, we have an equivalence of categories
  \begin{equation}
    \label{catO:eq:201}
    \calA/\calS_\Gamma \cong \rmod{\End_\calA(P^\perp_\Gamma)},
  \end{equation}
where \(P^\perp_\Gamma=\bigoplus_{\eta \in \Lambda - \Gamma} P(\eta)\) (see for example \cite[Proposition~33]{MR2774639}).   The quotient functor is \(Q = \Hom_\calA (P^\perp_\Gamma, \blank)\). In particular, we can deduce from \eqref{catO:eq:201} the abelian structure of \(\calA/\calS_\Gamma\). Notice that \(\End_\calA(P^\perp_\Gamma) = e^\perp_\Gamma R e^\perp_\Gamma\) where \(e^\perp_\Gamma = \sum_{\gamma \in \Lambda - \Gamma} e_\gamma\). 

A complete set of pairwise non-isomorphic simple objects in \(\calA/\calS_\Gamma\) is given by the \(L(\eta)\)'s for \(\eta \in \Lambda - \Gamma\), with projective covers \(P(\eta)\).
 
\subsubsection{Presentable modules}
\label{sec:presentable-modules}

Let \(\calC\) be an additive subcategory of  \(\calA\). We define the category of \emph{\(\calC\)--presentable objects} to be the full subcategory of \(\calA\) consisting of all objects \(M \in \calA\) having a presentation
 \( Q_1 \longrightarrow Q_2 \twoheadlongrightarrow M\)
with \(Q_1,Q_2 \in \calC\). Given a projective object \(P \in \calA\) we let \(\Add(P)\) be the additive full subcategory of \(\calA\) consisting of all objects which admit a direct sum decomposition with summands being direct summands of \(P\), and we consider the category \(\overline{\Add(P)}\) of \emph{\(P\)--presentable} or \emph{\(\Add(P)\)--presentable objects}. By \cite[Proposition~5.3]{MR0349747}, the category \(\overline{\Add(P)}\) is equivalent to \(\rmod{\End_\calA(P)}\). In particular, if \(P= P^\perp_\Gamma\) as in \eqref{catO:eq:201}, then we have
\begin{equation}
\overline{\Add(P^\perp_\Gamma)} \cong \rmod{\End_\calA(P^\perp_\Gamma)} \cong \calA/\calS_\Gamma.\label{catO:eq:203}
\end{equation}
Notice that this gives an equivalence between  \(\calA/\calS_\Gamma\) and a full subcategory of \(\calA\).

\begin{remark}
  \label{rem:11}
  If \(M,N \in \calA/\calS_\Gamma\) then by definition \(M\) and \(N\) are also objects of \(\calA\) and we can consider both the homomorphism spaces \(\Hom_{\calA}(M,N)\) and \(\Hom_{\calA/\calS_\Gamma}(M,N)\): they are in general different. But notice that if \(M\) and \(N\), as objects of \(\calA\), are \(P^\perp_\Gamma\)--presentable, then the two homomorphism spaces coincide by \eqref{catO:eq:203}. In the following, we will most of the time only deal with objects of Serre quotient categories which are also presentable.
\end{remark}

\subsection{Subquotient categories of \texorpdfstring{$\catO$}{O}}
\label{sec:gener-parab-subc}

Let us fix a positive integer \(n\). Let \(\gl_n\) be the general
Lie algebra of \(n \times n\) matrices with the standard Cartan
decomposition \(\gl_n=\frakn^- \oplus \frakh \oplus \frakn^+\) and let \(\frakb = \frakh \oplus \frakn^+\) be the standard Borel subalgebra. Consider
the integral BGG category \(\catO=\catO(\gl_n)=\catO(\gl_n,\frakb)\): this is the full
subcategory of finitely generated \(U(\gl_n)\)--modules that are weight
modules for the action of \(\frakh\) with integral weights and that are
locally \(\frakn^+\)--finite.
We recall some standard facts on the category \(\catO\); for more details we refer to
\cite{MR2428237}.

The category \(\catO\) is a highest weight category
(\cite{MR961165}). For a weight \(\lambda\) of \(\gl_n\) we let
\(M(\lambda)\) be the Verma module with highest weight \(\lambda\). We let
\(L(\lambda)\) be the unique simple quotient of \(M(\lambda)\) and
\(P(\lambda)\) be its projective cover. The modules \(L(\lambda)\) for
\(\lambda\) running over the integral weights of \(\gl_n\) give a full set
of pairwise non-isomorphic simple objects in \(\catO\).

We consider the dot action of the Weyl group \(\bbS_n\) on \(\frakh^*\), given by \(w \cdot \lambda= w(\lambda+\rho) -\rho\). Two simple objects \(L(\lambda)\), \(L(\mu)\) are in the same block of \(\catO\) if and only if \(\lambda\) and \(\mu\) are in the same \(\bbS_n\)--orbit under the dot action. For an integral dominant weight \(\lambda\) we let \(\catO_\lambda\) be the block of \(\catO\) containing \(L(\lambda)\). We have then a block decomposition \(\catO= \bigoplus_\lambda
\catO_\lambda\).

If \(P_\lambda\) is a minimal projective generator of \(\catO_\lambda\), let \(A_\lambda=\End_{\catO_\lambda}(P_\lambda)\). Then \(\catO_\lambda \cong \rmod{A_\lambda}\). The algebra \(A_\lambda\) can be given a natural positive grading (see \cite{MR1029692}, \cite{MR1322847}) which turns it into a Koszul algebra. Then one defines the \emph{graded version} of \(\catO_\lambda\) to be \(\catOZ_\lambda = \rgmod{A_\lambda}\).

Given a standard parabolic subalgebra \(\frakp \subset \gl_n\) with
Levi factor \(\frakl\), let \(\catO^{\frakp}\) be the full subcategory
of \(\catO\) consisting of modules that, viewed as \(U(\frakl)\)--modules,
are direct sums of finite-dimensional simple \(\frakl\)--modules. Let
\(W_\frakp \subset \bbS_n\) be the standard parabolic subgroup corresponding
to \(\frakp\), and let \(W^\frakp\) be the set of shortest coset representatives for \({W_\frakp}\backslash{ \bbS_n}\). Then \(\catO^{\frakp}\) is also the full Serre
subcategory of \(\catO\) generated by the simple objects \(L(x \cdot
\lambda )\) for \(\lambda\) dominant and \(x \in W^{\frakp}\) such that \(x \bbS_\lambda \subseteq W^\frakp\). We denote by \(P^{\frakp}(x \cdot \lambda)\)
the projective cover of \(L(x \cdot \lambda)\) in \(\catO^\frakp\) and
by \(M^{\frakp}(x \cdot \lambda)\) the corresponding parabolic Verma
module. The block decomposition of \(\catO\) induces a block
decomposition \(\catO^{\frakp}= \bigoplus_\lambda \catO^{\frakp}_\lambda\).

Let \(e_\frakp^\perp \in A_\lambda
= \End(\scrP(\lambda))\) be the idempotent projecting onto
the direct sum of the projective modules \(P(x \cdot
\lambda)\) for \( x \in \bbS_n\) such that \(x\bbS_\lambda
\not\subseteq W^\frakp\). Then 
\(\End(\scrP^{\frakp}(\lambda)) = A_\lambda / A_\lambda
e_\frakp^\perp A_\lambda\) and
\begin{equation}
\catO^\frakp_\lambda \cong
\rmod{\big(A_\lambda/A_\lambda e_\frakp^\perp A_\lambda\big)}.\label{eq:33}
\end{equation}
Since the idempotent \(e_\frakp^\perp\) is homogeneous, the latter quotient algebra inherits a graded structure. In
particular, there is a graded version \(\catOZ^\frakp_\lambda =
\rgmod{\big(A_\lambda/ A_\lambda e_\frakp^\perp A_\lambda\big)}\).

\subsubsection{Generalized parabolic subcategories}
\label{sec:gener-parab-subc-1}

Let now \(\frakp,\frakq\) be two orthogonal standard parabolic subalgebras of \(\gl_n\) (by orthogonal we mean that the corresponding subsets \(\Pi_\frakp,\Pi_\frakq\) of the simple roots \(\Pi\) of \(\gl_n\) are orthogonal; this is equivalent to imposing that \(\frakp+\frakq\) is also a parabolic subalgebra of \(\gl_n\) and \(\frakp \cap \frakq = \frakb\)).
Let \(W_\frakp, W_\frakq\) be the corresponding parabolic
subgroups of the Weyl group \(\bbS_n\). Note that, since \(\frakp\) and \(\frakq\) are orthogonal, \(W_\frakp \times W_\frakq\) is
also a subgroup of \(\bbS_n\). Consider the general Lie algebras
\(\gl_\frakp,\gl_\frakq \subset \gl_n\) with Weyl groups \(W_\frakp\) and \(W_\frakq\) respectively, so that \(\frakp = \gl_\frakp + \frakb\) and \(\frakq = \gl_\frakq + \frakb\).

Following \cite{MR2450613}, we let
\(\mathcal P_\frakq = \Add(P(w_\frakq \cdot 0))\) be the additive
  subcategory of \(\catO(\gl_\frakq)\) generated by the anti-dominant
  indecomposable projective module \(P(w_\frakq \cdot 0)\), where
  \(w_\frakq \in W_\frakq\) is the longest element.
Let also \(\overline{\mathcal P_\frakq}\) be the category of \(\calP_\frakq\)--presentable modules

Let \(\fraka=\fraka_{\frakp+\frakq} = (\gl_\frakp \oplus \gl_\frakq) +
\frakh\) and define \(\frakn_{\frakp+\frakq}\) by \(\frakp + \frakq =
\fraka \oplus \frakn_{\frakp+\frakq}\). Given a \(\gl_\frakq\)--module \(M\), we denote by \(\frakE^\fraka M \) the \(\fraka\)--module obtained by extending the action by \(0\).
Let \(\calP^\frakp_\frakq\) be the additive closure
of the full subcategory of \(\fraka\)--modules which have the form \(E
\otimes \frakE^\fraka P\), where \(E\) is a simple finite-dimensional \(\mathfrak
a\)--module and \(P \in \overline {\mathcal P_\frakq}\) is a projective object. Finally, let \(\calA_\frakq^\frakp = \overline {\mathcal P^\frakp_\frakq}\) be the category of \(\mathcal
P^\frakp_\frakq\)--presentable \(\mathfrak a\)--modules.
In other words, \(\calP^\frakp_\frakq\) is the category
\begin{equation}
\langle E \otimes \frakE^\fraka P(w_\frakq \cdot 0)
\suchthat E \text{ is a simple finite-dimensional \(\mathfrak a\)--module}\rangle
\label{catO:eq:2}
\end{equation}
and \(\mathcal A_\frakq^\frakp = \overline{ \mathcal P^\frakp_\frakq}\).

\begin{definition}
  We define \(\catO\{\frakp+\frakq,\calA_{\frakq}^{\frakp}\}\) to be the full
  subcategory of \(\gl_n\)--modules which are:
  \begin{enumerate}[(GP1),label=(GP\arabic*)]
  \item\label{item:1} finitely generated;
  \item\label{item:2} locally \(\mathfrak n_{\frakp+\frakq}\)--finite;
  \item\label{item:3} direct sum of objects of \(\mathcal A^\frakp_\frakq\) as
    \(\mathfrak a\)--modules.
  \end{enumerate}\label{catO:def:2}
\end{definition}

\begin{remark}
  The categories
  \(\catO\{\frakp+\frakq,\calA_{\frakq}^{\frakp}\}\) fall into
  a more general family of categories that were first
  introduced in \cite{MR1921761} (called \emph{generalized
    parabolic subcategories} of \(\catO\)) and then
  generalized in \cite{MR2057398}. Our definition follows
  \cite{MR2450613}, and in particular is a special case of
  \cite[Definition 32]{MR2450613}. However, in
  \cite{MR2450613} only the trivial block is studied, while
  we are interested also in singular blocks. Notice that
  the category \(\calA^\frakp_\frakq\) is admissible (in the
  sense of \cite[\textsection 6.3]{MR2450613}) by
  \cite[Lemma 33]{MR2450613}.\label{rem:13}
\end{remark}

\begin{lemma}
  \label{lem:24}
  The category \(\catO\{\frakp+\frakq,\calA_{\frakq}^{\frakp}\}\) is a subcategory of \(\catO^\frakp\).
\end{lemma}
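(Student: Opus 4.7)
The plan is to verify directly the two conditions defining $\catO^\frakp$ for an arbitrary $M \in \catO\{\frakp+\frakq,\calA_{\frakq}^{\frakp}\}$: namely, that $M$ lies in $\catO$ and that the Levi $\fraka_\frakp = \gl_\frakp + \frakh$ of $\frakp$ acts locally finitely, with $M$ decomposing as a direct sum of finite-dimensional simple $\fraka_\frakp$-modules. In each step, the strategy will be to reduce to the generating objects $E \otimes \frakE^\fraka P(w_\frakq \cdot 0)$ of $\calP^\frakp_\frakq$, using that the properties in question are stable under quotients and direct sums (hence under $\calP^\frakp_\frakq$-presentability and under the direct sum decomposition in (GP3)).

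First I would check the three defining conditions of $\catO$. Finite generation is (GP1). For $\frakh$-semisimplicity with integral weights, I would observe that $\frakh \subset \fraka$ and use (GP3) to reduce to objects of $\calP^\frakp_\frakq$, where both tensor factors $E$ and $\frakE^\fraka P(w_\frakq \cdot 0)$ manifestly have this property. For local $\frakn^+$-finiteness, I would use the decomposition $\frakn^+ = \frakn_{\frakp+\frakq} \oplus (\frakn^+ \cap \fraka)$: the first summand is handled exactly by (GP2), while $\frakn^+ \cap \fraka = (\frakn^+ \cap \gl_\frakp) \oplus (\frakn^+ \cap \gl_\frakq)$ acts locally finitely on each generating object $E \otimes \frakE^\fraka P(w_\frakq \cdot 0)$ since $\frakn^+ \cap \gl_\frakp$ acts only on the finite-dimensional factor $E$, and $\frakn^+ \cap \gl_\frakq$ acts through $P(w_\frakq \cdot 0) \in \catO(\gl_\frakq)$.

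Second, to conclude $M \in \catO^\frakp$, I would show that $\gl_\frakp$ acts locally finitely on $M$; combined with $\frakh$-semisimplicity and integrality of weights from the first step, and using complete reducibility of finite-dimensional $\gl_\frakp$-representations, this forces the required decomposition into finite-dimensional simple $\fraka_\frakp$-modules. To show local $\gl_\frakp$-finiteness, I would again invoke (GP3) to reduce to objects of $\calP^\frakp_\frakq$: since $\frakE^\fraka$ extends the $\gl_\frakq$-action by zero along the $\gl_\frakp$-factor of the decomposition $\fraka = \gl_\frakp \oplus \gl_\frakq \oplus \frakz$, the action of $\gl_\frakp$ on $E \otimes \frakE^\fraka P(w_\frakq \cdot 0)$ is entirely through the finite-dimensional factor $E$ and therefore locally finite.

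The routine but genuinely technical point I would take care of is the preservation of local $\frakn^+$- and local $\gl_\frakp$-finiteness under $\calP^\frakp_\frakq$-presentations and under the possibly infinite direct sum decompositions in (GP3); for this one uses that an element of a presented module lifts to an element of a generator, and that being contained in a finite-dimensional $\frakx$-submodule is a property determined locally on such lifts. The other delicate point is unpacking exactly how $\fraka_\frakp$ sits inside $\fraka = \gl_\frakp \oplus \gl_\frakq \oplus \frakz$ and verifying that the extension-by-zero functor $\frakE^\fraka$ really does make the $\gl_\frakp$-action on $\frakE^\fraka P(w_\frakq \cdot 0)$ trivial; once this is laid out, each step is essentially bookkeeping.
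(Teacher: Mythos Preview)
Your proposal is correct and follows essentially the same approach as the paper's proof: verify membership in $\catO$ using (GP1), (GP2), (GP3), and then the parabolic condition via (GP3). The paper's argument is much terser (three sentences), simply asserting that (GP2) and (GP3) give local $\frakn^+$-finiteness, (GP3) gives the $\frakh$-weight decomposition, and (GP3) gives the $\gl_\frakp$-semisimplicity; your version unpacks the same reasoning explicitly, including the decomposition $\frakn^+ = \frakn_{\frakp+\frakq} \oplus (\frakn^+ \cap \fraka)$ and the reduction to the generating objects $E \otimes \frakE^\fraka P(w_\frakq \cdot 0)$, which the paper leaves implicit.
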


\begin{proof}
  Conditions \ref{item:2} and \ref{item:3} together imply that
  modules of \(\catO\{\frakp+\frakq,\calA_{\frakq}^{\frakp}\}\) are
  locally \(\mathfrak n^+\)--finite; condition \ref{item:3} also
  implies that modules of
  \(\catO\{\frakp+\frakq,\calA_{\frakq}^{\frakp}\}\) are weight modules
  for \(\frakh\); hence \(\catO\{\frakp+\frakq,\calA_{\frakq}^{\frakp}\}\)
  is a subcategory of \(\catO\). By condition \ref{item:3}, moreover,
  objects of \(\catO\{\frakp+\frakq,\calA_{\frakq}^{\frakp}\}\) are
  direct sums of finite dimensional simple \(\gl_\frakp\)--modules. Hence
  \(\catO\{\frakp+\frakq,\calA_{\frakq}^{\frakp}\}\) is a subcategory of
  \(\catO^{\frakp}\).
\end{proof}

It follows in particular that the block decomposition \(\catO^\frakp =
\bigoplus_\lambda \catO^\frakp_\lambda\) induces a direct sum
decomposition
\(\catO\{\frakp+\frakq,\calA_{\frakq}^{\frakp}\} = \bigoplus_\lambda \catO\{\frakp+\frakq,\calA_{\frakq}^{\frakp}\}_\lambda\).

\begin{lemma}
  \label{lem:14}
  We have the following inclusions of full subcategories:
  \begin{enumerate}[(i)]
  \item\label{item:8} if \(\frakp' \subset \frakp\) then \(\catO\{\frakp+\frakq,\calA_{\frakq}^{\frakp}\} \subset \catO\{\frakp'+\frakq,\calA_{\frakq}^{\frakp'}\}\);
  \item\label{item:9} if \(\frakq' \subset \frakq\) then \(\catO\{\frakp+\frakq,\calA_{\frakq}^{\frakp}\} \subset \catO\{\frakp+\frakq',\calA_{\frakq'}^{\frakp}\}\).
  \end{enumerate}
\end{lemma}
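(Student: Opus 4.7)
The plan is to verify, in each of the two cases, that any module $M$ belonging to the left-hand category satisfies the three defining conditions \ref{item:1}--\ref{item:3} of Definition~\ref{catO:def:2} with respect to the smaller pair of parabolic subalgebras on the right. Condition \ref{item:1} is intrinsic to $M$ as a $U(\gl_n)$-module and does not depend on the parabolic decomposition, so it carries over verbatim. For condition \ref{item:2}, I would invoke Lemma~\ref{lem:24} to deduce that $M \in \catO^\frakp \subseteq \catO$, so $M$ is locally $\frakn^+$-finite; since both $\frakn_{\frakp'+\frakq}$ in case (i) and $\frakn_{\frakp+\frakq'}$ in case (ii) are subalgebras of $\frakn^+$, the required local finiteness is automatic.

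The real content of the argument lies in condition \ref{item:3}. In both cases the new reductive subalgebra is contained in $\fraka_{\frakp+\frakq}$, namely $\fraka_{\frakp'+\frakq} \subseteq \fraka_{\frakp+\frakq}$ in (i) and $\fraka_{\frakp+\frakq'} \subseteq \fraka_{\frakp+\frakq}$ in (ii). The hypothesis decomposes $M$ as an $\fraka_{\frakp+\frakq}$-module into a direct sum of objects of $\calA^\frakp_\frakq$, so by restriction it suffices to show that each such summand, when restricted to the smaller subalgebra, is a direct sum of objects of the new $\calA$-category.

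For part (i), I would start from a building block $E \otimes \frakE^{\fraka_{\frakp+\frakq}} P \in \calP^\frakp_\frakq$ with $E$ a simple finite-dimensional $\fraka_{\frakp+\frakq}$-module and $P \in \overline{\calP_\frakq}$ projective. Complete reducibility of finite-dimensional representations, applied to the reductive subalgebra $\fraka_{\frakp'+\frakq}$, forces $E|_{\fraka_{\frakp'+\frakq}}$ to split as a finite direct sum of simple finite-dimensional $\fraka_{\frakp'+\frakq}$-modules. The trivially extended factor satisfies $\frakE^{\fraka_{\frakp+\frakq}} P\big|_{\fraka_{\frakp'+\frakq}} = \frakE^{\fraka_{\frakp'+\frakq}} P$, because the zero action of $\gl_\frakp$ on $P$ restricts to the zero action of $\gl_{\frakp'}$. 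Combining the two decompositions produces a finite direct sum of generators of $\calP^{\frakp'}_\frakq$, and exactness of restriction upgrades this to $\calP^{\frakp'}_\frakq$-presentability, giving $\calA^\frakp_\frakq\big|_{\fraka_{\frakp'+\frakq}} \subseteq \calA^{\frakp'}_\frakq$ (possibly after a further direct-sum decomposition), which is exactly what is needed for \ref{item:3}.

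For part (ii) the strategy is structurally identical, and complete reducibility again handles the $E$-factor of a building block. What remains is the delicate question of whether $P\big|_{\gl_{\frakq'}}$ lies in the $\calP_{\frakq'}$-presentable category for every projective $P \in \overline{\calP_\frakq}$. Reducing to the case $P = P(w_\frakq \cdot 0)$, I would argue using a parabolic-induction or translation-functor description of the anti-dominant projective that $P(w_\frakq \cdot 0)\big|_{\gl_{\frakq'}}$ admits a presentation by direct summands of $P(w_{\frakq'} \cdot 0)$, yielding the analogous inclusion $\calA^\frakp_\frakq\big|_{\fraka_{\frakp+\frakq'}} \subseteq \calA^\frakp_{\frakq'}$. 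This restriction statement for the anti-dominant projective in case (ii) is the principal obstacle of the proof; the remainder is routine bookkeeping of the three axioms together with the complete-reducibility decomposition of finite-dimensional representations.
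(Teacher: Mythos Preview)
Your approach is essentially the paper's: verify (GP1)--(GP3), reduce (GP3) to the behaviour of the building blocks of $\calP^\frakp_\frakq$ under restriction, dispose of case (i) by complete reducibility of $E$, and handle case (ii) via a parabolic-induction/translation description of the anti-dominant projective. Two points in your treatment of (ii) deserve sharpening.

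First, you state the target as showing that $P(w_\frakq \cdot 0)|_{\gl_{\frakq'}}$ is merely $\calP_{\frakq'}$-\emph{presentable}. What the argument actually needs (and what the paper proves) is the stronger assertion that this restriction is a direct sum of objects of the form $E' \otimes P(w_{\frakq'} \cdot \mu)$, i.e.\ that it lands at the projective level rather than the presentable level. Only then does the original presentation $P_\alpha \to Q_\alpha \twoheadrightarrow M_\alpha$ restrict to an honest $\calP^\frakp_{\frakq'}$-presentation of $M_\alpha|_{\fraka_{\frakp+\frakq'}}$; with the weaker presentable conclusion you would still owe an argument that cokernels of maps between presentable objects remain presentable. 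Your parabolic-induction sketch does in fact deliver the stronger statement, so this is a matter of stating the right goal.

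Second, ``complete reducibility handles the $E$-factor'' does not cleanly separate $E$ from $P$ in case (ii): since $\gl_{\frakq'} \subset \gl_\frakq$, it acts nontrivially on the $\gl_\frakq$-component of $E$ as well as on $P$. The paper deals with this by first absorbing $E|_{\gl_\frakq}$ into the projective factor, which produces direct summands $P(w_\frakq \cdot \lambda)$ for various dominant $\lambda$; this is why the paper checks the restriction statement for all $\lambda$, not only $\lambda=0$.
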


We warn the reader, however, that the second inclusion will not be an exact inclusion of abelian categories (once we will have defined the abelian structure on the categories \(\catO\{\frakp+\frakq,\calA_{\frakq}^{\frakp}\}\), see \textsection\ref{sec:categ-frakp-pres}).

\begin{proof}
  Let \(M \in \catO\{\frakp+\frakq,\calA_{\frakq}^{\frakp}\}\). By
  definition, \(M\) is finitely generated and locally
  \(\frakn^+\)--finite.
  Write \(M = \bigoplus_\alpha M_\alpha\) as an
  \(\fraka_{\frakp+\frakq}\)--module, with \(M_\alpha \in
  \calA^{\frakp}_{\frakq}\). Let \(P_\alpha \mapto Q_\alpha \surto
  M_\alpha\) be a \(\calP^\frakp_\frakq\)--presentation of \(M_\alpha\). Considering this as a sequence of \(\fraka_{\frakp'+\frakq}\)--modules (resp.\ \(a_{\frakp+\frakq'}\)--modules), we see that it
  is enough to show that
  \begin{enumerate}[(i)]
  \item\label{item:10} every object of \(\calP^\frakp_\frakq\) decomposes, as an
    \(\fraka_{\frakp'+\frakq}\)--module, into a direct sum of
    objects of \(\calP^{\frakp'}_{\frakq}\);
  \item\label{item:11} every object of \(\calP^{\frakp}_{\frakq}\)
    decomposes, as an \(\fraka_{\frakp+\frakq'}\)--module, into a direct sum
    of objects of \(\calP^\frakp_{\frakq'}\).
  \end{enumerate}
  Since \ref{item:10} is straightforward (every object of \(\calP^{\frakp}_{\frakq}\) is, as an \(\fraka_{\frakp'+\frakq}\)--module, an object of \(\calP^{\frakp'}_{\frakq}\)), let us verify \ref{item:11}. For this it is enough to check that, for every dominant integral weight \(\lambda\) of \(\gl_\frakq\), the anti-dominant projective module \(P(w_\frakq \cdot \lambda) \in \catO(\gl_\frakq)\) decomposes, as a \(\gl_{\frakq'}\)--module, as direct sum of objects of type \(E \otimes P(w_{\frakq'} \cdot \mu)\) for some weight \(\mu\) of \(\gl_{\frakq'}\) and some finite dimensional \(\gl_{\frakq'}\)--module \(E\). This follows because \(\catO(\gl_\frakq) \ni P(w_{\frakq'} \cdot \lambda) = U(\gl_\frakq) \otimes_{U(\frakq'\cap \gl_\frakq)} P(w_{\frakq'} \cdot \lambda|_{\gl_{\frakq'}})\), and \(P(w_\frakq \cdot \lambda)\) can be obtained from \(P(w_{\frakq'} \cdot \lambda)\) in \(\catO(\gl_\frakq)\) by tensoring with finite-dimensional modules.
\end{proof}

\subsection{The parabolic categories of \texorpdfstring{$\frakp$}{p}-presentable modules}
\label{sec:categ-frakp-pres}

We will give now another definition of the blocks of \(\catO\{\frakp+\frakq,\calA_{\frakq}^{\frakp}\}\).
Let \(\lambda\) be a dominant integral weight for \(\gl_n\) with stabilizer
\(\bbS_\lambda\) under the dot action.
Define
  \begin{equation}
    \Lambda^{\frakp}_\frakq(\lambda) = \left\{ w \in (\bbS_n/\bbS_\lambda)^{\short} \, \left|\,
      \begin{aligned}
        &w\bbS_\lambda \subset W^\frakp\\
        &w\bbS_\lambda \cap w_\frakq W^\frakq\neq \emptyset
      \end{aligned}
\right.\right\}.\label{eq:39}
  \end{equation}
Notice that \(w_\frakq W^\frakq\) is simply the set of longest coset representatives for \(W_\frakq \backslash \bbS_n\). If \(\frakp = \frakb\) or \(\frakq=\frakb\) in the following we will omit them from the notation. If \(\lambda\) is regular then in particular
\(  \Lambda^\frakp_\frakq(\lambda) = \{w_\frakq w \suchthat w \in W^{\frakp+\frakq}\}\) is the set of elements of \(\bbS_n\) that are shortest coset representatives for \(W_\frakp \backslash \bbS_n\) and longest coset representatives for \(W_\frakq \backslash \bbS_n\).
Let
\begin{equation}
\scrP^{\frakp}_{\frakq} (\lambda) = \bigoplus_{x \in \Lambda^\frakp_\frakq(\lambda)} P^{\frakp}(x \cdot \lambda)\label{eq:137}
\end{equation}
and as in \textsection\ref{sec:category-cato} let \(\Add(\scrP^{\frakp}_{\frakq}(\lambda))\) be the full subcategory of \(\catO^{\frakp}_\lambda\) consisting of all modules which admit a direct sum decomposition with summands being direct summands of \(\scrP^{\frakp}_{\frakq}(\lambda)\).

\begin{definition}
  We define the category \(\catO^{\frakp,\frakq\pres}_\lambda\) to be
  the full subcategory of \(\catO^{\frakp}_\lambda\) which consists of
  all \(\Add(\scrP^{\frakp}_{\frakq}(\lambda))\)--presentable modules.\label{def:3}
\end{definition}

\begin{prop}
  \label{prop:1}
  For all integral dominant weights \(\lambda\), the categories \(\catO\{\frakp+\frakq,\calA_{\frakq}^{\frakp}\}_\lambda\) and \(\catO^{\frakp,\frakq\pres}_\lambda\) coincide as subcategories of \(\catO_\lambda\).
\end{prop}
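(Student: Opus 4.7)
The strategy is to realize both categories, as full subcategories of $\catO^\frakp_\lambda$, as the category $\overline{\Add(\scrP^\frakp_\frakq(\lambda))}$ of $\scrP^\frakp_\frakq(\lambda)$-presentable modules. For $\catO^{\frakp,\frakq\pres}_\lambda$ this is the definition, so the content reduces to showing that $\scrP^\frakp_\frakq(\lambda)$ is a projective generator of $\catO\{\frakp+\frakq,\calA_\frakq^\frakp\}_\lambda$ and then invoking \eqref{catO:eq:203}.

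I would invoke the general machinery of \cite[\S 6.3 and Lemma 33]{MR2450613}: since $\calA^\frakp_\frakq$ is admissible (Remark~\ref{rem:13}), the category $\catO\{\frakp+\frakq,\calA_\frakq^\frakp\}$ has enough projectives, and each indecomposable projective is a direct summand of a parabolically induced module $U(\gl_n)\otimes_{U(\frakp+\frakq)} Q$ where $Q\in \calP^\frakp_\frakq$ is indecomposable, viewed as a $(\frakp+\frakq)$-module with trivial $\frakn_{\frakp+\frakq}$-action. Such a $Q$ is a summand of some $E\otimes \frakE^\fraka P(w_\frakq\cdot\mu)$ with $E$ a simple finite-dimensional $\fraka$-module and $\mu$ a dominant integral weight of $\gl_\frakq$. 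Using the tensor identity and the standard description of parabolic Verma flags of such induced modules, the induction admits a filtration by parabolic Verma modules $M^\frakp(x\cdot\lambda)$ and, after projecting onto the block $\catO^\frakp_\lambda$, decomposes into summands of the form $P^\frakp(x\cdot\lambda)$. A direct count shows that the $x$ occurring are precisely those in $\Lambda^\frakp_\frakq(\lambda)$: the condition $x\bbS_\lambda\subseteq W^\frakp$ is forced by the need to land in $\catO^\frakp$, while the intersection condition $x\bbS_\lambda\cap w_\frakq W^\frakq\neq\emptyset$ encodes that the $\gl_\frakq$-factor is the full anti-dominant projective $P(w_\frakq\cdot\mu)$ (equivalently, that its Verma flag contains the longest element of the $W_\frakq$-orbit), rather than a smaller projective summand.

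Once the indecomposable projectives of $\catO\{\frakp+\frakq,\calA_\frakq^\frakp\}_\lambda$ are identified with the direct summands of $\scrP^\frakp_\frakq(\lambda)$, both $\catO\{\frakp+\frakq,\calA_\frakq^\frakp\}_\lambda$ and $\catO^{\frakp,\frakq\pres}_\lambda$ become equivalent to $\rmod{\End_{\catO^\frakp_\lambda}(\scrP^\frakp_\frakq(\lambda))}$ via $\Hom_{\catO^\frakp_\lambda}(\scrP^\frakp_\frakq(\lambda),\blank)$, and since this is in both cases the restriction of the same Hom-functor on $\catO^\frakp_\lambda$, the two categories coincide as full subcategories of $\catO^\frakp_\lambda$. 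The main obstacle is the Weyl-combinatorial bookkeeping in the identification step: matching the pair of parameters $(E,\mu)$ arising from parabolic induction with the single index $x\in\Lambda^\frakp_\frakq(\lambda)$. For $\lambda$ regular this is essentially the double coset decomposition $W_\frakp\backslash\bbS_n/W_\frakq$ as in \cite{MR2450613}, but for singular $\lambda$ one must carefully track how $\bbS_\lambda$ interacts with both $W^\frakp$ and $w_\frakq W^\frakq$, which is exactly what the two conditions in \eqref{eq:39} are designed to encode.
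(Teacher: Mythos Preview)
Your high-level plan is natural, but the key inferential step in your third paragraph does not go through. Knowing that both $\catO\{\frakp+\frakq,\calA^\frakp_\frakq\}_\lambda$ and $\catO^{\frakp,\frakq\pres}_\lambda$ are equivalent to $\rmod{\End_{\catO^\frakp_\lambda}(\scrP^\frakp_\frakq(\lambda))}$ via the restriction of the \emph{same} functor $F=\Hom_{\catO^\frakp_\lambda}(\scrP^\frakp_\frakq(\lambda),\blank)$ does \emph{not} imply they coincide as full subcategories of $\catO^\frakp_\lambda$. The functor $F$ is not faithful on all of $\catO^\frakp_\lambda$ (it annihilates the simples $L(x\cdot\lambda)$ with $x\notin\Lambda^\frakp_\frakq(\lambda)$), so distinct full subcategories can both map equivalently onto the same module category. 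Concretely: even granting that $\scrP^\frakp_\frakq(\lambda)$ is a projective generator of $\catO\{\frakp+\frakq,\calA^\frakp_\frakq\}_\lambda$ in \emph{its own} abelian structure, you have not argued that a projective presentation of $M$ computed there is also a presentation of $M$ as a $\gl_n$-module (i.e.\ with cokernel taken in $\catO^\frakp_\lambda$), which is what membership in $\overline{\Add(\scrP^\frakp_\frakq(\lambda))}$ requires. There is also a circularity risk: the abelian structure on $\catO\{\frakp+\frakq,\calA^\frakp_\frakq\}_\lambda$ is established in the paper only \emph{after} this proposition, and the machinery of \cite{MR2450613} you invoke is developed for the regular block (cf.\ Remark~\ref{rem:13}), so appealing to it for singular $\lambda$ needs justification you have not supplied.

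The paper sidesteps both issues by arguing the two inclusions directly at the level of $\gl_n$-modules, never invoking an abelian structure on $\catO\{\frakp+\frakq,\calA^\frakp_\frakq\}_\lambda$. For $\catO^{\frakp,\frakq\pres}_\lambda\subseteq\catO\{\frakp+\frakq,\calA^\frakp_\frakq\}_\lambda$ it realizes $P^\frakp(w_\frakq\cdot\lambda)$ by parabolic induction (the BGG construction), observes that as an $\fraka$-module it decomposes into objects of $\calP^\frakp_\frakq$, and obtains the remaining $P^\frakp(x\cdot\lambda)$ by tensoring with finite-dimensional modules; then any $\scrP^\frakp_\frakq(\lambda)$-presentation, read as a sequence of $\fraka$-modules, exhibits condition (GP3). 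For the converse, given $M\in\catO\{\frakp+\frakq,\calA^\frakp_\frakq\}_\lambda$ the paper takes its projective cover $Q$ \emph{in $\catO^\frakp_\lambda$} and uses (GP3) to see that $M$ is generated by vectors of $\gl_\frakq$-anti-dominant weight, forcing $Q\in\Add(\scrP^\frakp_\frakq(\lambda))$; it then shows the kernel $K\hookrightarrow Q$ is again covered (in $\catO^\frakp_\lambda$) by an object of $\Add(\scrP^\frakp_\frakq(\lambda))$, without ever claiming $K$ lies in $\catO\{\frakp+\frakq,\calA^\frakp_\frakq\}_\lambda$. This hands-on argument is what your abstract projective-generator reasoning would need to replace.
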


\begin{proof}
  First we show the inclusion \(\catO^{\frakp,\frakq\pres}_\lambda \subseteq \catO\{\frakp+\frakq,\calA_{\frakq}^{\frakp}\}_\lambda\). Consider \(P^{\frakp}(w_\frakq \cdot \lambda)\) in \(\catO^\frakp_\lambda\).
Let \( L(\lambda|_{\gl_\frakp})  \boxtimes P(w_\frakq \cdot \lambda|_{\gl_\frakq} ) \in \catO(\gl_{\frakp+\frakq})\) denote the \((\gl_\frakp \oplus \gl_\frakq)\)--module obtained as external tensor product of the finite-dimensional simple \(\gl_\frakp\)--module \(L(\lambda|_{\gl_\frakp}) \in \catO(\gl_\frakp)\)  and  the anti-dominant indecomposable projective module \(P(w_\frakq \cdot \lambda|_{\gl_\frakq} ) \in \catO(\gl_\frakq)\). Consider it as an \(\fraka\)--module by extending the action to \(\frakh\) with the weight \(\lambda\), and then as a \((\frakp + \frakq)\)--module by letting \(\frakn_{\frakp+\frakq}\) act by zero.
By the analogue of the BGG construction of projective modules in \(\catO\) \cite{MR0407097}, we have
  \begin{equation}
P^{\frakp}(w_\frakq \cdot \lambda) = U(\gl_n) \otimes_{U(\frakp + \frakq)} \big( L(\lambda|_{\gl_\frakp}) \boxtimes P(w_\frakq \cdot \lambda|_{\gl_\frakq} )\big).\label{eq:130}
\end{equation}
Since \(U(\gl_n)\) decomposes as direct sum of finite-dimensional modules for the adjoint action of \(\gl_\frakp \oplus \gl_\frakq\), it follows that \eqref{eq:130}, as an \(\fraka\)--module, decomposes as direct sum of objects of \(\calP^\frakp_\frakq\). By tensoring \eqref{eq:130} with finite dimensional \(\gl_n\)--modules we can obtain all projective modules \(P^{\frakp}(x \cdot \lambda)\) for \(x \in \Lambda^\frakp_\frakq(\lambda)\); since \(\calP^\frakp_\frakq\) is closed under tensor product with finite dimensional modules, it follows that each \(P^\frakp(x \cdot \lambda)\) for \(x \in \Lambda^\frakp_\frakq(\lambda)\) decomposes as direct sum of objects of \(\calP^\frakp_\frakq\). Now, if \(M \in \catO_\lambda^{\frakp,\frakq\pres}\) then we have a presentation \(Q_1 \mapto Q_2 \surto M\) with \(Q_1,Q_2 \in \Add(\scrP^{\frakp}_{\frakq}(\lambda))\). Considering this as a sequence of \(\fraka\)--modules, it follows that \(M\) decomposes as a direct sum of objects of \(\calA^\frakp_\frakq=\overline \calP^\frakp_\frakq\).

Now let us show the other inclusion \(\catO\{\frakp+\frakq,\calA_{\frakq}^{\frakp}\}_\lambda \subseteq \catO^{\frakp,\frakq\pres}_\lambda\). Let \(M \in
\catO\{\frakp+\frakq,\calA_{\frakq}^{\frakp}\}_\lambda\).  By Lemma \ref{lem:24} we have \(M
\in \catO^{\frakp}_\lambda\). As an \(\fraka\)--module, \(M\) is generated
by elements of weight \(x \cdot \lambda\) with \(sx < x\) for any simple
reflection \(s \in W_\frakq\) (i.e.\ \(x \cdot \lambda\) is an anti-dominant
weight for \(\gl_\frakq\)). Of course then this is also true as a
\(\gl_n\)--module. Hence the projective cover \(Q\) of \(M\) in
\(\catO^{\frakp}_\lambda\) is an element of
\(\Add(\scrP^{\frakp}_{\frakq}(\lambda))\). Let \(K=\ker(Q \surto M)\) in
\(\catO^{\frakp}_\lambda\), and consider the short exact sequence \(K
\into Q \surto M\) as a sequence of \(\fraka\)--modules.
Since all objects of \(\calA^\frakp_\frakq\)
are finitely generated, we may assume (taking direct summands) that \(K \into Q \surto M\) is a short exact sequence of finitely generated
\(\fraka\)--modules, that is, we can suppose \(M \in \calA^\frakp_\frakq\) and, by the first paragraph, \(Q \in \calP^\frakp_\frakq\).
We can write \(Q=Q_M \oplus Q'\) where \(Q_M\) is the projective cover of
\(M\), and \(K=Q' \oplus \ker(Q_M \surto M)\). Since \(M \in
\calA^\frakp_\frakq\), we have a presentation \(P_M \mapto Q_M \surto M\) with \(P_M, Q_M \in \calP^\frakp_\frakq\), hence we have a surjective map \(P_M \surto \ker(Q_M \surto M)\) and therefore a surjective map \(P' \surto K\) for \(P' = Q' \oplus P_M \in \calP^\frakp_\frakq\). Since as an \(\fraka\)--module \(P'\) is generated by elements of weight \(x \cdot \lambda\) with \(sx<x\) for any simple reflection \(s \in W_\frakq\), the same holds for \(K\). Hence we can apply the same construction we did for \(M\) to \(K\) and get a presentation \(P \mapto Q \surto M\) with \(P,Q \in \Add(\scrP^{\frakp}_{\frakq}(\lambda))\).
\end{proof}

For \(\frakp=\frakb\) and \(\lambda=0\) we get the category
\(\catO^{\frakq\pres}_0\) of \cite{MR2139933}. The results of
\cite[\textsection{}2]{MR2139933} carry over to the case of an arbitrary
integral weight \(\lambda\). For instance, we have:
\begin{lemma}
  The category \(\catO^{\frakq\pres}_\lambda\) is an abelian category
  with a simple preserving duality and is equivalent to
  \(\lmod{\End(\scrP_{\frakq}(\lambda))}\). For \(x \in
  \Lambda_\frakq(\lambda)\) the modules \(P(x \cdot \lambda)\) are
  obviously objects of \(\catO^{\frakq\pres}_\lambda\). Each \(P(x \cdot
  \lambda)\) has a unique simple quotient \(S(x \cdot \lambda)\) in
  \(\catO^{\frakq\pres}_\lambda\), and the \(S(x \cdot \lambda)\) for \(x
  \in \Lambda_\frakq(\lambda)\) give a full set of pairwise non
  isomorphic simple objects of \(\catO^{\frakq\pres}_\lambda\).\label{lem:22}
\end{lemma}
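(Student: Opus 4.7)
The strategy is to adapt the arguments of \cite[\textsection 2]{MR2139933} to a singular block \(\catO_\lambda\), using the general machinery of Serre quotients and presentable modules recalled in \textsection\ref{sec:serre-quotients} applied to \(\calA = \catO_\lambda\). Set \(\Gamma = (\bbS_n/\bbS_\lambda)^{\short} \setminus \Lambda_\frakq(\lambda)\) so that, by construction, \(\scrP_\frakq(\lambda) = \bigoplus_{x \in \Lambda_\frakq(\lambda)} P(x\cdot\lambda)\) is exactly the projective module ``\(P^\perp_\Gamma\)'' of \textsection\ref{sec:presentable-modules}. By Definition \ref{def:3} the category \(\catO^{\frakq\pres}_\lambda\) then coincides with the category \(\overline{\Add(\scrP_\frakq(\lambda))}\) of \(\scrP_\frakq(\lambda)\)--presentable modules, so \eqref{catO:eq:203} gives immediately an equivalence \(\catO^{\frakq\pres}_\lambda \cong \lmod{\End(\scrP_\frakq(\lambda))}\) (up to passing to the opposite algebra if one insists on left modules), whence the abelian structure.

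Next, the modules \(P(x\cdot\lambda)\) for \(x\in\Lambda_\frakq(\lambda)\) lie in \(\catO^{\frakq\pres}_\lambda\) tautologically, being direct summands of \(\scrP_\frakq(\lambda)\) and hence (trivially) \(\scrP_\frakq(\lambda)\)--presentable. Under the equivalence \(\Hom_{\catO_\lambda}(\scrP_\frakq(\lambda),\blank)\) they correspond to the indecomposable projective \(\End(\scrP_\frakq(\lambda))\)--modules \(e_x \End(\scrP_\frakq(\lambda))\). The standard representation theory of finite-dimensional algebras then yields that each such projective has a unique simple quotient \(S(x\cdot\lambda)\) and that the family \(\{S(x\cdot\lambda) \suchthat x\in\Lambda_\frakq(\lambda)\}\) is a full set of pairwise non-isomorphic simples of \(\catO^{\frakq\pres}_\lambda\); pairwise non-isomorphism is immediate because the \(P(x\cdot\lambda)\) are pairwise non-isomorphic indecomposable projectives in \(\catO_\lambda\).

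The simple-preserving duality I would construct by detouring through the Serre quotient description. By \eqref{catO:eq:203} one also has an equivalence \(\catO^{\frakq\pres}_\lambda \cong \catO_\lambda / \calS_\Gamma\), where \(\calS_\Gamma\) is the Serre subcategory generated by the simples \(L(\gamma)\) with \(\gamma\in\Gamma\). The classical BGG duality on \(\catO_\lambda\) is an exact contravariant auto-equivalence which fixes every simple object; consequently it preserves \(\calS_\Gamma\) and descends to an exact contravariant auto-equivalence of \(\catO_\lambda/\calS_\Gamma\) which still fixes every simple. Transporting along the equivalence with the presentable category yields the desired simple-preserving duality on \(\catO^{\frakq\pres}_\lambda\).

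The main obstacle I anticipate is precisely this last point: the BGG duality does \emph{not} restrict to \(\overline{\Add(\scrP_\frakq(\lambda))}\) as a subcategory of \(\catO_\lambda\), because it interchanges indecomposable projectives with tilting modules. The subtlety is that one must work with the intrinsic abelian structure of \(\catO^{\frakq\pres}_\lambda\) (equivalently, with its realization as a Serre quotient) rather than with its embedding into \(\catO_\lambda\); once the quotient picture is invoked the duality is produced by a formal descent argument. Everything else is routine once the equivalence with a finite-dimensional algebra module category is in hand.
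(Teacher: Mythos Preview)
Your proposal is correct and follows essentially the approach the paper has in mind. Note that the paper does not actually give a proof of this lemma: it simply states, just before the lemma, that ``the results of \cite[\textsection{}2]{MR2139933} carry over to the case of an arbitrary integral weight \(\lambda\)'' and then records the statement. Your write-up spells out exactly how that carry-over works using the general Serre-quotient/presentable-module machinery of \textsection\ref{sec:serre-quotients}, which is what the paper intends.

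One small comparison worth making concerns the duality. Your argument produces the simple-preserving duality by descending the BGG duality through the Serre quotient \(\catO_\lambda/\calS_\Gamma\). When the paper does eventually prove the duality in the more general setting (Lemma~\ref{lem:25}, for \(\catO^{\frakp,\frakq\pres}_\lambda\)), it instead works directly with the algebra: the BGG duality induces an anti-automorphism of \(A_\lambda\) fixing the primitive idempotents, which therefore restricts to an anti-automorphism of \(e_\frakq A_\lambda e_\frakq = \End(\scrP_\frakq(\lambda))\), and this in turn induces the duality on the module category. The two arguments are equivalent---yours is perhaps conceptually cleaner, while the paper's is more hands-on and makes manifest that the duality is compatible with the grading. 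Your observation that the BGG duality does \emph{not} restrict to \(\overline{\Add(\scrP_\frakq(\lambda))}\) as a full subcategory of \(\catO_\lambda\) (since it exchanges projectives with tiltings), and that one must therefore use the intrinsic abelian structure, is exactly the point; this is implicit in the paper's algebra-level argument as well.
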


We want to extend these results to the general case \(\frakp \neq
\frakb\). First, let us recall the definition of the Zuckermann's functor
\(\frakz : \catO \mapto \catO^{\frakp}\). Given \(M \in \catO\), the object
\(\frakz M\) is the largest quotient of \(M\) that lies in
\(\catO^{\frakp}\). The functor \(\frakz\) is right exact and \(\frakz P(x
\cdot \lambda) = P^{\frakp}(x \cdot \lambda)\) for each \(\lambda \in
\Lambda^\frakp(\lambda)\).

\begin{lemma}
  \label{lem:16}
  The category \(\catO^{\frakp,\frakq\pres}_\lambda\) coincides with the full subcategory of objects of \(\catO^{\frakq\pres}_\lambda\) that are in \(\catO^{\frakp}_\lambda\).
\end{lemma}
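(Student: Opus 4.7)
The plan is to prove the two inclusions separately, viewing both sides as full subcategories of \(\catO_\lambda\).

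For the inclusion \(\catO^{\frakp,\frakq\pres}_\lambda \subseteq \catO^{\frakq\pres}_\lambda \cap \catO^\frakp_\lambda\), containment in \(\catO^\frakp_\lambda\) is built into Definition~\ref{def:3}. To get containment in \(\catO^{\frakq\pres}_\lambda\), I would translate via Proposition~\ref{prop:1}: the category \(\catO^{\frakp,\frakq\pres}_\lambda\) coincides with \(\catO\{\frakp+\frakq,\calA^\frakp_\frakq\}_\lambda\), and likewise \(\catO^{\frakq\pres}_\lambda = \catO\{\frakq,\calA_\frakq\}_\lambda\). Applying Lemma~\ref{lem:14}(i) with \(\frakp' = \frakb\), so that \(\frakb + \frakq = \frakq\) and \(\calA^\frakb_\frakq\) reduces to \(\calA_\frakq\), then delivers the desired inclusion.

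For the nontrivial reverse direction, the plan is to exploit Zuckermann's functor. Take \(M \in \catO^{\frakq\pres}_\lambda \cap \catO^\frakp_\lambda\) and fix an \(\Add(\scrP_\frakq(\lambda))\)-presentation \(Q_1 \to Q_0 \to M \to 0\) in \(\catO_\lambda\), which exists by the very definition of \(\catO^{\frakq\pres}_\lambda\). The functor \(\frakz\colon \catO \to \catO^\frakp\) is right exact, and because \(M \in \catO^\frakp_\lambda\) we have \(\frakz M = M\); hence applying \(\frakz\) yields an exact sequence \(\frakz Q_1 \to \frakz Q_0 \to M \to 0\) in \(\catO^\frakp_\lambda\). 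The key fact, recorded just before the lemma, is that \(\frakz P(x\cdot\lambda) = P^\frakp(x\cdot\lambda)\) when \(x\bbS_\lambda \subseteq W^\frakp\) and vanishes otherwise. Combined with the identity \(\Lambda^\frakp_\frakq(\lambda) = \Lambda^\frakp(\lambda) \cap \Lambda_\frakq(\lambda)\) that is immediate from the definition~\eqref{eq:39}, this shows that \(\frakz Q_0, \frakz Q_1 \in \Add(\scrP^\frakp_\frakq(\lambda))\). So \(M\) has an \(\Add(\scrP^\frakp_\frakq(\lambda))\)-presentation in \(\catO^\frakp_\lambda\), which is exactly the condition \(M \in \catO^{\frakp,\frakq\pres}_\lambda\) from Definition~\ref{def:3}.

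I do not anticipate any serious obstacle. The only mildly delicate point is that the argument works with two-term presentations rather than full resolutions, which is essential because \(\frakz\) is only right exact; if the category \(\catO^{\frakq\pres}_\lambda\) were defined via projective resolutions rather than presentations, this simple Zuckermann argument would not suffice, and one would instead have to invoke a more refined analysis of the syzygies.
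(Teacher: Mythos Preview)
Your proposal is correct and matches the paper's proof essentially verbatim: the paper handles the inclusion $\catO^{\frakq\pres}_\lambda \cap \catO^\frakp_\lambda \subseteq \catO^{\frakp,\frakq\pres}_\lambda$ by applying the right-exact Zuckermann functor $\frakz$ to an $\Add(\scrP_\frakq(\lambda))$--presentation, and dispatches the reverse inclusion by invoking Proposition~\ref{prop:1} together with Lemma~\ref{lem:14}. You have simply spelled out a few details (e.g.\ the identity $\Lambda^\frakp_\frakq(\lambda) = \Lambda^\frakp(\lambda)\cap\Lambda_\frakq(\lambda)$ and the vanishing of $\frakz P(x\cdot\lambda)$ for $x\notin\Lambda^\frakp(\lambda)$) that the paper leaves implicit.
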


\begin{proof}
  Since both are full subcategories of \(\catO(\gl_n)\), we need only to prove that they have the same objects. Let \(M \in \catO^{\frakq\pres}_\lambda \cap \catO^{\frakp}_\lambda\) and consider a presentation \(P \mapto Q \mapto M \mapto 0\) with \(P,Q \in \Add(\scrP_{\frakq}(\lambda))\). Applying \(\frakz\) yields a presentation \(\frakz P \mapto \frakz Q \mapto M \mapto 0\) with \(\frakz P, \frakz Q \in \Add(\scrP^{\frakp}_{\frakq}(\lambda))\).

  The other inclusion follows by Proposition~\ref{prop:1}  and Lemma~\ref{lem:14}.
\end{proof}

\begin{lemma}
  \label{lem:15}
  The category \(\catO^{\frakp,\frakq\pres}_\lambda\) is the Serre subcategory of \(\catO^{\frakq\pres}_\lambda\) generated by the simple objects \(S(x \cdot \lambda)\) for \(x \in \Lambda^\frakp_\frakq(\lambda)\).
\end{lemma}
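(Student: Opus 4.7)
The plan is to leverage Lemma~\ref{lem:16}, which identifies \(\catO^{\frakp,\frakq\pres}_\lambda\) with the full subcategory of \(\catO^{\frakq\pres}_\lambda\) consisting of objects lying, as \(\gl_n\)-modules, in \(\catO^\frakp_\lambda\). I would realize \(\catO^{\frakq\pres}_\lambda\) as the Serre quotient \(\catO_\lambda/\calS_\frakq\), where \(\calS_\frakq\) is the Serre subcategory of \(\catO_\lambda\) generated by the simples \(L(y\cdot\lambda)\) with \(y \notin \Lambda_\frakq(\lambda)\); the corresponding exact quotient functor \(T\) sends \(L(x\cdot\lambda)\) to \(S(x\cdot\lambda)\) for \(x \in \Lambda_\frakq(\lambda)\) and to zero otherwise. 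Under this identification, \(\catO^{\frakp,\frakq\pres}_\lambda\) coincides with the essential image of \(\catO^\frakp_\lambda\) under \(T\); combined with Lemma~\ref{lem:22}, a simple \(S(x\cdot\lambda)\) of \(\catO^{\frakq\pres}_\lambda\) lies in \(\catO^{\frakp,\frakq\pres}_\lambda\) precisely when \(L(x\cdot\lambda) \in \catO^\frakp_\lambda\), i.e., when \(x \in \Lambda^\frakp(\lambda) \cap \Lambda_\frakq(\lambda) = \Lambda^\frakp_\frakq(\lambda)\).

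The main step is to verify that \(\catO^{\frakp,\frakq\pres}_\lambda\) is a Serre subcategory of \(\catO^{\frakq\pres}_\lambda\). For this I would invoke the following general fact about Serre quotients: the essential image of a Serre subcategory \(\calC \subset \calD\) under a Serre quotient \(T\colon \calD \twoheadrightarrow \calD/\calT\) is itself a Serre subcategory of \(\calD/\calT\). The argument relies on the standard lifting property: any subobject \(A \hookrightarrow T(Y)\) in \(\calD/\calT\) can be represented as \(T(Y')\) for some subobject \(Y' \subset Y\) in \(\calD\); since \(\calC\) is closed in \(\calD\) under subobjects and quotients, both \(Y'\) and \(Y/Y'\) lie in \(\calC\), so the image is closed under subobjects and quotients in \(\calD/\calT\). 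Lifting an extension \(0 \to A \to B \to C \to 0\) in \(\calD/\calT\) (with \(A,C\) in the image) to an extension in \(\calD\), and using closure of \(\calC\) under extensions, handles the third axiom. Applying this with \(\calD = \catO_\lambda\), \(\calC = \catO^\frakp_\lambda\), \(\calT = \calS_\frakq\) yields the required Serre subcategory property.

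Putting these two ingredients together gives the result: \(\catO^{\frakq\pres}_\lambda\) is a length category (being a Serre quotient of the length category \(\catO_\lambda\)), so a Serre subcategory is determined by its class of simples, and therefore \(\catO^{\frakp,\frakq\pres}_\lambda\) equals the Serre subcategory of \(\catO^{\frakq\pres}_\lambda\) generated by \(\{S(x\cdot\lambda) \suchthat x \in \Lambda^\frakp_\frakq(\lambda)\}\). The main obstacle I anticipate is the careful bookkeeping in the Serre-quotient lifting arguments: the abelian structure on \(\catO^{\frakq\pres}_\lambda\) is \emph{not} inherited from \(\catO_\lambda\) (as warned after Lemma~\ref{lem:14} and in Remark~\ref{rem:11}), so subobjects and extensions in the two categories genuinely differ, and the crux is that this mismatch is accounted for exactly by the kernel \(\calS_\frakq\) when one passes between lifts in \(\catO_\lambda\) and subobjects/extensions in \(\catO^{\frakq\pres}_\lambda\).
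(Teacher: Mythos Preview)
Your general claim that the essential image of a Serre subcategory under a Serre quotient is again a Serre subcategory is false, and the gap is exactly in your extension argument. For a counterexample, take \(\calD\) to be finite-dimensional representations of the \(A_3\) quiver \(3 \to 2 \to 1\), let \(\calT\) be generated by the simple \(L_2\), and let \(\calC\) be generated by \(L_1, L_3\). Since \(\Ext^1_\calD(L_i,L_j)=0\) for \(i,j\in\{1,3\}\), every object of \(\calC\) is semisimple, so \(T(\calC)\) contains only semisimple objects; but \(\calD/\calT\) is equivalent to representations of \(3 \to 1\) and has non-split extensions. The point is that an extension in \(\calD/\calT\) lifts to an extension \(0 \to \tilde A \to \tilde B \to \tilde C \to 0\) in \(\calD\), but you cannot force \(\tilde A, \tilde C\) to lie in \(\calC\) merely because their images in the quotient do. So your ``third axiom'' step does not go through, and this is not just bookkeeping.

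The paper's route is more concrete and sidesteps this issue. Rather than reason abstractly in the quotient, it shows directly that each \(S(x\cdot\lambda)\) with \(x\in\Lambda^\frakp_\frakq(\lambda)\), viewed as an honest \(\gl_n\)-module, lies in \(\catO^\frakp_\lambda\): one applies the Zuckermann functor \(\frakz\) to a \(\scrP_\frakq\)-presentation of \(S(x\cdot\lambda)\), checks that \(\frakz S(x\cdot\lambda)\neq 0\) and is still \(\scrP_\frakq\)-presentable, hence \(\frakz S(x\cdot\lambda)=S(x\cdot\lambda)\). You treat this as automatic (``\(S(x\cdot\lambda)\in\catO^{\frakp,\frakq\pres}_\lambda\) precisely when \(L(x\cdot\lambda)\in\catO^\frakp_\lambda\)''), but \(S(x\cdot\lambda)\) is in general much larger than \(L(x\cdot\lambda)\) as a \(\gl_n\)-module, so this equivalence is precisely the substance of the proof. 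Once each relevant simple is known to lie in \(\catO^\frakp_\lambda\) as a \(\gl_n\)-module, the conclusion follows from Lemma~\ref{lem:16} together with closure of \(\catO^\frakp_\lambda\) under subquotients and extensions in \(\catO_\lambda\), applied to the actual underlying modules rather than to abstract lifts.
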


\begin{proof}
  First let us prove that \(S(x \cdot \lambda) \in
  \catO^{\frakq\pres}_\lambda\) is in \(\catO_\lambda^{\frakp}\) if
  \(x \in \Lambda^\frakp_\frakq(\lambda)\). Let \(P\mapto Q \surto S(x \cdot
  \lambda)\) be a presentation of \(S(x \cdot \lambda)\) with \(P,Q \in
  \Add(\scrP_{\frakq})\).  Applying the Zuckermann's functor \(\frakz\)
  yields a presentation of \(\frakz S(x \cdot \lambda)\) with \(\frakz P,
  \frakz Q \in \Add(\scrP_{\frakq}^{\frakp})\).  Since \(\frakz P(x
  \cdot \lambda) \neq 0\) (because \(L(x \cdot \lambda)\) is a quotient
  of \(P(x \cdot \lambda)\) in \(\catO\)) and \(S(x \cdot \lambda)\) is a
  quotient of \(P(x \cdot \lambda)\), it follows that \(\frakz S(x \cdot
  \lambda)\neq 0\). On the other side, \(\frakz S(x \cdot \lambda) \in
  \catO^{\frakq\pres}\) by Lemma~\ref{lem:16}. But \(\frakz S(x \cdot
  \lambda)\) is a non-zero quotient in \(\catO^{\frakq\pres}\) of the
  simple module \(S(x \cdot \lambda)\), hence \(\frakz S(x \cdot \lambda)
  = S(x \cdot \lambda)\). It follows that \(S(x \cdot \lambda) \in \catO^{\frakp,\frakq\pres}_\lambda\).

  On the other side, if \(x \in \Lambda_\frakq(\lambda)\) but \(x \notin
  \Lambda^\frakp_\frakq(\lambda)\), then clearly \(S(x \cdot \lambda) \notin
  \catO^{\frakp}_\lambda\). Since \(\catO^{\frakp}_\lambda\) is
  closed under extensions, it follows that the objects of
  \(\catO^{\frakq\pres}_\lambda\) that are also in \(\catO^{\frakp}\)
  are exactly the objects whose composition factors are of type
  \(S(x \cdot \lambda)\) for \(x \in \Lambda^\frakp_\frakq(\lambda)\).
\end{proof}

It follows that the modules \(S(x \cdot \lambda)\) for \(x \in \Lambda^\frakp_\frakq(\lambda)\) give a full set of pairwise non-isomorphic simple objects of \(\catO^{\frakp,\frakq\pres}_\lambda\). Moreover, the projective cover of \(S(x \cdot \lambda)\) is \(P^{\frakp}(x \cdot \lambda)\).

\subsubsection{The graded abelian structure}
\label{sec:abelian-structure}

The category \(\catO^{\frakp,\frakq\pres}_\lambda\) is equivalent to the category of finitely generated (right) modules over \(\End(\scrP^{\frakp}_{\frakq}(\lambda))\):
\begin{equation}
  \label{eq:74}
  \catO^{\frakp,\frakq\pres}_\lambda \cong \rmod{\End\big(\scrP^\frakp_\frakq(\lambda)\big)}.
\end{equation}
 Via this equivalence we can define on \(\catO^{\frakp,\frakq\pres}_\lambda\) a natural abelian structure. However, as we already pointed out, this abelian structure is not induced by the abelian structure of \(\catO_\lambda\).

The algebra
\(\End(\scrP^{\frakp}_{\frakq}(\lambda))\) can be obtained from
\(A_\lambda = \End(\scrP(\lambda))\) in two steps. First, let \(e_\frakp^\perp
\in \End(\scrP(\lambda))\) be the idempotent projecting onto the direct
sum of the projective modules \(P(x \cdot \lambda)\) for \( x \notin
\Lambda^\frakp(\lambda)\). Then \(\End(\scrP^{\frakp}(\lambda)) =
A_\lambda / A_\lambda e^\frakp A_\lambda\). Moreover, let \(\overline
e_\frakq \in A_\lambda / A_\lambda e_\frakp^\perp A_\lambda\) be the
idempotent projecting onto the direct sum of the projective modules
\(P^{\frakp}(x \cdot \lambda)\) for \(x \in
\Lambda^\frakp_\frakq(\lambda)\). Then
\(\End(\scrP^{\frakp}_{\frakq}(\lambda)) = \overline e_\frakq( A_\lambda /
A_\lambda e_\frakp^\perp A_\lambda) \overline e_\frakq\).

By Lemma~\ref{lem:15}, the two steps can be done also in the inverse
order: let \(e_\frakq \in A_\lambda\) be the idempotent projecting onto
the direct sum of the projective modules \(P(x \cdot \lambda)\) for \(x
\in \Lambda_\frakq(\lambda)\) (notice that \(\overline e_\frakq = e_\frakq + A_\lambda e_\frakp^\perp A_\lambda\)). Then \(\End(\scrP_{\frakq}(\lambda)) = e_\frakq
A_\lambda e_\frakq\). Moreover, let \(f_\frakp^\perp = e_\frakq e_\frakp^\perp e_\frakq \in e_\frakq A_\lambda e_\frakq\) be
the idempotent projecting onto the direct sum of the projective
modules \(P(x \cdot \lambda)\) for \(x \notin
\Lambda^\frakp_\frakq(\lambda)\). Then
\(\End(\scrP^{\frakp}_{\frakq}(\lambda))= (e_\frakq A_\lambda e_\frakq) / ( e_\frakq
A_\lambda e_\frakq f_\frakp^\perp e_\frakq A_\lambda e_\frakq)\). It follows that
\begin{equation}
(e_\frakq A_\lambda e_\frakq) / ( e_\frakq A_\lambda e_\frakq f_\frakp^\perp e_\frakq A_\lambda e_\frakq) = \overline e_\frakq(A_\lambda/A_\lambda e_\frakp^\perp A_\lambda) \overline e_\frakq.\label{eq:1}
\end{equation}
As far as we understand, this is not a trivial result, but instead a  consequence of Lemma~\ref{lem:15}.

Recall that the algebra \(A_\lambda\) has a natural grading. Since the idempotents \(e_\frakp^\perp\) and \(\overline e_\frakq\) are homogeneous, this induces a grading on the algebra \eqref{eq:1}. Summarizing:

\begin{prop}
  \label{prop:6}
  The category \(\catO_\lambda^{\frakp,\frakq\pres}\) is equivalent to the category of finite-dimensional (right) modules over a finite-dimensional positively graded algebra.
\end{prop}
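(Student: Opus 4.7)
The plan is to simply assemble what has already been established in \textsection\ref{sec:abelian-structure} into the claimed statement, verifying three points: the equivalence, the positivity of the grading, and the finite-dimensionality.

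First I would invoke the equivalence \eqref{eq:74}, so that it suffices to exhibit a finite-dimensional positively graded algebra isomorphic to $\End\bigl(\scrP^{\frakp}_{\frakq}(\lambda)\bigr)$. Using the description derived in \textsection\ref{sec:abelian-structure}, I would take this algebra to be
\[
B \;=\; \overline e_\frakq\,\bigl(A_\lambda/A_\lambda e_\frakp^\perp A_\lambda\bigr)\,\overline e_\frakq,
\]
where $A_\lambda = \End(\scrP(\lambda))$ is the endomorphism ring of a minimal projective generator of the block $\catO_\lambda$. The algebra $A_\lambda$ is a basic finite-dimensional $\C$-algebra (this is standard for blocks of $\catO$), so $B$ is also finite-dimensional as a subquotient. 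In particular, finitely generated $B$-modules are automatically finite-dimensional, which reconciles the wording of \eqref{eq:74} with the statement.

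Next I would transfer the grading. By the results of \cite{MR1029692,MR1322847} recalled at the beginning of \textsection\ref{sec:gener-parab-subc}, the algebra $A_\lambda$ carries a positive (Koszul) grading, with the primitive idempotents $e_{x\cdot\lambda}$ concentrated in degree $0$. Since $e_\frakp^\perp$ is a sum of such idempotents, it is homogeneous of degree $0$, so $A_\lambda e_\frakp^\perp A_\lambda$ is a homogeneous two-sided ideal and the quotient $A_\lambda/A_\lambda e_\frakp^\perp A_\lambda$ inherits a positive grading. Similarly $\overline e_\frakq$ is a homogeneous idempotent in this quotient, so the compression $B = \overline e_\frakq(-)\overline e_\frakq$ is a positively graded subalgebra. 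Combining these two points yields the proposition.

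There is no real obstacle: the content of the statement is essentially encoded in the equality \eqref{eq:1} (itself a nontrivial consequence of Lemma~\ref{lem:15}) together with the homogeneity of the idempotents involved. The only thing worth checking carefully is that both constructions — first taking a subcategory and then a quotient, or the other way around — produce the same graded algebra $B$; but this is precisely \eqref{eq:1}, so no additional work is needed here.
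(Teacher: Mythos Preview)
Your proposal is correct and follows exactly the paper's approach: the proposition is stated in the paper as a summary (introduced by ``Summarizing:'') of the preceding discussion in \textsection\ref{sec:abelian-structure}, and your argument simply spells out that summary---the equivalence \eqref{eq:74}, finite-dimensionality of $A_\lambda$ and hence of $B$, and positivity of the inherited grading via the degree-zero homogeneity of $e_\frakp^\perp$ and $\overline e_\frakq$.
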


We will denote by \(\catOZ^{\frakp,\frakq\pres}_\lambda\) the graded
version of \(\catO^{\frakp,\frakq\pres}_{\lambda}\), that is the
category of finitely generated \emph{graded} modules over the algebra
\eqref{eq:1}. There is an obvious forgetful functor \(\frakf:
\catOZ^{\frakp,\frakq\pres}_{\lambda} \mapto
\catO^{\frakp,\frakq\pres}_{\lambda}\), which forgets the grading. By a
\emph{graded lift} of an object \(M \in
\catO^{\frakp,\frakq\pres}_{\lambda}\) we mean an object \(\tilde M \in
\catOZ^{\frakp,\frakq\pres}_{\lambda}\) such that \(\frakf \tilde M =
M\). The object \(M\) is then called \emph{gradable}.  Of course, not all
modules are gradable (cf.\ \cite{MR2005290}), but all interesting ones
will be. In particular, the techniques of \cite{MR2005290} ensure that
simple and indecomposable projective modules are gradable, both as
objects of \(\catO_\lambda\) and of \(\catO^{\frakp,\frakq\pres}_\lambda\)
(although the grading is different). We take their standard graded
lifts to be determined by requiring that the simple head is
concentrated in degree \(0\). We will use the notation \(qM = M\langle
1\rangle\) for the graded shift of a module.

\subsubsection{The properly stratified structure}
\label{sec:prop-strat-strct}

The results of \cite[Section~2]{MR2139933} extend to the categories \(\catO^{\frakp,\frakq\pres}_\lambda\). Let us briefly sketch them.
First, we notice that the category \(\catO^{\frakp,\frakq\pres}_\lambda\)  inherits from \(\catO_\lambda\) a simple-preserving duality:
\begin{lemma}
  The algebra \eqref{eq:1} inherits from \(A_\lambda\) an anti-automorphism; this induces a
  simple-preserving duality \(*\) on
  \(\catO^{\frakp,\frakq\pres}_\lambda\).\label{lem:25}
\end{lemma}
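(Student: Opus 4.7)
My plan is to derive the claimed anti-involution and the induced duality from the standard simple-preserving duality $d$ on $\catO_\lambda$ coming from the Chevalley anti-involution $\tau\colon\gl_n\mapto\gl_n$, $x\mapsto x^T$, via the subquotient description of $\catO^{\frakp,\frakq\pres}_\lambda$ inside $\catO_\lambda$. Recall that $d$ is a contravariant exact involutive equivalence on $\catO_\lambda$ with $dL(\mu)\cong L(\mu)$ for every simple $L(\mu)$. Using the standard fact that in $\catO_\lambda$ the multiset of indecomposable injectives $\{I(\mu)=dP(\mu)\}$ coincides with the multiset of indecomposable projectives $\{P(\mu)\}$, one can fix an isomorphism $d\scrP(\lambda)\cong\scrP(\lambda)$ lifting $d$ to a graded anti-automorphism $\sigma$ of $A_\lambda=\End(\scrP(\lambda))$.

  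The next step would be to push $\sigma$ through to the quotient-of-a-corner of $A_\lambda$ appearing in \eqref{eq:1}. The Chevalley involution $\tau$ preserves the Levi $\gl_\frakp$ (block-diagonal matrices are transpose-stable), so the defining condition of $\catO^\frakp_\lambda$ is $d$-stable and $d$ restricts to a duality on $\catO^\frakp_\lambda$. Via \eqref{eq:33}, this is exactly the assertion that $\sigma$ preserves the two-sided ideal $A_\lambda e_\frakp^\perp A_\lambda$, so $\sigma$ descends to an anti-automorphism $\bar\sigma$ of $A_\lambda/A_\lambda e_\frakp^\perp A_\lambda=\End(\scrP^\frakp(\lambda))$.

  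To pass from $\bar\sigma$ to an anti-involution of the corner algebra $\overline e_\frakq\bigl(A_\lambda/A_\lambda e_\frakp^\perp A_\lambda\bigr)\overline e_\frakq=\End(\scrP^\frakp_\frakq(\lambda))$, I would invoke Lemma~\ref{lem:15}, according to which $\catO^{\frakp,\frakq\pres}_\lambda$ is equivalently the Serre quotient of $\catO^\frakp_\lambda$ by the Serre subcategory generated by the simples $L(x\cdot\lambda)$ with $x\in\Lambda^\frakp(\lambda)\setminus\Lambda^\frakp_\frakq(\lambda)$. Since $d$ fixes every simple, it preserves this Serre subcategory and therefore descends to the Serre quotient; algebraically this says precisely that $\bar\sigma$ preserves the idempotent $\overline e_\frakq$ and restricts to the desired anti-involution. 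The resulting duality $*$ on $\catO^{\frakp,\frakq\pres}_\lambda$ is then simple-preserving because $*S(x\cdot\lambda)\cong Q(dL(x\cdot\lambda))\cong Q(L(x\cdot\lambda))\cong S(x\cdot\lambda)$, where $Q\colon\catO^\frakp_\lambda\mapto\catO^{\frakp,\frakq\pres}_\lambda$ denotes the Serre quotient functor.

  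The delicate point I anticipate is the very first step: upgrading the contravariant equivalence $d$ to a genuine anti-automorphism of $A_\lambda$ (rather than merely an anti-isomorphism $A_\lambda\xrightarrow{\sim}A_\lambda^{\mathrm{op}}$) relies on the standard but non-elementary bijection between iso classes of indecomposable projectives and of indecomposable injectives in $\catO_\lambda$. Once this is granted, the two descent steps become straightforward consequences of the transpose-stability of $\gl_\frakp$ and of the simple-invariance of $d$.
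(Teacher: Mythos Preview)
Your argument is correct and follows essentially the same route as the paper: start from the standard simple-preserving duality on \(\catO_\lambda\), pass to \(\catO^\frakp_\lambda\), and then to the corner algebra \(\overline e_\frakq(A_\lambda/A_\lambda e_\frakp^\perp A_\lambda)\overline e_\frakq\). The only difference is in the last step: the paper observes directly that the anti-automorphism on \(\End(\scrP^\frakp(\lambda))\) fixes each primitive idempotent (since \(d\) fixes each simple, hence each indecomposable projective), so it restricts to the corner on the nose; your phrasing via preservation of the Serre subcategory and descent to the Serre quotient is a valid but slightly less direct way of saying the same thing.
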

\begin{proof}
  The category \(\catO\) has a simple-preserving duality (see for example \cite[\textsection{}3.2]{MR2428237}), which restricts to a simple-preserving duality on \(\catO^\frakp_\lambda\). This defines an anti-automorphism on \(\End_{\catO^\frakp}(\scrP^\frakp(\lambda)) = A_\lambda/A_\lambda e_\frakp^\perp A_\lambda\), which is the identity on the idempotents projecting onto the indecomposable projective modules \(P^\frakp(w \cdot \lambda)\). Hence this restricts to an anti-automorphism of \(\overline e_\frakq(A_\lambda/A_\lambda e_\frakp^\perp A_\lambda) \overline e_\frakq\), see \eqref{eq:1}, which induces, by the equivalence of categories \eqref{eq:74}, a simple-preserving duality on \(\catO^{\frakp,\frakq\pres}_\lambda\).
\end{proof}

Let \(x \in \Lambda^{\frakp}_\frakq(\lambda)\). The module \(P^{\frakp}(x \cdot \lambda)\) is the projective cover of
\(S(x \cdot \lambda)\) in \(\catO^{\frakp,\frakq\pres}_\lambda\). Given two modules \(M\), \(N\) the trace of \(M\) in \(N\) is defined to be the sum of all images of maps \(f\colon M \mapto N\), in formulas \(\Trace_M N = \sum_{f \colon M \mapto N} \image f\). Then we have
\(S(x \cdot \lambda) = P^{\frakp}(x \cdot \lambda) /
\Trace_{\scrP^{\frakp}_{\frakq}(\lambda)}(\rad P^{\frakp}(x \cdot \lambda))\) as \(\gl_n\)--modules.
Let
\(P^{\frakp}(\prec x) = \bigoplus_{w \in \Lambda^\frakp_\frakq(\lambda), w \prec x}P^{\frakp}(w \cdot
\lambda)\) and set \(\Delta (x \cdot \lambda) = P^{\frakp}(x \cdot
\lambda) / \Trace_{P^{\frakp}(\prec x) }P^{\frakp}(x \cdot \lambda)\). As in
\cite[Lemma~2.8]{MR2139933}, one can show that the modules \(\Delta(x
\cdot \lambda)\) satisfy a universal property, and as in
\cite[Proposition~2.9]{MR2139933} this can be used to show that
  \begin{equation}
    \label{eq:16}
    \Delta(x \cdot \lambda ) \cong U(\gl_n) \otimes_{U(\frakp + \frakq)} P^{(\fraka)}(x \cdot \lambda) ,
  \end{equation}
where \(P^{(\fraka)}(x \cdot \lambda)\) is the projective cover in \(\catO^{\frakp \cap \fraka}(\fraka)\) of the highest weight module with highest weight \(x \cdot \lambda\). Moreover, one can define
  \begin{equation}
    \label{eq:22}
    \oDelta(x \cdot \lambda ) \cong U(\gl_n) \otimes_{U(\frakp+\frakq)} S^{(\fraka)}(x \cdot \lambda) ,
  \end{equation}
where \(S^{(\fraka)}(x \cdot \lambda)\) is the simple module in \(\calA^\frakp_\frakq\) with highest weight \(x \cdot \lambda\).

We recall the definition of a graded \emph{properly stratified algebra} in the sense of \cite{MR2057398} (see also \cite{MR1921761}, \cite{MR2344576}).
\begin{definition}\label{def:2}
  Let \(B\) be a finite-dimensional associative graded algebra over a
  field \(\K\) with a simple-preserving duality and with equivalence
  classes of simple modules \(\{ \mathbbol L(\lambda)\langle j \rangle \suchthat \lambda \in \Lambda, j \in \Z\}\) where \((\Lambda,\prec)\) is a partially ordered finite set. For each \(\lambda
  \in \Lambda\) let:
  \begin{enumerate}[(i)]
  \item \label{soerg:item:8} \(\mathbbol P(\lambda)\) denote the
    projective cover of \(\mathbbol L(\lambda)\),
  \item \label{soerg:item:9} \(\mathbbol \Delta(\lambda)\) be the
    maximal quotient of \(\mathbbol P(\lambda)\) such that
    \([\mathbbol\Delta(\lambda):\mathbbol L(\mu)\langle i \rangle ]=0\) for all \(\mu
    \succ \lambda\), \(i \in \Z\),
  \item \label{soerg:item:16} \(\overline{\mathbbol \Delta}(\lambda)\)
    be the maximal quotient of \(\mathbbol \Delta(\lambda)\) such that
    \([\rad \overline{\mathbbol \Delta}(\lambda):\mathbbol L(\mu)\langle i \rangle]=0\)
    for all \(\mu \succeq \lambda\), \(i \in \Z\).
  \end{enumerate}
  Then \(B\) is \emph{properly stratified} if the following conditions hold for
  every \(\lambda \in \Lambda\):
  \begin{enumerate}[label=(PS\arabic*),ref=\arabic*]
  \item \label{soerg:item:17} the kernel of the canonical epimorphism
    \(\mathbbol P(\lambda) \surto \mathbbol\Delta(\lambda)\) has a
    filtration with subquotients isomorphic to graded shifts of
    \(\mathbbol\Delta(\mu)\), \(\mu \succ \lambda\);
  \item \label{soerg:item:18} the kernel of the canonical epimorphism
    \(\mathbbol\Delta(\lambda) \surto \overline{\mathbbol
      \Delta}(\lambda)\) has a filtration with subquotients isomorphic
    to graded shifts of \(\overline{\mathbbol \Delta}(\lambda)\);
  \item \label{soerg:item:19} the kernel of the canonical epimorphism
    \(\overline{\mathbbol \Delta}(\lambda) \surto \mathbbol
    L(\lambda)\) has a filtration with subquotient isomorphic to graded
    shifts of \(\mathbbol L(\mu)\), \(\mu \prec \lambda\).
  \end{enumerate}
\end{definition}
The modules \(\mathbbol\Delta(i)\) and \(\overline{\mathbbol\Delta}(i)\) are called \emph{standard} and \emph{proper standard} modules respectively.
The same argument as for \cite[Theorem~2.16]{MR2139933} gives:

\begin{theorem}
  \label{thm:4}
  The algebra \(\End(\scrP^{\frakp}_{\frakq}(\lambda))\) with the
  order induced by the Bruhat order on \(\Lambda^\frakp_\frakq(\lambda)\) is
  a graded properly stratified algebra. The modules \(\Delta(x \cdot \lambda)\) and
  \(\oDelta(x \cdot \lambda)\) are the standard and proper standard
  modules respectively.
\end{theorem}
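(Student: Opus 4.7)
The plan is to follow the strategy of \cite[Theorem~2.16]{MR2139933} and adapt it from the case $\frakp=\frakb$ to the general parabolic case, exploiting the fact that (by Proposition~\ref{prop:1}) our category coincides with a generalized parabolic subcategory in the sense of \cite{MR2057398,MR1921761}, so the machinery of properly stratified algebras developed there is applicable. Simple-preservation of the duality is already Lemma~\ref{lem:25}, so the content is to verify axioms \ref{soerg:item:17}--\ref{soerg:item:19}.

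First I would establish a universal property for $\Delta(x\cdot\lambda)$ as in \eqref{eq:16}: using Frobenius reciprocity for the induction $U(\gl_n)\otimes_{U(\frakp+\frakq)}(\blank)$, the module $\Delta(x\cdot\lambda)$ is the projective cover of $S(x\cdot\lambda)$ in the Serre subcategory of $\catO^{\frakp,\frakq\pres}_\lambda$ generated by $\{S(y\cdot\lambda)\suchthat y\not\succ x\}$. This mirrors \cite[Lemma~2.8 and Proposition~2.9]{MR2139933}, the only modification being that the fiber factor is now the parabolic Levi projective $P^{(\fraka)}(x\cdot\lambda)\in\catO^{\frakp\cap\fraka}(\fraka)$ rather than the standard Levi projective; the argument is formally identical because tensoring with finite-dimensional $\gl_\frakp$-modules commutes with the Zuckermann functor $\frakz$ used to pass from $\catO$ to $\catO^\frakp$. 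An analogous universal property for $\oDelta(x\cdot\lambda)$ follows directly from the simplicity of $S^{(\fraka)}(x\cdot\lambda)$ in $\calA^\frakp_\frakq$.

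Next I would build the three filtrations. For \ref{soerg:item:17}, the kernel of $P^\frakp(x\cdot\lambda)\surto\Delta(x\cdot\lambda)$ is generated by images of maps from $P^\frakp(y\cdot\lambda)$ with $y\succ x$ in $\Lambda^\frakp_\frakq(\lambda)$; using the PBW decomposition $U(\gl_n)\cong U(\frakn^-_{\frakp+\frakq})\otimes U(\frakp+\frakq)$ together with the $\calP^\frakp_\frakq$-presentability of $P^\frakp(x\cdot\lambda)$ as $\fraka$-module (from the proof of Proposition~\ref{prop:1}), one extracts a $\Delta$-filtration by successively peeling off the highest weight occurring in the kernel. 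For \ref{soerg:item:18}, the kernel of $\Delta(x\cdot\lambda)\surto\oDelta(x\cdot\lambda)$ is $U(\gl_n)\otimes_{U(\frakp+\frakq)}\rad P^{(\fraka)}(x\cdot\lambda)$, and one filters $\rad P^{(\fraka)}(x\cdot\lambda)$ inside $\calA^\frakp_\frakq$ by copies of $S^{(\fraka)}(x\cdot\lambda)\langle j\rangle$ (these are the only composition factors of the Levi projective cover lying in degrees $>0$, because of the block structure of $\catO^{\frakp\cap\fraka}(\fraka)$); inducing up gives the required $\oDelta$-filtration. Finally, \ref{soerg:item:19} follows from \ref{soerg:item:17} applied to the dual, together with the simple-preserving duality $*$ of Lemma~\ref{lem:25}, since $\oDelta(x\cdot\lambda)^*$ fits into an exact triangle built from proper standards in the reverse order.

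The main obstacle will be \ref{soerg:item:17}: one needs to see that the natural $\Delta$-filtration of $P^\frakp(x\cdot\lambda)$ obtained from the BGG-type decomposition \eqref{eq:130} is compatible with the passage to $\Lambda^\frakp_\frakq(\lambda)$, i.e.\ that the subquotients are indexed precisely by $y\in\Lambda^\frakp_\frakq(\lambda)$ with $y\succ x$, not by the larger set $\Lambda^\frakp(\lambda)$. This requires the observation that if $y\in\Lambda^\frakp(\lambda)\setminus\Lambda^\frakp_\frakq(\lambda)$, then $\Delta(y\cdot\lambda)$ is killed on passage to $\catO^{\frakp,\frakq\pres}_\lambda$ because $y$ is not $W_\frakq$-antidominant, so such subquotients disappear from the filtration after projection. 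Once this compatibility is verified, the Bruhat-order axiom of properly stratified algebras is automatic from the construction, and combining all three filtrations with Lemma~\ref{lem:25} yields the theorem.
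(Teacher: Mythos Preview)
Your overall strategy---follow \cite[Theorem~2.16]{MR2139933} and adapt it to general $\frakp$ using the identifications \eqref{eq:16} and \eqref{eq:22}---is exactly what the paper does; indeed the paper's entire proof is the one-line reference to that argument. The universal-property step and the induction/PBW approach to (PS\ref{soerg:item:17}) and (PS\ref{soerg:item:18}) are the right moves.

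Two points need correction. First, your argument for (PS\ref{soerg:item:19}) is off: this axiom is essentially tautological from the definitions of $\Delta$ and $\oDelta$ in Definition~\ref{def:2}---since $\Delta(x\cdot\lambda)$ already has no composition factor $S(\mu)$ with $\mu\succ x$, and $\oDelta(x\cdot\lambda)$ is the maximal quotient whose radical has no $S(\mu)$ with $\mu\succeq x$, the radical of $\oDelta$ automatically has only $S(\mu)$ with $\mu\prec x$. No duality or ``exact triangle'' argument is needed (and ``exact triangle'' is out of place in this abelian setting). Second, your ``main obstacle'' is misstated: the modules $\Delta(y\cdot\lambda)$ are only defined for $y\in\Lambda^\frakp_\frakq(\lambda)$, so there is nothing to ``kill'' for $y\notin\Lambda^\frakp_\frakq(\lambda)$. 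The $\Delta$-filtration of $P^\frakp(x\cdot\lambda)$ is built directly inside $\catO^{\frakp,\frakq\pres}_\lambda$ from the induction description \eqref{eq:16}, not by first filtering in $\catO^\frakp_\lambda$ and then projecting. Also check your Bruhat-order direction in the (PS\ref{soerg:item:17}) paragraph: the paper's construction has $\Delta(x\cdot\lambda)=P^\frakp(x\cdot\lambda)/\Trace_{P^\frakp(\prec x)}P^\frakp(x\cdot\lambda)$, so the kernel is generated by images of projectives indexed by $y\prec x$, not $y\succ x$.
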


It is easy to show that also the modules \(\Delta(x \cdot \lambda)\) and \(\overline \Delta(x \cdot \lambda)\) are gradable. Again, we choose their standard lifts by requiring the simple heads to be concentrated in degree \(0\).

\subsection{Functors between categories \texorpdfstring{$\catO^{\frakp,\frakq\pres}_\lambda$}{O}}
\label{sec:funct-betw-categ}
We conclude this section examining the natural functors that can be defined between the categories we have introduced. In particular, for \(\frakp' \supseteq \frakp\) and \(\frakq' \supseteq \frakq\) we will define functors
\begin{equation*}
\begin{tikzpicture}[baseline=(A1.base)]
  \node (A1) at (0,0) {\(\catOZ^{\frakp,\frakq\pres}_\lambda\)};
  \node (A2) at (3,0) {\(\catOZ^{\frakp',\frakq\pres}_\lambda\)};
  \draw[->,bend left=10] (A1) to node[above] {\(\frakz\)} (A2);
  \draw[->,bend left=10] (A2) to node[below] {\(\frakj\)} (A1);
\end{tikzpicture} \qquad \text{and} \qquad
\begin{tikzpicture}[baseline=(A1.base)]
  \node (A1) at (0,0) {\(\catOZ^{\frakp,\frakq\pres}_\lambda\)};
  \node (A2) at (3,0) {\(\catOZ^{\frakp,\frakq'\pres}_\lambda\)};
  \draw[->,bend left=10] (A1) to node[above] {\(\frakQ\)} (A2);
  \draw[->,bend left=10] (A2) to node[below] {\(\fraki\)} (A1);
\end{tikzpicture}
\end{equation*}

\subsubsection{Zuckermann's functors}
\label{sec:zuck-funct}
Suppose \(\frakp'\) is also a standard parabolic subalgebra of \(\gl_n\) with \(\frakp' \subset \frakp\). Let us fix an integral dominant weight \(\lambda\). We have then an inclusion functor \(\frakj :
\catO^{\frakp}_\lambda \mapto \catO^{{\frakp'}}_\lambda\). Since the
abelian structure of \(\catO^{\frakp}_\lambda\) is the restriction of
the abelian structure of \(\catO^{{\frakp'}}_\lambda\), this is an
exact functor. Using Lemma~\ref{lem:14}, we see that this restricts to an exact functor \(\frakj:
\catO^{\frakp,\frakq\pres}_\lambda \to
\catO^{{\frakp'},\frakq\pres}_\lambda\).

The left adjoint of \(\frakj : \catO^{\frakp}
\mapto \catO^{{\frakp'}}\) is the \emph{Zuckermann's functor} \(\mathfrak z :
\catO^{{\frakp'}}_\lambda \mapto \catO^{\frakp}_\lambda\), defined on \(M
\in \catO^{{\frakp'}}_\lambda\) by taking the maximal quotient that
lies in \(\catO^{\frakp}_\lambda\). The functor \(\mathfrak z\) is
right exact, but not exact in general. Being right exact, \(\frakz\) sends a presentation \(P \mapto Q \surto M\) with \(P,Q \in \Add(\scrP^{{\frakp'}}_{\frakq}(\lambda))\) to a presentation \(\frakz P \mapto \frakz Q \surto \frakz M\) of \(\frakz M\) with \(\frakz P, \frakz Q \in \Add(\scrP^{\frakp}_{\frakq}(\lambda))\), hence it restricts to a functor \(\mathfrak z :
\catO^{{\frakp'},\frakq\pres}_\lambda \mapto \catO^{\frakp,\frakq\pres}_\lambda\).

Notice that the definitions of \(\frakj\) and \(\frakz\) make sense in the graded setting too, hence we have also adjoint functors
\begin{equation}
\begin{tikzpicture}[baseline = (A.base)]
\node (A) at (0,0) {\(\catOZ^{{\frakp'},\frakq\pres}_\lambda\)};
\node (B) at (3,0) {\(\catOZ^{\frakp,\frakq\pres}_\lambda\)};
\draw[->,bend left=10] (A) to node[above] {\(\frakz\)} (B);
\draw[->,bend left=10] (B) to node[below] {\(\frakj\)} (A);
\end{tikzpicture}
\label{eq:195}
\end{equation}

\subsubsection{Coapproximation functors}
\label{sec:coappr-funct-1}

Suppose that \(\frakq'\) is a standard parabolic subalgebra of \(\gl_n\) with \(\frakq' \subseteq \frakq\) and let us fix an integral dominant weight
\(\lambda\). According to Lemma~\ref{lem:14}, we have an inclusion
functor \(\mathfrak i : \catO^{\frakp,\frakq\pres}_\lambda \to
\catO^{\frakp,{\frakq'}\pres}_\lambda\). This is right exact but
not left exact in general (cf.\ \cite[Example 2.3]{MR2139933} for an example).

Its right adjoint \(\mathfrak Q : \catO^{\frakp,{\frakq'}\pres}_\lambda
\mapto \catO^{\frakp,\frakq\pres}_\lambda\) is called {\em
  coapproximation}, and can be described Lie theoretically as
follows. Take \(M \in \catO^{\frakp,{\frakq'}\pres}_\lambda\), and let
\(p: Q \twoheadrightarrow \Trace_{\scrP^{\frakp}_{\frakq}(\lambda)}(M)\)
be a projective cover in \(\catO^{\frakp,\frakq'\pres}_\lambda\) (notice
that \(\scrP^{\frakp}_\frakq(\lambda)\) is a direct summand of
\(\scrP^{\frakp}_{\frakq'}(\lambda)\) and in particular an object of
\(\catO^{\frakp,\frakq'\pres}_\lambda\)). Then define \(\mathfrak Q(M) =
Q/\Trace_{\scrP^{\frakp}_{\frakq}(\lambda)}(\ker p)\). It is
easy to verify that \(\frakQ\) is just a Serre quotient functor, and hence it is exact; indeed, it corresponds under the equivalence of categories \eqref{eq:74} to \(\Hom_{\catO^{\frakp,\frakq'\pres}_\lambda}(\scrP^\frakp_\frakq(\lambda),\blank)\). Its left adjoint \(\fraki\), on the other hand, corresponds to the induction functor \(\blank \otimes_{\End(\scrP^\frakp_\frakq(\lambda))} \End(\scrP^\frakp_{\frakq'}(\lambda))\). In particular, there are graded lifts
\begin{align}
  \fraki = \blank \otimes_{\End(\scrP^\frakp_\frakq(\lambda))} \End(\scrP^\frakp_{\frakq'}(\lambda)) \colon \catOZ^{\frakp,\frakq\pres}_\lambda & \longrightarrow
  \catOZ^{\frakp,\frakq'\pres}_\lambda \\
  \frakQ = \Hom_{\catOZ^{\frakp,\frakq'\pres}_\lambda}(\scrP^\frakp_\frakq(\lambda),\blank) \colon
  \catOZ^{\frakp,{\frakq'}\pres}_\lambda & \longrightarrow
  \catOZ^{\frakp,\frakq\pres}_\lambda.
\end{align}

Our next goal is to compute the action of \(\frakQ\) on proper standard modules.  We will need the following easy fact:

\begin{lemma}
  \label{lem:10}
  Let \(\frakq' \subset \frakq\) and let \(w \in \Lambda^\frakp_{\frakq'}(\lambda)\). Then there exists a unique \(x \in W_\frakq\) such that \(xw \in \Lambda^\frakp_\frakq(\lambda)\) and \(\len(xw)=\len(x)+\len(w)\).
\end{lemma}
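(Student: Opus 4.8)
The plan is to treat this as a statement purely about the Weyl group $\bbS_n$ with its standard parabolic subgroups $W_\frakp$, $W_{\frakq'}\subset W_\frakq$ (with $\Pi_\frakp$ orthogonal to $\Pi_\frakq$) and $\bbS_\lambda$; note at the outset that $\bbS_\lambda$ is a \emph{standard} parabolic (since $\lambda$ is dominant integral its dot-stabilizer is the stabilizer of the dominant weight $\lambda+\rho$), so the usual parabolic-coset facts apply. I would record the facts I need: $g$ is shortest in the left coset $g\bbS_\lambda$ iff $g$ maps $\Pi_\lambda$ into the positive roots $\Phi^+$ (and then $\len(gu)=\len(g)+\len(u)$ for $u\in\bbS_\lambda$); $g$ is shortest, resp.\ longest, in the right coset $W_\frakq g$ iff $g^{-1}(\Pi_\frakq)\subseteq\Phi^+$, resp.\ $\subseteq\Phi^-$; elements of $W_\frakq$ permute $\Phi^+\setminus\Phi_\frakq^+$ while $w_\frakq$ (the longest element of $W_\frakq$) negates $\Phi_\frakq^+$; and — the key use of orthogonality — every element of $W_\frakq$ fixes each root of $\Phi_\frakp$, so left multiplication by $W_\frakq$ carries $W^\frakp$ into $W^\frakp$. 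The last point makes the $\frakp$-conditions in the definition of $\Lambda^\frakp_\bullet(\lambda)$ automatic, so I would only ever use that $w\in\Lambda^\frakp_{\frakq'}(\lambda)$ implies $w\in(\bbS_n/\bbS_\lambda)^{\short}$ and $w\bbS_\lambda\subseteq W^\frakp$.

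For existence I would exhibit $x$ explicitly. Put $\tilde w:=ww_\lambda$, the longest element of $w\bbS_\lambda$ (so $\len(\tilde w)=\len(w)+\len(w_\lambda)$ and $\tilde w\in W^\frakp$); let $\tilde w_0$ be the shortest element of the right coset $W_\frakq\tilde w$, write $\tilde w=\tilde z\,\tilde w_0$ with $\tilde z\in W_\frakq$ and $\len(\tilde w)=\len(\tilde z)+\len(\tilde w_0)$, and set $x:=w_\frakq\tilde z^{-1}\in W_\frakq$. Then $x\tilde w=w_\frakq\tilde w_0$ is the longest element of $W_\frakq\tilde w$ with $\len(x\tilde w)=\len(x)+\len(\tilde w)$; combining this with $xw=(x\tilde w)w_\lambda$ and a length squeeze gives $\len(xw)=\len(x)+\len(w)$; the identity $xw\bbS_\lambda=x(w\bbS_\lambda)$ together with the orthogonality fact gives $xw\bbS_\lambda\subseteq W^\frakp$; and $x\tilde w=(xw)w_\lambda\in xw\bbS_\lambda$ is by construction a longest $W_\frakq$-coset representative, so $xw\bbS_\lambda\cap w_\frakq W^\frakq\neq\emptyset$. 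The remaining point — that $xw$ is the \emph{shortest} element of $xw\bbS_\lambda$ — is the one that needs work: I would check $(xw)(\alpha)>0$ for $\alpha\in\Pi_\lambda$ by setting $\beta:=w(\alpha)>0$, observing that $\{\gamma\in\Phi^+\colon x(\gamma)<0\}\subseteq\Phi_\frakq^+$ so one may assume $\beta\in\Phi_\frakq^+$, and then running the substitutions $\tilde z^{-1}=\tilde w_0\,w_\lambda\,w^{-1}$ and $w_\lambda(\alpha)=-\alpha'$ (some $\alpha'\in\Pi_\lambda$) to reduce the claim to $\tilde w_0(\alpha')>0$; this last inequality holds because $\tilde w_0\in W^\frakq$ forces $\tilde w_0^{-1}(\Phi_\frakq^+)\subseteq\Phi^+$, which is incompatible with $\tilde w_0(\alpha')\in-\Phi_\frakq^+$. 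This establishes $xw\in\Lambda^\frakp_\frakq(\lambda)$.

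For uniqueness I would prove the stronger statement that $\Lambda^\frakp_\frakq(\lambda)$ meets every right $W_\frakq$-coset in at most one element; since $xw$ and any competitor $x'w$ lie in $W_\frakq w$, this forces $x'w=xw$ and hence $x'=x$. If $z_1,z_2\in\Lambda^\frakp_\frakq(\lambda)$ lie in one right $W_\frakq$-coset, then using $z_i\in(\bbS_n/\bbS_\lambda)^{\short}$ one checks that $z_i\bbS_\lambda\cap w_\frakq W^\frakq\neq\emptyset$ in fact forces the longest element $z_iw_\lambda$ of $z_i\bbS_\lambda$ into $w_\frakq W^\frakq$ (the only $W_\lambda$-translate of $z_i$ that can land among the longest $W_\frakq$-representatives is the one produced by $w_\lambda$); then $z_1w_\lambda$ and $z_2w_\lambda$ lie in the same right $W_\frakq$-coset and are both its longest element, so $z_1w_\lambda=z_2w_\lambda$ and $z_1=z_2$.

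The hard part is the verification that $xw$ is shortest in its $\bbS_\lambda$-coset: the tempting shortcut ``a length-additive left multiplication by $W_\frakq$ preserves shortest $\bbS_\lambda$-coset representatives'' is simply false (already in $\bbS_3$ with $W_\frakq=\langle s_1\rangle$, $\bbS_\lambda=\langle s_2\rangle$ and $w=s_2s_1$, where $\len(s_1 w)=\len(s_1)+\len(w)$ but $s_1w=s_1s_2s_1$ is no longer a shortest coset representative), so the argument genuinely relies on having chosen $\tilde w$ to be the \emph{longest} element of $w\bbS_\lambda$ and on having built $x$ from the shortest $W_\frakq$-representative of $\tilde w$. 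All the root bookkeeping is elementary but must be done carefully; the one other thing to be careful about is confirming at the start that $\bbS_\lambda$ is a standard parabolic, so that the quoted coset facts are available.
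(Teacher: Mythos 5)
Your proof is correct, and it ultimately constructs the same element $x$ as the paper's, but by a different route. The paper's proof works with the minimal double coset representative $z$ of $W_\frakq w \bbS_\lambda$: it writes $w=y_1z$, takes $y_0 \in W_\frakq$ to be the (unique) element with $y_0 z w_\lambda$ the longest element of the double coset, and sets $x=y_0y_1^{-1}$; the checks that $y_0z$ really lies in $\Lambda^\frakp_\frakq(\lambda)$ and that lengths add are left implicit, resting on the structure theory of parabolic double cosets. You bypass double cosets entirely and work only with the single right coset $W_\frakq\tilde w$ for $\tilde w=ww_\lambda$: you factor $\tilde w=\tilde z\tilde w_0$ with $\tilde w_0$ shortest there, set $x=w_\frakq\tilde z^{-1}$, and then verify directly, via inversion-set bookkeeping, that $xw$ is shortest in $xw\bbS_\lambda$. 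Unwinding both constructions shows $(xw)w_\lambda=w_\frakq\tilde w_0$ in each case, which is simultaneously the longest element of $W_\frakq ww_\lambda$ and of the whole double coset, so the two answers agree. What your route buys is that it is more elementary (only right-coset facts, no appeal to minimal/maximal double coset representatives), at the cost of the explicit verification that $xw(\alpha)>0$ for $\alpha\in\Pi_\lambda$, which you correctly flag as the genuinely delicate step; your uniqueness argument (that $\Lambda^\frakp_\frakq(\lambda)$ meets each right $W_\frakq$-coset in at most one element, because $z\in\Lambda^\frakp_\frakq(\lambda)$ forces $zw_\lambda$ to be the longest element of $W_\frakq z w_\lambda$) is also more self-contained than the paper's one-line assertion of uniqueness. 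One minor point to make airtight: when you ``reduce the claim to $\tilde w_0(\alpha')>0$,'' what you actually need is $w_\frakq\tilde w_0(\alpha')<0$, i.e.\ $\tilde w_0(\alpha')\in\Phi_\frakq^+$; this does follow, but only after noting explicitly that $\tilde w_0(\alpha')=-\tilde z^{-1}(\beta)\in\Phi_\frakq$ (since $\beta\in\Phi_\frakq^+$ and $\tilde z^{-1}\in W_\frakq$) — your phrase ``incompatible with $\tilde w_0(\alpha')\in-\Phi_\frakq^+$'' shows you are using this, but it deserves to be stated.
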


\begin{proof}
  Let \(\bbS_\lambda\) be the stabilizer of the weight \(\lambda\). Since \(\frakp\) is orthogonal to \(\frakq\), we may assume \(\frakp=\frakb\). Moreover, since \(\Lambda_{\frakq'} (\lambda) \subseteq (\bbS_n/\bbS_\lambda)^\short\), it is clearly sufficient to prove the result for \(w \in (\bbS_n/\bbS_\lambda)^\short\). Then the lemma is simply a statement about double cosets. Let \(z\) be the shortest element in the double coset \(W_\frakq w \bbS_\lambda\). Then all shortest coset representatives for \(\bbS_n/\bbS_\lambda\) contained in \(W_\frakq w \bbS_\lambda\) can be obtained as \(yz\) for \(y \in W_\frakq\) (and in particular \(w=y_1 z\) for \(y_1 \in W_\frakq\)). Let \(y_0 \in W_\frakq\) be the shortest element such that \(y_0 z\bbS_\lambda \cap (W_\frakq \backslash \bbS_n)^\longrep \neq \emptyset\) (this exists, since this is the unique element such that \(y_0 z w_\lambda\) is the longest element of the double coset \(W_\frakq w \bbS_\lambda\), where \(w_\lambda\) is the longest element of \(\bbS_\lambda\)). Setting \(x=y_0 y_1^{-1} \) we get the claim.
\end{proof}

First we suppose that we are in the extreme case \(\frakq'=\frakb\), and we compute the action of \(\frakQ\) on Verma modules.

\begin{prop}\label{prop:9}
  Consider the coapproximation functor \(\frakQ:
  \catOZ^{\frakp}_\lambda \to
  \catOZ^{\frakp,\frakq\pres}_\lambda\). Let \(w \in
  \Lambda^{\frakp}(\lambda)\), and let \(x \in W_\frakq\) be the element
  given by Lemma~\ref{lem:10} such that \(xw \in
  \Lambda^{\frakp}_\frakq(\lambda)\). Then we have \(\mathfrak Q
  M^{\frakp} (w \cdot \lambda) = q^{\len(x)}\overline \Delta(xw\cdot
  \lambda)\).
\end{prop}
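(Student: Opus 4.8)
The plan is to compute $\frakQ M^{\frakp}(w\cdot\lambda)$ directly from the Lie-theoretic description of the coapproximation functor, in the spirit of \cite[Proposition~2.9]{MR2139933} (extended to an arbitrary weight $\lambda$ and to $\frakp\neq\frakb$). Specialising \S\ref{sec:coappr-funct-1} to $\frakq'=\frakb$, so that $\catOZ^{\frakp,\frakb\pres}_\lambda=\catOZ^{\frakp}_\lambda$, the functor $\frakQ$ sends a module $M$ to $Q/\Trace_{\scrP^{\frakp}_{\frakq}(\lambda)}(\ker p)$, where $p\colon Q\surto\Trace_{\scrP^{\frakp}_{\frakq}(\lambda)}(M)$ is a projective cover in $\catOZ^{\frakp}_\lambda$. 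So the computation has two steps: identify the trace $T:=\Trace_{\scrP^{\frakp}_{\frakq}(\lambda)}\bigl(M^{\frakp}(w\cdot\lambda)\bigr)$, then identify the kernel of its projective cover. (One could instead first reduce to the case $\frakp=\frakb$; we argue directly.)

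For the first step, transitivity of parabolic induction gives
\[
M^{\frakp}(w\cdot\lambda)=U(\gl_n)\otimes_{U(\frakp+\frakq)}\bigl(L(w\cdot\lambda|_{\gl_\frakp})\boxtimes M^{(\gl_\frakq)}(w\cdot\lambda|_{\gl_\frakq})\bigr),
\]
where the $\gl_\frakp$-factor is a finite-dimensional simple module and the $\gl_\frakq$-factor is an ordinary Verma module of $\gl_\frakq$; by \eqref{eq:130} together with the BGG construction of projectives, $\scrP^{\frakp}_{\frakq}(\lambda)$ has the same form with the $\gl_\frakq$-factors replaced by the antidominant indecomposable projectives. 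Frobenius reciprocity then reduces the trace to a computation inside $\catO(\gl_\frakq)$ (this works because all the inducing modules are generated in extremal weights, so $\Hom$ over $U(\gl_n)$ between the relevant induced modules agrees with $\Hom$ over $U(\frakp+\frakq)$): the trace of the antidominant projective in $M^{(\gl_\frakq)}(w\cdot\lambda|_{\gl_\frakq})$ is the image of the embedded antidominant Verma $M^{(\gl_\frakq)}(xw\cdot\lambda|_{\gl_\frakq})$, which is simple and equals the unique simple object $S^{(\gl_\frakq)}$ of $\overline{\calP_\frakq}$. Carrying the Koszul grading through the induction functor, this embedding has internal degree equal to the Loewy depth of the antidominant simple inside $M^{(\gl_\frakq)}(w\cdot\lambda|_{\gl_\frakq})$, which by the length additivity in Lemma~\ref{lem:10} (read off on the $\gl_\frakq$-component of $w$) equals $\len(x)$. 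Since $x\in W_\frakq$ fixes the $\gl_\frakp$-coordinates we have $w\cdot\lambda|_{\gl_\frakp}=xw\cdot\lambda|_{\gl_\frakp}$, so comparing with \eqref{eq:22} yields $T\cong q^{\len(x)}\oDelta(xw\cdot\lambda)$.

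For the second step, the head of $T$ is $S(xw\cdot\lambda)$ shifted by $\langle\len(x)\rangle$, so $Q=P^{\frakp}(xw\cdot\lambda)\langle\len(x)\rangle$ and $\ker p=\ker\bigl(P^{\frakp}(xw\cdot\lambda)\surto\oDelta(xw\cdot\lambda)\bigr)\langle\len(x)\rangle$. Using \eqref{eq:130} and \eqref{eq:22} (as in the proof of Proposition~\ref{prop:1}) to write $P^{\frakp}(xw\cdot\lambda)=U(\gl_n)\otimes_{U(\frakp+\frakq)}P^{(\fraka)}(xw\cdot\lambda)$ and $\oDelta(xw\cdot\lambda)=U(\gl_n)\otimes_{U(\frakp+\frakq)}S^{(\fraka)}(xw\cdot\lambda)$, this kernel equals $U(\gl_n)\otimes_{U(\frakp+\frakq)}\rad_{\calA^{\frakp}_{\frakq}}P^{(\fraka)}(xw\cdot\lambda)$; being an object of $\calA^{\frakp}_{\frakq}$, the radical is a quotient of a finite direct sum of the projectives $P^{(\fraka)}(z\cdot\lambda)$, $z\in\Lambda^{\frakp}_{\frakq}(\lambda)$, so $\ker p$ is $\Add(\scrP^{\frakp}_{\frakq}(\lambda))$-presentable and hence coincides with its own $\scrP^{\frakp}_{\frakq}(\lambda)$-trace. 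Therefore $\frakQ M^{\frakp}(w\cdot\lambda)=Q/\ker p=T=q^{\len(x)}\oDelta(xw\cdot\lambda)$.

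I expect the main obstacle to lie in the second paragraph: on the one hand, making precise that the $\scrP^{\frakp}_{\frakq}(\lambda)$-trace of the parabolic Verma module is computed ``one Levi block at a time'' via Frobenius reciprocity, and on the other hand the grading bookkeeping that pins the internal shift to exactly $\len(x)$ — that is, matching the graded Loewy structure of a $\gl_\frakq$-Verma module with the combinatorial content of Lemma~\ref{lem:10}. Both are routine adaptations of \cite[\S2]{MR2139933}; in the singular case one additionally appeals, as throughout \S\ref{sec:categ-frakp-pres}, to the fact that the results of \cite[\S2]{MR2139933} remain valid for an arbitrary integral weight $\lambda$.
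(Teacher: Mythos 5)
Your plan — compute $\frakQ M^{\frakp}(w\cdot\lambda)$ directly from the Lie-theoretic description — is a genuinely different route from the paper, which instead reduces inductively via the exact sequence $qM^{\frakp}(sw\cdot\lambda)\hookrightarrow M^{\frakp}(w\cdot\lambda)\surto Q$ and the exactness of $\frakQ$ (Lemmas~\ref{lem:5},~\ref{lem:1},~\ref{lem:6}), and then handles the antidominant case via the universal property of $\oDelta$ (Lemma~\ref{lem:18}). Unfortunately, the direct computation as you carry it out contains a fundamental error.

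The error is in the first step, in the sentence identifying the trace. You write that the antidominant $\gl_\frakq$--Verma (equivalently, the antidominant $\gl_\frakq$--simple) ``equals the unique simple object $S^{(\gl_\frakq)}$ of $\overline{\calP_\frakq}$,'' and deduce $T\cong q^{\len(x)}\oDelta(xw\cdot\lambda)$. But the simple object of $\overline{\calP_\frakq}$, viewed as a $\gl_\frakq$--module via the equivalence $\overline{\calP_\frakq}\cong\rmod{\End(P(w_\frakq\cdot\lambda))}$, is the quotient $P(w_\frakq\cdot\lambda)/\Trace_{P(w_\frakq\cdot\lambda)}(\rad P(w_\frakq\cdot\lambda))$, not the antidominant simple $L(w_\frakq\cdot\lambda)$. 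These are different modules: the former has more than one $\gl_\frakq$--composition factor in general, and the latter is not even $\calP_\frakq$--presentable. Concretely, take $\gl_2$ with $\frakp=\frakb$, $\frakq=\gl_2$, $\lambda=0$: here $T=\Trace_{P(s\cdot 0)}(M(0))=L(s\cdot 0)$, whereas $\oDelta(s\cdot 0)=S(s\cdot 0)=P(s\cdot 0)/\mathrm{soc}$, a length-two module; and indeed $\frakQ M(0)=P(s\cdot 0)/\mathrm{soc}\neq T$. So the claimed identity ``$\frakQ M^{\frakp}(w\cdot\lambda)=Q/\ker p=T$'' is false. The issue is that $\frakQ M$ is the \emph{presentable} object associated with the trace (namely $Q/\Trace_{\scrP^\frakp_\frakq(\lambda)}(\ker p)$), which properly surjects onto $T$ whenever $T$ is not itself presentable, and in this setting $T$ essentially never is.

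This confusion also infects the second step. You assert $P^{\frakp}(xw\cdot\lambda)=U(\gl_n)\otimes_{U(\frakp+\frakq)}P^{(\fraka)}(xw\cdot\lambda)$, but by \eqref{eq:16} the right-hand side is $\Delta(xw\cdot\lambda)$, not the indecomposable projective $P^{\frakp}(xw\cdot\lambda)$ (these agree only when $xw$ is the maximal element, in which case $P^\frakp=\Delta$). In the same $\gl_2$ example, $\ker p=\rad P(s\cdot 0)$ has head $L(0)$ and hence is \emph{not} generated by $\scrP^\frakp_\frakq(\lambda)=P(s\cdot 0)$; so the claim that $\ker p$ coincides with its own trace is also false. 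The two errors happen to conspire so that the final answer matches the proposition, but the argument does not establish it. The safer route is the paper's: use the short exact sequence of Lemma~\ref{lem:5}/\ref{lem:1} together with the exactness of $\frakQ$ to shift to the antidominant case, then identify $\frakQ M^\frakp$ there as the maximal quotient of $P^\frakp$ satisfying the defining property of $\oDelta$, never touching the trace of the parabolic Verma directly.
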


For the proof we will need some preliminary results in the ungraded setting.

\begin{lemma}
  \label{lem:5}
  Suppose \(w \in \Lambda^\frakp(\lambda)\), let \(M(w\cdot \lambda)\) be
  a Verma module in \(\catO_\lambda\) and \(M^{\frakp}(w \cdot \lambda)\)
  be its parabolic quotient in \(\catO^{\frakp}_\lambda\). Then for
  every simple reflection \(s \in W_\frakq\) such that \(\len(sw)> \len
  (w)\) the map \(M^{\frakp}(sw \cdot \lambda) \mapto M^{\frakp}(w \cdot
  \lambda)\) induced at the quotient by the inclusion \(M(sw \cdot
  \lambda) \hookrightarrow M(w \cdot \lambda)\) is injective.
\end{lemma}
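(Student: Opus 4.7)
My plan is to prove injectivity by realising the induced map explicitly via a singular vector and then matching characters. I would let $v_w \in M(w \cdot \lambda)$ denote the canonical generator and set $n = \langle w(\lambda + \rho), \alpha_s^\vee\rangle$; the hypothesis $\len(sw) > \len(w)$ gives $n \geq 1$, and the given inclusion $M(sw \cdot \lambda) \hookrightarrow M(w \cdot \lambda)$ is realised by the singular vector $v = f_s^n v_w$, where $f_s$ is a root vector for $-\alpha_s$. The crucial input is that $\frakp \perp \frakq$ forces $\alpha_s$ to be orthogonal to $\Pi_\frakp$; hence $f_s \in \frakn_\frakp^-$, the coroot $\alpha_s^\vee$ lies in the centre of the Levi $\frakl$, and $f_s$ commutes with the semisimple part $\frakl_{ss}$ of $\frakl$. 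From $s\alpha = \alpha$ for every $\alpha \in \Pi_\frakp$ one also checks that $sw \in \Lambda^\frakp(\lambda)$, so that $M^\frakp(sw \cdot \lambda)$ is non-zero.

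Next I would work inside the PBW model $M^\frakp(w \cdot \lambda) \cong U(\frakn_\frakp^-) \otimes_\C E$, where $E$ denotes the finite-dimensional simple $\frakl$-module of highest weight $w \cdot \lambda$ with highest weight vector $v_w^E$. The image $\bar v$ of $v$ in $M^\frakp(w \cdot \lambda)$ corresponds to $f_s^n \otimes v_w^E$ and is visibly non-zero. Since $v$ is singular in $M(w \cdot \lambda)$, $\bar v$ is annihilated by $\frakn^+$, and since $sw \cdot \lambda$ is $\frakl$-dominant integral, the submodule $U(\gl_n) \bar v \subset M^\frakp(w \cdot \lambda)$ is a highest weight quotient of $M^\frakp(sw \cdot \lambda)$, producing the sought induced map $\phi \colon M^\frakp(sw \cdot \lambda) \twoheadrightarrow U(\gl_n) \bar v$. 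Injectivity thus amounts to showing that $\phi$ is an isomorphism.

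I would then conclude with a character computation. Using that $\frakn^+$ kills $\bar v$, that $\frakl_{ss}$ commutes with $f_s$, and the PBW decomposition $U(\frakn^-) = U(\frakn_\frakp^-) U(\frakn_\frakl^-)$, one identifies $U(\gl_n) \bar v$ with the subspace $(U(\frakn_\frakp^-) f_s^n) \otimes E$ inside $U(\frakn_\frakp^-) \otimes E$. Since $U(\frakn_\frakp^-)$ is a domain, right-multiplication by $f_s^n$ is injective, so this subspace has character $e^{-n\alpha_s} \cdot \mathrm{ch}\, M^\frakp(w \cdot \lambda)$. On the other hand, centrality of $\alpha_s^\vee$ in $\frakl$ implies that the simple $\frakl$-modules of highest weights $w \cdot \lambda$ and $sw \cdot \lambda$ agree as $\frakl_{ss}$-modules and their characters differ only by the central shift $e^{-n\alpha_s}$, whence $\mathrm{ch}\, M^\frakp(sw \cdot \lambda) = e^{-n\alpha_s} \cdot \mathrm{ch}\, M^\frakp(w \cdot \lambda)$. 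Equality of the two characters forces $\phi$ to be an isomorphism onto its image, proving injectivity. The main technical obstacle is the PBW identification of $U(\gl_n)\bar v$ with $(U(\frakn_\frakp^-) f_s^n) \otimes E$: one must carefully control the commutators of elements of $\frakn_\frakp^-$ with $f_s$, handled by ordering the PBW basis with $f_s$ last and exploiting that $\bar v$ is $\frakn^+$-singular.
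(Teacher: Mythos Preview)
Your proof is correct and uses exactly the same two ingredients as the paper: that $f_s \in U(\frakn^-_\frakp)$ because $\alpha_s \perp \Pi_\frakp$, and that $U(\frakn^-_\frakp)$ is an integral domain by PBW. The paper, however, finishes in one line: both parabolic Verma modules are free $U(\frakn^-_\frakp)$-modules, and under the identifications $M^\frakp(sw\cdot\lambda)\cong U(\frakn^-_\frakp)\otimes E'$, $M^\frakp(w\cdot\lambda)\cong U(\frakn^-_\frakp)\otimes E$ the induced map is $u\otimes e'\mapsto uf_s^n\otimes\psi(e')$ for an $\frakl_{ss}$-isomorphism $\psi\colon E'\to E$, hence injective because right multiplication by $f_s^n$ is injective on the domain $U(\frakn^-_\frakp)$. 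Your route---identifying the image as $(U(\frakn^-_\frakp)f_s^n)\otimes E$ and then matching characters---reproves precisely this injectivity, just packaged differently; the character equality $\mathrm{ch}\,M^\frakp(sw\cdot\lambda)=e^{-n\alpha_s}\mathrm{ch}\,M^\frakp(w\cdot\lambda)$ is equivalent to the rank count in the free-module picture. One small remark: the ``technical obstacle'' you flag (commutators of $\frakn^-_\frakp$ with $f_s$) is not an issue, since $f_s\in\frakn^-_\frakp$ and $\frakn^-_\frakp$ is closed under brackets; the only commutation you actually need, and use, is $[\frakn^-_\frakl,f_s]=0$.
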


\begin{example}
  \label{ex:2}
  Notice that in the statement of the lemma it is essential to assume that the simple reflection \(s\) is orthogonal to the parabolic subalgebra \(\frakp\). As a counterexample when this is not true, consider the regular block \(\catO^\frakp_0(\gl_3)\), where \(\frakp \subset \gl_3\) is the standard parabolic subalgebra corresponding to the composition \((2,1)\). Then the inclusion \(M(s_2 \cdot 0) \into M(0)\) of Verma modules in \(\catO(\gl_3)\) induces a map \(M^\frakp(s_2 \cdot 0 ) \mapto M^\frakp(0)\) which is not injective (the kernel is isomorphic to the simple module \(L(s_2 s_1 \cdot 0)\)).
\end{example}

\begin{proof}[Proof of Lemma \ref{lem:5}]
  Let \(v_{sw},v_w\) be the highest weight vectors of \(M(sw\cdot \lambda)\) and
  \(M(w \cdot \lambda)\) respectively. Then (cf.\ \cite[\textsection
  1.4]{MR2428237}) the inclusion \(M(sw \cdot \lambda) \hookrightarrow
  M(w \cdot \lambda)\) is determined by \(v_{sw} \mapsto f_{\alpha_s}^k
  v_w\) for some \(k \in \N\), where \(f_{\alpha_s} \in \mathfrak n^-\) is the standard generator of \(U(\gl_n)\) corresponding to the simple root \(\alpha_s\). This
  indeed defines an injective map because the Verma modules are free
  as \(U(\mathfrak n^-)\)--modules and \(U(\mathfrak n^-)\) has no zero-divisors, both by the PBW Theorem.

  Let \(\gl_n=\frakp \oplus \mathfrak u^-_{\frakp}\). The parabolic
  Verma modules can be defined through parabolic induction, hence they
  are free as \(U(\mathfrak u^-_{\frakp})\)--modules (although in general
  not of rank one). Since the simple reflection \(s\) is orthogonal to
  the set of reflections \(W_\frakp\), the element \(f_{\alpha_s}\) lies
  in \(U(\mathfrak u^-_{\frakp})\) and the map on the quotients is again
  given by multiplication by it. By the same argument as before, this
  map has to be injective.
\end{proof}

\begin{lemma}
  \label{lem:1}
  With the same notation as before, \(\coker \big(M^{\frakp}(sw \cdot \lambda) \hookrightarrow M^{\frakp}(w \cdot \lambda)\big)\) has only composition factors of type \(L(y \cdot \lambda)\) with \(sy > y\).
\end{lemma}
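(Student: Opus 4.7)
The plan is to show that $C := \coker\!\bigl(M^\frakp(sw\cdot\lambda) \hookrightarrow M^\frakp(w\cdot\lambda)\bigr)$ lies in the parabolic subcategory $\catO^{\frakp_s}_\lambda$, where $\frakp_s = \frakb + \frakg_s$ and $\frakg_s = \langle e_s, h_s, f_s\rangle$ is the $\mathfrak{sl}_2$-triple associated to the simple reflection $s$. Once this is established, the lemma follows from the standard fact that a simple module $L(y\cdot\lambda)$ lies in $\catO^{\frakp_s}$ precisely when $\langle y\cdot\lambda+\rho,\alpha_s^\vee\rangle > 0$, which (since $\lambda$ is dominant) is equivalent to $sy > y$ in the Bruhat order.

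The relevant consequences of the hypothesis $s \perp \frakp$ are threefold: $\frakg_s$ commutes with the Levi $\frakl$, so in particular $[\frakl,f_s]=0$; one has $e_s \in \frakn^+_\frakp$ and $f_s \in \frakn^-_\frakp$; and the embedding of Lemma~\ref{lem:5} is explicitly $\bar v_{sw}\mapsto f_s^k\,\bar v_w$ with $k = \langle w\cdot\lambda+\rho,\alpha_s^\vee\rangle > 0$. Using the PBW isomorphism $M^\frakp(w\cdot\lambda) \cong U(\frakn^-_\frakp) \otimes L^\frakl(w\cdot\lambda)$, the finite-dimensional subspace $L^\frakl(w\cdot\lambda) \subset M^\frakp(w\cdot\lambda)$ becomes $\frakg_s$-locally-finite in $C$: the element $e_s$ annihilates $L^\frakl$ via the $\frakp$-action (as $e_s \in \frakn^+_\frakp$), $h_s \in \frakl$ acts semisimply, and for any $\ell = u\bar v_w$ with $u \in U(\frakl)$ we have
\[
f_s^k\,\ell \;=\; u\, f_s^k\,\bar v_w \;\in\; M^\frakp(sw\cdot\lambda),
\]
so $f_s^k \ell$ vanishes in $C$.

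Since $C$ is generated as a $\gl_n$-module by the image of $L^\frakl(w\cdot\lambda)$, this local finiteness propagates to all of $C$: for any $m = u\ell$ with $u \in U(\gl_n)$ and $\ell \in L^\frakl$, iterating the identity $x(u\ell) = [x,u]\ell + u(x\ell)$ for $x \in \frakg_s$ shows that $U(\frakg_s)\cdot m \subseteq (\mathrm{ad}\, U(\frakg_s))(u)\cdot U(\frakg_s)\ell$, whose first factor is finite-dimensional because $U(\gl_n)$ is $\mathrm{ad}$-$\frakg_s$-locally-finite (as $\gl_n$ is finite-dimensional) and whose second factor is finite-dimensional by the previous paragraph. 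The main delicate point is establishing that $f_s^k$ annihilates \emph{all} of $L^\frakl(w\cdot\lambda)$ in $C$, not merely the highest weight vector $\bar v_w$; this is exactly where the orthogonality $s \perp \frakp$ is essential, via the commutativity $[\frakl,f_s]=0$ that allows $f_s^k$ to be moved past any $u \in U(\frakl)$ (compare Example~\ref{ex:2}, which shows that the analogous statement fails without orthogonality).
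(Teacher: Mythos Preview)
Your proposal is correct and follows essentially the same route as the paper. The paper's proof is a one-liner: since the inclusion is multiplication by $f_{\alpha_s}^k$, the PBW theorem shows the cokernel is locally $f_{\alpha_s}$--finite, hence its composition factors are indexed by shortest coset representatives for $\langle s\rangle\backslash\bbS_n$. Your argument unpacks exactly this, phrasing the conclusion as $C\in\catO^{\frakp_s}$ and supplying the propagation step via the adjoint action explicitly.

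One small imprecision: you write $[\frakl,f_s]=0$, but this fails for the Cartan part $\frakh\subset\frakl$, since $[h,f_s]=-\alpha_s(h)f_s$. What orthogonality actually gives is $[\gl_\frakp,f_s]=0$ (equivalently $[\frakn^-_\frakl,f_s]=0$), and this suffices: either write $\ell=u\bar v_w$ with $u\in U(\frakn^-_\frakl)$ (which already generates $L^\frakl$ from the highest weight vector), or observe that $[\frakl,f_s]\subseteq\C f_s$, so $f_s^k U(\frakl)=U(\frakl)f_s^k$ and the conclusion $f_s^k\ell=0$ in $C$ still follows. Either way the argument goes through.
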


\begin{proof}
  The inclusion is given by multiplication by \(f^k_{\alpha_s}\). By the
  PBW Theorem, it follows immediately that the cokernel is locally
  \(\langle f^k_{\alpha_s} \rangle_{k \in \N}\)--finite, hence all its
  composition factors are indexed by elements of \(\bbS_n\) that are
  shortest coset representatives for \(\langle s \rangle \backslash
  \bbS_n\).
\end{proof}

\begin{lemma}
  \label{lem:6}
  For every \(w \in \Lambda^\frakp(\lambda)\) and \(x \in W_\frakq\) such that \(xw \in \Lambda^\frakp(\lambda)\) we have
  \(q^{\ell(x)} \mathfrak Q M^{\frakp} (xw \cdot \lambda) =
  \mathfrak Q M^{\frakp} (w \cdot \lambda)\).
\end{lemma}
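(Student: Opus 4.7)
The plan is to proceed by induction on $\ell(x)$. The base case $x=e$ is immediate, so suppose $\ell(x) \geq 1$ and choose a simple reflection $s \in W_\frakq$ such that $x = sx'$ with $\ell(x) = \ell(x') + 1$. The orthogonality of $W_\frakp$ and $W_\frakq$ ensures that left multiplication by elements of $W_\frakq$ preserves $W^\frakp$ and preserves the property of being a shortest coset representative for $\bbS_\lambda$, so $x'w \in \Lambda^\frakp(\lambda)$. The same orthogonality yields the length additivity $\ell(xw) = \ell(x) + \ell(w)$, from which $\ell(sx'w) > \ell(x'w)$.

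I would then apply Lemma~\ref{lem:5} to $x'w$ and $s$ to produce an injection $M^\frakp(sx'w \cdot \lambda) \hookrightarrow M^\frakp(x'w \cdot \lambda)$. In the graded setting, the standard convention of placing the simple head of each parabolic Verma module in degree zero lifts this to $qM^\frakp(sx'w \cdot \lambda) \hookrightarrow M^\frakp(x'w \cdot \lambda)$. Let $C$ denote the cokernel. By Lemma~\ref{lem:1}, every composition factor of $C$ has the form $L(y \cdot \lambda)$ with $sy > y$, and since $s \in W_\frakq$, each such simple module lies outside the Serre subcategory cut out by $\Lambda^\frakp_\frakq(\lambda)$ (its highest weight fails to be antidominant for $W_\frakq$). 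Hence $\frakQ C = 0$, and applying the exact functor $\frakQ$ to the short exact sequence yields $q\, \frakQ M^\frakp(xw \cdot \lambda) \cong \frakQ M^\frakp(x'w \cdot \lambda)$.

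Composing with the inductive hypothesis $q^{\ell(x')} \frakQ M^\frakp(x'w \cdot \lambda) = \frakQ M^\frakp(w \cdot \lambda)$ then gives the claimed identity $q^{\ell(x)} \frakQ M^\frakp(xw \cdot \lambda) = \frakQ M^\frakp(w \cdot \lambda)$. The main obstacle is verifying that every composition factor of $C$ is genuinely killed by $\frakQ$: for regular $\lambda$ this is immediate from $sy > y$, but in the singular case one must argue carefully at the level of $\bbS_\lambda$-cosets to conclude that such a $y$ cannot admit any representative in $w_\frakq W^\frakq$, which is the condition determining membership of the simple module in $\catO^{\frakp,\frakq\pres}_\lambda$. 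A secondary technical point is confirming that the relevant inclusion of parabolic Verma modules indeed admits the stated graded lift with shift by $q$, which follows from the Koszul grading conventions on $\catO$.
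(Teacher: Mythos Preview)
Your approach matches the paper's: reduce to a simple reflection, use Lemma~\ref{lem:5} for the inclusion and Lemma~\ref{lem:1} for the cokernel, then apply the exact functor~$\frakQ$ to the short exact sequence. Your singular-case concern is resolved by observing that the conclusion of Lemma~\ref{lem:1} is really a statement about the weight, namely $\langle y(\lambda+\rho),\alpha_s^\vee\rangle>0$; since $(yt)\cdot\lambda=y\cdot\lambda$ for every $t\in\bbS_\lambda$, the same inequality forces $s(yt)>yt$ for all such $t$, so no representative of the coset $y\bbS_\lambda$ lies in $w_\frakq W^\frakq$ and hence $\frakQ L(y\cdot\lambda)=0$.

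There is one genuine slip. Orthogonality of $W_\frakp$ and $W_\frakq$ does \emph{not} yield the length additivity $\ell(xw)=\ell(x)+\ell(w)$: take $n=4$, $W_\frakp=\langle s_1\rangle$, $W_\frakq=\langle s_3\rangle$, $\lambda$ regular, and $w=x=s_3$; then $w$ and $xw=e$ both lie in $\Lambda^\frakp(\lambda)$ but $\ell(xw)=0$, and the identity asserted by the lemma actually fails for this pair. Both your reduction and the paper's own ``it is sufficient to prove it for a simple reflection'' are tacitly assuming $\ell(xw)=\ell(x)+\ell(w)$, which does hold in every application of the lemma since $x$ is always supplied by Lemma~\ref{lem:10}. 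With that hypothesis your induction goes through cleanly: if $x=sx'$ is reduced and $\ell(xw)=\ell(x)+\ell(w)$, then $\ell(x'w)=\ell(x')+\ell(w)$ and $s(x'w)>x'w$, so Lemma~\ref{lem:5} applies at each step and the intermediate element feeds back into the inductive hypothesis.
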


\begin{proof}
  Of course, it is sufficient to prove it for a simple reflection \(s
  \in W_\frakq\). Then the result follows from Lemma~\ref{lem:1} if we apply
  the exact functor \(\mathfrak Q\) to the short exact
  sequence
  \begin{equation}
    q M^{\frakp}(sw \cdot \lambda) \hookrightarrow M^{\frakp}(w \cdot \lambda) \twoheadrightarrow Q. \qedhere \label{eq:40}
\end{equation}
\end{proof}

\begin{lemma}
  \label{lem:18}
  Let \(w \in \Lambda^\frakp_\frakq(\lambda)\). Then \(\frakQ M^\frakp(w \cdot \lambda) = \oDelta (w \cdot \lambda)\).
\end{lemma}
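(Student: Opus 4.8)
The plan is to reduce the statement to the corresponding result over the Levi subalgebra $\fraka$, using the description of $M^\frakp(w\cdot\lambda)$ and of $\oDelta(w\cdot\lambda)$ as parabolically induced modules. Writing $M^{(\fraka)}(w\cdot\lambda)$ for the parabolic Verma module of highest weight $w\cdot\lambda$ in $\catO^{\frakp\cap\fraka}(\fraka)$, transitivity of parabolic induction (applied to $\frakb\subseteq\frakp$, so that $\frakq=\gl_\frakq+\frakb\subseteq\frakp+\frakq$) gives
\begin{equation*}
  M^\frakp(w\cdot\lambda)\;=\;U(\gl_n)\otimes_{U(\frakp+\frakq)}M^{(\fraka)}(w\cdot\lambda),
\end{equation*}
which is the exact analogue of \eqref{eq:16} with the $\fraka$-level projective $P^{(\fraka)}$ replaced by the $\fraka$-level parabolic Verma $M^{(\fraka)}$; on the other hand $\oDelta(w\cdot\lambda)=U(\gl_n)\otimes_{U(\frakp+\frakq)}S^{(\fraka)}(w\cdot\lambda)$ by \eqref{eq:22}.

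Next I would show that $\frakQ\colon\catO^\frakp_\lambda\to\catO^{\frakp,\frakq\pres}_\lambda$ is compatible with induction from $\frakp+\frakq$: for an appropriate $\fraka$-module $Y$ one has $\frakQ\bigl(U(\gl_n)\otimes_{U(\frakp+\frakq)}Y\bigr)=U(\gl_n)\otimes_{U(\frakp+\frakq)}\frakQ^{(\fraka)}Y$, where $\frakQ^{(\fraka)}$ is the coapproximation functor for $\fraka$ attached to the category $\calP^\frakp_\frakq$ of \eqref{catO:eq:2}. This can be checked directly from the Lie-theoretic description of $\frakQ$ (a quotient by the trace of $\scrP^\frakp_\frakq(\lambda)$), together with the facts that $U(\gl_n)\otimes_{U(\frakp+\frakq)}-$ is exact and that it carries $\calP^\frakp_\frakq$-presentations to $\Add(\scrP^\frakp_\frakq(\lambda))$-presentations, exactly as in the proof of Proposition~\ref{prop:1} and formula \eqref{eq:130}. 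Since $\gl_\frakp$ is orthogonal to $\gl_\frakq$, the functor $\frakQ^{(\fraka)}$ only affects the $\gl_\frakq$-tensor factor, so the claim reduces to: the coapproximation functor of $\catO(\gl_\frakq)$ associated with $\calP_\frakq=\Add(P(w_\frakq\cdot 0))$ sends the antidominant Verma module $M_{\gl_\frakq}(w\cdot\lambda|_{\gl_\frakq})$ to the corresponding simple object of $\overline{\calP_\frakq}$. For $w\in\Lambda^\frakp_\frakq(\lambda)$ the weight $w\cdot\lambda|_{\gl_\frakq}$ is antidominant for $\gl_\frakq$, and this is precisely the statement of \cite[\textsection2]{MR2139933} (carried over to an arbitrary integral block, as already observed in the text); putting the $\gl_\frakp$- and $\gl_\frakq$-factors back together and inducing up yields $\oDelta(w\cdot\lambda)$.

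The step I expect to be the main obstacle is the compatibility of $\frakQ$ with the induction $U(\gl_n)\otimes_{U(\frakp+\frakq)}-$: one must verify that the projective cover and the trace subfunctor entering the recipe for $\frakQ$ behave well under this functor, including the point that, because $w\in\Lambda^\frakp_\frakq(\lambda)$ and hence $P^\frakp(w\cdot\lambda)$ is already a summand of $\scrP^\frakp_\frakq(\lambda)$, no degree shift appears — which is what makes the (ungraded) identity of the lemma the correct input for Lemma~\ref{lem:6} and Proposition~\ref{prop:9}. An internal alternative that avoids passing through $\fraka$ is to apply the exact functor $\frakQ$ to the projective presentation $0\to N\to P^\frakp(w\cdot\lambda)\to M^\frakp(w\cdot\lambda)\to 0$ in $\catO^\frakp_\lambda$ (with $N$ the kernel), identify $\frakQ M^\frakp(w\cdot\lambda)$ with $P^\frakp(w\cdot\lambda)/\Trace_{\scrP^\frakp_\frakq(\lambda)}(N)$, and then match this quotient with $\oDelta(w\cdot\lambda)$ via the universal property of proper standard modules furnished by Theorem~\ref{thm:4}; this merely trades the commutation problem for an equally delicate identification of the trace.
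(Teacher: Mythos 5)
Your main line of argument — reduce to the Levi $\fraka$ via parabolic induction and prove that $\frakQ$ commutes with $U(\gl_n)\otimes_{U(\frakp+\frakq)}-$ — is a genuinely different route from the paper's. The paper's proof never leaves $\catO^{\frakp}_\lambda$: it starts from the parabolic Verma filtration of $P^\frakp(w\cdot\lambda)$, applies the exact functor $\frakQ$ to get a filtration of $P^\frakp(w\cdot\lambda)=\frakQ P^\frakp(w\cdot\lambda)$ in $\catO^{\frakp,\frakq\pres}_\lambda$ by the modules $\frakQ M^\frakp(y\cdot\lambda)$ ($y\preceq w$), and then uses the characterization of $\oDelta(w\cdot\lambda)$ as the maximal quotient of $P^\frakp(w\cdot\lambda)$ whose radical has no composition factor $S(z\cdot\lambda)$ with $z\preceq w$. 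That the top filtration piece $\frakQ M^\frakp(w\cdot\lambda)$ is exactly this maximal quotient is seen by computing the simple heads of the lower pieces $\frakQ M^\frakp(y\cdot\lambda)$, $y\prec w$, via Lemma~\ref{lem:6} together with Lemma~\ref{lem:10}. This is essentially the ``internal alternative'' you mention in your last sentence and then set aside.

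The step you yourself flag as ``the main obstacle,'' namely
\begin{equation*}
\frakQ\bigl(U(\gl_n)\otimes_{U(\frakp+\frakq)}Y\bigr)\;\cong\;U(\gl_n)\otimes_{U(\frakp+\frakq)}\frakQ^{(\fraka)}Y,
\end{equation*}
is a genuine gap, not a routine verification. The evidence you offer — that parabolic induction is exact and sends $\calP^\frakp_\frakq$-presentations to $\Add(\scrP^\frakp_\frakq(\lambda))$-presentations — only shows that the right-hand side lies in $\catO^{\frakp,\frakq\pres}_\lambda$; it produces no natural map to the left-hand side and does not identify the two. Via the equivalence \eqref{eq:74}, $\frakQ$ is $\Hom_{\catO^{\frakp}_\lambda}(\scrP^\frakp_\frakq(\lambda),-)$, and parabolic induction sits in the \emph{first} variable of an adjunction, not the second, so no Frobenius-reciprocity argument gives the claimed commutativity directly; moreover the paper never introduces or studies an $\fraka$-level coapproximation $\frakQ^{(\fraka)}$, so everything about it (exactness, the asserted value on the antidominant $\gl_\frakq$-Verma module, the comparison with \cite[\S 2]{MR2139933}) would need to be built from scratch. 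By contrast, all the input the paper needs about $\frakQ$ on Verma modules is isolated in Lemma~\ref{lem:6}, whose proof is an elementary short-exact-sequence argument resting on Lemmas~\ref{lem:5} and~\ref{lem:1}. I would therefore suggest you develop the internal alternative rather than the induction route: replace the single projective presentation by the full parabolic Verma filtration of $P^\frakp(w\cdot\lambda)$, apply $\frakQ$, and match against the defining maximality property of $\oDelta(w\cdot\lambda)$ from Theorem~\ref{thm:4}, using Lemmas~\ref{lem:6} and~\ref{lem:10} to control the simple heads of the subquotients $\frakQ M^\frakp(y\cdot\lambda)$ for $y\prec w$.
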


\begin{proof}
  The projective module \(P^{\frakp}(w \cdot \lambda)\) has a filtration
  by parabolic Verma modules in \(\catO^{\frakp}_\lambda\). Hence the
  projective module \(P^{\frakp}(w \cdot \lambda)=\frakQ P^\frakp(w
  \cdot \lambda)\) in \(\catO^{\frakp,\frakq\pres}_\lambda\) has a
  filtration by modules \(\mathfrak Q M^\frakp(y \cdot \lambda)\) for \(y
  \in \Lambda^\frakp(\lambda)\), \(y \preceq w\).

Now the proper standard module \(\overline \Delta(w \cdot \lambda)\) is
defined to be the maximal quotient \(Q\) of \(P^{\frakp}( w \cdot \lambda)\)
in \(\catO^{\frakp,\frakq\pres}_\lambda\) satisfying
\begin{equation}
[\rad
Q:S(z \cdot \lambda)] = 0 \qquad \text{for all }z
\preceq w.\label{eq:176}
\end{equation}
Obviously the quotient
\(\mathfrak Q M^{\frakp}(w \cdot \lambda)\) at the top of
\(P^{\frakp}( w \cdot \lambda)\) satisfies \eqref{eq:176}.
Any bigger quotient contains the simple head of some \(\frakQ M^\frakp (y \cdot \lambda)\) for \(y \prec w\).
Consider such a \(y\) and let \(x' \in W_\frakq\) be the element given by Lemma~\ref{lem:10} for \(y\). By Lemma~\ref{lem:6} the simple head in \(\catO^{\frakp,\frakq\pres}\) of \(\frakQ M^\frakp(y \cdot \lambda)\) is the simple head of \(\frakQ M^\frakp(x'y \cdot \lambda)\);
but this is the simple head of \(\frakQ P^\frakp(x' y \cdot \lambda)\), that is \(S(x'y \cdot \lambda)\). Notice that \(x'y \preceq w\) (this follows because \(y \prec w\) and both \(x'y,w \in \Lambda^\frakp_\frakq(\lambda)\)). Hence \(\frakQ M^\frakp(w \cdot \lambda)\) is indeed the maximal quotient satisfying \eqref{eq:176}.
\end{proof}

The proof of the proposition follows now easily:

\begin{proof}[Proof of Proposition~\ref{prop:9}]
  By Lemma~\ref{lem:6} we have \(\frakQ M^\frakp(w \cdot \lambda) = q^{\len(x)} \frakQ M^\frakp(xw \cdot \lambda)\) and by Lemma~\ref{lem:18} this is \(q^{\len(x)} \oDelta(xw \cdot \lambda)\).
\end{proof}

From Proposition~\ref{prop:9} one can directly deduce:

\begin{corollary}
  \label{cor:2}
  Let \(\frakq'\) be a standard parabolic subalgebra of \(\gl_n\) with
  \(\frakq' \subset \frakq\) and consider the coapproximation functor
  \(\frakQ: \catOZ^{\frakp,\frakq'\pres}_\lambda \to
  \catOZ^{\frakp,\frakq\pres}_\lambda\). Let \(w \in
  \Lambda^{\frakp}_{\frakq'}(\lambda)\) and let \(x \in W_\frakq\) be the element given by Lemma~\ref{lem:10}. Then we have \(\frakQ \oDelta(w
  \cdot \lambda) = q^{\len(x)} \oDelta (xw \cdot \lambda)\).
\end{corollary}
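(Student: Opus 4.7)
My plan is to deduce this from Proposition~\ref{prop:9} by a transitivity argument: I decompose the coapproximation $\frakQ^{\frakq}_{\frakb} \colon \catOZ^{\frakp}_\lambda \to \catOZ^{\frakp,\frakq\pres}_\lambda$ through the intermediate category $\catOZ^{\frakp,\frakq'\pres}_\lambda$ and then compare the two resulting descriptions on Verma modules.

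First, I would establish the transitivity
\begin{equation*}
\frakQ^{\frakq}_{\frakb} \;\cong\; \frakQ^{\frakq}_{\frakq'} \circ \frakQ^{\frakq'}_{\frakb}.
\end{equation*}
This follows from the description of coapproximation as a Serre quotient functor. Indeed, the paragraph after Proposition~\ref{prop:1} identifies $\frakQ^{\frakq}_{\frakq'}$ (resp.\ $\frakQ^{\frakq'}_{\frakb}$) with $\Hom(\scrP^{\frakp}_{\frakq}(\lambda), \blank)$ on the appropriate category, and the composite kills exactly the Serre subcategory of modules whose composition factors $L(y \cdot \lambda)$ satisfy $y \in \Lambda^{\frakp}(\lambda) \setminus \Lambda^{\frakp}_{\frakq}(\lambda)$, which is precisely the Serre subcategory quotiented out by $\frakQ^{\frakq}_{\frakb}$. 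Alternatively one can argue by uniqueness of right adjoints: both functors are right adjoint to the composite inclusion $\catO^{\frakp,\frakq\pres}_\lambda \into \catO^{\frakp,\frakq'\pres}_\lambda \into \catO^{\frakp}_\lambda$.

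Next, I apply Proposition~\ref{prop:9} with $\frakq$ replaced by $\frakq'$. Because $w \in \Lambda^{\frakp}_{\frakq'}(\lambda)$, the element of $W_{\frakq'}$ provided by Lemma~\ref{lem:10} (for the pair $(\frakb, \frakq')$) is forced to be the identity by uniqueness. Hence
\begin{equation*}
\frakQ^{\frakq'}_{\frakb}\bigl(M^{\frakp}(w \cdot \lambda)\bigr) \;=\; \oDelta_{\frakq'}(w \cdot \lambda),
\end{equation*}
where $\oDelta_{\frakq'}$ denotes the proper standard module in $\catOZ^{\frakp,\frakq'\pres}_\lambda$ (the object whose coapproximation image we are trying to compute).

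Finally, I apply $\frakQ^{\frakq}_{\frakq'}$ to both sides. The left-hand side becomes $\frakQ^{\frakq}_{\frakq'}\bigl(\oDelta_{\frakq'}(w \cdot \lambda)\bigr)$, which is what we want to compute. The right-hand side, by transitivity and another application of Proposition~\ref{prop:9} (now in the original form for the pair $(\frakb, \frakq)$), equals $q^{\len(x)}\oDelta(xw \cdot \lambda)$, where $x \in W_{\frakq}$ is the element furnished by Lemma~\ref{lem:10} for $w \in \Lambda^{\frakp}_{\frakq'}(\lambda) \subseteq \Lambda^{\frakp}(\lambda)$. Combining the two equalities yields the claim. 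The only subtle point is the transitivity of coapproximation, but once expressed via $\Hom$-functors it is immediate, and the matching of the elements $x$ from the two invocations of Lemma~\ref{lem:10} is automatic since the lemma produces a unique element for each input.
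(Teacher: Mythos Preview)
Your proof is correct and follows essentially the same approach as the paper's: both factor the coapproximation $\frakQ_{\frakq}$ from $\catOZ^{\frakp}_\lambda$ as $\frakQ \circ \frakQ_{\frakq'}$ and apply Proposition~\ref{prop:9} twice (once for $\frakq'$, yielding $\oDelta_{\frakq'}(w\cdot\lambda)$ since $w\in\Lambda^{\frakp}_{\frakq'}(\lambda)$ forces the Lemma~\ref{lem:10} element to be trivial, and once for $\frakq$). You simply supply more detail on why the transitivity holds and why the two instances of Lemma~\ref{lem:10} produce the same $x$, which the paper leaves implicit.
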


\begin{proof}
  Let \(\frakQ_{\frakq'}: \catOZ^{\frakp}_\lambda \to
  \catOZ^{\frakp,\frakq'\pres}_\lambda\) and \(\frakQ_{\frakq}:
  \catOZ^{\frakp}_\lambda \mapto \catOZ^{\frakp,\frakq\pres}_\lambda\) be
  the coapproximation functors. It follows from the definition that
  \(\frakQ \circ \frakQ_{\frakq'} = \frakQ_\frakq\). By Proposition~\ref{prop:9} we have
  \(\frakQ_{\frakq'} M^{\frakp} (w \cdot \lambda) = \oDelta(w \cdot \lambda) \) and \(\frakQ_{\frakq} M^{\frakp} (w
    \cdot \lambda) = q^{\len(x)} \oDelta(xw \cdot \lambda)\), and the
    claim follows.
\end{proof}

The coapproximation functor \(\frakQ\) enables us to compute proper standard filtrations of standard modules:

\begin{prop}
  \label{prop:23}
  Suppose that \(\frakq\) has only one block (that is, \(W_\frakq \cong \bbS_k\) for some integer \(k\)) and let \(\lambda\) be a dominant regular weight. Then
\begin{enumerate}[(i)]
\item for all \(w \in \Lambda^\frakp_\frakq(\lambda)\) the proper standard filtration of the standard module \(\Delta(w \cdot \lambda) \in \catO^{\frakp,\frakq\pres}_\lambda\) has length \(k!\)
\item \label{item:14} in the Grothendieck group of \(\catOZ^{\frakp,\frakq\pres}_\lambda\) we have
    \(\Delta(w \cdot \lambda)] = [k]_0! \, [\oDelta(w \cdot \lambda)\).
\end{enumerate}
\end{prop}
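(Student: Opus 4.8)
The plan is to deduce everything from part~(ii) and then to reduce (ii) to a computation with the coinvariant algebra of $\bbS_k$. By Theorem~\ref{thm:4} the algebra $\End(\scrP^\frakp_\frakq(\lambda))$ is graded properly stratified, with standard modules $\Delta(w\cdot\lambda)$ and proper standard modules $\oDelta(w\cdot\lambda)$. The second defining condition of Definition~\ref{def:2} (property~\ref{soerg:item:18}) says that the kernel of the canonical epimorphism $\Delta(w\cdot\lambda)\twoheadrightarrow\oDelta(w\cdot\lambda)$ admits a filtration all of whose subquotients are graded shifts of the \emph{single} module $\oDelta(w\cdot\lambda)$; hence in the Grothendieck group of $\catOZ^{\frakp,\frakq\pres}_\lambda$ one has $[\Delta(w\cdot\lambda)]=r(q)\,[\oDelta(w\cdot\lambda)]$ for some $r(q)\in\Z_{\geq 0}[q,q^{-1}]$, and the length of the proper standard filtration equals $r(1)$. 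So part~(ii) is the statement $r(q)=[k]_0!$, and part~(i) follows because $[k]_0!$ specialises to $k!$ at $q=1$. It thus suffices to prove~(ii).

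To identify $r(q)$ I would use the Lie-theoretic descriptions \eqref{eq:16} and \eqref{eq:22}: $\Delta(w\cdot\lambda)$ and $\oDelta(w\cdot\lambda)$ are the images of $P^{(\fraka)}(w\cdot\lambda)$, resp.\ $S^{(\fraka)}(w\cdot\lambda)$, under the induction functor $U(\gl_n)\otimes_{U(\frakp+\frakq)}(-)$. Since $U(\gl_n)$ is free as a $U(\frakp+\frakq)$-module, this functor is exact, hence descends to a map of Grothendieck groups (which is $\Z[q,q^{-1}]$-linear once graded lifts are chosen), and it is enough to prove $[P^{(\fraka)}(w\cdot\lambda)]=[k]_0!\,[S^{(\fraka)}(w\cdot\lambda)]$ in the graded Grothendieck group of $\calA^\frakp_\frakq$. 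Here, because $\frakp$ and $\frakq$ are orthogonal, $\fraka=(\gl_\frakp\oplus\gl_\frakq)+\frakh$ and $P^{(\fraka)}(w\cdot\lambda)=L\boxtimes(-)$ with $L=L\bigl((w\cdot\lambda)|_{\gl_\frakp}\bigr)$ a fixed finite-dimensional simple $\gl_\frakp$-module and $(-)$ a projective object of $\overline{\calP_\frakq}$; moreover $P^{(\fraka)}(w\cdot\lambda)$ is the projective cover of $S^{(\fraka)}(w\cdot\lambda)$ in $\calA^\frakp_\frakq$. So the identity reduces to the statement that, in the graded Grothendieck group of $\overline{\calP_\frakq}$, the indecomposable projective has class equal to $[k]_0!$ times the class of the simple object.

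Finally, since $W_\frakq\cong\bbS_k$ and $\lambda$ is regular, $\gl_\frakq\cong\gl_k$ and the relevant block of $\catO(\gl_\frakq)$ is a regular block; by the analysis of \textsection\ref{sec:abelian-structure} (applied to $\frakq$ in place of $\frakp+\frakq$) the category $\overline{\calP_\frakq}$ is equivalent to $\rgmod{C}$, where $C=\End_{\catO(\gl_\frakq)}(P(w_\frakq\cdot 0))$ with the Koszul grading normalised so that simple heads sit in degree $0$. Then $C$ is the graded coinvariant algebra of $\bbS_k$, with $\dim C_{2j}=\#\{w\in\bbS_k\mid\len(w)=j\}$ and $\dim C_{2j+1}=0$; hence in $\rgmod{C}$ the projective $C$ has class $\bigl(\sum_{w\in\bbS_k}q^{2\len(w)}\bigr)$ times the class of the simple module, and a short computation identifies $\sum_{w\in\bbS_k}q^{2\len(w)}$ with $[k]_0!$. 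Running the reductions backwards gives $[\Delta(w\cdot\lambda)]=[k]_0!\,[\oDelta(w\cdot\lambda)]$, proving (ii), and then (i).

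The delicate point is the bookkeeping with gradings: \eqref{eq:16} and \eqref{eq:22} are recorded only as isomorphisms of $\gl_n$-modules, so one must promote them (and the induction functor) to the graded setting compatibly with the standard graded lifts of \textsection\ref{sec:abelian-structure}, or else argue directly that the $q$-shifts in the proper standard filtration of $\Delta(w\cdot\lambda)$ are those dictated by the graded Verma flag of the big antidominant projective $P(w_\frakq\cdot 0)$. If one only wants part~(i) there is a shortcut bypassing gradings: by BGG reciprocity (cf.\ \cite{MR2428237}) the projective $P(w_\frakq\cdot 0)\in\catO(\gl_\frakq)$ has a Verma flag of length $|\bbS_k|=k!$ with each Verma occurring exactly once, and inducing this flag via \eqref{eq:16} together with Proposition~\ref{prop:9} and Lemma~\ref{lem:18} exhibits a proper standard filtration of $\Delta(w\cdot\lambda)$ of length $k!$.
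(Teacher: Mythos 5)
Your proposal is correct, but it reaches the conclusion by a slightly different route than the paper. You reduce to a statement in the ``coefficient category'' $\calA^\frakp_\frakq$, using the parabolic induction descriptions \eqref{eq:16} and \eqref{eq:22} together with exactness of $U(\gl_n)\otimes_{U(\frakp+\frakq)}(-)$, and then pin down the graded multiplicity by identifying $\End_{\catO(\gl_\frakq)}(P(w_\frakq\cdot 0))$ with the coinvariant algebra of $\bbS_k$ (Soergel's endomorphism theorem). The paper instead stays in $\catO^{\frakp}(\gl_n)$: it starts from the graded Verma flag of the projective $P^\frakp(w\cdot\lambda)$, recorded in \eqref{eq:182} as a consequence of the Kazhdan--Lusztig conjecture, and applies the exact coapproximation functor $\frakQ$, using Proposition~\ref{prop:9} to convert each $M^\frakp(xw'\cdot\lambda)$ into $q^{\len(w_\frakq)-\len(x)}\oDelta(w\cdot\lambda)$; the part of the resulting filtration building $\Delta(w\cdot\lambda)$ has graded length $\sum_{x\in W_\frakq}q^{2(\len(w_\frakq)-\len(x))}=[k]_0!$. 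Both arguments ultimately feed on the same graded data (the Verma flag of the antidominant projective), just packaged on opposite sides of the induction adjunction. What your route buys is conceptual clarity and independence from the $\gl_n$-level KL computation (you only need $\gl_k$-data); what the paper's route buys is that it avoids having to promote \eqref{eq:16} and \eqref{eq:22} to graded isomorphisms compatible with the chosen standard lifts, which is precisely the bookkeeping you flag as delicate and leave unresolved. Your suggested workaround at the end -- tracing the $q$-shifts directly through the graded Verma flag of $P(w_\frakq\cdot 0)$ via $\frakQ$ -- is in effect the paper's argument, and your ungraded shortcut for part~(i) matches the first half of the paper's proof almost verbatim.
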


\begin{proof}
  Since \(\lambda\) is regular, \(w\) is a longest coset representative for \({W_\frakq}\backslash{\bbS_n}\), hence \(w=w_\frakq w'\). It is well-known that in a Verma flag of the projective module \(P^\frakp(w \cdot \lambda)\) all Verma modules \(M^\frakp(xw'\cdot \lambda)\) for \(x \in W_\frakq\) appear exactly once. Applying \(\frakQ\), by Proposition~\ref{prop:9}  we get a filtration of \(P^\frakp(w\cdot \lambda)\) in \(\catO^{\frakp,\frakq\pres}_\lambda\) with \(\oDelta(w \cdot \lambda)\) appearing exactly \(k!\) times. Of course, this is the part of the filtration that builds the standard module \(\Delta(w \cdot \lambda)\). By the Kazhdan-Lusztig conjecture, in the Grothendieck group of \(\catOZ^{\frakp}\) we have
  \begin{equation}
    \label{eq:182}
    [P^{\frakp}(w \cdot \lambda)] \in \sum_{x \in W_\frakq} q^{\len(w_\frakq)-\len(x)} [M^\frakp(xw' \cdot \lambda)] + \sum_{x \in W_\frakq, z \prec w'} q\Z[q] [M^\frakp(xz \cdot \lambda)].
  \end{equation}
Applying \(\frakQ\) and considering only the part of the filtration that builds \(\Delta(w \cdot \lambda)\) we get
  \begin{equation}
    \label{eq:208}
    [\Delta(w \cdot \lambda)] = \sum_{x \in W_\frakq} q^{2(\len(w_\frakq)-\len(x))} [\oDelta(w \cdot \lambda)],
  \end{equation}
  which gives \ref{item:14}.
\end{proof}

\subsubsection{Graded lifts of translation functors}
\label{sec:grad-transl-funct}

Let \(\lambda,\mu\) be weights with stabilizers \(\bbS_\lambda\) and \(\bbS_{\mu}\). We denote by  \(\trasT_\lambda^\mu : \catO_\lambda(\gl_n) \mapto \catO_\mu (\gl_n)\) the translation functor (see \cite{MR2428237}). In \cite{MR2005290} graded lifts are introduced if \(\lambda\) is regular and \(\mu\) semi-regular (that is, \(\bbS_\mu\) is generated by one simple reflection), or the opposite. We need to work in a more general case.

We suppose that \(\bbS_\lambda \subset \bbS_\mu\). We will use the expressions \emph{translation onto the wall} and \emph{translation out of the wall} to indicate the translation functors \(\trasT_\lambda^\mu\) and \(\trasT_\mu^\lambda\) respectively (notice that in the literature these expressions are often used only when \(\lambda\) is regular). As in \cite[Section~8]{MR2005290}, it follows that the translation functors \(\trasT_\lambda^\mu\) and \(\trasT_\mu^\lambda\) have graded lifts, unique up to a shift, which we fix by imposing \(\trasT_\lambda^\mu M( \lambda) = P(\mu)\) and \(\trasT_\mu^\lambda M(\mu) = P(x_0 \cdot \lambda)\), where \(x_0\) is the longest element in \((\bbS_{\mu}/\bbS_\lambda)^{\short}\). We have then graded adjunctions
  \begin{equation}
    \label{eq:59}
    \trasT_\mu^\lambda \adjunction q^{\len(x_0)} \trasT_\lambda^\mu \qquad \text{and} \qquad \trasT_\lambda^\mu \adjunction q^{- \len(x_0)} \trasT_\mu^\lambda.
  \end{equation}
We refer to \cite[Section~4.4]{miophd2} for more details.

\subsubsection{Translation functors in \texorpdfstring{$\catOZ^{\frakp,\frakq\pres}$}{O}}
\label{sec:transl-funct}

Translation functors preserve the subcategories we have introduced:

\begin{lemma}
  \label{lem:21}
  Given two dominant weights \(\lambda,\mu\), the translation functor \(\trasT_\lambda^\mu\) restricts to a functor \(\trasT_\lambda^\mu :\catO^{\frakp,\frakq\pres}_\lambda \to \catO^{\frakp,\frakq\pres}_\mu\). Moreover, translation functors commute with the functors \(\frakj,\frakz,\fraki,\frakQ\).
\end{lemma}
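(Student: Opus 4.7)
The plan is to first verify that $\trasT_\lambda^\mu$ preserves the subcategory $\catO^{\frakp,\frakq\pres}$ and then to deduce all four commutation statements from the tautological fact that $\trasT_\lambda^\mu$ commutes with the inclusion functors $\frakj$ and $\fraki$.

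For preservation, I would use the alternative description of $\catO^{\frakp,\frakq\pres}_\lambda$ provided by Proposition~\ref{prop:1} as $\catO\{\frakp+\frakq,\calA_\frakq^\frakp\}_\lambda$, characterized by conditions~\ref{item:1}--\ref{item:3}. Recall that $\trasT_\lambda^\mu$ is the composition of tensoring with a finite-dimensional $\gl_n$-module $F$ and projecting onto the block of $\mu$. Projection onto a block trivially preserves all three conditions, so it suffices to check that tensoring with $F$ does. Finite generation~\ref{item:1} and local $\frakn_{\frakp+\frakq}$-finiteness~\ref{item:2} are immediate. For~\ref{item:3}, the key observation is that $F$ restricted to $\fraka$ decomposes as a finite direct sum of finite-dimensional simple $\fraka$-modules $F_i$; therefore if $M = \bigoplus_\alpha M_\alpha$ as an $\fraka$-module with each $M_\alpha \in \calA_\frakq^\frakp$, then $M \otimes F \cong \bigoplus_{\alpha,i} M_\alpha \otimes F_i$, and each summand again lies in $\calA_\frakq^\frakp$ since the class $\calP_\frakq^\frakp$ from \eqref{catO:eq:2} is closed under tensoring with finite-dimensional $\fraka$-modules, and this property passes to the presentable closure.

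Next, the commutations with $\frakj$ and $\fraki$ are tautological: both functors are inclusions of full subcategories, and on both source and target the translation functor is defined by the same prescription (tensor with $F$, project to the block), so $\trasT_\lambda^\mu \circ \frakj = \frakj \circ \trasT_\lambda^\mu$ and $\trasT_\lambda^\mu \circ \fraki = \fraki \circ \trasT_\lambda^\mu$ on the nose.

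Finally, I would deduce the commutations with $\frakz$ and $\frakQ$ by adjunction. The functors satisfy $\frakz \dashv \frakj$ and $\fraki \dashv \frakQ$, while $\trasT_\mu^\lambda \dashv q^{\len(x_0)}\trasT_\lambda^\mu$ by~\eqref{eq:59}. Taking left adjoints of both sides of the identity $\trasT_\lambda^\mu \circ \frakj = \frakj \circ \trasT_\lambda^\mu$ and invoking uniqueness of adjoints gives $\frakz \circ \trasT_\mu^\lambda \cong \trasT_\mu^\lambda \circ \frakz$ (up to the grading shift coming from~\eqref{eq:59}), which is the asserted commutation after swapping the roles of $\lambda$ and $\mu$. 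The identical argument, now taking right adjoints of $\trasT_\lambda^\mu \circ \fraki = \fraki \circ \trasT_\lambda^\mu$ and using $\fraki \dashv \frakQ$, yields the commutation with $\frakQ$. The only genuinely non-formal step is the $\fraka$-module verification in the preservation part; once that is settled, everything else is bookkeeping with adjunctions.
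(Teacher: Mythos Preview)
Your proposal is correct and follows essentially the same route as the paper: use the description of $\catO^{\frakp,\frakq\pres}_\lambda$ as $\catO\{\frakp+\frakq,\calA^\frakp_\frakq\}_\lambda$ to see that tensoring with a finite-dimensional module preserves it, note that $\frakj,\fraki$ are inclusions so commutation is immediate, and then pass to $\frakz,\frakQ$ by adjunction. Your write-up simply spells out in more detail what the paper leaves implicit (and the grading shifts you worry about in the adjunction step appear symmetrically on both sides and cancel).
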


\begin{proof}
  It follows directly from the definition that tensoring with a finite dimensional \(\gl_n\)--module defines an exact endofunctor of the category \(\catO\{\frakp+\frakq,\calA^{\frakp}_\frakq\}\). In particular, the translation functor \(\trasT_\lambda^\mu\) preserves the category \(\catO\{\frakp+\frakq,\calA^{\frakp}_\frakq\}\).

Since \(\frakj,\fraki\) are inclusions, it follows that \(\trasT_\lambda^\mu\) commutes with them. By adjunction, it commutes also with \(\frakz,\frakQ\).
\end{proof}

Of course we have also the graded version
\begin{equation}
\trasT_{\lambda}^{\mu}: \catOZ^{\frakp,\frakq\pres}_\lambda \mapto \catOZ^{\frakp,\frakq\pres}_\mu.\label{eq:41}
\end{equation}
 We will need the following easy result to compute the action of translation functors \eqref{eq:41} in the category \(\catOZ^{\frakp,\frakq\pres}\):

\begin{lemma}
  \label{lem:9}
  Let \(\bbS_\lambda,\bbS_\mu\) be standard parabolic subgroups of \(\bbS_n\) with \(\bbS_\lambda \subset \bbS_\mu\). Then for every \(w \in (\bbS_n/\bbS_\lambda)^\short\) there exist unique elements \(w' \in (\bbS_n/\bbS_{\mu})^\short\), \(x \in (\bbS_{\mu}/\bbS_\lambda)^\short\) such that \(w=w'x\). Moreover \(\len(w)=\len(w')+\len(x)\).
\end{lemma}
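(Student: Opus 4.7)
The plan is to reduce this to a standard fact about Coxeter groups: if $u$ is the shortest element in the right coset $u W_I$ (for $W_I$ a standard parabolic) and $v \in W_I$, then $\ell(uv) = \ell(u) + \ell(v)$. This is classical and will be the main technical ingredient; it follows because no reduced expression for $u$ can end in a simple reflection of $W_I$, so concatenating a reduced expression for $u$ with one for $v$ stays reduced.

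For existence, given $w \in (\bbS_n/\bbS_\lambda)^{\short}$, I would define $w'$ to be the unique shortest coset representative of $w \bbS_\mu$, so $w' \in (\bbS_n / \bbS_\mu)^{\short}$ and $w = w' x$ for a unique $x \in \bbS_\mu$. The length identity $\ell(w) = \ell(w') + \ell(x)$ then follows immediately from the standard fact quoted above, applied to the pair $(w', x)$. It remains to verify $x \in (\bbS_\mu/\bbS_\lambda)^{\short}$. For this, decompose $x = y z$ inside $\bbS_\mu$ with $y \in (\bbS_\mu/\bbS_\lambda)^{\short}$, $z \in \bbS_\lambda$, and $\ell(x) = \ell(y) + \ell(z)$. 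Then $w z^{-1} = w' y$ lies in $w \bbS_\lambda$, so the minimality of $w$ in its $\bbS_\lambda$-coset gives $\ell(w) \leq \ell(w' y) = \ell(w') + \ell(y)$. Comparing with $\ell(w) = \ell(w') + \ell(y) + \ell(z)$ forces $\ell(z) = 0$, i.e.\ $z = e$ and $x = y \in (\bbS_\mu/\bbS_\lambda)^{\short}$.

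For uniqueness, suppose $w = w_1' x_1 = w_2' x_2$ are two such factorizations. Then $w \bbS_\mu = w_1' \bbS_\mu = w_2' \bbS_\mu$, and since each $w_i'$ is the unique shortest representative of this coset, $w_1' = w_2'$, hence $x_1 = x_2$. The main (and only) obstacle is ensuring that the chosen $x$ actually lies in $(\bbS_\mu/\bbS_\lambda)^{\short}$, which is handled by the length comparison above; once that is in place, everything else is purely formal.
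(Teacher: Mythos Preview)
Your proof is correct and follows essentially the same approach as the paper: define $w'$ as the shortest representative of $w\bbS_\mu$, write $w = w'x$ with $x \in \bbS_\mu$, and then verify $x \in (\bbS_\mu/\bbS_\lambda)^{\short}$. The only cosmetic difference is in this last verification: the paper argues directly that $\ell(xt) > \ell(x)$ for all $t \in \bbS_\lambda$ (using $\ell(wt) = \ell(w') + \ell(xt)$), whereas you factor $x = yz$ and force $z = e$ by a length comparison; both are equally valid and rely on the same underlying length additivity.
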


\begin{proof}
  The element \(w\) determines some coset
  \(w\bbS_{\mu}\), in which there is a unique shortest coset
  representative \(w'\). Hence \(w = w' x\) for some \(x \in
  \bbS_{\mu}\) with \(\len(w)=\len(w')+\len(x)\). Since \(w \in
  (\bbS_n / \bbS_\lambda)^\short\) we have \(\len(wt) > \len(w)\) for all
  \(t \in \bbS_\lambda\); but then also \(\len(xt) > \len(x)\) for all \(t
  \in \bbS_\lambda\), hence \(x \in (\bbS_{\mu}/\bbS_\lambda)^\short\).
\end{proof}

Now we compute how translation functors act on proper standard modules. First, we consider translation onto the wall:

\begin{prop}
  \label{prop:10}
  Let \(\lambda,\mu\) be dominant weights with stabilizers \(\bbS_\lambda,\bbS_{\mu}\) respectively, and suppose \(\bbS_\lambda \subset \bbS_\mu\). Let \(w \in \Lambda^\frakp_\frakq(\lambda)\), and write \(w=w'x\) as given by Lemma~\ref{lem:9}. Then we  have
  \begin{equation}
    \label{eq:47}
    \trasT_{\lambda}^\mu \overline \Delta(w \cdot \lambda) \cong
    \begin{cases}
      q^{- \len(x)} \overline \Delta( w' \cdot \mu) & \text{if } w' \in \Lambda^\frakp_\frakq(\mu),\\
      0 & \text{otherwise}.
    \end{cases}
  \end{equation}
\end{prop}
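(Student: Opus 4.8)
The plan is to reduce the computation to the behaviour of translation onto the wall on \emph{ordinary} Verma modules, for which the answer is classical, by exploiting the description of proper standard modules as coapproximations of parabolic Verma modules.

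First I would rewrite $\oDelta(w\cdot\lambda)$ using Lemma~\ref{lem:18} in its graded form. Concretely, since $w\in\Lambda^\frakp_\frakq(\lambda)$, the element of Lemma~\ref{lem:10} attached to $w$ is $e$, so Proposition~\ref{prop:9} gives $\oDelta(w\cdot\lambda)=\frakQ\,M^\frakp(w\cdot\lambda)$, where $\frakQ\colon\catOZ^{\frakp}_\lambda\to\catOZ^{\frakp,\frakq\pres}_\lambda$ is the coapproximation functor and $M^\frakp(w\cdot\lambda)=\frakz\,M(w\cdot\lambda)$ is the graded parabolic Verma module (the parabolic Verma being the largest quotient of $M(w\cdot\lambda)$ lying in $\catO^\frakp$, with all graded lifts normalized so that the simple head sits in degree $0$). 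By Lemma~\ref{lem:21} the translation functor $\trasT_\lambda^\mu$ commutes with $\frakz$ and with $\frakQ$, so
\[
  \trasT_\lambda^\mu\,\oDelta(w\cdot\lambda)\;=\;\frakQ\,\trasT_\lambda^\mu\,M^\frakp(w\cdot\lambda)\;=\;\frakQ\,\frakz\,\trasT_\lambda^\mu\,M(w\cdot\lambda),
\]
and everything is reduced to translating an ordinary Verma onto the wall and applying $\frakz$ and $\frakQ$.

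Next I would invoke the standard description of translation onto the wall on Verma modules (classical in the ungraded case, see \cite[\textsection{}7.6]{MR2428237}; the graded refinement, with the shift fixed by the normalization of \textsection\ref{sec:grad-transl-funct}, follows as in \cite{MR2005290} and \cite[\textsection{}4.4]{miophd2}): writing $w=w'x$ as in Lemma~\ref{lem:9}, one has $\trasT_\lambda^\mu\,M(w\cdot\lambda)\cong q^{-\len(x)}\,M(w'\cdot\mu)$, noting that $w'\cdot\mu=w\cdot\mu$ as weights. Applying $\frakz$ then gives $\trasT_\lambda^\mu\,M^\frakp(w\cdot\lambda)\cong q^{-\len(x)}M^\frakp(w'\cdot\mu)$ when $w'\bbS_\mu\subseteq W^\frakp$, i.e.\ when $w'\in\Lambda^\frakp(\mu)$, and $\trasT_\lambda^\mu\,M^\frakp(w\cdot\lambda)=0$ otherwise, since $\frakz\,M(w'\cdot\mu)$ equals $M^\frakp(w'\cdot\mu)$ precisely in the first case and vanishes in the second.

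Finally I would match these two cases with the statement. The key point is that for $w\in\Lambda^\frakp_\frakq(\lambda)$ one has $w\bbS_\lambda\subseteq w'\bbS_\mu$ (because $x\in\bbS_\mu$ and $\bbS_\lambda\subseteq\bbS_\mu$), so the condition $w\bbS_\lambda\cap w_\frakq W^\frakq\neq\emptyset$ forces $w'\bbS_\mu\cap w_\frakq W^\frakq\neq\emptyset$; comparing with \eqref{eq:39}, this shows that $w'\in\Lambda^\frakp(\mu)$ if and only if $w'\in\Lambda^\frakp_\frakq(\mu)$. Hence, if $w'\in\Lambda^\frakp_\frakq(\mu)$ then $\frakQ\,M^\frakp(w'\cdot\mu)=\oDelta(w'\cdot\mu)$ (once more by Lemma~\ref{lem:18}, the element of Lemma~\ref{lem:10} attached to $w'$ being $e$), so $\trasT_\lambda^\mu\,\oDelta(w\cdot\lambda)=q^{-\len(x)}\oDelta(w'\cdot\mu)$; otherwise $\trasT_\lambda^\mu\,M^\frakp(w\cdot\lambda)=0$ and therefore $\trasT_\lambda^\mu\,\oDelta(w\cdot\lambda)=\frakQ(0)=0$. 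The step requiring the most care is pinning down the exact grading shift $q^{-\len(x)}$ in the graded translation of Verma modules onto the wall; the remainder is a combination of the commutation properties of Lemma~\ref{lem:21} with the elementary coset bookkeeping just described.
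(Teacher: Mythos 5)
Your proof is correct and takes essentially the same route as the paper: write $\oDelta(w\cdot\lambda)=\frakQ\,\frakz\, M(w\cdot\lambda)$ via Proposition~\ref{prop:9}, push $\trasT_\lambda^\mu$ past $\frakz$ and $\frakQ$ using Lemma~\ref{lem:21}, and apply the graded translation-onto-the-wall formula $\trasT_\lambda^\mu M(w\cdot\lambda)\cong q^{-\len(x)}M(w'\cdot\mu)$. One welcome addition: you spell out the coset argument showing that, for $w\in\Lambda^\frakp_\frakq(\lambda)$ with $w=w'x$, one automatically has $w'\bbS_\mu\cap w_\frakq W^\frakq\neq\emptyset$, so that $w'\in\Lambda^\frakp(\mu)$ is equivalent to $w'\in\Lambda^\frakp_\frakq(\mu)$ -- a point the paper's proof uses implicitly when it asserts $\frakz M(w'\cdot\mu)\cong 0$ for $w'\notin\Lambda^\frakp_\frakq(\mu)$.
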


\begin{proof}
  First, we compute in the usual category \(\catO(\gl_n)\). It is well-known that translating a Verma module to the wall gives a Verma
  module. In fact if we forget the grading then \(\trasT_\lambda^\mu
  M(w \cdot \lambda) \cong M(w' \cdot \mu)\)
  (cf.\ \cite[Theorem~7.6]{MR2428237}). The graded version can be
  computed generalizing \cite[Theorem~8.1]{MR2005290}, and is
  \(\trasT_\lambda^\mu M(w \cdot \lambda) = q^{- \len(x)} M(w' \cdot
  \mu)\).

  Now since the functors \(\mathfrak z\) and \(\mathfrak Q\) commute with
  \(\trasT_\lambda^\mu\), using Proposition~\ref{prop:9} we have
\begin{equation}
  \label{eq:48}
    \trasT_\lambda^\mu \overline \Delta(w \cdot \lambda) \cong \trasT_\lambda^\mu
    \mathfrak Q \mathfrak z M(w \cdot \lambda) \cong \mathfrak Q \mathfrak
    z \trasT_\lambda^\mu M(w \cdot \lambda) \cong q^{- \len (x)}
    \mathfrak Q \mathfrak z M(w' \cdot \mu).
\end{equation}
If \(w' \notin \Lambda^\frakp_\frakq(\mu)\) then \(\mathfrak z M(w' \cdot \mu)\cong 0\). Otherwise we get \( \trasT_\lambda^\mu \oDelta(w \cdot \lambda) \cong q^{-\len(x)}\oDelta(w' \cdot \mu)\).
\end{proof}

Now let us compute translation of proper standard modules out of the wall:

\begin{prop}
  \label{prop:11}
  Let \(\lambda,\mu\) be dominant weights with stabilizers \(\bbS_\lambda,\bbS_{\mu}\) respectively, and suppose \(\bbS_\lambda \subset \bbS_\mu\). Then for every \(w \in \Lambda^\frakp_\frakq(\mu)\) we have
  \begin{equation}
    \label{eq:139}
    [\trasT_{\mu}^\lambda \overline \Delta(w \cdot \mu)] =
      \sum_{y \in  (\bbS_{\mu}/ \bbS_{\lambda})^{\short}} q^{\len(y_0)- \len(y)+\len(x_y)} [ \overline \Delta( x_y wy \cdot \lambda) ],
  \end{equation}
  where \(y_0\) is the longest element of \((\bbS_{\mu}/ \bbS_{\lambda})^{\short}\), and for every \(y \in (\bbS_{\mu}/\bbS_\lambda)^\short\) the element \(x_y\) is the element given by Lemma~\ref{lem:10} for \(wy \in \Lambda^\frakp(\lambda)\).
\end{prop}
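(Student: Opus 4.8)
The plan is to reduce \eqref{eq:139} to the (classical) computation of translation of parabolic Verma modules out of the wall, and then to transport the answer through the coapproximation functor $\frakQ$ using Proposition~\ref{prop:9}. First I would invoke Lemma~\ref{lem:18}: since $w \in \Lambda^\frakp_\frakq(\mu)$, we have $\oDelta(w \cdot \mu) = \frakQ M^\frakp(w \cdot \mu)$ in $\catOZ^{\frakp,\frakq\pres}_\mu$. Now $\trasT_\mu^\lambda$ is exact (it is tensoring with a finite-dimensional $\gl_n$--module), $\frakQ$ is exact (being a Serre quotient functor, see \textsection\ref{sec:coappr-funct-1}), and the two commute by Lemma~\ref{lem:21}; hence
\begin{equation*}
\trasT_\mu^\lambda \oDelta(w \cdot \mu) \;=\; \trasT_\mu^\lambda \frakQ\, M^\frakp(w \cdot \mu) \;=\; \frakQ\, \trasT_\mu^\lambda M^\frakp(w \cdot \mu),
\end{equation*}
where now $\frakQ \colon \catOZ^{\frakp}_\lambda \to \catOZ^{\frakp,\frakq\pres}_\lambda$. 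So it suffices to compute $[\trasT_\mu^\lambda M^\frakp(w \cdot \mu)]$ in $\catOZ^\frakp_\lambda$ and then apply $\frakQ$ termwise to a parabolic Verma flag.

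For the second step I would recall that translation functors preserve parabolic Verma flags, so $\trasT_\mu^\lambda M^\frakp(w \cdot \mu)$ genuinely admits a $\Delta^\frakp$--filtration. Since $w \in \Lambda^\frakp_\frakq(\mu) \subseteq (\bbS_n/\bbS_\mu)^{\short}$, the products $wy$ for $y \in (\bbS_\mu/\bbS_\lambda)^{\short}$ are precisely the shortest $\bbS_n/\bbS_\lambda$--coset representatives lying in $w\bbS_\mu$, and $\len(wy)=\len(w)+\len(y)$ (Lemma~\ref{lem:9}). The classical statement on translating Verma modules out of the wall (\cite[\textsection 7]{MR2428237}), in the graded normalization fixed in \textsection\ref{sec:grad-transl-funct} (generalizing \cite[Theorem~8.1]{MR2005290}; cf.\ \cite[\textsection 4.4]{miophd2}), then gives in the Grothendieck group
\begin{equation*}
[\trasT_\mu^\lambda M^\frakp(w \cdot \mu)] \;=\; \sum_{y \in (\bbS_\mu/\bbS_\lambda)^{\short}} q^{\len(y_0)-\len(y)}\,[M^\frakp(wy \cdot \lambda)],
\end{equation*}
where $y_0$ is the longest element of $(\bbS_\mu/\bbS_\lambda)^{\short}$ (the element denoted $x_0$ in \textsection\ref{sec:grad-transl-funct}), and $M^\frakp(wy\cdot\lambda)$ is read as $0$ whenever $wy \notin \Lambda^\frakp(\lambda)$.

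Finally I would apply $\frakQ$. By exactness, $[\frakQ\, \trasT_\mu^\lambda M^\frakp(w\cdot\mu)]$ is obtained by applying $\frakQ$ to each step of the $\Delta^\frakp$--flag above. For every $y$ with $wy \in \Lambda^\frakp(\lambda)$, Lemma~\ref{lem:10} provides the unique $x_y \in W_\frakq$ with $x_y w y \in \Lambda^\frakp_\frakq(\lambda)$ and $\len(x_y w y) = \len(x_y)+\len(wy)$, and Proposition~\ref{prop:9} yields $\frakQ M^\frakp(wy\cdot\lambda) = q^{\len(x_y)}\oDelta(x_y w y \cdot \lambda)$; when $wy \notin \Lambda^\frakp(\lambda)$ the corresponding summand vanishes. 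Collecting the exponents gives
\begin{equation*}
[\trasT_\mu^\lambda \oDelta(w \cdot \mu)] \;=\; \sum_{y \in (\bbS_\mu/\bbS_\lambda)^{\short}} q^{\len(y_0)-\len(y)+\len(x_y)}\,[\oDelta(x_y w y \cdot \lambda)],
\end{equation*}
which is exactly \eqref{eq:139}.

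The step requiring real care is the middle one: pinning down the $\Delta^\frakp$--filtration of $\trasT_\mu^\lambda M^\frakp(w\cdot\mu)$ with precisely the graded shifts $q^{\len(y_0)-\len(y)}$ in the generality needed here, where both $\lambda$ and $\mu$ are allowed to be singular and only $\bbS_\lambda \subseteq \bbS_\mu$ is assumed. The ungraded flag (each parabolic Verma appearing once) is standard, but the graded refinement goes beyond \cite{MR2005290} and depends on the normalization of graded translation functors set up in \textsection\ref{sec:grad-transl-funct}; once that is in hand, the remainder of the argument is purely the combinatorics of coset representatives encapsulated in Lemmas~\ref{lem:9} and~\ref{lem:10}, together with the exactness of $\frakQ$ and $\trasT_\mu^\lambda$.
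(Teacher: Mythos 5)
Your proposal follows the same overall strategy as the paper's proof: express $\oDelta(w\cdot\mu)$ as $\frakQ M^\frakp(w\cdot\mu)$, compute the translation of the parabolic Verma module, and apply $\frakQ$ termwise using Proposition~\ref{prop:9}. The end result and the final step are identical. The difference lies exactly in the step you flag as ``requiring real care,'' and this is where your write-up leaves a genuine gap: you assert the graded formula
\begin{equation*}
[\trasT_\mu^\lambda M^\frakp(w \cdot \mu)] = \sum_{y \in (\bbS_\mu/\bbS_\lambda)^{\short}} q^{\len(y_0)-\len(y)}\,[M^\frakp(wy \cdot \lambda)]
\end{equation*}
directly in the parabolic category $\catOZ^\frakp$, citing references (\cite[\textsection 7]{MR2428237}, \cite{MR2005290}) that establish the corresponding statement only for non-parabolic Verma modules, or at most in the regular/semi-regular setting.

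The paper bridges this gap differently. It first derives the graded translation formula \eqref{eq:49} in $\catOZ$ for the ordinary Verma module $M(w\cdot\mu)$, where the references do apply. It then passes to $\catOZ^\frakp$ by applying the Zuckermann functor $\frakz$, which is only right exact in general, so one cannot naively push a Verma flag through it. The key observation (stated in the remark immediately after the proposition) is that $wy\bbS_\lambda \subseteq w\bbS_\mu \subseteq W^\frakp$ for all $y \in (\bbS_\mu/\bbS_\lambda)^{\short}$, hence $\frakz M(wy\cdot\lambda) \neq 0$ for \emph{every} $y$; since $\frakz$ is exact on modules with a Verma flag all of whose subquotients have nonvanishing Zuckermann image (\cite[Lemma~5.5.5]{miophd2}), applying $\frakz$ to \eqref{eq:49} is legitimate and yields the parabolic formula. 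Your proposal omits this observation entirely, and in fact your precaution about terms being ``read as $0$ whenever $wy \notin \Lambda^\frakp(\lambda)$'' is a sign of the omission: as the paper notes, this never happens under the hypothesis $w \in \Lambda^\frakp_\frakq(\mu)$, and the non-vanishing of all terms is precisely what makes the passage through $\frakz$ work. To make your argument complete, you would need either to supply this $\frakz$-exactness argument, or to prove the graded parabolic translation formula independently; simply citing the non-parabolic sources does not suffice.
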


Note that \(w \in \Lambda^\frakp_\frakq(\mu)\) implies that \(w \bbS_{\mu} \subseteq W^\frakp\); but as \(\bbS_\lambda \subseteq \bbS_{\mu}\) we have then \(wy \bbS_{\lambda} \subseteq W^\frakp\), and in particular \(wy \in \Lambda^\frakp(\lambda)\) for all \(y \in (\bbS_{\mu}/\bbS_\lambda)^{\short}\).

\begin{proof}
  Consider \(M(w \cdot \mu)\) in \(\catOZ\). Then we have
  \begin{equation}
    \label{eq:49}
    [\trasT_{\mu}^\lambda M(w \cdot \mu)] =
      \sum_{y \in (\bbS_{\mu}/ \bbS_{\lambda})^{\short}} q^{\len(y_0) - \len(y)} [M( wy \cdot \lambda)].
  \end{equation}
  This is well-known in the ungraded setting (see for example \cite[Theorem~7.12]{MR2428237}); the graded version follows as in \cite{MR2120117}. Notice that \(wy\bbS_\lambda \subseteq w \bbS_\mu \subseteq W^\frakp\) for all \(y \in ({\bbS_\mu}/{\bbS_\lambda})^{\textrm{short}}\). In particular, \(\frakz M(wy \cdot \lambda) \not\cong 0\) for all \(y \in ({\bbS_\mu}/{\bbS_\lambda})^{\textrm{short}}\). Since the Zuckermann's functor \(\frakz\) is exact on modules that admit a Verma flag with Verma modules \(M(z \cdot \lambda)\) such that \(\frakz M(z \cdot \lambda) \not\cong 0\) (\cite[Lemma~5.5.5]{miophd2}), we can apply \(\frakz\) to both sides of \eqref{eq:49}. Hence we get in \(\catOZ^\frakp\):
  \begin{equation}
    \label{eq:207}
    [\trasT_{\mu}^\lambda M^\frakp(w \cdot \mu)] =
      \sum_{y \in (\bbS_{\mu}/ \bbS_{\lambda})^{\short}} q^{\len(y_0) - \len(y)} [M^\frakp( wy \cdot \lambda)].
  \end{equation}
Now we can apply the exact functor \(\frakQ\) to both sides. Using Proposition~\ref{prop:9} and the commutativity of \(\mathfrak Q\) with \(\trasT_\mu^\lambda\) we obtain the claim.
\end{proof}

Now we compute translations of projective modules out of the wall:

\begin{prop}\label{prop:7}
  Let \(\lambda,\mu\) be dominant weights with stabilizers \(\bbS_\lambda,\bbS_{\mu}\) respectively, and suppose that \(\bbS_\lambda \subset \bbS_\mu\). Then for every \(w \in \Lambda^{\frakp}_\frakq(\mu)\) we have in \(\catOZ^{\frakp,\frakq\pres}\):
  \begin{equation}
    \trasT_\mu^{\lambda} P^\frakp(w \cdot
    \mu) = P^\frakp(wy_0 \cdot \lambda)\label{eq:42}
  \end{equation}
  where  \(y_0\) is the longest element of \((\bbS_{\mu}/\bbS_{\lambda})^{\short}\).
\end{prop}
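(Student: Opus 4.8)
The plan is to reduce the statement to the corresponding (essentially classical) fact in the graded parabolic category $\catOZ^\frakp$, using the exact coapproximation functor $\frakQ\colon\catOZ^\frakp_\nu\to\catOZ^{\frakp,\frakq\pres}_\nu$ of Proposition~\ref{prop:9} (the case $\frakq'=\frakb$). Concretely, I would first prove that for $w\in\Lambda^\frakp_\frakq(\mu)\subseteq\Lambda^\frakp(\mu)$ one has
\begin{equation*}
  \trasT_\mu^\lambda P^\frakp(w\cdot\mu)\cong P^\frakp(wy_0\cdot\lambda)\qquad\text{in }\catOZ^\frakp_\lambda .
\end{equation*}
In the ungraded setting this is the well-known behaviour of translation out of the wall on indecomposable projectives of $\catO$ (see \cite[Ch.~7]{MR2428237}) combined with the facts that the Zuckermann functor $\frakz\colon\catO_\nu\to\catO^\frakp_\nu$ commutes with translation functors and satisfies $\frakz P(x\cdot\nu)=P^\frakp(x\cdot\nu)$; since $wy_0\in\Lambda^\frakp(\lambda)$ (see the combinatorial step below) the module $P^\frakp(wy_0\cdot\lambda)$ is non-zero, so applying $\frakz$ to $\trasT_\mu^\lambda P(w\cdot\mu)\cong P(wy_0\cdot\lambda)$ yields the displayed isomorphism. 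The graded refinement, with no grading shift, follows as in \cite[\textsection 8]{MR2005290} and \cite{MR2120117} and is compatible with the normalization of the graded translation functors fixed in \textsection\ref{sec:grad-transl-funct} (for $w=e$ and $\frakp=\frakb$ it is precisely $\trasT_\mu^\lambda M(\mu)=P(y_0\cdot\lambda)$, using that $M(\mu)=P(\mu)$ for $\mu$ dominant).

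Next I would apply $\frakQ$ to this identity. By Lemma~\ref{lem:21} the functor $\frakQ$ commutes with $\trasT_\mu^\lambda$, and since $w\in\Lambda^\frakp_\frakq(\mu)$ one has $\frakQ P^\frakp(w\cdot\mu)=P^\frakp(w\cdot\mu)$ in $\catOZ^{\frakp,\frakq\pres}_\mu$ (as in the proof of Lemma~\ref{lem:18}, because $P^\frakp(w\cdot\mu)$ is a direct summand of $\scrP^\frakp_\frakq(\mu)$). Hence $\frakQ\bigl(\trasT_\mu^\lambda P^\frakp(w\cdot\mu)\bigr)$ is exactly $\trasT_\mu^\lambda P^\frakp(w\cdot\mu)$ computed in $\catOZ^{\frakp,\frakq\pres}$. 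On the other side, once we know $wy_0\in\Lambda^\frakp_\frakq(\lambda)$ we likewise get $\frakQ P^\frakp(wy_0\cdot\lambda)=P^\frakp(wy_0\cdot\lambda)$ in $\catOZ^{\frakp,\frakq\pres}_\lambda$. Combining, $\trasT_\mu^\lambda P^\frakp(w\cdot\mu)=P^\frakp(wy_0\cdot\lambda)$ in $\catOZ^{\frakp,\frakq\pres}$, as claimed.

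It remains to verify the combinatorial statement: if $w\in\Lambda^\frakp_\frakq(\mu)$ and $y_0$ is the longest element of $(\bbS_\mu/\bbS_\lambda)^{\short}$, then $wy_0\in\Lambda^\frakp_\frakq(\lambda)$. Let $w_\mu,w_\lambda$ be the longest elements of $\bbS_\mu,\bbS_\lambda$, so that $y_0=w_\mu w_\lambda$; since $w\in(\bbS_n/\bbS_\mu)^{\short}$ the product $wy_0$ lies in $(\bbS_n/\bbS_\lambda)^{\short}$ with $\len(wy_0)=\len(w)+\len(y_0)$, and $wy_0\bbS_\lambda\subseteq w\bbS_\mu\subseteq W^\frakp$, giving the first condition defining $\Lambda^\frakp_\frakq(\lambda)$. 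For the second condition I would use the observation that if a coset $x\bbS_\nu$ (with $x\in(\bbS_n/\bbS_\nu)^{\short}$) meets the set $w_\frakq W^\frakq$ of longest $W_\frakq$-coset representatives, then its longest element $xw_\nu$ already lies in $w_\frakq W^\frakq$: indeed if $xu\in w_\frakq W^\frakq$ with $u\in\bbS_\nu$, then $xw_\nu=(xu)(u^{-1}w_\nu)$ with $\len(xw_\nu)=\len(xu)+\len(u^{-1}w_\nu)$, so for each simple reflection $s\in W_\frakq$ we get $\len(s\,xw_\nu)\le\len(s\,xu)+\len(u^{-1}w_\nu)<\len(xw_\nu)$, meaning $xw_\nu$ is longest in $W_\frakq\,xw_\nu$. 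Applying this with $\nu=\mu$ (as $w\bbS_\mu$ meets $w_\frakq W^\frakq$ by $w\in\Lambda^\frakp_\frakq(\mu)$) gives $ww_\mu\in w_\frakq W^\frakq$; and $ww_\mu=wy_0w_\lambda$ is the longest element of the coset $wy_0\bbS_\lambda$, so $wy_0\bbS_\lambda\cap w_\frakq W^\frakq\ni ww_\mu\neq\emptyset$. Thus $wy_0\in\Lambda^\frakp_\frakq(\lambda)$, and the same computation with $\frakq$ replaced by $\frakb$ gives $wy_0\in\Lambda^\frakp(\lambda)$ as used above.

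The routine ingredients are the standard Coxeter facts (length-additive factorization through a chain of parabolic subgroups and the identity $y_0=w_\mu w_\lambda$) and the standard properties of $\frakz$, $\frakQ$ and of translation functors. The part requiring genuine care is the graded bookkeeping: establishing the identity in $\catOZ^\frakp$ with the correct, i.e.\ trivial, grading shift so as to match the normalization of \textsection\ref{sec:grad-transl-funct}, and keeping precise track of which abelian category each module and each isomorphism belongs to when passing back and forth through $\frakQ$.
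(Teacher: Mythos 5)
Your proposal is correct and uses essentially the same route as the paper: reduce to Humphreys' Theorem~7.11 for the ungraded identity $\trasT_\mu^\lambda P(w\cdot\mu)\cong P(wy_0\cdot\lambda)$ in $\catO$, upgrade to the graded statement, pass to $\catOZ^\frakp$ by $\frakz$ (using that $\frakz$ commutes with translation functors), and conclude in $\catOZ^{\frakp,\frakq\pres}$.

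Two small differences worth noting. First, the paper establishes the absence of a grading shift more precisely: it quotes \eqref{eq:207}, from which one sees that the top Verma module $M^\frakp(wy_0\cdot\lambda)$ in the Verma flag of $\trasT_\mu^\lambda M^\frakp(w\cdot\mu)$ appears with shift $q^0$; your appeal to Section~8 of \cite{MR2005290} and to the normalization convention is plausible but less self-contained. Second, the paper dispenses with your combinatorial verification of $wy_0\in\Lambda^\frakp_\frakq(\lambda)$: by Lemma~\ref{lem:21} the translation functor preserves $\catOZ^{\frakp,\frakq\pres}$, so the left-hand side $\trasT_\mu^\lambda P^\frakp(w\cdot\mu)$ already lies there, and since $\catOZ^{\frakp,\frakq\pres}_\lambda$ is a full subcategory of $\catOZ^\frakp_\lambda$ closed under isomorphism, so does its isomorph $P^\frakp(wy_0\cdot\lambda)$ — from which $wy_0\in\Lambda^\frakp_\frakq(\lambda)$ comes for free. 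Your direct Coxeter-theoretic argument for this membership is correct and is a valid alternative, but is extra work; it also renders the appeal to $\frakQ$ (acting as the identity on $P^\frakp(wy_0\cdot\lambda)$) superfluous, since once both sides are known to lie in the full subcategory the isomorphism in $\catOZ^\frakp$ transfers automatically.
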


\begin{proof}
  Let \(P (w \cdot \lambda) \in \catOZ\). By \cite[Theorem~7.11]{MR2428237}
  we have \(\trasT_\mu^{\lambda} P (w\cdot \mu) = P (wy_0 \cdot
  \lambda)\) as ungraded modules. By \eqref{eq:207}, the top
  Verma module is not shifted under translation, hence this also
  holds as graded modules. Applying the Zuckermann's functor \(\frakz\) we get \eqref{eq:42} in \(\catOZ^\frakp\), hence also in \(\catOZ^{\frakp,\frakq\pres}\).
  Notice that we get for free that \(w y_0 \in \Lambda^\frakp_\frakq(\lambda)\) (although it would be easy to check it directly).
\end{proof}

Using the adjunctions \eqref{eq:59} we can then compute translations of simple modules onto the wall:

\begin{prop}\label{prop:8}
  Let \(\lambda,\mu\) be dominant weights with stabilizers \(\bbS_\lambda,\bbS_{\mu}\) respectively, and suppose that \(\bbS_\lambda \subset \bbS_\mu\). Let \(y_0\) be the longest element of \((\bbS_{\mu}/\bbS_{\lambda})^{\short}\). Then for every \(w \in \Lambda^\frakp_\frakq(\lambda)\) we have in \(\catOZ^{\frakp,\frakq\pres}\)
  \begin{equation}
    \trasT_\lambda^{\mu} S(w \cdot
    \lambda) =
    \begin{cases}
      q^{-\len(y_0)} S(z \cdot \mu) & \text{if } w = zy_0 \text{ for some }z \in \Lambda^\frakp_\frakq(\mu)
      \in \bbS_{\mu}, \\
      0 & \text{otherwise.}
    \end{cases}
\label{eq:26}
  \end{equation}
\end{prop}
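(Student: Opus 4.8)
The plan is to deduce the statement from Proposition~\ref{prop:7} by a left adjunction together with a Jordan--Hölder multiplicity count. First I would record the two facts that make everything formal. On one hand, $\trasT_\lambda^\mu$ restricts to an \emph{exact} functor $\catOZ^{\frakp,\frakq\pres}_\lambda\to\catOZ^{\frakp,\frakq\pres}_\mu$: by the proof of Lemma~\ref{lem:21} tensoring with a finite dimensional $\gl_n$--module is an exact endofunctor of the (nonstandard) abelian category $\catO\{\frakp+\frakq,\calA^\frakp_\frakq\}$, and $\trasT_\lambda^\mu$ is this operation followed by the (exact) projection onto the block $\mu$. On the other hand, by Lemma~\ref{lem:21} the graded adjunctions \eqref{eq:59} transport to the categories $\catOZ^{\frakp,\frakq\pres}$; unwinding them shows that $\trasT_\lambda^\mu$ admits a left adjoint, namely the grading shift $q^{\len(y_0)}\trasT_\mu^\lambda$ of the translation functor out of the wall.

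Next I would compute, for every $z\in\Lambda^\frakp_\frakq(\mu)$, the graded multiplicity space
\[
  \Hom_{\catOZ^{\frakp,\frakq\pres}_\mu}\!\big(P^\frakp(z\cdot\mu),\ \trasT_\lambda^\mu S(w\cdot\lambda)\big)
  \ \cong\ \Hom_{\catOZ^{\frakp,\frakq\pres}_\lambda}\!\big(q^{\len(y_0)}\trasT_\mu^\lambda P^\frakp(z\cdot\mu),\ S(w\cdot\lambda)\big),
\]
using the left adjunction just mentioned. By Proposition~\ref{prop:7} one has $\trasT_\mu^\lambda P^\frakp(z\cdot\mu)=P^\frakp(zy_0\cdot\lambda)$ with $zy_0\in\Lambda^\frakp_\frakq(\lambda)$ (as noted there), so the right-hand side equals $\Hom(P^\frakp(zy_0\cdot\lambda),\,q^{-\len(y_0)}S(w\cdot\lambda))$. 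Since $S(w\cdot\lambda)$ is simple and $P^\frakp(zy_0\cdot\lambda)$ is its projective cover exactly when $zy_0=w$, this space is one--dimensional, concentrated in the single internal degree $-\len(y_0)$, when $zy_0=w$, and is zero otherwise. As $P^\frakp(z\cdot\mu)$ is the projective cover of the simple $S(z\cdot\mu)$ in $\catOZ^{\frakp,\frakq\pres}_\mu$ (Lemma~\ref{lem:15}), this computes the graded Jordan--Hölder multiplicities of $\trasT_\lambda^\mu S(w\cdot\lambda)$: every simple occurs with total multiplicity at most one, and $S(z\cdot\mu)$ occurs, with grading shift $q^{-\len(y_0)}$, precisely for the $z$, if any, with $zy_0=w$.

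Finally I would conclude. Writing $w=w'x$ with $w'\in(\bbS_n/\bbS_\mu)^\short$ and $x\in(\bbS_\mu/\bbS_\lambda)^\short$ as in Lemma~\ref{lem:9}, the equation $zy_0=w$ for $z\in(\bbS_n/\bbS_\mu)^\short$ forces $z=w'$ and $x=y_0$ by uniqueness of that factorization; so there is a $z\in\Lambda^\frakp_\frakq(\mu)$ with $zy_0=w$ iff $x=y_0$ and $w'\in\Lambda^\frakp_\frakq(\mu)$, and then $z=w'$ is unique. If no such $z$ exists, $\trasT_\lambda^\mu S(w\cdot\lambda)$ has no composition factors, hence is $0$. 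If $z$ exists, $\trasT_\lambda^\mu S(w\cdot\lambda)$ has a single composition factor $S(z\cdot\mu)\langle-\len(y_0)\rangle$ with multiplicity one, hence $\trasT_\lambda^\mu S(w\cdot\lambda)\cong q^{-\len(y_0)}S(z\cdot\mu)$; this is exactly \eqref{eq:26}.

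The argument is entirely formal once Proposition~\ref{prop:7} is available; the only points needing a little care are the bookkeeping of the grading shifts in \eqref{eq:59} (equivalently, checking that $q^{\len(y_0)}\trasT_\mu^\lambda$ is the correctly normalized graded left adjoint of $\trasT_\lambda^\mu$) and the routine verification that exactness and the adjunction survive the passage to the nonstandard abelian structure on $\catOZ^{\frakp,\frakq\pres}$. I do not expect a genuine obstacle here.
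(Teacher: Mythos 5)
Your proof is correct and follows essentially the same route as the paper's: use Proposition~\ref{prop:7} to identify $\trasT_\mu^\lambda P^\frakp(z\cdot\mu)$, apply the graded adjunction \eqref{eq:59}, and read off the graded Jordan--Hölder multiplicities of $\trasT_\lambda^\mu S(w\cdot\lambda)$ from $\Hom$ out of indecomposable projectives. The paper's version is terser (it does not spell out the multiplicity count or the exactness/uniqueness remarks), but the mathematical content is the same.
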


\begin{proof}
  We use the previous result together with the adjunction \(\trasT_{\mu}^{\lambda} \adjunction q^{\len(y_0)} \trasT_\lambda^\mu\). For every projective module \(P^\frakp(z \cdot\mu) \in \catOZ^{\frakp,\frakq\pres}_\mu\) we have
  \begin{equation}
    \label{eq:32}
    \Hom(\trasT_\mu^{\lambda} P^\frakp(z \cdot\mu), S(w \cdot\lambda)) \cong \Hom(P^\frakp(z \cdot\mu), q^{\len(y_0)}\trasT_{\lambda}^\mu S(w \cdot \lambda)).
  \end{equation}
  The left hand side is \(0\) unless \(w =zy_0\), in which case it is \(\C\), and the claim follows.
\end{proof}

\section{The categorification}
\label{sec:categorification}

In this section, which contains the main theorems of the paper, we
 construct explicitly the categorification of the representations
in the category \(\catRep\). We will define the
categorification itself in \textsection\ref{sec:grothendieck-group}
and construct the action of the intertwining operators and of \(\Uqgl\)
in \textsection\ref{sec:action-intertwiners} and
\textsection\ref{sec:functors-scr-e} respectively. In
\textsection\ref{sec:form} we will categorify the bilinear form
\eqref{eq:53} and in \textsection\ref{sec:canonical-basis} we will
prove that the indecomposable projective modules categorify the
canonical basis.

\paragraph{Notation}
\label{sec:notation}

For every composition \(\bolda\) of some \(n\) we fix, once and
forever, a dominant integral weight \(\lambda_\bolda\)
for \(\gl_n\) with stabilizer \(\bbS_\bolda\) under the dot
action. We suppose for future notational convenience that if
\(\compn\) is the regular composition of \(n\)
\eqref{eq:167} then \(\lambda_\compn=0\). Fix now a
positive integer \(n\) and \(k \in \{0,\ldots,n\}\). If
\(\Pi=\{\alpha_1,\ldots,\alpha_{n-1}\}\) is the set of the
simple roots of \(\gl_n\), we let \(\frakp\) and \(\frakq\) be the
standard parabolic subalgebras of \(\gl_n\) with corresponding
sets of simple roots
\(\Pi_\frakq=\{\alpha_1,\ldots,\alpha_{k-1}\}\) and
\(\Pi_\frakp=\{\alpha_{k},\ldots,\alpha_{n-1}\}\), so that
\( \bbS_k \times \bbS_{n-k} \cong W_{\frakp+\frakq} \subseteq
\bbS_n\). We set
\begin{equation}
\Lambda_k(\bolda) =
\Lambda^\frakp_\frakq(\lambda_\bolda) \qquad \text{and} \qquad \calQ_{k}(\bolda)
= \catOZ^{\frakp,\frakq\pres}_{\lambda_\bolda}.\label{eq:205}
\end{equation}
From now on, for \(w \in
\Lambda_k(\bolda)\) we denote by \(S_{\bolda,k}(w) \in \calQ_k(\bolda)\) the simple
module \(S(w \cdot \lambda_\bolda)\) and by \(Q_{\bolda,k}(w)\) its projective cover
\(P^\frakp(w \cdot \lambda_\bolda)\). We let also \(\Delta_{\bolda,k}(w)\) and
\(\oDelta_{\bolda,k}(w)\) be the corresponding standard and proper standard module. We will sometimes omit the subscripts \(k\) and \(\bolda\) when there will be no risk of confusion.

Fix a positive integer \(n\) and a composition \(\bolda\) of \(n\).

\subsection{Combinatorics of tableaux}
\label{sec:comb-tabl}

Given an integer \(k\) with \(0 \leq k \leq n\), recall that a \emph{hook
  partition} of shape \((n-k,k)\) is made of a row of length
\(n-k\) and a column of length \(k\), arranged as shown in
Figure \ref{fig:hook}. We call the first row just the \emph{row} and the first column just the \emph{column} of the hook partition. Keep in mind that for us the box in
the corner belongs to the row, but not to the column. Therefore we display the column slightly detached from the row.  If
\(\bolda =(a_1,\ldots,a_\ell)\) is a composition of \(n\), a
\emph{\((n-k,k)\)--tableau of type \(\bolda\)} is a tableau filled with
the integers
\begin{equation}
  \label{eq:62}
  \underbrace{1,\ldots,1}_{a_1\text{ times}},\underbrace{2,\ldots,2}_{a_2\text{ times}},\ldots,\underbrace{\ell,\ldots,\ell}_{a_\ell\text{ times}}.
\end{equation}

If we number the boxes of the hook partition of shape \((n-k,k)\) from
\(1\) to \(n\) starting with the column from the bottom to the top and
ending with the row from the left to the right, then the permutation
group \(\bbS_n\) acts from the left on the set of \((n-k,k)\)--tableaux of
type \(\bolda\) permuting the boxes. The stabilizer of this action is
\(\bbS_\bolda\).

Define the \emph{minimal} \((n-k,k)\)--tableau \(T_\bolda^{\mathrm{min}}\)
of type \(\bolda\) to be the tableau obtained putting the numbers
\eqref{eq:62} in order first in the column, from the bottom to the
top, then in the row, from the left to the right (see Figure
\ref{fig:admissible-hook}). Set also
\begin{equation}
T_\bolda(w) = w \cdot T_\bolda^{\mathrm{min}} \label{eq:96}
\end{equation}
for each \(w \in \bbS_n\).
Then we can define a bijection \(w \mapsto T_\bolda(w)\) between
  \((\bbS_n/\bbS_\bolda)^{\textrm{short}}\) and \((n-k,k)\)--tableaux of type
  \(\bolda\).

\begin{figure}
  \centering
  \begin{tikzpicture}[scale=0.5]
    \draw (0,0) rectangle (3,1);
    \draw (1,0) -- (1,1);
    \draw (2,0) -- (2,1);
    \begin{scope}[yshift=-0.2cm]
      \draw (0,0) rectangle (1,-2);
      \draw (0,-1) -- (1,-1);
    \end{scope}
    \begin{scope}[xshift=7cm]
      \draw (0,0) rectangle (2,1);
      \draw (1,0) -- (1,1);
      \begin{scope}[yshift=-0.2cm]
        \draw (0,0) rectangle (1,-3);
        \draw (0,-1) -- (1,-1);
        \draw (0,-2) -- (1,-2);
      \end{scope}
    \end{scope}
  \end{tikzpicture}
\caption{Hook partitions of shape (3,2) and (2,3).}\label{fig:hook}
\end{figure}
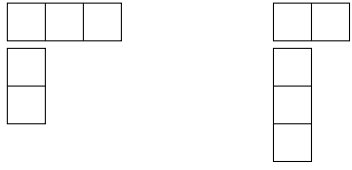

\begin{figure}
  \centering
  \begin{tikzpicture}[scale=0.5]
    \draw (0,0) rectangle (3,1);
    \draw (1,0) -- (1,1);
    \draw (2,0) -- (2,1);
    \node at (0.5,0.5) {\(3\)};
    \node at (1.5,0.5) {\(4\)};
    \node at (2.5,0.5) {\(4\)};
    \begin{scope}[yshift=-0.2cm]
      \draw (0,0) rectangle (1,-4);
      \draw (0,-1) -- (1,-1);
      \draw (0,-2) -- (1,-2);
      \draw (0,-3) -- (1,-3);
      \node at (0.5,-0.5) {\(3\)};
      \node at (0.5,-1.5) {\(2\)};
      \node at (0.5,-2.5) {\(2\)};
      \node at (0.5,-3.5) {\(1\)};
    \end{scope}
  \end{tikzpicture}
  \hspace{1cm}
  \begin{tikzpicture}[scale=0.5]
    \draw (0,0) rectangle (3,1);
    \draw (1,0) -- (1,1);
    \draw (2,0) -- (2,1);
    \node at (0.5,0.5) {\(2\)};
    \node at (1.5,0.5) {\(3\)};
    \node at (2.5,0.5) {\(4\)};
    \begin{scope}[yshift=-0.2cm]
      \draw (0,0) rectangle (1,-4);
      \draw (0,-1) -- (1,-1);
      \draw (0,-2) -- (1,-2);
      \draw (0,-3) -- (1,-3);
      \node at (0.5,-0.5) {\(4\)};
      \node at (0.5,-1.5) {\(2\)};
      \node at (0.5,-2.5) {\(3\)};
      \node at (0.5,-3.5) {\(1\)};
    \end{scope}
  \end{tikzpicture}
  \hspace{1cm}
  \begin{tikzpicture}[scale=0.5]
    \draw (0,0) rectangle (3,1);
    \draw (1,0) -- (1,1);
    \draw (2,0) -- (2,1);
    \node at (0.5,0.5) {\(2\)};
    \node at (1.5,0.5) {\(2\)};
    \node at (2.5,0.5) {\(4\)};
    \begin{scope}[yshift=-0.2cm]
      \draw (0,0) rectangle (1,-4);
      \draw (0,-1) -- (1,-1);
      \draw (0,-2) -- (1,-2);
      \draw (0,-3) -- (1,-3);
      \node at (0.5,-0.5) {\(1\)};
      \node at (0.5,-1.5) {\(3\)};
      \node at (0.5,-2.5) {\(3\)};
      \node at (0.5,-3.5) {\(4\)};
    \end{scope}
  \end{tikzpicture}
  \hspace{1cm}
  \begin{tikzpicture}[scale=0.5]
    \draw (0,0) rectangle (3,1);
    \draw (1,0) -- (1,1);
    \draw (2,0) -- (2,1);
    \node at (0.5,0.5) {\(1\)};
    \node at (1.5,0.5) {\(3\)};
    \node at (2.5,0.5) {\(4\)};
    \begin{scope}[yshift=-0.2cm]
      \draw (0,0) rectangle (1,-4);
      \draw (0,-1) -- (1,-1);
      \draw (0,-2) -- (1,-2);
      \draw (0,-3) -- (1,-3);
      \node at (0.5,-0.5) {\(2\)};
      \node at (0.5,-1.5) {\(2\)};
      \node at (0.5,-2.5) {\(3\)};
      \node at (0.5,-3.5) {\(4\)};
    \end{scope}
  \end{tikzpicture}
\caption{These are \((3,4)\)--tableaux of type \((1,2,2,2)\). The leftmost tableau is the minimal one. Notice that only the last one is admissible.}\label{fig:admissible-hook}
\end{figure}
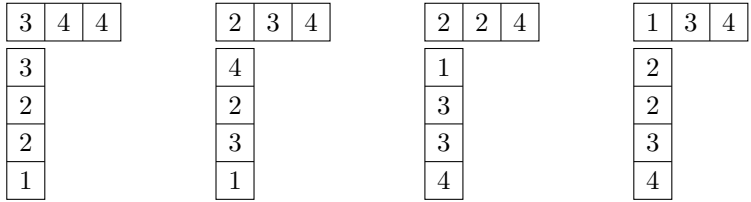

\newlength{\oldparindent}
\setlength{\oldparindent}{\parindent}

\noindent\begin{minipage}[t]{\linewidth-2cm}
  \setlength{\parindent}{\oldparindent}
   We say that a tableau is \emph{admissible} if:
  \begin{enumerate}[(a)]
  \item\label{item:12} the entries in the row are strictly increasing (from left to
    right),
  \item\label{item:13} the entries in the column are non-increasing (from the bottom
    to the top),
  \end{enumerate}
  as shown in the picture on the right.
  For an example see the last tableau in Figure \ref{fig:admissible-hook}.
\end{minipage}
\hfill
\begin{minipage}[c][0cm][t]{1.5cm}
    \begin{tikzpicture}[anchorbase,scale=0.5]
    \draw (0,0) rectangle (3,1);
    \node at (1.5,0.5) {\(<\)};
    \begin{scope}[yshift=-0.2cm]
      \draw (0,0) rectangle (1,-3);
      \node[rotate=90] at (0.5,-1.5) {\(\geq\)};
    \end{scope}
  \end{tikzpicture}
\end{minipage}
\hspace{0.05cm}

\begin{prop}
  \label{prop:12}
  The bijection
  \begin{equation}
    \begin{aligned}
      (\bbS_n/\bbS_\bolda)^\short & \xleftrightarrow{\,1-1\,}
      \{(n-k,k)\text{-tableaux of type }\bolda\}\\
      w & \xmapsto{\hphantom{\,1-1\,}} T_\bolda(w)
    \end{aligned}\label{eq:191}
  \end{equation}
  restricts to a bijection
  \begin{equation}
    \Lambda_k(\bolda) \xleftrightarrow{\,1-1\,} \{\text{admissible } (n-k,k)\text{-tableaux of type }\bolda\}.\label{eq:192}
  \end{equation}
\end{prop}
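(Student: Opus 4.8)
The plan is to translate both defining conditions of $\Lambda_k(\bolda) = \Lambda^{\frakp}_{\frakq}(\lambda_{\bolda})$ from \eqref{eq:39}, as well as the two admissibility conditions, into statements about a single permutation $w \in (\bbS_n/\bbS_{\bolda})^{\short}$, and then to check that they match. The basic combinatorial datum is the function $t^{\mathrm{min}}\colon \{1,\dots,n\} \to \{1,\dots,\ell\}$ recording the entry of each box in $T^{\mathrm{min}}_{\bolda}$: by construction $t^{\mathrm{min}}$ is weakly increasing and its level sets are exactly the blocks $\{a_1+\dots+a_{m-1}+1,\dots,a_1+\dots+a_m\}$ of the Young subgroup $\bbS_{\bolda} = \bbS_{\lambda_{\bolda}}$. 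Since $\bbS_n$ acts on tableaux by permuting the boxes, the entry of $T_{\bolda}(w) = w\cdot T^{\mathrm{min}}_{\bolda}$ in box $b$ equals $t^{\mathrm{min}}(w^{-1}(b))$. Under the fixed numbering of boxes, boxes $1,\dots,k$ form the column read from the bottom to the top while boxes $k+1,\dots,n$ form the row read from the left to the right; moreover $W_{\frakq}$ is the symmetric group on $\{1,\dots,k\}$, $W_{\frakp}$ the symmetric group on $\{k+1,\dots,n\}$, and $w_{\frakq}$ is the reversal of $\{1,\dots,k\}$. Consequently admissibility condition (a) reads $t^{\mathrm{min}}(w^{-1}(k+1)) < \dots < t^{\mathrm{min}}(w^{-1}(n))$, and condition (b) reads $t^{\mathrm{min}}(w^{-1}(1)) \geq \dots \geq t^{\mathrm{min}}(w^{-1}(k))$.

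For the first defining condition, $w\bbS_{\bolda} \subseteq W^{\frakp}$, I would combine the standard characterization $x \in W^{\frakp} \iff x^{-1}(k+1) < \dots < x^{-1}(n)$ with the elementary fact that, for two boxes $b,b'$, one has $\sigma(b) < \sigma(b')$ for \emph{every} $\sigma \in \bbS_{\bolda}$ if and only if the block of $b$ strictly precedes the block of $b'$, i.e.\ $t^{\mathrm{min}}(b) < t^{\mathrm{min}}(b')$; applying this to the consecutive pairs $w^{-1}(j), w^{-1}(j+1)$ shows that $w\bbS_{\bolda} \subseteq W^{\frakp}$ is equivalent to (a). For the second condition, $w\bbS_{\bolda} \cap w_{\frakq}W^{\frakq} \neq \emptyset$, recall that $w_{\frakq}W^{\frakq}$ is the set of \emph{longest} coset representatives for $W_{\frakq}\backslash\bbS_n$, cut out by $x^{-1}(1) > \dots > x^{-1}(k)$; the condition therefore asserts that there exists $\tau \in \bbS_{\bolda}$ with $\tau(w^{-1}(1)) > \dots > \tau(w^{-1}(k))$. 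The key combinatorial input is that such a $\tau$ exists if and only if the sequence of blocks of $w^{-1}(1),\dots,w^{-1}(k)$ is non-increasing, which is precisely (b). Necessity is immediate: if two of these boxes lay in the same block, a $\tau$ swapping them would violate the chain of inequalities, and boxes in an earlier block can never be sent above boxes in a later block. For sufficiency one uses that the indices $i$ with $w^{-1}(i)$ in a fixed block form a contiguous interval, so that $\tau$ can be chosen block by block to realize the required strictly decreasing order within each block, the inequalities across distinct blocks holding automatically since $\tau$ preserves blocks.

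Putting the two equivalences together, $w \in \Lambda_k(\bolda)$ holds if and only if $T_{\bolda}(w)$ satisfies both (a) and (b), that is, is admissible; restricting the bijection \eqref{eq:191} then gives \eqref{eq:192}. The degenerate cases $k=0$ (empty column, $W_{\frakq}$ trivial) and $k=n$ (empty row, $W_{\frakp}$ trivial) only need a remark. The step I expect to be the main obstacle is the sufficiency half of the combinatorial input for condition (b): one must produce a \emph{single} $\tau \in \bbS_{\bolda}$ realizing the whole chain $\tau(w^{-1}(1)) > \dots > \tau(w^{-1}(k))$ simultaneously, which is exactly where the contiguity of equal-block indices enters; everything else amounts to a mechanical unwinding of the coset-representative conventions.
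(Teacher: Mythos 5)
Your proposal is correct and follows the same strategy as the paper's (very terse) proof, namely translating the two defining conditions of $\Lambda^{\frakp}_{\frakq}(\lambda_\bolda)$ in \eqref{eq:39} into statements about the entries of $T_\bolda(w)$ via the identity $T_\bolda(w)(b)=t^{\mathrm{min}}(w^{-1}(b))$ and matching them with conditions \ref{item:12} and \ref{item:13} respectively. You supply the combinatorial details that the paper leaves implicit — in particular the ``for all $\sigma\in\bbS_\bolda$'' quantifier for the first condition (the paper's shorthand ``$w\in W^\frakp$'' must be read as ``$w\bbS_\bolda\subseteq W^\frakp$'' to be consistent with \eqref{eq:39}) and the contiguity argument producing the required $\tau$ for the second — and both are correct.
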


\begin{proof}
  Given \(w \in (\bbS_n/\bbS_\bolda)^\short\), it is enough to observe
  that the condition \ref{item:12} is equivalent to \(w \in W^\frakp\)
  and the condition \ref{item:13} is equivalent to \(w \bbS_\bolda \cap
  w_\frakq W^{\frakq} \neq \varnothing\).

\end{proof}

\subsection{The Grothendieck group}
\label{sec:grothendieck-group}

Let \(\calA\) be an abelian category. We recall that the Grothendieck group \(K(\calA)\) is the quotient of the free \(\Z\)--module on generators \([M]\) for \(M\in \calA\) modulo the relation \([B]=[A]+[C]\) for each short exact sequence \(A \into B \surto C\). If the category \(\calA\) is graded then \(K(\calA)\) becomes a \(\Z[q,q^{-1}]\)--module under \(q[M]=[qM]=[M \langle 1 \rangle]\). For an abelian graded category \(\calA\) we let moreover
\begin{equation}
K^{\C(q)}(\mathcal A) = \C(q) \otimes_{\Z[q,q^{-1}]} K(\mathcal A).\label{eq:85}
\end{equation}

Fix an integer \(0 \leq k \leq n\). 
A basis of the Grothendieck group \(K(\calQ_k(\bolda))\) as a
\(\Z[q,q^{-1}]\)--module is given by the simple modules
\(S_{\bolda,k} (w)\) for \(w \in \Lambda_k(\bolda)\). Since \(\calQ_k(\bolda)\) is properly stratified, the matrix which expresses the proper standard modules in the basis given by the simple modules is lower triangular (with respect to the ordering \(\prec\)), with ones on the diagonal. Hence
equivalence classes of the proper standard modules also give a basis. On the other side, the standard modules do not give a basis over \(\Z[q,q^{-1}]\) in general (although they always give a basis of \(K^{\C(q)}(\calQ_k(\bolda))\) over \(\C(q)\)).

According to Proposition~\ref{prop:12}, the set \(\Lambda_k(\bolda)\) is in bijection with the set of admissible \((n-k,k)\)--tableaux of type \(\bolda\). For \(w \in \Lambda_k(\bolda)\) let \(v_{(w)}= v^\bolda_\boldeta \in V(\bolda)\), where
\begin{equation}
  \label{eq:63}
  \eta_i = \begin{cases}
    1 & \text{if the number \(i\) appears in the row of \(T_\bolda(w)\)},\\
    0 & \text{otherwise}.
  \end{cases}
\end{equation}
We write also \(v_{(T_\bolda(w))}=v_{(w)}\).
We can then define an isomorphism
\begin{equation}
  \label{eq:30}
  \begin{aligned}
    K^{\C(q)}(\calQ_k(\bolda)) & \longrightarrow V(\bolda)_{k}\\
    [\overline \Delta_{\bolda,k}(w)] & \longmapsto \frac{1}{(v_{(w)},v_{(w)})_\bolda} v_{(w)}.
  \end{aligned}
\end{equation}
Notice that if \(\bolda=(a_1,\ldots,a_\ell)\) then for \(k<n-\ell\) the category \(\calQ_k(\bolda)\) is empty. We set
\begin{equation}
\calQ(\bolda) = \bigoplus_{k=n-\ell}^n \calQ_k(\bolda)\label{eq:183}
\end{equation}
and we get an isomorphism
\begin{equation}
  \label{eq:50}
  K^{\C(q)}(\calQ(\bolda)) \cong V(\bolda).
\end{equation}

\begin{remark}\label{rem:14}
  Notice that for \(M \in \calQ_k(\bolda)\)
  we have \([(q M)^*] = q^{-1} M^*\)
  in the Grothendieck group, where \(M^*\)
  denotes the dual of \(M\).
  This follows immediately since the duality comes from an
  hones dual space construction on modules over the algebra
  corresponding to \(\calQ_k(\bolda)\),
  see Lemma~\ref{lem:25}.
\end{remark}

\subsection{Categorification of the intertwiners}
\label{sec:action-intertwiners}

Let \(\Cat\) be the category whose objects are finite direct sums of the
categories \(\calQ_k(\bolda)\) for all \(n \geq 0\), \(0 \leq k \leq n\) and
for all compositions \(\bolda\) of \(n\), and whose morphisms are all
functors between these categories. We define a functor \(\funcF :
\catWeb \mapto \Cat\) as follows. If \(\bolda=(a_1,\ldots,a_\ell)\) is an
object of \(\catWeb\) with \(n=\sum a_i\), then we set
\begin{equation}
\funcF(\bolda) =  \calQ(\bolda).  \label{eq:o51}
\end{equation}
If \(\lambda_\bolda, \lambda_{\bolda'}\) with \(n= \sum a_i = \sum a'_j\) are the fixed dominant weights
of \(\gl_n\) with stabilizers \(\bbS_\bolda, \bbS_{\bolda'}\) let us denote
\(\trasT_\bolda^{\bolda'}=\trasT_{\lambda_\bolda}^{\lambda_{\bolda'}}\). Then we define \(\funcF\) on the elementary webs \eqref{catO:eq:194} and \eqref{catO:eq:198} by
\begin{equation}
\funcF(\webjoin_{\bolda,i})=\trasT_\bolda^{\boldsymbol{\hat a}_i} \qquad \text{and} \qquad \funcF(\websplit^{\bolda,i}) = \trasT_{\boldsymbol{\hat a}_i}^\bolda \label{eq:199}
\end{equation}
where \(\boldsymbol{\hat a}_i\) was defined in
\eqref{catO:eq:196}. 

\begin{lemma}
  \label{lem:23}
  The assignment \eqref{eq:199} defines a functor \(\funcF\colon \catWeb \mapto \Cat\).
\end{lemma}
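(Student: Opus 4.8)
The statement to prove is Lemma~\ref{lem:23}: that the assignment \eqref{eq:199} on elementary webs extends to a well-defined functor $\funcF \colon \catWeb \mapto \Cat$. Since the category $\catWeb$ is generated (as a monoidal category) by the elementary webs $\webjoin_{\bolda,i}$ and $\websplit^{\bolda,i}$, and $\Cat$ is the category whose morphisms are arbitrary functors between the $\calQ_k(\bolda)$'s, the only thing that can go wrong is that the defining relations of $\catWeb$ (Definition~\ref{def:1}, i.e.\ relations \eqref{eq:O53}, \eqref{eq:44}, \eqref{eq:54}) are not respected by the translation functors. So the plan is: first fix the monoidal bookkeeping (how $\funcF$ acts on a horizontal juxtaposition of elementary webs, observing that $\trasT_\bolda^{\boldsymbol{\hat a}_i}$ is by construction a translation functor for $\gl_n$, well-defined up to the fixed graded shift normalization of \textsection\ref{sec:grad-transl-funct}), and then check the three relations one at a time.

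\textbf{Checking the relations.} For relation \eqref{eq:O53} (the ``circle evaluation'' $\webjoin \circ \websplit = \qbin{a+b}{a}\,\id$): applying $\funcF$ turns the left-hand side into a composite $\trasT_{\boldsymbol{\hat a}_i}^{\bolda}\circ\trasT_\bolda^{\boldsymbol{\hat a}_i}$, which is translation onto a wall followed by translation back out. Up to a graded shift this is well-known to be a direct sum of copies of the identity; the precise multiplicity and $q$-powers are computed from the Poincaré polynomial of the relevant parabolic subgroup, which is exactly $\qbin{a+b}{a}$ (or rather its $[{-}]_0$-normalized version built into the fixed graded lifts). Concretely, one reads this off Proposition~\ref{prop:7} and Proposition~\ref{prop:8}, or directly from the action on proper standard modules via Proposition~\ref{prop:10} and Proposition~\ref{prop:11}, checking that $\trasT\circ\trasT$ sends $\oDelta_{\bolda,k}(w)$ to $\qbin{a+b}{a}$ copies of itself in the Grothendieck group and is isomorphic to a direct sum of shifts of the identity as a functor. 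Relation \eqref{eq:44} (associativity/symmetry of trivalent merges) becomes the statement that translating from $\bolda$ to a coarser composition in two different orders gives isomorphic functors; this holds because in each case the composite is, up to the normalized shift, the translation functor from $\bolda$ to the common target composition, and translation functors between fixed pairs of blocks are unique up to shift — here one uses that the fixed weights $\lambda_\bolda$ are chosen once and for all and that the intermediate composition is the same in both bracketings. Relation \eqref{eq:54} is the braid-type / Temperley--Lieb relation; this is the one genuinely non-formal check, and the cleanest route is to verify it on the Grothendieck group using the isomorphisms \eqref{eq:30} and \eqref{eq:50} together with Proposition~\ref{prop:20} and Proposition~\ref{prop:19} (so that $[\funcF(\cdot)]$ acts as the corresponding web $\funcT(\cdot)$ does on $V(\bolda)$), and then to upgrade the Grothendieck-group identity to an isomorphism of functors by an indecomposability/summand-counting argument — the two sides are sums of translation functors, each summand is determined up to shift by its class, and the shifts match by the normalization.

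\textbf{Main obstacle.} The formal part (that $\catWeb$ is generated by elementary webs and that functors compose, so only relations need checking) is routine. The real work is relation \eqref{eq:54}: it mixes three adjacent strands and its image under $\funcF$ is a somewhat intricate composite of translations on and off two different walls, so the $q$-shifts have to be tracked carefully against the normalization chosen in \textsection\ref{sec:grad-transl-funct}. I expect the argument to proceed by first establishing the identity after passing to Grothendieck groups — where it reduces, via Proposition~\ref{prop:4}, Proposition~\ref{prop:19} and Proposition~\ref{prop:20}, to the already-known relation \eqref{eq:133}/\eqref{eq:84} satisfied by $\check H_i$ on $V^{\otimes n}$ — and then lifting to an isomorphism of functors by decomposing both sides into indecomposable functors and matching summands using the properly stratified structure (Theorem~\ref{thm:4}) and the computations of Propositions~\ref{prop:10}, \ref{prop:11}, \ref{prop:7}, \ref{prop:8}. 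Relations \eqref{eq:O53} and \eqref{eq:44} are comparatively easy: they follow from the definitions of the categories $\calQ_k(\bolda)$ together with standard properties of translation functors, exactly as recorded in the preceding subsection. Once all three relations are verified, $\funcF$ is well-defined on $\catWeb$, proving the lemma.
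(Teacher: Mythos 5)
The paper's proof is one sentence: the relations \eqref{eq:O53}--\eqref{eq:54} are already known for translation functors on the full graded category $\catOZ$ (citing the author's thesis), the translation functors preserve the subquotients $\calQ_k(\bolda)$ by Lemma~\ref{lem:21}, and therefore the natural isomorphisms on $\catOZ$ restrict to natural isomorphisms on the $\calQ_k(\bolda)$. You instead propose to check the relations directly inside the subquotient categories.

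Your Grothendieck-group computations are sound, and the easy relations \eqref{eq:O53} and \eqref{eq:44} can indeed be handled via Propositions~\ref{prop:7}--\ref{prop:11}. But your plan for the Temperley--Lieb relation \eqref{eq:54} has a genuine gap: you propose to verify it on Grothendieck groups and then ``upgrade'' to an isomorphism of functors by decomposing both sides into indecomposable translation functors and matching summands by their classes. That upgrading step requires a classification of projective functors (indecomposable summands of translation functors, and the fact that they are determined up to shift by their classes) on the categories $\calQ_k(\bolda)$. Such a classification exists on $\catOZ_0$, but it is \emph{not} available for the parabolic categories $\catO^\frakp$ with $\frakp \neq \frakb$, let alone for the $\frakq$-presentable quotients used here --- the paper explicitly flags exactly this difficulty just after Conjecture~\ref{conj:1}. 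So the lifting argument you sketch cannot be carried out in the $\calQ_k(\bolda)$ as stated. The way around this, which is what the paper does, is to establish the functor isomorphisms once on $\catOZ$ (where projective functors are classified and the relations are standard facts about translation functors) and then simply restrict: since translation functors preserve the subcategories $\calQ_k(\bolda)$ (Lemma~\ref{lem:21}) and objects of $\calQ_k(\bolda)$ are presentable (so their morphism spaces agree with those in $\catO$, Remark~\ref{rem:11}), a natural isomorphism on $\catOZ$ restricts automatically to a natural isomorphism of the restricted functors. Your approach would need to be reorganized along these lines to close the gap.
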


\begin{proof}
  We need to check that translation functors satisfy isotopy invariance and the relations (\ref{eq:O53}--\ref{eq:54}). 
  It is known that these relations are satisfied by translation functors on \(\catOZ\) (see \cite[Section~4.5]{miophd2} for detailed proofs). Recall from Lemma~\ref{lem:21} that the translation functors restrict to the subquotient categories \(\calQ_k(\bolda)\). Of course these restricted translation functors also satisfy the relations (\ref{eq:O53}--\ref{eq:54}).
\end{proof}

The functor \(\funcF\) categorifies the functor \(\funcT\) (cf.\ \textsection\ref{reps:sec:diagr-intertw-oper}):

\begin{theorem}
  \label{thm:1}
  The following diagram commutes:
  \begin{equation}
\begin{tikzpicture}[baseline=(current bounding box.center)]
  \matrix (m) [matrix of math nodes, row sep=3em, column
  sep=5.5em, text height=1.5ex, text depth=0.25ex] {
     & \Cat \\
    \catWeb & \catRep \\};
  \path[->] (m-2-1) edge node[above left] {\( \funcF \)} (m-1-2);
  \path[->] (m-1-2) edge node[auto] {\( K^{\C(q)} \)} (m-2-2);
  \path[->] (m-2-1) edge node[below] {\( \funcT\)} (m-2-2);
\end{tikzpicture}\label{eq:52}
\end{equation}
\end{theorem}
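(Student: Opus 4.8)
The plan is to verify commutativity of \eqref{eq:52} by checking it on the generating morphisms of $\catWeb$, namely the elementary webs $\webjoin_{\bolda,i}$ and $\websplit^{\bolda,i}$ (which, together with objects, generate $\catWeb$ by the discussion following \eqref{catO:eq:198}). Since $K^{\C(q)}$ is a functor and $\funcT,\funcF,K^{\C(q)}$ all respect composition and tensor products (monoidality of $\funcT$ is Proposition~\ref{prop:2}, and $\funcF$ is a functor by Lemma~\ref{lem:23}), it suffices to show that for each elementary web $\phi$ the square
\[
  K^{\C(q)}(\funcF(\phi)) = \funcT(\phi)
\]
holds as a map of $\C(q)$--vector spaces under the identifications \eqref{eq:50}. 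Concretely, $\funcF(\webjoin_{\bolda,i}) = \trasT_{\bolda}^{\boldsymbol{\hat a}_i}$ is a translation onto a wall (since $\bbS_{\boldsymbol{\hat a}_i} \supset \bbS_{\bolda}$) and $\funcF(\websplit^{\bolda,i}) = \trasT_{\boldsymbol{\hat a}_i}^{\bolda}$ is a translation out of the wall, so I would compute the action of these translation functors on Grothendieck groups and compare with $\funcT(\webjoin_{\bolda,i})$ and $\funcT(\websplit^{\bolda,i})$, which by \eqref{eq:29} are the maps built from the projections $\Phi_{a,b}$ and embeddings $\Phi^{a,b}$ of \eqref{eq:37}--\eqref{eq:38}.

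The key computational inputs are already in place. In the basis of $K^{\C(q)}(\calQ_k(\bolda))$ given by classes of proper standard modules $[\oDelta_{\bolda,k}(w)]$, Proposition~\ref{prop:10} gives the action of translation onto the wall on proper standard modules, and Proposition~\ref{prop:11} gives the action of translation out of the wall. Under the isomorphism \eqref{eq:30}, $[\oDelta_{\bolda,k}(w)]$ corresponds to $\tfrac{1}{(v_{(w)},v_{(w)})_\bolda} v_{(w)}$, where $v_{(w)} = v^\bolda_\boldeta$ is determined by the admissible tableau $T_\bolda(w)$ via \eqref{eq:63}; equivalently, by Proposition~\ref{prop:12}, $w \in \Lambda_k(\bolda)$ is encoded by which entries lie in the row versus column of the hook. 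So the plan is: translate the formulas of Propositions~\ref{prop:10} and~\ref{prop:11} through the dictionary \eqref{eq:30}, keeping careful track of the normalization factors $(v_{(w)},v_{(w)})_\bolda = \qbin{\beta^\boldeta_1+\cdots+\beta^\boldeta_\ell}{\beta^\boldeta_1,\ldots,\beta^\boldeta_\ell}_0$ from \eqref{eq:99}, and match the resulting linear maps with $\funcT(\webjoin_{\bolda,i}) = \id^{\otimes i-1}\otimes \Phi_{a_i,a_{i+1}} \otimes \id^{\otimes\ell-i-1}$ and $\funcT(\websplit^{\bolda,i}) = \id^{\otimes i-1}\otimes \Phi^{a_i,a_{i+1}}\otimes\id^{\otimes\ell-i-1}$. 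It is enough to carry this out locally, i.e.\ in the two-variable case $V(a)\otimes V(b) \leftrightarrow V(a+b)$, since translation functors, $\Phi_{a,b}$, and $\Phi^{a,b}$ all act only in the relevant tensor slot.

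The main obstacle will be the bookkeeping of $q$--powers and quantum-binomial normalization factors: Propositions~\ref{prop:10}--\ref{prop:11} carry shifts $q^{-\len(x)}$, $q^{\len(y_0)-\len(y)+\len(x_y)}$, etc., while \eqref{eq:37}--\eqref{eq:38} carry factors $\qbin{a+b-1}{b}$, $q^{-b}\qbin{a+b-1}{b}$, $q^a$, and \eqref{eq:30} divides by $\qbin{\cdots}{\cdots}_0$; one must check these all conspire correctly. I expect the identity \eqref{eq:83}, $\Phi_{a,b}\Phi^{a,b} = \qbin{a+b}{a}\id$, together with the fact that $\trasT_{\boldsymbol{\hat a}_i}^{\bolda}\trasT_{\bolda}^{\boldsymbol{\hat a}_i}$ acts as multiplication by $[a_i+a_{i+1}]$--type quantum numbers (translation onto then out of a semi-regular wall for $W_\frakp\times W_\frakq$-orbits), to be the consistency check that makes the two normalizations agree; this is parallel to the $\mathfrak{sl}_2$ computation and to Lemma~\ref{lem:19} / Proposition~\ref{prop:19}, which already align Lusztig's and the Hecke-theoretic canonical bases. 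Once the local matching is done on the four basis vectors $v^a_\epsilon\otimes v^b_\delta$ (resp.\ the two $v^{a+b}_\epsilon$), functoriality and monoidality finish the proof; I would also remark that $\funcT$ being full (Proposition~\ref{prop:2}) together with this commutation recovers that $\funcF$ descends compatibly, though that is not strictly needed for the statement.
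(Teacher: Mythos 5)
Your plan matches the paper's proof: reduce to the elementary webs $\webjoin_{\bolda,i}$ and $\websplit^{\bolda,i}$, check equality on the basis of classes of proper standard modules using the dictionary \eqref{eq:30}, and invoke Propositions~\ref{prop:10} and~\ref{prop:11} for translation onto and out of the wall, with the tableau combinatorics of Proposition~\ref{prop:12} and the normalization formula \eqref{eq:99} handling the $q$--power and quantum-binomial bookkeeping exactly as you anticipate. The only imprecision is that the translation functors do not literally factor through a single tensor slot; the ``locality'' you invoke is not a structural fact about $\funcF$ but rather a consequence of the tableau computation (the other entries of $T_\bolda(w)$ are simply untouched), which is how the paper justifies restricting attention to the $i$-th and $(i+1)$-th factors.
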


\begin{proof}
  Let \(\bolda=(a_1,\ldots,a_\ell)\) and
  \(\bolda'=\hat\bolda_i\).
We need
  to show that
  \(K^{\C(q)}(\trasT_\bolda^{\bolda'})=\funcT(\webjoin_{\bolda,i})\) and
  \(K^{\C(q)}(\trasT_{\bolda'}^{\bolda})=\funcT(\websplit_{\bolda,i})\). Of course it is sufficient to check this on the basis of proper standard modules. Hence it suffices to check that
  \begin{align}
    [\trasT_\bolda^{\bolda'} \oDelta_{\bolda,k}( w)] &= \funcT(\webjoin_{\bolda,i}) [\oDelta_{\bolda,k}(w)],\label{eq:184}\\
[\trasT_{\bolda'}^{\bolda} \oDelta_{\bolda',k}( w')]& = \funcT(\websplit^{\bolda,i}) [\oDelta_{\bolda',k} (w')]\label{eq:185}
  \end{align}
for all \(w \in \Lambda_k(\bolda)\) and \(w' \in \Lambda_k(\bolda')\) (for all possible values of \(k\)).

Let us fix \(k\) and start with \eqref{eq:184}. Fix \(w \in
\Lambda_k(\bolda)\) and write \(w=w'x\) with \(w' \in
(\bbS_n/\bbS_{\bolda'})^\short\), \(x \in
(\bbS_{\bolda'}/\bbS_\bolda)^\short\) as given by Lemma~\ref{lem:9}.
By Proposition~\ref{prop:10} we have
\begin{equation}\label{eq:197}
\trasT_\bolda^{\bolda'}
\overline \Delta(w \cdot \lambda) =
\begin{cases}
q^{- \len(x)} \overline \Delta(w'
\cdot \mu) &\text{if }w' \in \Lambda_k(\bolda'),
\\ 0 & \text{otherwise.}
\end{cases}
\end{equation}
In what follows, we
only write the \(i\)--th and \((i+1)\)--th tensor factors of \(v_{(w)}\) and
the \(i\)--th tensor factor of \(v_{(w')}\), since the other ones are
clearly the same. Let \(T_\bolda(w)\) be the \((n-k,k)\)--tableau of type
\(\bolda\) corresponding to \(w\), and notice that the tableau
\(T_{\bolda'}(w)\) can be obtained from \(T_\bolda(w)\) by decreasing by
one all entries greater or equal to \(i+1\).

We have four cases (see Figure \ref{fig:four-cases}):
\begin{enumerate}[label=(\alph*)]
\item \label{item:4} If \(v_{(w)}=v^{a_i}_1 \otimes v^{a_{i+1}}_1\) then
  \(T_\bolda(w)\) has both an entry \(i\) and an entry \(i+1\) in the
  row. Then \(T_{\bolda'}(w)\) has two entries \(i\) in the row, and is
  not admissible; of course this also holds for \(T_{\bolda'}(w')\)
  since \(w' = w x^{-1}\). Hence \(w' \notin \Lambda_k(\bolda')\) and
  \(\trasT_\bolda^{\bolda'} \overline \Delta_{\bolda,k}(w)=0\).
\item \label{item:5} If \(v_{(w)}=v^{a_i}_1 \otimes v^{a_{i+1}}_0\) then
  \(T_\bolda(w)\) has an entry \(i\) but no entry \(i+1\) in the row. It is
  easy to see that in this case \(x\) is a permutation of length
  \(a_{i+1}\) composed with the longest element of
  \(({\bbS_{a_i+a_{i+1}-1}}/({\bbS_{a_i-1} \times
  \bbS_{a_{i+1}}}))^\short\) and therefore
\begin{equation}
\trasT_\bolda^{\bolda'} \overline
  \Delta_{\bolda,k}(w) = q^{- a_{i+1}} q^{-(a_i-1)a_{i+1}}
  \overline \Delta_{\bolda',k}(w').\label{eq:o88}
  \end{equation}
\item \label{item:6} If \(v_{(w)}=v^{a_i}_0 \otimes
  v^{a_{i+1}}_1\) then \(T_\bolda(w)\) has an entry \(i+1\) but
  no entry \(i\) in the row. Then \(x\) is the longest element
  of \(({\bbS_{a_i+a_{i+1}-1}}/({\bbS_{a_i} \times
  \bbS_{a_{i+1}-1}}))^\short\) and therefore
\begin{equation}
  \trasT_\bolda^{\bolda'} \overline \Delta_{\bolda,k}(w)
  = q^{-a_i(a_{i+1}-1)} \overline \Delta_{\bolda',k}(w').\label{eq:o89}
  \end{equation}
\item \label{item:7} If \(v_{(w)}=v^{a_i}_0 \otimes
  v^{a_{i+1}}_0\) then all entries \(i\) and \(i+1\) of
  \(T_\bolda(w)\) are in the column. Then \(x\) is the longest
  element of
  \(({\bbS_{a_i+a_{i+1}}}/({\bbS_{a_i} \times
    \bbS_{a_{i+1}}}))^\short\) and hence 
  \begin{equation}
\trasT_\bolda^{\bolda'}
  \overline \Delta_{\bolda,k}(w) = q^{-a_i a_{i+1}}
  \overline \Delta_{\bolda',k}(w').\label{eq:o90}
  \end{equation}
\end{enumerate}

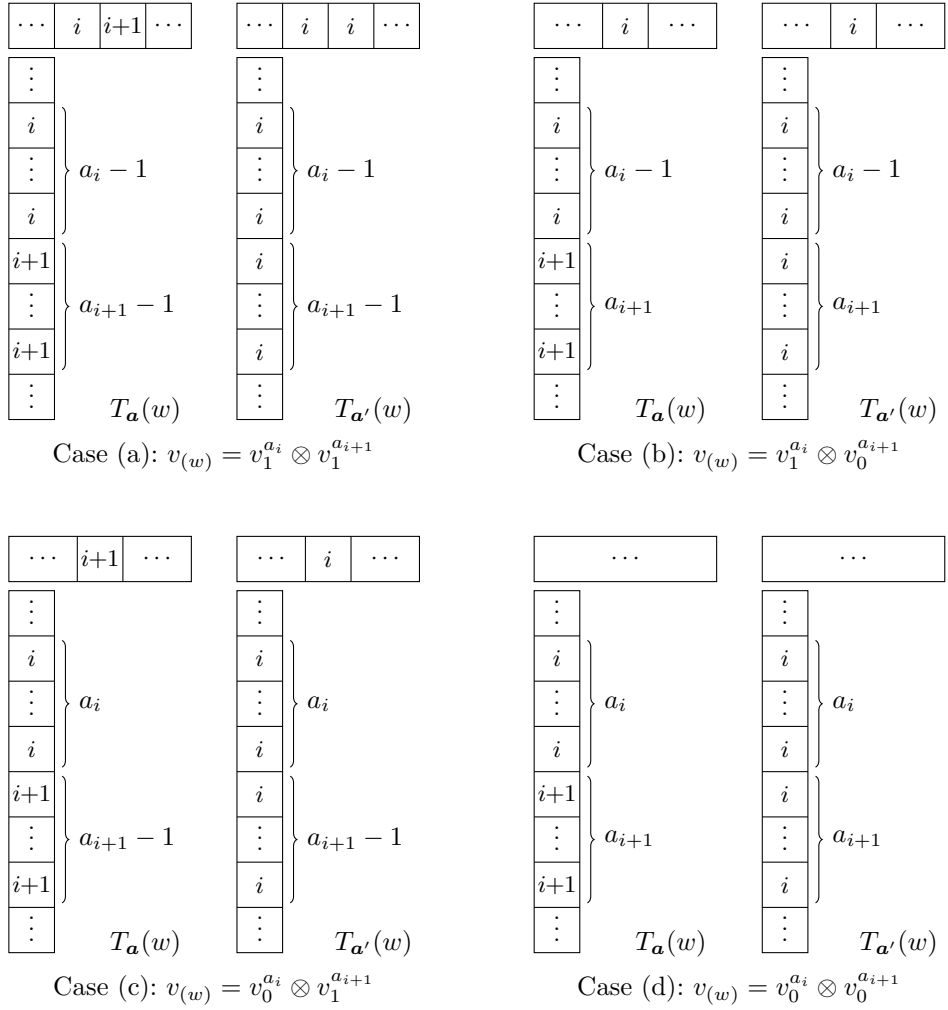
\begin{figure}
  \centering
  \begin{tikzpicture}[scale=0.6,entry/.style={font=\small}]
    \draw (0,0) rectangle (4,1);
    \draw (1,0) -- (1,1);
    \draw (2,0) -- (2,1);
    \draw (3,0) -- (3,1);
    \node[entry] at (0.5,0.5) {\(\cdots\)};
    \node[entry] at (1.5,0.5) {\(i\)};
    \node[entry] at (2.5,0.5) {\(i\hspace{-0.6ex}+\hspace{-0.6ex}1\)};
    \node[entry] at (3.5,0.5) {\(\cdots\)};
    \begin{scope}[yshift=-0.2cm]
      \draw (0,0) rectangle (1,-8);
      \draw (0,-1) -- ++(1,0);
      \draw (0,-2) -- ++(1,0);
      \draw (0,-3) -- ++(1,0);
      \draw (0,-4) -- ++(1,0);
      \draw (0,-5) -- ++(1,0);
      \draw (0,-6) -- ++(1,0);
      \draw (0,-7) -- ++(1,0);
      \node[entry] at (0.5,-0.3) {\(\vdots\)};
      \node[entry] at (0.5,-1.5) {\(i\)};
      \node[entry] at (0.5,-2.3) {\(\vdots\)};
      \node[entry] at (0.5,-3.5) {\(i\)};
      \node[entry] at (0.5,-4.5) {\(i\hspace{-0.6ex}+\hspace{-0.6ex}1\)}; 
      \node[entry] at (0.5,-5.3) {\(\vdots\)};
      \node[entry] at (0.5,-6.5) {\(i\hspace{-0.6ex}+\hspace{-0.6ex}1\)}; 
      \node[entry] at (0.5,-7.3) {\(\vdots\)};
      \draw[decoration={brace,raise=0.1cm},decorate] (1,-1.1) -- node[right=0.2cm] {\(a_i-1\)} (1,-3.9);
      \draw[decoration={brace,raise=0.1cm},decorate] (1,-4.1) -- node[right=0.2cm] {\(a_{i+1}-1\)} (1,-6.9);
    \end{scope}
    \node at (3,-8) {\(T_\bolda(w)\)};
  \begin{scope}[xshift=5cm] 
    \draw (0,0) rectangle (4,1);
    \draw (1,0) -- (1,1);
    \draw (2,0) -- (2,1);
    \draw (3,0) -- (3,1);
    \node[entry] at (0.5,0.5) {\(\cdots\)};
    \node[entry] at (1.5,0.5) {\(i\)};
    \node[entry] at (2.5,0.5) {\(i\)};
    \node[entry] at (3.5,0.5) {\(\cdots\)};
    \begin{scope}[yshift=-0.2cm]
      \draw (0,0) rectangle (1,-8);
      \draw (0,-1) -- ++(1,0);
      \draw (0,-2) -- ++(1,0);
      \draw (0,-3) -- ++(1,0);
      \draw (0,-4) -- ++(1,0);
      \draw (0,-5) -- ++(1,0);
      \draw (0,-6) -- ++(1,0);
      \draw (0,-7) -- ++(1,0);
      \node[entry] at (0.5,-0.3) {\(\vdots\)};
      \node[entry] at (0.5,-1.5) {\(i\)};
      \node[entry] at (0.5,-2.3) {\(\vdots\)};
      \node[entry] at (0.5,-3.5) {\(i\)};
      \node[entry] at (0.5,-4.5) {\(i\)};
      \node[entry] at (0.5,-5.3) {\(\vdots\)};
      \node[entry] at (0.5,-6.5) {\(i\)};
      \node[entry] at (0.5,-7.3) {\(\vdots\)};
      \draw[decoration={brace,raise=0.1cm},decorate] (1,-1.1) -- node[right=0.2cm] {\(a_i-1\)} (1,-3.9);
      \draw[decoration={brace,raise=0.1cm},decorate] (1,-4.1) -- node[right=0.2cm] {\(a_{i+1}-1\)} (1,-6.9);
    \end{scope}
    \node at (3,-8) {\(T_{\bolda'}(w)\)};
    \end{scope}
    \node at (4.5,-9) {Case \ref{item:4}: \(v_{(w)} = v_1^{a_i} \otimes v_{1}^{a_{i+1}}\)};
  \end{tikzpicture}\qquad\qquad
  \begin{tikzpicture}[scale=0.6,entry/.style={font=\small}]
    \draw (0,0) rectangle (4,1);
    \draw (1.5,0) -- (1.5,1);
    \draw (2.5,0) -- (2.5,1);
    \node[entry] at (0.75,0.5) {\(\cdots\)};
    \node[entry] at (2,0.5) {\(i\)};
    \node[entry] at (3.25,0.5) {\(\cdots\)};
    \begin{scope}[yshift=-0.2cm]
      \draw (0,0) rectangle (1,-8);
      \draw (0,-1) -- ++(1,0);
      \draw (0,-2) -- ++(1,0);
      \draw (0,-3) -- ++(1,0);
      \draw (0,-4) -- ++(1,0);
      \draw (0,-5) -- ++(1,0);
      \draw (0,-6) -- ++(1,0);
      \draw (0,-7) -- ++(1,0);
      \node[entry] at (0.5,-0.3) {\(\vdots\)};
      \node[entry] at (0.5,-1.5) {\(i\)};
      \node[entry] at (0.5,-2.3) {\(\vdots\)};
      \node[entry] at (0.5,-3.5) {\(i\)};
      \node[entry] at (0.5,-4.5) {\(i\hspace{-0.6ex}+\hspace{-0.6ex}1\)}; 
      \node[entry] at (0.5,-5.3) {\(\vdots\)};
      \node[entry] at (0.5,-6.5) {\(i\hspace{-0.6ex}+\hspace{-0.6ex}1\)}; 
      \node[entry] at (0.5,-7.3) {\(\vdots\)};
      \draw[decoration={brace,raise=0.1cm},decorate] (1,-1.1) -- node[right=0.2cm] {\(a_i-1\)} (1,-3.9);
      \draw[decoration={brace,raise=0.1cm},decorate] (1,-4.1) -- node[right=0.2cm] {\(a_{i+1}\)} (1,-6.9);
    \end{scope}
    \node at (3,-8) {\(T_\bolda(w)\)};
  \begin{scope}[xshift=5cm] 
    \draw (0,0) rectangle (4,1);
    \draw (1.5,0) -- (1.5,1);
    \draw (2.5,0) -- (2.5,1);
    \node[entry] at (0.75,0.5) {\(\cdots\)};
    \node[entry] at (2,0.5) {\(i\)};
    \node[entry] at (3.25,0.5) {\(\cdots\)};
    \begin{scope}[yshift=-0.2cm]
      \draw (0,0) rectangle (1,-8);
      \draw (0,-1) -- ++(1,0);
      \draw (0,-2) -- ++(1,0);
      \draw (0,-3) -- ++(1,0);
      \draw (0,-4) -- ++(1,0);
      \draw (0,-5) -- ++(1,0);
      \draw (0,-6) -- ++(1,0);
      \draw (0,-7) -- ++(1,0);
      \node[entry] at (0.5,-0.3) {\(\vdots\)};
      \node[entry] at (0.5,-1.5) {\(i\)};
      \node[entry] at (0.5,-2.3) {\(\vdots\)};
      \node[entry] at (0.5,-3.5) {\(i\)};
      \node[entry] at (0.5,-4.5) {\(i\)};
      \node[entry] at (0.5,-5.3) {\(\vdots\)};
      \node[entry] at (0.5,-6.5) {\(i\)};
      \node[entry] at (0.5,-7.3) {\(\vdots\)};
      \draw[decoration={brace,raise=0.1cm},decorate] (1,-1.1) -- node[right=0.2cm] {\(a_i-1\)} (1,-3.9);
      \draw[decoration={brace,raise=0.1cm},decorate] (1,-4.1) -- node[right=0.2cm] {\(a_{i+1}\)} (1,-6.9);
    \end{scope}
    \node at (3,-8) {\(T_{\bolda'}(w)\)};
    \end{scope}
    \node at (4.5,-9) {Case \ref{item:5}: \(v_{(w)} = v_1^{a_i} \otimes v_{0}^{a_{i+1}}\)};
  \end{tikzpicture}\\ \vspace{0.7cm}
  \begin{tikzpicture}[scale=0.6,entry/.style={font=\small}]
    \draw (0,0) rectangle (4,1);
    \draw (1.5,0) -- (1.5,1);
    \draw (2.5,0) -- (2.5,1);
    \node[entry] at (0.75,0.5) {\(\cdots\)};
    \node[entry] at (2,0.5) {\(i\hspace{-0.6ex}+\hspace{-0.6ex}1\)};
    \node[entry] at (3.25,0.5) {\(\cdots\)};
    \begin{scope}[yshift=-0.2cm]
      \draw (0,0) rectangle (1,-8);
      \draw (0,-1) -- ++(1,0);
      \draw (0,-2) -- ++(1,0);
      \draw (0,-3) -- ++(1,0);
      \draw (0,-4) -- ++(1,0);
      \draw (0,-5) -- ++(1,0);
      \draw (0,-6) -- ++(1,0);
      \draw (0,-7) -- ++(1,0);
      \node[entry] at (0.5,-0.3) {\(\vdots\)};
      \node[entry] at (0.5,-1.5) {\(i\)};
      \node[entry] at (0.5,-2.3) {\(\vdots\)};
      \node[entry] at (0.5,-3.5) {\(i\)};
      \node[entry] at (0.5,-4.5) {\(i\hspace{-0.6ex}+\hspace{-0.6ex}1\)}; 
      \node[entry] at (0.5,-5.3) {\(\vdots\)};
      \node[entry] at (0.5,-6.5) {\(i\hspace{-0.6ex}+\hspace{-0.6ex}1\)}; 
      \node[entry] at (0.5,-7.3) {\(\vdots\)};
      \draw[decoration={brace,raise=0.1cm},decorate] (1,-1.1) -- node[right=0.2cm] {\(a_i\)} (1,-3.9);
      \draw[decoration={brace,raise=0.1cm},decorate] (1,-4.1) -- node[right=0.2cm] {\(a_{i+1}-1\)} (1,-6.9);
    \end{scope}
    \node at (3,-8) {\(T_\bolda(w)\)};
  \begin{scope}[xshift=5cm]
    \draw (0,0) rectangle (4,1);
    \draw (1.5,0) -- (1.5,1);
    \draw (2.5,0) -- (2.5,1);
    \node[entry] at (0.75,0.5) {\(\cdots\)};
    \node[entry] at (2,0.5) {\(i\)};
    \node[entry] at (3.25,0.5) {\(\cdots\)};
    \begin{scope}[yshift=-0.2cm]
      \draw (0,0) rectangle (1,-8);
      \draw (0,-1) -- ++(1,0);
      \draw (0,-2) -- ++(1,0);
      \draw (0,-3) -- ++(1,0);
      \draw (0,-4) -- ++(1,0);
      \draw (0,-5) -- ++(1,0);
      \draw (0,-6) -- ++(1,0);
      \draw (0,-7) -- ++(1,0);
      \node[entry] at (0.5,-0.3) {\(\vdots\)};
      \node[entry] at (0.5,-1.5) {\(i\)};
      \node[entry] at (0.5,-2.3) {\(\vdots\)};
      \node[entry] at (0.5,-3.5) {\(i\)};
      \node[entry] at (0.5,-4.5) {\(i\)};
      \node[entry] at (0.5,-5.3) {\(\vdots\)};
      \node[entry] at (0.5,-6.5) {\(i\)};
      \node[entry] at (0.5,-7.3) {\(\vdots\)};
      \draw[decoration={brace,raise=0.1cm},decorate] (1,-1.1) -- node[right=0.2cm] {\(a_i\)} (1,-3.9);
      \draw[decoration={brace,raise=0.1cm},decorate] (1,-4.1) -- node[right=0.2cm] {\(a_{i+1}-1\)} (1,-6.9);
    \end{scope}
    \node at (3,-8) {\(T_{\bolda'}(w)\)};
    \end{scope}
    \node at (4.5,-9) {Case \ref{item:6}: \(v_{(w)} = v_0^{a_i} \otimes v_{1}^{a_{i+1}}\)};
  \end{tikzpicture}\qquad\qquad
  \begin{tikzpicture}[scale=0.6,entry/.style={font=\small}]
    \draw (0,0) rectangle (4,1);
    \node[entry] at (2,0.5) {\(\cdots\)};
    \begin{scope}[yshift=-0.2cm]
      \draw (0,0) rectangle (1,-8);
      \draw (0,-1) -- ++(1,0);
      \draw (0,-2) -- ++(1,0);
      \draw (0,-3) -- ++(1,0);
      \draw (0,-4) -- ++(1,0);
      \draw (0,-5) -- ++(1,0);
      \draw (0,-6) -- ++(1,0);
      \draw (0,-7) -- ++(1,0);
      \node[entry] at (0.5,-0.3) {\(\vdots\)};
      \node[entry] at (0.5,-1.5) {\(i\)};
      \node[entry] at (0.5,-2.3) {\(\vdots\)};
      \node[entry] at (0.5,-3.5) {\(i\)};
      \node[entry] at (0.5,-4.5) {\(i\hspace{-0.6ex}+\hspace{-0.6ex}1\)}; 
      \node[entry] at (0.5,-5.3) {\(\vdots\)};
      \node[entry] at (0.5,-6.5) {\(i\hspace{-0.6ex}+\hspace{-0.6ex}1\)}; 
      \node[entry] at (0.5,-7.3) {\(\vdots\)};
      \draw[decoration={brace,raise=0.1cm},decorate] (1,-1.1) -- node[right=0.2cm] {\(a_i\)} (1,-3.9);
      \draw[decoration={brace,raise=0.1cm},decorate] (1,-4.1) -- node[right=0.2cm] {\(a_{i+1}\)} (1,-6.9);
    \end{scope}
    \node at (3,-8) {\(T_\bolda(w)\)};
  \begin{scope}[xshift=5cm]
    \draw (0,0) rectangle (4,1);
    \node[entry] at (2,0.5) {\(\cdots\)};
    \begin{scope}[yshift=-0.2cm]
      \draw (0,0) rectangle (1,-8);
      \draw (0,-1) -- ++(1,0);
      \draw (0,-2) -- ++(1,0);
      \draw (0,-3) -- ++(1,0);
      \draw (0,-4) -- ++(1,0);
      \draw (0,-5) -- ++(1,0);
      \draw (0,-6) -- ++(1,0);
      \draw (0,-7) -- ++(1,0);
      \node[entry] at (0.5,-0.3) {\(\vdots\)};
      \node[entry] at (0.5,-1.5) {\(i\)};
      \node[entry] at (0.5,-2.3) {\(\vdots\)};
      \node[entry] at (0.5,-3.5) {\(i\)};
      \node[entry] at (0.5,-4.5) {\(i\)};
      \node[entry] at (0.5,-5.3) {\(\vdots\)};
      \node[entry] at (0.5,-6.5) {\(i\)};
      \node[entry] at (0.5,-7.3) {\(\vdots\)};
      \draw[decoration={brace,raise=0.1cm},decorate] (1,-1.1) -- node[right=0.2cm] {\(a_i\)} (1,-3.9);
      \draw[decoration={brace,raise=0.1cm},decorate] (1,-4.1) -- node[right=0.2cm] {\(a_{i+1}\)} (1,-6.9);
    \end{scope}
    \node at (3,-8) {\(T_{\bolda'}(w)\)};
    \end{scope}
    \node at (4.5,-9) {Case \ref{item:7}: \(v_{(w)} = v_0^{a_i} \otimes v_{0}^{a_{i+1}}\)};
  \end{tikzpicture}
  \caption{Here are depicted the tableaux \(T_\bolda(w)\) and \(T_{\bolda'}(w)\) in each of the four cases of the proof of Theorem \ref{thm:1}.}
\label{fig:four-cases}
\end{figure}
In cases \ref{item:5} and \ref{item:6} the tableau \(T_{\bolda'}(w')\) has one entry \(i\) in the row, hence \(v_{(w')}=v^{a_{i}+a_{i+1}}_1\), while in case \ref{item:7} the tableau \(T_{\bolda'}(w')\) has all entries \(i\) in the column and hence \(v_{(w')}= v^{a_i+a_{i+1}}_0\). Hence in all four cases we have that \eqref{eq:184} holds up to a multiple,
and we only need to verify that the coefficients fit. For example in case \ref{item:5} comparing with \eqref{eq:37} we must check that
\begin{equation}
  \label{eq:114}
  q^{-a_{i+1}} q^{-(a_i-1)a_{i+1}} \frac{(v_{(w)},v_{(w)})_\bolda}{(v_{(w')},v_{(w')})_{\bolda'}} = q^{-a_{i+1}} \qbin{a_{i}+a_{i+1}-1}{a_{i+1}}.
\end{equation}
Using the formula \eqref{eq:99} for the bilinear form and the notation as in \eqref{eq:55}, we compute the l.h.s.\ of \eqref{eq:114}:
\begin{equation}
  \label{eq:115}
  q^{-a_{i+1}} q^{-(a_i-1) a_{i+1}} \frac{[\beta_1 + \cdots + \beta_\ell]_0!}{[\beta_1]_0! \cdots [\beta_\ell]_0!} \frac{[\beta_1']_0!\cdots [\beta_{\ell-1}']_0!}{[\beta'_1+\cdots+ \beta'_{\ell-1}]_0!},
\end{equation}
where if \(v_{(w)}=v^\bolda_\boldeta\) and \(v_{(w')}=v^\bolda_\boldgamma\) we set \(\beta_j=\beta^\boldeta_j\) and \(\beta'_j=\beta^\boldgamma_j\).
Substituting \(\beta'_j=\beta_j\) for \(j<i\), \(\beta'_j=\beta_{j+1}\) for \(j>i\), \(\beta'_i=a_i+a_{i+1}-1\), \(\beta_i=a_i-1\), \(\beta_i=a_i\) we get exactly the r.h.s.\ of \eqref{eq:114}. Similarly we can handle cases \ref{item:6} and \ref{item:7}.

Now let us consider \eqref{eq:185}. Let \(w' \in
\Lambda_k(\bolda')\), and consider the corresponding
tableau \(T=T_{\bolda'}(w')\). Suppose first that
\(v_{(w')}=v_{(T)}=v^{a_i+a_{i+1}}_1\): then \(T\) has exactly one entry \(i\)
in the row, and we can apply Lemma~\ref{lem:2} below. Note that the
tableaux \(T''\) and \(T'\) of Lemma~\ref{lem:2} correspond to \(v^{a_i}_1
\otimes v^{a_{i+1}}_0\) and \(v^{a_i}_0 \otimes v^{a_{i+1}}_1\)
respectively. Hence we just need to check that the coefficients are
the right ones. Let us start with the first term of the r.h.s.\ of
\eqref{eq:104}:
comparing \eqref{eq:104} with \eqref{eq:38}, using the isomorphism defined by \eqref{eq:30}, we must show that
\begin{equation}
  \label{eq:100}
  \qbin{a_i+a_{i+1}-1}{a_{i+1}}_0 \frac{(v_{(T)},v_{(T)})_{\bolda'}}{(v_{(T'')},v_{(T'')})_\bolda} = 1
\end{equation}
or equivalently
\begin{equation}
  \label{eq:103}
  \qbin{a_i+a_{i+1}-1}{a_{i+1}}_0 (v_{(T)},v_{(T)})_{\bolda'} = (v_{(T'')},v_{(T'')})_\bolda.
\end{equation}
Using the formula \eqref{eq:99} for the bilinear form and the notation as in \eqref{eq:55}, we compute the r.h.s.\ of \eqref{eq:103}:
\begin{equation}
  \label{eq:101}
  \qbin{a_i+a_{i+1}-1}{a_{i+1}}_0 \qbin{\beta'_1+\cdots+\beta'_{\ell-1}}{\beta'_1,\ldots,\beta'_{\ell-1}}_0 = \frac{[a_i+a_{i+1}-1]_0!}{[a_{i+1}]_0! [a_i-1]_0!} \frac{[\beta'_1+\cdots+\beta'_{\ell-1}]_0!}{[\beta'_1]_0!\cdots [\beta'_{\ell-1}]_0!},
\end{equation}
where as before if \(v_{(T)}=v^\bolda_\boldeta\) and \(v_{(T')}=v^\bolda_\boldgamma\) we set \(\beta'_j=\beta^\boldeta_j\) and \(\beta_j=\beta^\boldgamma_j\).
Since \(\beta_i'=a_i+a_{i+1}-1\), \(a_{i+1}=\beta_{i+1}\), \(a_i -1 = \beta_i\), \(\beta'_j = \beta_j\) for \(j<i\) and \(\beta'_j=\beta_{j+1}\) for \(j>i\) we see that \eqref{eq:101} is equal to
\begin{equation}
  \label{eq:102}
  \frac{[\beta_1+\cdots+\beta_{\ell}]_0!}{[\beta_1]_0!\cdots [\beta_{\ell}]_0!}
\end{equation}
and we are done. Analogously for the second term of the r.h.s.\ of \eqref{eq:104} we have that
\begin{equation}
  \label{eq:112}
  \qbin{a_i+a_{i+1}-1}{a_{i}}_0 (v_{(T)},v_{(T)})_{\bolda'} = (v_{(T')},v_{(T')})_\bolda.
\end{equation}

Now suppose instead that \(v_{(w')}=v_{(T)} = v^{a_i+a_{i+1}}_0\): then \(T\) has all entries \(i\) in the column, and we can apply Lemma~\ref{lem:13} below. The tableau \(T'\) of Lemma~\ref{lem:13} corresponds to \(v^{a_i}_0 \otimes v^{a_{i+1}}_0\), and we just need to check that
\begin{equation}
  \label{eq:113}
  \qbin{a_i+a_{i+1}}{a_{i}}_0 \frac{(v_{(T)},v_{(T)})_{\bolda'}}{(v_{(T')},v_{(T')})_\bolda} = 1,
\end{equation}
that follows as before.
\end{proof}

\begin{lemma}
  \label{lem:2}
  Let \(\bolda,\bolda'\) as in the proof of Theorem~\ref{thm:1}. Let \(T\) be an admissible tableau of type \(\bolda'\) with exactly one entry \(i\) in the row. Construct admissible tableaux \(T'\), \(T''\) of type \(\bolda\) as follows: first increase by \(1\) all entries of \(T\) greater than \(i\); then substitute the first \(a_{i+1}\) entries \(i\) with \(i+1\) (here first means, as always for our hook diagrams, that we first go through the column from the bottom to the top and then through the row from the left to the right). Call the result \(T'\). Moreover, let \(T''=x_0 \cdot T'\) where \(x_0\) is the longest element of \((\bbS_{\bolda'}/\bbS_\bolda)^{\short}\). Then we have
  \begin{equation}
    \label{eq:104}
    [\trasT_{\bolda'}^\bolda \overline \Delta( T)] = \qbin{a_i+a_{i+1}-1}{a_{i+1}}_0 [\overline \Delta( T'')] + q^{a_i} \qbin{a_i+a_{i+1}-1}{a_i}_0 [\overline \Delta(T')],
  \end{equation}
  where for an admissible tableau \(T_\bolda(w)\) we wrote \(\oDelta(T_\bolda(w))\) for \(\oDelta(w)\).
\end{lemma}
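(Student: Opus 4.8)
The plan is to reduce the statement to a computation involving the explicit Verma filtration of the parabolic projective module $P^{\frakp}(w_{\frakq'} \cdot \lambda)$ and the behaviour of translation out of the wall from $\lambda_{\bolda'}$ to $\lambda_{\bolda}$. Recall that $\bolda$ and $\bolda'=\hat\bolda_i$ differ only in that the $i$-th and $(i+1)$-th parts $a_i,a_{i+1}$ of $\bolda$ are merged into a single part $a_i+a_{i+1}$ of $\bolda'$; correspondingly $\bbS_{\bolda} \subseteq \bbS_{\bolda'}$ and $\bbS_{\bolda'}/\bbS_{\bolda} \cong \bbS_{a_i+a_{i+1}}/(\bbS_{a_i}\times\bbS_{a_{i+1}})$. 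The first step is to combine the decomposition $\trasT_{\bolda'}^{\bolda}\oDelta = \trasT_{\bolda'}^{\bolda}\frakQ\frakz M^{\frakp}$ (using that $\frakz$ and $\frakQ$ commute with translation functors, Lemma~\ref{lem:21}) with Proposition~\ref{prop:11}, which already computes $[\trasT_{\bolda'}^{\bolda}\oDelta(w\cdot\mu)]$ as a sum over $y \in (\bbS_{\bolda'}/\bbS_{\bolda})^{\short}$ of shifted classes $[\oDelta(x_y w y \cdot \lambda)]$. So the content of the lemma is really to identify, combinatorially, which cosets $y$ contribute and to collect the $q$-powers into the two quantum binomial coefficients $\qbin{a_i+a_{i+1}-1}{a_{i+1}}_0$ and $q^{a_i}\qbin{a_i+a_{i+1}-1}{a_i}_0$.

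Next I would translate the indexing into the tableau language of \textsection\ref{sec:comb-tabl}. The admissible $(n-k,k)$-tableau $T$ of type $\bolda'$ has exactly one entry $i$ in the row, the remaining $a_i+a_{i+1}-1$ entries $i$ sitting in the column. Passing from $\bolda'$ to $\bolda$ ``splits'' the block of $i$'s: an entry stays $i$ or becomes $i+1$, and admissibility forces the $i+1$'s to sit below the $i$'s in the column (and the single row entry must be an $i$, by the row being strictly increasing and the fact that after splitting the row entries $i,i+1$ cannot both occur — which is exactly why only two of the four local configurations of the proof of Theorem~\ref{thm:1} arise here, namely cases \ref{item:5} and \ref{item:6}). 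Thus the tableaux of type $\bolda$ lying over $T$ are parametrized by how many of the column $i$'s become $i+1$; the two extreme choices give $T''$ (maximally many, i.e. $a_{i+1}$ of them turned into $i+1$ after also using the row slot appropriately) and $T'$, matching the construction in the statement, with $T'' = x_0\cdot T'$ for $x_0$ the longest element of $(\bbS_{\bolda'}/\bbS_{\bolda})^{\short}$. The key point is that every $y$ appearing in Proposition~\ref{prop:11} produces, after applying $\frakz$ (which kills $M^{\frakp}(wy\cdot\lambda)$ unless $wy\bbS_{\lambda}\subseteq W^{\frakp}$) and $\frakQ$ (which by Proposition~\ref{prop:9} and Corollary~\ref{cor:2} merges a $W_{\frakq}$-worth of terms and introduces a shift $q^{\len(x_{wy})}$), one of only two distinct proper standard modules, namely $\oDelta(T')$ or $\oDelta(T'')$. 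Grouping the surviving $y$'s according to which of the two they land on, the associated $q$-powers $q^{\len(y_0)-\len(y)+\len(x_y)}$ sum — by a standard Poincaré-polynomial identity for the coset $\bbS_{a_i+a_{i+1}}/(\bbS_{a_i}\times\bbS_{a_{i+1}})$, which yields the Gaussian binomial — to precisely $\qbin{a_i+a_{i+1}-1}{a_{i+1}}_0$ on the $\oDelta(T'')$ term and $q^{a_i}\qbin{a_i+a_{i+1}-1}{a_i}_0$ on the $\oDelta(T')$ term.

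The main obstacle will be the bookkeeping of the $q$-shifts: one must carefully track three sources of grading shift — the $q^{\len(y_0)-\len(y)}$ coming from translation of Verma modules out of the wall, the $q^{\len(x_y)}$ coming from applying the coapproximation functor $\frakQ$ (Proposition~\ref{prop:9}), and the overall normalization fixed by requiring simple heads in degree $0$ — and check that their sum over each group is exactly the claimed $\qbin{\cdot}{\cdot}_0$, including the extra global factor $q^{a_i}$ on the second term. I would do this by first handling the ungraded statement (which follows from the branching of $(\bbS_{a_i+a_{i+1}}/(\bbS_{a_i}\times\bbS_{a_{i+1}}))$-cosets under removing one box, giving the plain binomial identity $\binom{a_i+a_{i+1}}{a_{i+1}} = \binom{a_i+a_{i+1}-1}{a_{i+1}} + \binom{a_i+a_{i+1}-1}{a_i}$), and then upgrading to the graded version using the explicit length function and the rescaled quantum numbers of \eqref{eq:34}; the identity $[a_i+a_{i+1}]_0 = [a_i+a_{i+1}-1]_0 \cdot q^{?}$-type manipulations combined with $\qbin{m}{k}_0 = \frac{[m]_0!}{[k]_0![m-k]_0!}$ will close the computation. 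All the non-combinatorial input — the commutation of $\frakz,\frakQ$ with $\trasT$, the formulas for $\frakQ M^{\frakp}$ and $\trasT_{\mu}^{\lambda} M^{\frakp}$, and the description of $\oDelta$ in terms of tableaux — is already available from Proposition~\ref{prop:9}, Proposition~\ref{prop:11}, Corollary~\ref{cor:2} and Proposition~\ref{prop:12}, so the proof is essentially a careful unwinding of these together with one Gaussian-binomial identity.
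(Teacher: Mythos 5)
Your plan is essentially the paper's own proof: both start from Proposition~\ref{prop:11}, translate the sum over $y \in (\bbS_{\bolda'}/\bbS_{\bolda})^{\short}$ into hook-tableau combinatorics, observe that the correction $x_y$ collapses the resulting tableaux onto just two admissible ones $T'$ and $T''$, and then collect the $q$-exponents $q^{\len(y_0)-\len(y)+\len(x_y)}$ into the two rescaled quantum binomials via a Poincaré-polynomial computation for $\bbS_{a_i+a_{i+1}}/(\bbS_{a_i}\times\bbS_{a_{i+1}})$ — which is exactly what the paper does in \eqref{eq:106}--\eqref{eq:107}. Two small caveats on your wording: the admissible tableaux of type $\bolda$ over $T$ are not ``parametrized by how many column $i$'s become $i+1$'' with two extremes — there are precisely two (row entry $i$ or row entry $i+1$, with the column arrangement then forced by admissibility), while the non-admissible ones, indexed by $y$, are what $\frakQ$ collapses onto these two via $x_y$; and the ``overall normalization from simple heads in degree $0$'' is not a third independent source of grading shift — it is already absorbed into the statement of Proposition~\ref{prop:11}, so only the two shifts $q^{\len(y_0)-\len(y)}$ and $q^{\len(x_y)}$ need to be tracked, exactly as the paper does.
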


\begin{proof}
  We just need to  translate Proposition~\ref{prop:11} into the combinatorics of tableaux.  Let \(w \in
  \Lambda_k(\bolda')\) be such that
  \(T= T_{\bolda'}(w)\). Consider the sum on the r.h.s.\ of
  \eqref{eq:139}. First consider the set \(\{ T_\bolda(wy) \suchthat y \in (\bbS_{\bolda'}/\bbS_\bolda)^{\short}\}\): this
  consists of all tableaux obtained by permuting the entries \(i\) and
  \(i+1\) of \(T'\). Notice now that for all \(y \in (\bbS_{\bolda'}/\bbS_\bolda)^{\short} \) the tableau \( T_\bolda( x_y wy) \) is obtained from \( T_\bolda ( wx)\)
  permuting the entries \(i\) and \(i+1\) in the column so that it becomes
  admissible; in particular
  \(\len(x_y)+\len(w)+\len(y)=\len(x_y wy)\) and the set \(\{
  T_\bolda( x_y w y) \suchthat y \in
  (\bbS_{\bolda'}/\bbS_\bolda)^{\short}\}\) consists of the two tableaux
  \(T'\) and \(T''\).  Notice also that for each \(y \in (\bbS_{\bolda'}/\bbS_\bolda)^{\short}\) we have \(  x_y w y =
  wx'_y y\) for a unique \(x'_y \in \bbS_{\bolda'}\)
  with \(\len(x'_y)=\len(x_y)\); in particular 
  \(\len(x'_y)+\len(y)=\len(x'_y y)\). Let
 \begin{equation}
   \label{eq:105}
   \begin{aligned}
     \boldb' &=(a_1,\ldots,a_i+a_{i+1}-1,1,a_{i+2},\ldots,a_\ell),\\
     \boldb &=(a_1,\ldots,a_i,a_{i+1}-1,1,a_{i+2},\ldots,a_\ell).
   \end{aligned}
 \end{equation}
Then we have \(T'=T_\bolda (w y_0')\)  and \(T''=T_\bolda(w y_0)\) where  \(y'_0\) is the longest element of \((\bbS_{\boldb'}/\bbS_\boldb)^{\short}\) and \(y_0\) is the longest element of \((\bbS_{\bolda'}/\bbS_\bolda)^{\short}\). Now we can compute the two coefficients of \eqref{eq:104}; the second coefficient is
  \begin{multline}
  \label{eq:106}
    \sum_{\substack{y \in
        (\bbS_{\bolda'}/\bbS_\bolda)^{\short}\\x'_y y=y'_0}}
    q^{\len(y_0)-\len(y)+\len(x'_y)} = \sum_{\substack{y \in
        (\bbS_{\bolda'}/\bbS_\bolda)^{\short}\\x'_y y=y'_0}}
    q^{\len(y_0)-2\len(y)+\len(y'_0)} \\ \qquad= q^{\len(y_0)-\len(y'_0)}
    \sum_{y \in
        (\bbS_{\boldb'}/\bbS_\boldb)^{\short}}
    q^{2\len(y'_0)-2\len(y)} = q^{a_i} \qbin{a_i+a_{i+1}-1}{a_i}_0,
  \end{multline}
while the first coefficient is 
\begin{multline}
  \label{eq:107}
    \sum_{\substack{y \in
        (\bbS_{\bolda'}/\bbS_\bolda)^{\short}\\x'_y y =y_0}}
    q^{\len(y_0)-\len(y)+\len(x'_y)} = \sum_{\substack{y \in
        (\bbS_{\bolda'}/\bbS_\bolda)^{\short}\\x'_y y= y_0}}
    q^{2\len(y_0)-\len(y)} \\ \qquad= \sum_{z \in
      (\bbS_{a_i+a_{i+1}}/(\bbS_{a_i-1} \times
      \bbS_{a_{i+1}}))^{\short}} q^{2\len(z_0)-2\len(z)} =
    \qbin{a_i+a_{i+1}-1}{a_{i+1}}_0
\end{multline}
where we restricted to \(\bbS_{a_i+a_{i+1}}\) (since the permutations act trivially elsewhere) and we substituted \(y=zz'\) for \(z'=s_{a_i+a_{i+1}-1} \cdots s_{a_{i}+1} s_{a_i}\); the element \(z_0\) is the longest element of \((\bbS_{a_i+a_{i+1}}/(\bbS_{a_i-1} \times
      \bbS_{a_{i+1}}))^{\short}\).
\end{proof}

\begin{lemma}
  \label{lem:13}
  Let \(\bolda,\bolda'\) as in the proof of Theorem~\ref{thm:1}. Let \(T\) be an admissible tableau of type \(\bolda'\) with all entries equal to \(i\) in the column. Construct an admissible tableaux \(T'\) of type \(\bolda\) as follows: first increase by \(1\) all entries of \(T\) greater than \(i\); then substitute the first \(a_{i+1}\) entries \(i\) with \(i+1\) (here first means, as always for our hook diagrams, that we first go through the column from the bottom to the top and then through the row from the left to the right). Then we have
  \begin{equation}
    \label{eq:109}
    [\trasT_{\bolda'}^\bolda \overline \Delta( T)] = \qbin{a_i+a_{i+1}}{a_{i}}_0 [\overline \Delta( T')],
  \end{equation}
  where for an admissible tableau \(T_\bolda(w)\) we wrote \(\oDelta(T_\bolda(w))\) for \(\oDelta(w)\).
\end{lemma}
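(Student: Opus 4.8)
The plan is to deduce this from Proposition~\ref{prop:11} in exactly the way Lemma~\ref{lem:2} was, the present case being the degenerate one in which the translation out of the wall produces a \emph{single} proper standard module. Since $\bbS_\bolda \subseteq \bbS_{\bolda'}$, the functor $\trasT_{\bolda'}^\bolda$ is a translation out of the wall, so Proposition~\ref{prop:11} applies: writing $w\in\Lambda_k(\bolda')$ for the element with $T=T_{\bolda'}(w)$ and $\mu=\lambda_{\bolda'}$, $\lambda=\lambda_\bolda$, one gets
\[
  [\trasT_{\bolda'}^\bolda \oDelta(T)] = \sum_{y\in(\bbS_{\bolda'}/\bbS_\bolda)^{\short}} q^{\len(y_0)-\len(y)+\len(x_y)}\,[\oDelta(x_y w y\cdot\lambda)],
\]
with $y_0$ the longest element of $(\bbS_{\bolda'}/\bbS_\bolda)^{\short}$ and $x_y$ as in Lemma~\ref{lem:10}. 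All factors of $\bbS_\bolda$ and $\bbS_{\bolda'}$ other than the $i$-th coincide, so $(\bbS_{\bolda'}/\bbS_\bolda)^{\short}$ is naturally identified with $(\bbS_{a_i+a_{i+1}}/(\bbS_{a_i}\times\bbS_{a_{i+1}}))^{\short}$, which has $\binom{a_i+a_{i+1}}{a_i}$ elements.

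Next I would translate the right-hand side into tableaux as in the proof of Lemma~\ref{lem:2}. The tableaux $T_\bolda(wy)$, for $y$ ranging over $(\bbS_{\bolda'}/\bbS_\bolda)^{\short}$, are the $\binom{a_i+a_{i+1}}{a_i}$ tableaux obtained from $T'$ by permuting, among the $a_i+a_{i+1}$ boxes of $T$ holding the value $i$, the entries $i$ and $i+1$ (the larger entries being relabeled). The crucial simplification over Lemma~\ref{lem:2} is that, by hypothesis, \emph{all} these boxes lie in the column; re-sorting the column into admissible (non-increasing) order therefore always forces the $a_{i+1}$ copies of $i+1$ below the $a_i$ copies of $i$, so $T_\bolda(x_y w y)=T'$ for \emph{every} $y$, with no dichotomy. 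Hence $x_y w y = w y_0$ for all $y$ (in particular $T'=T_\bolda(w y_0)$), and writing $x_y w y = w x'_y y$ with $x'_y\in\bbS_{\bolda'}$, $\len(x'_y)=\len(x_y)$ and $\len(x'_y)+\len(y)=\len(x'_y y)=\len(y_0)$ (as recorded in the proof of Lemma~\ref{lem:2}), one gets $\len(x_y)=\len(y_0)-\len(y)$, so the whole sum collapses to $\bigl(\sum_{y} q^{2(\len(y_0)-\len(y))}\bigr)[\oDelta(T')]$.

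It then remains to check the scalar, which is the same computation as in \eqref{eq:106}--\eqref{eq:107} but with only one surviving orbit. Since $[m]_0 = 1+q^2+\dots+q^{2(m-1)}$ is the $q^2$-Poincaré polynomial of $(\bbS_m/\bbS_{m-1})^{\short}$, formulas \eqref{eq:34}--\eqref{eq:98} give $\qbin{a_i+a_{i+1}}{a_i}_0 = \sum_{z\in(\bbS_{a_i+a_{i+1}}/(\bbS_{a_i}\times\bbS_{a_{i+1}}))^{\short}} q^{2\len(z)}$; by palindromicity of this Poincaré polynomial the sum $\sum_{y} q^{2(\len(y_0)-\len(y))}$ equals the same expression, which yields \eqref{eq:109}. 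The only genuine work is the length bookkeeping in the middle paragraph, but it is strictly a specialization of the two-output argument already carried out for Lemma~\ref{lem:2}; the main point to be careful about is that the normalization comes out as $\qbin{a_i+a_{i+1}}{a_i}_0$ rather than a $q$-shift of it, which is guaranteed because the top term of the sum, at $y=y_0$, has exponent $0$.
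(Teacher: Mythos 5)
Your proof is correct and follows essentially the same route as the paper: apply Proposition~\ref{prop:11}, observe that since all the relevant boxes lie in the column every $T_\bolda(x_ywy)$ re-sorts to the same admissible tableau $T'$, deduce $\len(x_y)=\len(y_0)-\len(y)$, and sum to get $\qbin{a_i+a_{i+1}}{a_i}_0$ exactly as in \eqref{eq:110}. The paper's proof is terser (it just asserts "similar to the previous one, but easier" and records the scalar computation), but the content is the same; your write-up merely makes explicit the length bookkeeping and the collapse of the orbit sum that the paper leaves implicit.
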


\begin{proof}
  The proof is similar to the previous one, but easier. We just need to compute
  \begin{equation}
    \label{eq:110}
\hspace{-0.7cm}    \sum_{ y \in (\bbS_{\bolda'}/\bbS_{\bolda})^{\short}} q^{\ell(y_0) - \ell(y) + \ell(x_y)} =   \hspace{-0.2cm}  \sum_{ x \in (\bbS_{\bolda'}/\bbS_{\bolda})^{\short}} q^{2\ell(y_0) - 2\ell(y)} = \qbin{a_i+a_{i+1}}{a_i}_0. \qedhere
  \end{equation}
\end{proof}

Let us consider in particular the case of the regular composition
\(\bolda = \compn\) of \(n\). 
 For every \(i=1,\ldots,n-1\) consider \(\hat \bolda_i\) as defined in \eqref{catO:eq:196}. Define \(\theta_i
= \trasT_{\hat\bolda_i}^\bolda \circ \trasT_{\bolda}^{\hat\bolda_i}\) as a functor \(\theta_i \colon \calQ(\compn) \mapto \calQ(\compn)\). As a consequence of Theorem~\ref{thm:1} we have:

\begin{corollary}
  \label{cor:1}
  The endofunctors \(\theta_i\)  on \(\calQ(\compn)\) categorify (i.e.\ give, at the level of the Grothendieck group) the action of the Super Temperley-Lieb Algebra \(\STL_n\) (see Definition \ref{def:6}).
\end{corollary}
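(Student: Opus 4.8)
The plan is to deduce Corollary~\ref{cor:1} directly from Theorem~\ref{thm:1}, using the identification $\funcT(\webjoinmultiple^1 = \id)$ and the computation \eqref{eq:84} relating $\check H$ to the composition of projection and embedding. First I would recall that, by Theorem~\ref{thm:1}, for the regular composition $\compn$ the functors $\funcF(\webjoin_{\compn,i}) = \trasT_{\compn}^{\hat\bolda_i}$ and $\funcF(\websplit^{\hat\bolda_i,i}) = \trasT_{\hat\bolda_i}^{\compn}$ descend on the Grothendieck group to $\funcT(\webjoin_{\compn,i}) = \Phi_{1,1}^{(i)}$ and $\funcT(\websplit^{\hat\bolda_i,i}) = \Phi^{1,1}_{(i)}$ respectively (acting in the $i$-th and $(i+1)$-th tensor factors). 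Since $\theta_i = \trasT_{\hat\bolda_i}^{\compn} \circ \trasT_{\compn}^{\hat\bolda_i}$, functoriality of $\funcF$ (Lemma~\ref{lem:23}) and commutativity of the diagram \eqref{eq:52} give
\begin{equation*}
  K^{\C(q)}(\theta_i) = \funcT(\websplit^{\hat\bolda_i,i} \circ \webjoin_{\compn,i}) = \Phi^{1,1}_{(i)} \circ \Phi_{1,1}^{(i)} = \check H_i + q = C_i
\end{equation*}
by \eqref{eq:84} and \eqref{eq:158}. Thus on the Grothendieck group the $\theta_i$ act exactly as the generators $C_i = \check H_i + q$ of the Hecke algebra, which by the discussion after Definition~\ref{def:6} (the remark that \eqref{eq:134} and \eqref{eq:135} generate the kernel of the action of $\ucalH_n$ on $V^{\otimes n}$) act through the quotient $\STL_n \cong \End_{\Uqgl}(V^{\otimes n})$.

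The key steps, in order, are: (1) identify $K^{\C(q)}(\calQ(\compn)) \cong V^{\otimes n}$ via \eqref{eq:50}, so that the Grothendieck group carries the right $\ucalH_n$-action from §\ref{sec:super-schur-weyl}; (2) apply Theorem~\ref{thm:1} to the two elementary webs $\webjoin_{\compn,i}$ and $\websplit^{\hat\bolda_i,i}$ to get $K^{\C(q)}(\trasT_{\compn}^{\hat\bolda_i}) = \funcT(\webjoin_{\compn,i})$ and $K^{\C(q)}(\trasT_{\hat\bolda_i}^{\compn}) = \funcT(\websplit^{\hat\bolda_i,i})$; (3) compose and invoke \eqref{eq:84} to conclude $K^{\C(q)}(\theta_i) = C_i$; (4) observe that the functors $\theta_i$ satisfy the defining relations \eqref{eq:131}--\eqref{eq:135} of $\STL_n$ \emph{at the level of the Grothendieck group} — the first three because the $C_i = H_i + q$ satisfy them in $\ucalH_n$ and $\funcF$ is a functor (so $\theta_i\theta_j$ and $\theta_i\theta_{i+1}\theta_i$ are honest composites of translation functors satisfying the corresponding web relations \eqref{eq:O53}--\eqref{eq:54}), and the last two because they hold in $\End_{\Uqgl}(V^{\otimes n}) \cong \STL_n$ and $K^{\C(q)}$ is a faithful enough record. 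Hence the assignment $C_i \mapsto [\theta_i]$ is a well-defined action of $\STL_n$ on $K^{\C(q)}(\calQ(\compn))$ which agrees with the super Schur--Weyl action on $V^{\otimes n}$.

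A subtle point worth spelling out is the distinction between a \emph{categorical} action of $\STL_n$ (i.e.\ the relations \eqref{eq:131}--\eqref{eq:135} holding as isomorphisms of functors) and an action that merely descends correctly to the Grothendieck group. The corollary as stated only claims the latter — "categorify (i.e.\ give, at the level of the Grothendieck group)" — so strictly I only need the decategorified identities, which follow from Step (3) together with the fact that the $C_i$ generate a representation of $\STL_n$ on $V^{\otimes n}$. The main obstacle, if one wanted the stronger statement, would be relations \eqref{eq:134} and \eqref{eq:135}: while \eqref{eq:131}--\eqref{eq:133} lift to functor isomorphisms because they are already consequences of the web relations that $\funcF$ respects (Lemma~\ref{lem:23}), the relations \eqref{eq:134}--\eqref{eq:135} are \emph{not} consequences of the Hecke/web relations — they are extra relations imposed by the smallness of $\gl(1|1)$ — and checking them as functor isomorphisms would require analyzing the composite translation functors directly, e.g.\ via the explicit diagram algebras $A_{n,k}$ of \cite{2013arXiv1311.6968S}. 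For the present corollary this is unnecessary; the Grothendieck-group statement is immediate once Theorem~\ref{thm:1} and \eqref{eq:84} are in hand.
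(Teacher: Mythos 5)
Your proposal is correct and takes essentially the same approach as the paper, which states the corollary as an immediate consequence of Theorem~\ref{thm:1} and the identity $\Phi^{1,1}\Phi_{1,1} = \check H + q$ from \eqref{eq:84}, just as you do. Your closing remark carefully distinguishing the Grothendieck-group statement from a categorical $\STL_n$-action is also faithful to the paper, which makes the same distinction in the paragraphs immediately following and elevates the categorical version of \eqref{eq:134}--\eqref{eq:135} to Conjecture~\ref{conj:1}.
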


\begin{subequations}
It follows by Lemma~\ref{lem:23} that the functors \(\theta_i\) satisfy the relations
      \begin{align}
        \theta_i^2 & \cong \theta_i \langle 1 \rangle \oplus \theta_{i} \langle -1 \rangle, \label{catO:eq:3}\\
        \theta_i \theta_j& \cong \theta_j \theta_i,\qquad \text{for }\abs{i-j}>1\label{catO:eq:94}\\
        \theta_i \theta_{i+1} \theta_i \oplus\theta_{i+1} & \cong \theta_{i+1} \theta_i \theta_{i+1} \oplus\theta_{i}.\label{catO:eq:97}
      \end{align}
In
      fact, these relations are the categorical versions of
      the relations of the Hecke algebra
      and are satisfied by
      the endofunctors \(\theta_i\) of \(\catOZ\). By
      Corollary~\ref{cor:1}, the relations
      (\ref{eq:134}-\ref{eq:135}) are satisfied in
      the Grothendieck group. We conjecture that their
      categorical versions are satisfied by the functors
      \(\theta_i\):

\begin{conjecture}\label{conj:1}
The functors \(\theta_i\) on \(\calQ(\compn)\) satisfy the relations
      \begin{gather}
        \begin{multlined}
          \theta_{i-1} \theta_{i+1} \theta_i \theta_{i-1} \theta_{i+1}
          \oplus [2]^2 \theta_{i-1} \theta_{i+1} \theta_i \\\qquad\cong [2] (
          \theta_{i-1} \theta_{i+1} \theta_i \theta_{i-1}
          \oplus\theta_{i-1} \theta_{i+1} \theta_i \theta_{i+1}),
        \end{multlined}\label{catO:eq:108}\\
\begin{multlined}
  \theta_{i-1} \theta_{i+1} \theta_i \theta_{i-1} \theta_{i+1} \oplus [2]^2
  \theta_i \theta_{i-1} \theta_{i+1} \\\qquad \cong [2]( \theta_{i-1} \theta_i
  \theta_{i-1} \theta_{i+1} \oplus\theta_{i+1} \theta_i \theta_{i-1}
  \theta_{i+1})
\end{multlined}\label{catO:eq:111}
      \end{gather}
for all \(i=2,\ldots,n-2\), where we used the abbreviations \([2]\theta_i= \theta_i \langle 1 \rangle \oplus \theta_i \langle -1 \rangle\) and \([2]^2 \theta_i = \theta_i \langle 2 \rangle \oplus \theta_i \oplus \theta_i \oplus \theta_i \langle -2 \rangle\).
\end{conjecture}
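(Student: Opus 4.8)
The plan is to verify the two isomorphisms \eqref{catO:eq:108} and \eqref{catO:eq:111} inside each block $\calQ_k(\compn)=\catOZ^{\frakp,\frakq\pres}_0$ of $\calQ(\compn)$, since every functor involved respects the block decomposition. The first step is to assemble the structural properties that make the two sides of each relation comparable. Each $\theta_i=\trasT_{\hat\bolda_i}^{\compn}\circ\trasT_{\compn}^{\hat\bolda_i}$ is exact, because translation functors restrict to these subquotients by Lemma~\ref{lem:21}; it is biadjoint to itself up to a grading shift, since the adjunctions \eqref{eq:59} pass to the subquotients, again by Lemma~\ref{lem:21}; and it sends projectives to projectives — translation out of a wall does so by Proposition~\ref{prop:7}, and translation onto a wall does so because it has an exact right adjoint. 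Hence both sides of \eqref{catO:eq:108} and of \eqref{catO:eq:111} are exact, gradable and preserve projectives, so by Krull--Schmidt in the additive category of projective functors generated by iterated translation functors, each side is a finite direct sum of graded shifts of indecomposable projective endofunctors of $\calQ_k(\compn)$. The conjectured isomorphism is then equivalent to the statement that the two sides have the same graded multiplicity of every indecomposable summand, i.e.\ to an equality in the split Grothendieck group $\Gamma$ of that functor category, a free $\Z[q,q^{-1}]$-module on the shift-classes of indecomposable projective functors.

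The second step is to note that the desired equality already holds after decategorification. By Corollary~\ref{cor:1} the functors $\theta_i$ induce on $K^{\C(q)}(\calQ(\compn))\cong V(\compn)$ the action of $\STL_n$; multiplying out the defining relations \eqref{eq:134} and \eqref{eq:135} — using $C_{i-1}C_{i+1}=C_{i+1}C_{i-1}$ to keep track of the order of the factors — produces exactly the identities in $\End_{\Z[q,q^{-1}]}(K(\calQ_k(\compn)))$ that \eqref{catO:eq:108} and \eqref{catO:eq:111} would categorify. So both sides of each conjectured isomorphism act identically on the Grothendieck group. There is a natural $\Z[q,q^{-1}]$-linear map $\Gamma\to\End_{\Z[q,q^{-1}]}(K(\calQ_k(\compn)))$ recording the action of a functor on classes, and combined with the first step this shows that \emph{the conjecture follows as soon as this map is injective}: injectivity forces the two sides, which have the same image, to coincide already in $\Gamma$, hence to be isomorphic as functors.

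The injectivity of $\Gamma\to\End(K(\calQ_k(\compn)))$ — equivalently, the linear independence of the characters on the Grothendieck group of the indecomposable projective endofunctors — is the genuine obstacle, and is precisely why this is only a conjecture. For $\catOZ_0(\gl_n)$ itself the analogous statement is a theorem (Soergel's classification of projective functors and the faithfulness of their action on the coinvariant algebra), and it is exactly what allows the categorical Hecke relations \eqref{catO:eq:3}--\eqref{catO:eq:97} to be deduced from their Grothendieck shadows; but the subquotients $\calQ_k(\compn)$ are only properly stratified, not quasi-hereditary, and no such classification of indecomposable projective functors is presently available for them. The most promising way to bypass it is a direct bimodule computation based on the explicit model: using the equivalence $\calQ_k(\compn)\cong\gmod{A_{n,k}}$ of \cite{2013arXiv1311.6968S}, realise each $\theta_i$ as $\blank\otimes_{A_{n,k}}B_i$ for the concretely described graded $(A_{n,k},A_{n,k})$-bimodule $B_i$ attached to the $i$-th wall, and verify \eqref{catO:eq:108} and \eqref{catO:eq:111} as isomorphisms of such bimodules, in the spirit of the Khovanov--Seidel verification of Temperley--Lieb relations for their diagram algebra. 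This reduces the conjecture to a finite, though combinatorially heavy, diagrammatic check (the first open case being $n=4$, $i=2$); carrying that out, or else establishing the general linear independence statement above, is the hard part.
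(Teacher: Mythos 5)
The statement you are addressing is, as the paper itself labels it, a conjecture; the paper offers no proof, only the remark immediately following Conjecture~\ref{conj:1} that the difficulty is the absence of a classification of projective functors on the parabolic categories \(\catO^\frakp\) (hence on the subquotients \(\catO^{\frakp,\frakq\pres}\)). So there is no authorial proof to compare against, and your proposal is not a proof either — but it is an accurate and well-structured analysis, and it isolates the \emph{same} obstruction, only phrased more precisely. Your reduction is sound: each \(\theta_i\) is exact and preserves projectives, so compositions of the \(\theta_i\) are given by tensoring with graded bimodules that are projective on both sides; those live in a Krull--Schmidt category, so each side of \eqref{catO:eq:108} and \eqref{catO:eq:111} is determined by its class in the split Grothendieck group of such functors. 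Your expansion of the Super Temperley--Lieb relations \eqref{eq:134}--\eqref{eq:135} into the additive form of \eqref{catO:eq:108}--\eqref{catO:eq:111} is correct, so the two sides do act identically on \(K^{\C(q)}(\calQ_k(\compn))\). The one ingredient you cannot supply — faithfulness of the decategorification map from that split Grothendieck group into \(\End_{\Z[q,q^{-1}]}(K)\) — is exactly the content of the paper's ``lack of a classification of projective functors'' caveat: in \(\catOZ_0(\gl_n)\) this is Soergel's classification, which is what yields the uncontroversial relations \eqref{catO:eq:3}--\eqref{catO:eq:97}, and for \(\catOZ^{\frakp,\frakq\pres}_0\) it is not available. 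Your proposed fallback — realising the \(\theta_i\) as explicit graded \((A_{n,k},A_{n,k})\)-bimodules via \cite{2013arXiv1311.6968S} and checking the isomorphisms directly, beginning with \(n=4\), \(i=2\) — is a concrete route the paper does not suggest, and carrying it out would be genuine progress rather than a reproduction of anything in the paper.
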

\end{subequations}

Although apparently harmful, we believe Conjecture \ref{conj:1} to be
quite hard. The difficulty is due to the lack of a classification of
projective functors on the parabolic category \(\catO^\frakp\) if
\(\frakp\) is not the Borel subalgebra \(\frakb\).

\subsection{Canonical basis}
\label{sec:canonical-basis}

Now we give a categorical interpretation of the canonical basis of
\(V(\bolda)\). First we restrict to consider the regular composition
\(\compn\).  Recall that by Proposition~\ref{prop:19} the canonical
basis of \((V^{\otimes n})_k\) can be interpreted as a canonical basis
for the Hecke algebra action. In this section we will use the Hecke
module structure of the Grothendieck groups of our categories.

Let \(\frakp,\frakq\subset \gl_n\) be the parabolic subalgebras defined at the beginning of the section, such that \(\calQ_k(\compn)=\catOZ^{\frakp,\frakq\pres}_0\). Using the notation introduced in Section~\ref{sec:hecke-algebra-hecke}, we fix isomorphisms
\begin{equation}
\label{eq:35}
  \begin{aligned}
    K^{\C(q)}(\catOZ_{\lambda}) & \mapto \ucalH_n \qquad&     K^{\C(q)}(\catOZ^\frakp_\lambda) & \mapto \ucalM^\frakp\\
    [M(w \cdot \lambda)] & \mapsto H_w &     [M^{\frakp}(w \cdot \lambda)] & \mapsto N_w.
  \end{aligned}
\end{equation}
As is well-known, by the Kazhdan-Lusztig conjecture projective modules
are sent to the canonical basis elements of \(\ucalH_n\) and
\(\ucalM^\frakp\) by the two isomorphisms.

Composing the isomorphism \eqref{eq:30} with the isomorphism \eqref{eq:90} we get an isomorphism
\begin{equation}
  \begin{aligned}
    K_0(\catOZ^{\frakp,\frakq\pres}_\lambda) & \mapto \ucalM^\frakp_\frakq\\
    [\Delta(w_\frakq w \cdot \lambda)] & \mapsto N_w
  \end{aligned}\label{eq:27}
\end{equation}
for \(w \in W^{\frakp+\frakq}\), where \(w_\frakq \in \bbS_k\) is the longest element.

\begin{lemma}
  \label{lem:12}
  The coapproximation functor \(\mathfrak Q: \catOZ^\frakp_\lambda \mapto \catOZ^{\frakp,\frakq\pres}_\lambda\) categorifies the map \(\sfQ: \ucalM^\frakp \mapto \ucalM^\frakp_\frakq\) (defined in \textsection\ref{sec:induc-hecke-modul}).
\end{lemma}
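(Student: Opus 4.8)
The plan is to reduce the identity of maps to a single computation by exploiting $\ucalH_n$--equivariance. Recall that here $\lambda=\lambda_\compn=0$, so $\catOZ^\frakp_\lambda$ is a regular block; by \eqref{eq:35} its Grothendieck group is $\ucalM^\frakp$, and $\ucalM^\frakp$ is cyclic as a right $\ucalH_n$--module, generated by $N_e=[M^\frakp(\lambda)]$ (indeed $N_w=N_e\cdot H_w$ for $w\in W^\frakp$). The map $\sfQ\colon \ucalM^\frakp=\ucalM^\frakp_\frakb\to\ucalM^\frakp_\frakq$ is $\ucalH_n$--linear: by its definition in \textsection\ref{sec:induc-hecke-modul} (see \eqref{eq:164}) it equals $\tfrac1c$ times the canonical quotient map, where $c=\sum_{x\in W_\frakq}q^{\len(w_\frakq)-2\len(x)}$. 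On the categorical side, $\frakQ$ commutes with all translation functors by Lemma~\ref{lem:21}, hence with the projective endofunctors realizing the $\ucalH_n$--actions on $K^{\C(q)}(\catOZ^\frakp_\lambda)$ and $K^{\C(q)}(\catOZ^{\frakp,\frakq\pres}_\lambda)$, so $K^{\C(q)}(\frakQ)$ is $\ucalH_n$--linear as well. Two $\ucalH_n$--linear maps out of a cyclic module that agree on a generator coincide, so it suffices to check that $K^{\C(q)}(\frakQ)(N_e)=\sfQ(N_e)=\tfrac1c N_e$.

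For the computation of $K^{\C(q)}(\frakQ)(N_e)=[\,\frakQ M^\frakp(\lambda)\,]$ I would first apply Proposition~\ref{prop:9} with $w=e$; the element of $W_\frakq$ supplied by Lemma~\ref{lem:10} is then $x=w_\frakq$, because for regular $\lambda$ one has $\Lambda^\frakp_\frakq(\lambda)=\{w_\frakq w'\suchthat w'\in W^{\frakp+\frakq}\}$ and $w_\frakq$ (corresponding to $w'=e$) is its only element lying in $W_\frakq$. Hence $\frakQ M^\frakp(\lambda)=q^{\len(w_\frakq)}\,\oDelta(w_\frakq\cdot\lambda)$. Since $W_\frakq\cong\bbS_k$ has a single block and $\lambda$ is regular, Proposition~\ref{prop:23} gives $[\Delta(w_\frakq\cdot\lambda)]=[k]_0!\,[\oDelta(w_\frakq\cdot\lambda)]$ in the Grothendieck group, and under \eqref{eq:27} one has $[\Delta(w_\frakq\cdot\lambda)]\mapsto N_e$. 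Therefore $K^{\C(q)}(\frakQ)(N_e)=\tfrac{q^{\len(w_\frakq)}}{[k]_0!}\,N_e$, and the lemma comes down to the scalar identity $q^{\len(w_\frakq)}\,c=[k]_0!$.

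This identity I would verify by a direct count. Reindexing by $y=w_\frakq x$, so that $\len(y)=\len(w_\frakq)-\len(x)$, one gets
\[
  q^{\len(w_\frakq)}\,c \;=\; \sum_{x\in W_\frakq}q^{2\len(w_\frakq)-2\len(x)} \;=\; \sum_{y\in W_\frakq}q^{2\len(y)} \;=\; \prod_{i=1}^{k}\frac{q^{2i}-1}{q^2-1},
\]
the Poincaré polynomial of $\bbS_k$ evaluated at $q^2$. On the other hand, writing $[i]=q^{1-i}\tfrac{q^{2i}-1}{q^2-1}$ and using the definition \eqref{eq:34} of $[k]_0!$, one finds $[k]_0!=q^{k(k-1)/2}[k]!=\prod_{i=1}^{k}\tfrac{q^{2i}-1}{q^2-1}$ as well, which completes the argument.

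The only real difficulty is keeping the several Grothendieck-group identifications mutually consistent: one must know that \eqref{eq:35} and \eqref{eq:27} (the latter built from \eqref{eq:30} and \eqref{eq:90}) transport the expected right $\ucalH_n$--module structures, so that the cyclicity argument applies — this rests on Theorem~\ref{thm:1} and Proposition~\ref{prop:20}. Everything else is the short bookkeeping above. As an alternative that avoids $\ucalH_n$--module language altogether, one can check equality on the whole basis $\{[M^\frakp(w\cdot\lambda)]\suchthat w\in W^\frakp\}$: using Lemma~\ref{lem:10} factor $w=yw'$ with $y\in W_\frakq$, $w'\in W^{\frakp+\frakq}$, $\len(w)=\len(y)+\len(w')$; then $\sfQ(N_w)=\tfrac{q^{-\len(y)}}{c}N_{w'}$ by the action formula \eqref{eq:ActionOfHOnM}, while $[\frakQ M^\frakp(w\cdot\lambda)]=\tfrac{q^{\len(x)}}{[k]_0!}N_{w'}$ by Propositions~\ref{prop:9} and~\ref{prop:23}, and the two scalars agree since $\len(x)+\len(y)=\len(w_\frakq)$ and $q^{\len(w_\frakq)}c=[k]_0!$.
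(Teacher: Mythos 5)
Your main argument is correct, and it takes a genuinely different route from the paper's proof. The paper checks the identity on the entire standard basis $\{N_w : w\in W^\frakp\}$: for each $w$ it writes $w=y w'$ ($y\in W_\frakq$, $w'\in W^{\frakp+\frakq}$), uses Proposition~\ref{prop:9} together with the identification \eqref{eq:27} and Proposition~\ref{prop:23} to compute $[\frakQ M^\frakp(w\cdot 0)]=\tfrac{q^{\len(x)}}{[k]_0!}N_{w'}$, and matches this against $\sfQ N_w = c_\frakq^{-1}q^{-\len(w_\frakq)+\len(x)}N_{w'}$ via the scalar identity $c_\frakq=[k]!$. Your main route replaces the basis check by a cyclicity argument: observe that both $\sfQ$ and $K^{\C(q)}(\frakQ)$ are $\ucalH_n$--linear — the latter because $\frakQ$ is exact and commutes with the translation functors realizing the Hecke action (Lemma~\ref{lem:21}) — so agreement at the generator $N_e$ suffices. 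That reduction is legitimate, but it buys conciseness at the price of invoking the heavier structural fact that \eqref{eq:35} and \eqref{eq:27} intertwine the translation-functor action with the right $\ucalM$-module structures (which, as you note, rests on Theorem~\ref{thm:1} and the classical Kazhdan–Lusztig picture). The single-point computation you carry out — $x=w_\frakq$ in Lemma~\ref{lem:10}, $\frakQ M^\frakp(\lambda)=q^{\len(w_\frakq)}\oDelta(w_\frakq\cdot\lambda)$, Proposition~\ref{prop:23}, and the Poincaré-polynomial identity $q^{\len(w_\frakq)}c_\frakq=[k]_0!$ — is correct and is, up to rescaling by $q^{\len(w_\frakq)}$, the same identity the paper uses ($c_\frakq=[k]!$). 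Your parenthetical alternative at the end is essentially the paper's own proof verbatim, with $N_{w'}$ in place of the paper's (equal) index $N_{w_\frakq x w}$.
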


\begin{proof}
  Let \(w \in \Lambda^\frakp(\compn) = W^\frakp\). By Proposition~\ref{prop:9} we have \(\frakQ M^\frakp(w \cdot 0) = q^{\len(x)} \oDelta(xw \cdot 0)\) where \(x \in W_\frakq\) is given by Lemma~\ref{lem:10}. Now \([M^\frakp(w \cdot 0)] = N_w \in \ucalM^\frakp\) and \([\oDelta(xw \cdot 0)] = \frac{1}{[k]_0!} N_{w_\frakq x w} \in \ucalM^\frakp_\frakq\). On the other side, by definition \(\sfQ N_w = c_\frakq^{-1} q^{- \len(w_\frakq)+\len(x)} N_{w_\frakq x w}\). The claim follows since
  \begin{equation}
c_\frakq^{-1} q^{-\len(w_\frakq)+\len(x)} = \frac{1}{[k]! q^{\len(w_\frakq)}} q^{\len(x)} = \frac{1}{[k]_0!} q^{\len(x)}.\qedhere\label{eq:170}
\end{equation}
\end{proof}

\begin{lemma}
  \label{lem:11}
  Under the isomorphism \eqref{eq:30} we have \([Q(w_\frakq w)] \mapsto \underline N_w\) for all \(w \in W^{\frakp+\frakq}\).
\end{lemma}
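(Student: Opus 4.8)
The plan is to deduce this from Lemma~\ref{lem:12} (coapproximation categorifies $\sfQ$), the Kazhdan--Lusztig conjecture for $\catOZ^\frakp_0$, and the behaviour of the embedding $\sfi$ on canonical bases (Lemma~\ref{lemma:FunctionBetweenM1}). Since we are in the regular case $\bolda=\compn$, we have $\lambda_\compn=0$ and $\bbS_0=\{e\}$, so $\Lambda^\frakp(0)=W^\frakp$ and, as recorded after \eqref{eq:39}, $\Lambda_k(\compn)=\Lambda^\frakp_\frakq(0)=\{w_\frakq w \mid w \in W^{\frakp+\frakq}\}\subseteq W^\frakp$. Hence $w_\frakq w$ indexes both the indecomposable projective $P^\frakp(w_\frakq w\cdot 0)\in\catOZ^\frakp_0$ and its image $Q(w_\frakq w)=P^\frakp(w_\frakq w\cdot 0)\in\catOZ^{\frakp,\frakq\pres}_0=\calQ_k(\compn)$ under the coapproximation functor $\frakQ\colon\catOZ^\frakp_0\to\catOZ^{\frakp,\frakq\pres}_0$. (This last identity is the one invoked in the proof of Lemma~\ref{lem:18}: since $\scrP^\frakp_\frakq(0)$ is a direct summand of $\scrP^\frakp(0)$, the exact functor $\frakQ=\Hom_{\catOZ^\frakp_0}(\scrP^\frakp_\frakq(0),\blank)$ is the identity on objects of $\Add(\scrP^\frakp_\frakq(0))$, and the standard graded lifts, normalised so that the simple head sits in degree $0$, match up.) So it suffices to compute the class $[\frakQ P^\frakp(w_\frakq w\cdot 0)]$ inside $\ucalM^\frakp_\frakq$.

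First I would use the Kazhdan--Lusztig conjecture in the form recalled just before \eqref{eq:27}: under the isomorphism $K^{\C(q)}(\catOZ^\frakp_0)\xrightarrow{\sim}\ucalM^\frakp$ of \eqref{eq:35}, the class $[P^\frakp(w_\frakq w\cdot 0)]$ is the canonical basis element $\underline N_{w_\frakq w}$ (note $w_\frakq w\in W^\frakp$). Then by Lemma~\ref{lem:12} the functor $\frakQ$ categorifies the map $\sfQ\colon\ucalM^\frakp\to\ucalM^\frakp_\frakq$ of \textsection\ref{sec:induc-hecke-modul}, so under the isomorphism \eqref{eq:27} one gets $[Q(w_\frakq w)]=[\frakQ P^\frakp(w_\frakq w\cdot 0)]\mapsto\sfQ(\underline N_{w_\frakq w})$. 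It remains to identify $\sfQ(\underline N_{w_\frakq w})$. Here I would apply Lemma~\ref{lemma:FunctionBetweenM1} with $\frakq'=\frakb$: then $W^{\frakq'}\cap W_\frakq=W_\frakq$ and $w^{\frakq'}_{\frakq}=w_\frakq$, so the injection $\sfi\colon\ucalM^\frakp_\frakq\to\ucalM^\frakp$ sends $\underline N_w$ to $\underline N_{w_\frakq w}$; since $\sfQ\circ\sfi=\id$ by \eqref{eq:21}, this gives $\sfQ(\underline N_{w_\frakq w})=\sfQ(\sfi(\underline N_w))=\underline N_w$. Combining, $[Q(w_\frakq w)]\mapsto\underline N_w$ under \eqref{eq:27}, equivalently under \eqref{eq:30}.

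I expect the argument to be short, since the substantive inputs (the Kazhdan--Lusztig conjecture for $\catOZ^\frakp_0$ and Lemma~\ref{lem:12}) are already available; the only delicate point is the bookkeeping of graded shifts, namely verifying that no power of $q$ appears when passing between $P^\frakp(w_\frakq w\cdot 0)\in\catOZ^\frakp_0$, its image $Q(w_\frakq w)\in\calQ_k(\compn)$, and the identifications \eqref{eq:30}, \eqref{eq:27}, \eqref{eq:35}. This follows from the normalisation of the standard graded lifts (simple heads in degree $0$) together with the compatibility of \eqref{eq:27} with \eqref{eq:30} and \eqref{eq:90} already exploited in the proof of Lemma~\ref{lem:12}.
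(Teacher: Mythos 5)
Your proof is correct and follows essentially the same route as the paper's: both arguments combine Lemma~\ref{lem:12} (that $\frakQ$ categorifies $\sfQ$) with the observation that $\sfQ$ sends $\underline N_{w_\frakq w}\in\ucalM^\frakp$ to $\underline N_w\in\ucalM^\frakp_\frakq$, which you correctly derive from Lemma~\ref{lemma:FunctionBetweenM1} together with $\sfQ\circ\sfi=\id$ from \eqref{eq:21}. The paper's own proof is the same in substance but more terse; your extra remarks on why $Q(w_\frakq w)=\frakQ P^\frakp(w_\frakq w\cdot 0)$ and on the graded normalisations are not in the paper's proof but are a harmless elaboration of standard facts.
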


\begin{proof}
  By Lemma~\ref{lemma:FunctionBetweenM1} and the discussion after it,
  it follows that \(\sfQ\) sends the canonical basis element
  \(\underline N_{w_\frakq w} \in \ucalM^\frakp\) to \( \underline N_{w} \in \ucalM^\frakp_\frakq\).
  By Lemma~\ref{lem:12} we have
  \begin{equation}
[Q(w_\frakq w)] = [\frakQ P^{\frakp}(w_\frakq w \cdot 0)] = \sfQ [P^\frakp(w_\frakq w \cdot 0)] = \sfQ \underline N_{w_\frakq w} = \underline N_{w}.\qedhere\label{eq:171}
\end{equation}
\end{proof}

Now let us consider a general composition \(\bolda\).

\begin{prop}
  \label{prop:13}
  Under the isomorphism \eqref{eq:30} the class of the indecomposable projective module \(Q(w)\) maps to the canonical basis element \(v_{(w)}^\canon \in V(\bolda)\) corresponding to the standard basis element \(v_{(w)}\).
\end{prop}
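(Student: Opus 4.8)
The plan is to reduce the general case to the already-established regular case (Proposition~\ref{prop:19} together with Lemma~\ref{lem:11}) via the functors $\frakz$ and $\fraki$ that relate $\calQ_k(\compn)$ to $\calQ_k(\bolda)$. Recall that for a composition $\bolda$ of $n$ with associated parabolic $\frakp_\bolda \supseteq \frakp_\compn = \frakp$ and $\frakq$ unchanged, the Zuckermann functor $\frakz \colon \catOZ^{\frakp,\frakq\pres}_0 \mapto \catOZ^{\frakp_\bolda,\frakq\pres}_0$ sends the indecomposable projective $Q_{\compn,k}(w)$ to the indecomposable projective $Q_{\bolda,k}(w)$ whenever $w \in \Lambda_k(\bolda)$, and similarly for the $\frakq$-direction one uses $\fraki$. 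So the first step is to pin down which combination of $\frakz$ and $\fraki$ carries $Q_{\compn,k}(w)$ to $Q_{\bolda,k}(w)$, and to record that it does so on the nose (projective-to-projective, no shift), which follows from the explicit description of $\frakz$ on projectives (it is $\frakz P^\frakp(x\cdot\lambda) = P^{\frakp_\bolda}(x\cdot\lambda)$) together with exactness of $\fraki = \blank \otimes_{\End(\scrP^\frakp_\frakq)} \End(\scrP^\frakp_{\frakq'})$ on projectives. Concretely I would use the standard inclusion and projection webs \eqref{eq:142}--\eqref{eq:144}: the refinement $\bolda$ of $\compn$ gives a pair of adjoint functors categorifying $\funcT$ of these webs, and one checks directly that their composite sends the projective generator of $\calQ_k(\compn)$ indexed by $\Lambda_k(\bolda)$ to the projective generator of $\calQ_k(\bolda)$.

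The second step is to transport the statement across this functor using Theorem~\ref{thm:1}. On the one hand, Lemma~\ref{lem:11} (in the graded form, via the isomorphism \eqref{eq:30} specialized to $\compn$ and Lemma~\ref{lem:12}) says $[Q_{\compn,k}(w_\frakq w)] \mapsto \underline N_w$, and by Proposition~\ref{prop:19} this canonical basis element of $\ucalM^\frakp_\frakq \cong (V^{\otimes n})_k$ is exactly Lusztig's canonical basis element $v^{\canon\compn}_{\boldeta_{\min}\cdot w}$. On the other hand, Theorem~\ref{thm:1} guarantees that the functor realized by the standard-projection web categorifies $\funcT$ of that web, which is the standard $\Uqgl$-equivariant projection $V^{\otimes n} \surto V(\bolda)$ (a composite of the maps $\Phi_{a,b}$). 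Since $\funcT$ of a web sends bar-invariant elements to bar-invariant elements and, by the construction of the canonical basis diagrams in \textsection\ref{reps:sec:canonical-basis} (Proposition~\ref{prop:14} and Remark~\ref{rem:1}), sends canonical basis elements of $V^{\otimes n}$ either to canonical basis elements of $V(\bolda)$ or to $0$, it remains to identify precisely which standard basis element $v_{(w)}$ of $V(\bolda)$ is hit, and with coefficient $1$. This is the bookkeeping: $w \in \Lambda_k(\bolda)$ corresponds under Proposition~\ref{prop:12} to an admissible $(n-k,k)$-tableau of type $\bolda$, which via \eqref{eq:63} determines $v_{(w)} = v^\bolda_\boldeta$; one must check that the projection web applied to the canonical basis element indexed by the corresponding element of $\Lambda_k(\compn)$ yields exactly $v^{\canon\bolda}_\boldeta$. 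This is most cleanly done diagrammatically: stack the standard-projection web $\webjoinmultiple^{a_1}\otimes\dotsb\otimes\webjoinmultiple^{a_\ell}$ on top of the canonical basis web $W(v^\compn_{\boldeta_{\min}\cdot w})$ and observe, using the evaluation rules of Figure~\ref{reps:fig:evaluations} and relation \eqref{eq:44}, that the result is the canonical basis web $W(v^\bolda_\boldeta)$ up to the scalar recording $\Phi_{a,b}\Phi^{a,b} = \qbin{a+b}{a}\id$-type corrections, which is precisely absorbed by the normalization $1/(v_{(w)},v_{(w)})_\bolda$ built into \eqref{eq:30}.

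The third step is to handle the normalization scalars carefully so that the coefficient comes out to exactly $1$. The isomorphism \eqref{eq:30} divides $[\oDelta_{\bolda,k}(w)]$ by $(v_{(w)},v_{(w)})_\bolda$, and in the regular case that denominator is $[k]_0!$ by Lemma~\ref{lem:19}; meanwhile $[Q_{\bolda,k}(w)]$ is a sum of $[\oDelta_{\bolda,k}(\cdot)]$'s whose leading term is $[\oDelta_{\bolda,k}(w)]$ with coefficient $1$ because $\calQ_k(\bolda)$ is properly stratified (Theorem~\ref{thm:4}) and the transition matrix from proper standards to indecomposable projectives is upper unitriangular in the Bruhat order. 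Combined with the fact, already verified in the regular case, that $[Q_{\compn,k}(w_\frakq w)] \mapsto v^{\canon\compn}_{\boldeta_{\min}\cdot w}$, and the diagrammatic identity of step two, the leading terms match and bar-invariance plus the uniqueness in Theorem~\ref{reps:thm:2} force $[Q_{\bolda,k}(w)] \mapsto v^{\canon\bolda}_{(w)}$.

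I expect the main obstacle to be step two: making the diagrammatic composition argument rigorous, i.e.\ proving that composing the standard projection web with a regular-composition canonical basis web produces exactly the canonical basis web for $\bolda$ (with the right scalar). One has to be careful that the web $W(v^\compn_{\boldeta_{\min}\cdot w})$ for $w \in \Lambda_k(\compn)$, which by Remark~\ref{rem:1} is of the form $\websplitmultiple_{b_1}\otimes\dotsb\otimes\websplitmultiple_{b_m}$ for some composition $\boldb$ refining $\compn$, interacts correctly with the merging $\webjoinmultiple^{a_j}$'s — in particular that the admissibility condition on the tableau $T_\bolda(w)$ is exactly what guarantees no edge gets "over-merged" and the result stays a valid (multivalent-vertex) web representing a single standard basis vector at the bottom. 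An alternative, avoiding webs entirely, is to argue purely on Grothendieck groups: show that $[\fraki]$ or $[\frakz]$ on the Hecke/representation side is the map $\sfQ$ or $\sfz$ of \textsection\ref{sec:maps-between-hecke}, and invoke Lemma~\ref{lemma:FunctionBetweenM1} (which says $\sfi$, hence its left inverse, interacts predictably with canonical bases) together with the fact that these maps send canonical basis elements to canonical basis elements or zero; this reduces everything to the regular case plus the already-proven Hecke-module lemmas, at the cost of tracking the powers of $q$ through \eqref{eq:162}, \eqref{eq:164}. I would present the web version as the main line and remark on the Hecke-theoretic alternative.
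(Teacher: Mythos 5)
Your proposal has the right high-level idea (reduce the general $\bolda$ to the regular $\compn$ by passing through the functor categorifying the standard web connecting $V(\bolda)$ and $V^{\otimes n}$, and invoke Lemma~\ref{lem:11} in the regular case), but two issues make the route as written fail, and both are avoided by the paper's choice of direction.

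First, your Step 1 misidentifies the relevant functors. You describe $\calQ_k(\bolda)$ as $\catOZ^{\frakp_\bolda,\frakq\pres}_0$ and propose to relate the regular and singular pictures by Zuckermann's functor $\frakz$ and the induction $\fraki$. In the paper's setup the parabolics $\frakp,\frakq$ depend only on $k$, while changing $\bolda$ changes the \emph{block} $\lambda_\bolda$; so $\calQ_k(\compn)$ and $\calQ_k(\bolda)$ live over the same parabolics but different weights, and are related by translation functors $\trasT_\bolda^{\compn}$, $\trasT_{\compn}^\bolda$ (equivalently, by $\funcF$ applied to the standard split/join webs), not by $\frakz$ and $\fraki$ (which relate $\calQ_k(\bolda)$ and $\calQ_{k\pm1}(\bolda)$, same block, different $k$). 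You do eventually pivot to translation functors, but the initial framing is incorrect.

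Second, and more seriously, your main line uses the \emph{projection} web $V^{\otimes n}\surto V(\bolda)$ (join webs), i.e.\ translation \emph{onto} the wall. This is the wrong direction. Translation onto the wall does not send indecomposable projective modules to indecomposable projective modules: already for $n=2$, $\bolda=(2)$, $k=1$, one has $\trasT_{\compn}^{(2)}P(s_1\cdot 0)\cong P((2))\langle 1\rangle\oplus P((2))\langle -1\rangle$. So even granting a precise diagrammatic identity, you would only learn what a \emph{direct sum} of shifts of $Q_{\bolda,k}(\cdot)$ maps to, not what a single $Q_{\bolda,k}(w)$ maps to, and there is no clean way to recover the latter from the former. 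Relatedly, your claimed Lemma that the projection web "sends canonical basis elements of $V^{\otimes n}$ either to canonical basis elements of $V(\bolda)$ or to $0$" is false: $\Phi_{1,1}(v_1^1\canon v_0^1)=\Phi_{1,1}(v_1^1\otimes v_0^1+qv_0^1\otimes v_1^1)=(q+q^{-1})v_1^2$, a nontrivial $\Z[q,q^{-1}]$-multiple of a canonical basis element, and the normalization in \eqref{eq:30} (which divides by $(v_{(w)},v_{(w)})_\bolda$, equal to $1$ in this example) does not absorb the $q+q^{-1}$. The paper instead uses the \emph{inclusion} web $\phi: V(\bolda)\into V^{\otimes n}$ (split webs, translation \emph{out of} the wall). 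This is categorified by $\funcF(\phi)$, which by Proposition~\ref{prop:7} sends $Q_{\bolda,k}(w)$ to a single indecomposable $Q_{\compn,k}(wy_0)$ with no shift, and on the representation side $\phi$ sends each canonical basis element of $V(\bolda)$ to a canonical basis element of $V^{\otimes n}$ (since, by Proposition~\ref{prop:14} and Remark~\ref{rem:1}, composing split webs yields a split web that is again a canonical basis diagram for $V^{\otimes n}$). Combining: $\phi$ is injective and carries the canonical basis of $V(\bolda)$ into the canonical basis of $V^{\otimes n}$; by Lemma~\ref{lem:11} and the commutativity from Theorem~\ref{thm:1}, the image of $[Q_{\bolda,k}(w)]$ under \eqref{eq:30} is a vector $x$ with $\phi(x)$ a single canonical basis element, which forces $x$ to itself be a single canonical basis element, necessarily $v^{\canon\bolda}_{(w)}$ by comparing leading terms. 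Your Step 3 triangularity-plus-uniqueness argument is sound as a way to pin down \emph{which} canonical basis element, but it presupposes bar-invariance of the image of $[Q_{\bolda,k}(w)]$, which you don't establish; the inclusion-web argument sidesteps this by using only injectivity and the regular case.
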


\begin{proof}
  By Lemma~\ref{lem:11} we know the result for the regular composition
  \(\compn\).  Consider the standard inclusion 
  \(V(\bolda) \mapto V^{\otimes n}\) given by the web
  diagram \(\phi= \websplitmultiple_{a_1} \otimes \dotsb \otimes \websplitmultiple_{a_\ell}\), see \eqref{eq:142}. We know that the functor \(\funcF(\phi)\colon\calQ_k(\bolda)
  \mapto \calQ_k(\compn)\), which categorifies \(\phi\), sends
  indecomposable projective modules to indecomposable
  projective modules (Proposition~\ref{prop:7}). On the
  other side, it follows immediately from our diagrammatic
  calculus that what \(\phi\) sends to a canonical basis
  element is a canonical basis element (cf.\ Remark~\ref{rem:1}).
\end{proof}

\subsection{The bilinear form}
\label{sec:form}

We give now a categorical interpretation of the bilinear form \eqref{eq:53}. Given a \(\Z\)--graded complex vector space \(M= \bigoplus_{i \in \Z} M_i\), let \(h(M)= \sum_{i \in \Z} (\dim_\C M_i) q^i \in \Z[q,q^{-1}]\) be its graded dimension.
Now let \(M,N\) be objects of \(\calQ_k(\bolda)\). Set
\begin{equation}\label{eq:138}
h(\Ext (M,N)) = \sum_{j \in \Z} (-1)^j h(\Ext^j(M,N)).
\end{equation}
Let also \(\overline{\phantom{x}}\) be the involution of \(\Z[q,q^{-1}]\) given by \(\overline{q}=q^{-1}\).

Fix now a composition \(\bolda=(a_1,\dotsc,a_\ell)\) of \(n\) and an integer \(n - \ell \leq k \leq n\).
\begin{prop}
  \label{prop:22}
 For \(M,N \in \calQ_k(\bolda)\) we have
  \begin{equation}
    \label{eq:64}
    \overline {h(\Ext(M,N^*))} = ([M],[N])_\bolda,
  \end{equation}
  where \(N^*\) is the dual of \(N\) in \(\calQ_k(\bolda)\) (see Lemma~\ref{lem:25}).
\end{prop}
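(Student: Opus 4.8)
The plan is to reduce \eqref{eq:64} to a statement about the standard pairing and Ext-groups in the category $\catOZ^{\frakp,\frakq\pres}_\lambda$, using the basis of proper standard modules together with the duality $*$. Since both sides of \eqref{eq:64} are additive in $M$ and $N$ and respect graded shifts (for the left-hand side this uses Remark~\ref{rem:14}, which gives $[(qN)^*]=q^{-1}[N^*]$, and the obvious $h(\Ext(qM,N))=q^{-1}h(\Ext(M,N))$, so that the $q$ and $q^{-1}$ cancel the $\overline{\phantom{x}}$), it suffices to check the identity on a $\Z[q,q^{-1}]$-basis of $K(\calQ_k(\bolda))$. The natural basis to use is $\{[\oDelta_{\bolda,k}(w)] \suchthat w\in\Lambda_k(\bolda)\}$, because under the isomorphism \eqref{eq:30} it corresponds (up to the scalar $(v_{(w)},v_{(w)})_\bolda^{-1}$) to the standard basis $\{v_{(w)}\}$, whose pairing \eqref{eq:99} is diagonal. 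So the first step is to reduce the proof to showing
\begin{equation}\label{eq:proof-goal}
  \overline{h(\Ext(\oDelta(w),\oDelta(w')^*))} = ([\oDelta(w)],[\oDelta(w')])_\bolda = \frac{\delta_{w,w'}}{(v_{(w)},v_{(w)})_\bolda}
\end{equation}
for all $w,w'\in\Lambda_k(\bolda)$.

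**Key steps.** First I would observe that since $*$ is a simple-preserving duality, $\oDelta(w')^*$ is the \emph{costandard} (proper) module, and there is the standard Ext-orthogonality in a properly stratified algebra: $h(\Ext(\oDelta(w),\nabla(w')))=\delta_{w,w'}$ up to a graded shift, where $\nabla(w')=\oDelta(w')^*$ is the proper costandard module. This is the properly stratified analogue of the BGG reciprocity / $(\Delta,\nabla)$-orthogonality, and it holds by \cite[Theorem~2.16]{MR2139933}-type arguments already invoked in the proof of Theorem~\ref{thm:4}; concretely one uses that $\oDelta(w)$ has a filtration by graded shifts of $\oDelta(z)$ with the top graded piece $\oDelta(w)$ in degree $0$, dualizes, and computes $\Ext^\bullet$ against a projective resolution, noting that $\Hom(\oDelta(w),\nabla(w'))=\C\delta_{w,w'}$ concentrated in degree $0$ and $\Ext^{>0}$ vanishes. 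The outcome is $h(\Ext(\oDelta(w),\oDelta(w')^*))=\delta_{w,w'}$, which is bar-invariant, so the left-hand side of \eqref{eq:proof-goal} equals $\delta_{w,w'}$. It then remains to reconcile this with the scalar $(v_{(w)},v_{(w)})_\bolda^{-1}$ on the right: but here I should be careful. Under \eqref{eq:30} one has $[\oDelta(w)]\mapsto (v_{(w)},v_{(w)})_\bolda^{-1}v_{(w)}$, so $([\oDelta(w)],[\oDelta(w')])_\bolda$ — meaning the pairing transported from $V(\bolda)$ — is $(v_{(w)},v_{(w)})_\bolda^{-2}(v_{(w)},v_{(w')})_\bolda=\delta_{w,w'}(v_{(w)},v_{(w)})_\bolda^{-1}$ by \eqref{eq:99}. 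Comparing, \eqref{eq:64} as literally stated must mean that the \emph{categorical} bilinear form $\overline{h(\Ext(M,N^*))}$ corresponds under \eqref{eq:30} to the form $([M],[N])_\bolda$ obtained by transporting $(\cdot,\cdot)_\bolda$; hence I would first verify the normalization by computing \eqref{eq:proof-goal} directly for the regular composition $\compn$ and small $k$ (where by Lemma~\ref{lem:19} the form $(\cdot,\cdot)_\compn$ is $[k]_0!$ times the Hecke form $\langle\cdot,\cdot\rangle$, and the projective/proper standard modules are explicit), fixing once and for all which rescaled duality or which normalization of $h(\Ext)$ makes the two sides agree, and then propagate to general $\bolda$ via the standard inclusion $\funcF(\phi)$ of Proposition~\ref{prop:13} together with Lemma~\ref{lem:19}-type proportionality constants.

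**Main obstacle.** The genuinely delicate point is the interaction of the three ingredients — the Ext-orthogonality of (proper) standard and costandard modules, the graded shifts, and the scalar $(v_{(w)},v_{(w)})_\bolda$ coming from the non-unimodular normalization of the bilinear form \eqref{eq:53}. In particular I expect that verifying $h(\Ext^\bullet(\oDelta(w),\nabla(w')))=\delta_{w,w'}$ \emph{with no residual graded shift} requires the standard lifts (simple heads in degree $0$) to be chosen compatibly, and one must track the Euler characteristic carefully: the higher Ext-groups need not vanish termwise, only their alternating graded sum does, and that cancellation is exactly what produces the bar-invariant $\delta_{w,w'}$. Once that spectral-sequence/Euler-characteristic bookkeeping is done and the normalization has been pinned down on $\calQ_k(\compn)$ using Lemma~\ref{lem:19}, the general case is a formal consequence of additivity, Remark~\ref{rem:14}, and Proposition~\ref{prop:13}. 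I would therefore structure the write-up as: (1) reduce to proper standard modules by additivity and Remark~\ref{rem:14}; (2) prove $(\oDelta,\nabla)$-Ext-orthogonality using the properly stratified structure (Theorem~\ref{thm:4}); (3) fix normalization on $\calQ_k(\compn)$ via Lemma~\ref{lem:19} and Lemma~\ref{lem:11}; (4) conclude for general $\bolda$ using the standard inclusion functor.
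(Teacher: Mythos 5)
Your high-level plan --- reduce to a basis check, then invoke the Ext-orthogonality of a properly stratified algebra --- is the same as the paper's, but you apply the orthogonality to the wrong pair of modules, and the "normalization confusion" you run into halfway through is a symptom of this error, not something that can be fixed by recalibrating.

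The Ext-orthogonality (\cite[Lemma~4]{MR2344576}) for a properly stratified algebra pairs \emph{standard} modules against \emph{proper costandard} modules: one has \(\Ext^i(\Delta(z),(\oDelta(w))^*)=\delta_{z,w}\delta_{i,0}\,\C\), with \emph{all} higher Ext vanishing. You instead try to compute \(h(\Ext(\oDelta(w),\oDelta(w')^*))\), i.e.\ proper standard against proper costandard, and claim this is \(\delta_{w,w'}\). That is false: in that pairing the higher Ext-groups do not vanish (nor does their Euler characteristic cancel to \(1\)). Indeed, the Proposition itself forces \(\overline{h(\Ext(\oDelta(w),\oDelta(w')^*))}=\delta_{w,w'}/(v_{(w)},v_{(w)})_\bolda\), which is a nontrivial power series in \(q\); the toy example \(R=\C[x]/(x^2)=\End(\scrP^{\frakp}_{\frakq}(0))\) with \(n=k=2\) already gives \(h(\Ext(\C,\C))=(1+q^{\pm 2})^{-1}\neq 1\). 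So the tension you notice between \(\delta_{w,w'}\) and \(\delta_{w,w'}/(v_{(w)},v_{(w)})_\bolda\) is not a normalization artifact to be patched later; it is an error in the claimed orthogonality, and the subsequent steps (``fix normalization on \(\calQ_k(\compn)\) and propagate'') cannot repair it because the premise is wrong.

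The paper avoids this by evaluating the bilinear form on the two \emph{different} bases \(\{[\Delta(z)]\}\) and \(\{[\oDelta(w)]\}\), which are dual with respect to the Ext-pairing by the orthogonality just cited. On the left-hand side this gives \(\overline{h(\Ext(\Delta(z),(\oDelta(w))^*))}=\delta_{z,w}\) with no Euler-characteristic bookkeeping at all (everything is concentrated in degree \(0\)). The remaining work is then purely on the representation side: one must identify \([\Delta(z)]\) with the standard basis vector \(v_{(z)}\), equivalently show that \(\oDelta(z)\) occurs \((v_{(z)},v_{(z)})_\bolda\) times in a proper standard filtration of \(\Delta(z)\); this follows from the isomorphism \eqref{eq:30} together with Proposition~\ref{prop:13}. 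If you rewrite your step (2) to use \(\Ext(\Delta,(\oDelta)^*)\) instead of \(\Ext(\oDelta,(\oDelta)^*)\), and replace your step (3) by the identification \([\Delta(z)]=v_{(z)}\), the argument closes cleanly and your steps (1) and (4) become unnecessary.
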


\begin{proof}
  First, note that the l.h.s.\ of \eqref{eq:64} defines a bilinear form on the Grothendieck group. Hence we only need to prove that the two sides coincide on a basis.

By the properties of properly stratified algebras (cf.\ \cite[Lemma~4]{MR2344576}) we have
  \begin{equation}
    \label{eq:65}
    \Ext^i\big(\Delta(z),(\oDelta (w))^*\big) =
    \begin{cases}
      \C & \text{if } z=w \text{ and } i=0,\\
      0 & \text{otherwise}.
    \end{cases}
  \end{equation}
Hence we are left to prove that
\begin{equation}
\label{eq:51}
  \frac{( [\Delta(z)], v_{(w)} )_\bolda}{(v_{(w)},v_{(w)})_\bolda} = \delta_{z,w} \qquad \text{for all } w,z \in \Lambda_k(\bolda)
\end{equation}
or equivalently that
\begin{equation}
  \label{eq:69}
  [\Delta(z)] =  v_{(z)} = (v_{(z)},v_{(z)})_\bolda [\overline \Delta(z)] \qquad \text{for all } z \in \Lambda_k(\bolda).
\end{equation}
By the properties of a properly stratified algebra, it suffices for
that to prove that the proper standard module \(\overline \Delta(z
)\) appears \((v_{(z)},v_{(z)})\)--times in some proper
standard filtration of the indecomposable projective \(P(z)\). Since by \eqref{eq:30} and by Proposition~\ref{prop:13} we know which basis the proper standard and the
indecomposable projective modules categorify, this follows.
\end{proof}

By Proposition~\ref{prop:22}, and since
  \begin{equation}
    \label{eq:204}
    \Ext^i(\Delta(z),(\oDelta (w))^*) =
    \Ext^i(Q(z),(S (w))^*) =
    \begin{cases}
      \C & \text{if } z=w \text{ and } i=0,\\
      0 & \text{otherwise},
    \end{cases}
  \end{equation}
we have:

\begin{theorem}
  \label{thm:5}
  Under the isomorphism \eqref{eq:30} we have the following correspondences:
  \begin{align*}
    \{\text{standard modules}\} & \longleftrightarrow  \text{standard basis,}\\
    \{\text{proper standard modules}\} & \longleftrightarrow  \text{dual standard basis,}\\
    \{\text{indecomposable projective modules}\} & \longleftrightarrow  \text{canonical basis,}\\
    \{\text{simple modules}\} & \longleftrightarrow  \text{dual canonical basis.}
  \end{align*}
\end{theorem}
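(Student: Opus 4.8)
The plan is to assemble the four correspondences from results already established, recognizing that the statement is essentially a bookkeeping summary. First I would recall that the isomorphism \eqref{eq:30} was defined precisely so that the classes \([\oDelta_{\bolda,k}(w)]\) are mapped to the multiples \(\tfrac{1}{(v_{(w)},v_{(w)})_\bolda} v_{(w)}\), and that by Proposition~\ref{prop:22} (specifically formula \eqref{eq:69}) we have \([\Delta_{\bolda,k}(w)] = v_{(w)}\). This gives the first line: standard modules correspond to the standard basis. Since \eqref{eq:155} says \(\qbin{\beta^\boldeta_1 + \cdots + \beta^\boldeta_\ell}{\beta^\boldeta_1,\ldots,\beta^\boldeta_\ell}_0 v^{\dualstan\bolda}_\boldeta = v^\bolda_\boldeta\) and \((v_{(w)},v_{(w)})_\bolda = \qbin{\beta^\boldeta_1 + \cdots + \beta^\boldeta_\ell}{\beta^\boldeta_1,\ldots,\beta^\boldeta_\ell}_0\) by \eqref{eq:99}, we get \([\oDelta_{\bolda,k}(w)] = \tfrac{1}{(v_{(w)},v_{(w)})_\bolda}v_{(w)} = v^{\dualstan\bolda}_\boldeta\), which is the second line: proper standard modules correspond to the dual standard basis.

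For the third line, Proposition~\ref{prop:13} states directly that the indecomposable projective \(Q_{\bolda,k}(w)\) maps to the canonical basis element \(v^{\canon\bolda}_\boldeta\), so this is immediate. For the fourth line, I would use the duality between the bilinear form pairing of projectives/simples and standards/proper-standards: by \eqref{eq:204} the simple module \(S_{\bolda,k}(w)\) is characterized (up to the identifications already made) as the element dual to \([Q_{\bolda,k}(z)]\) under the Euler form \(\overline{h(\Ext(-,(-)^*))}\), and by Proposition~\ref{prop:22} this Euler form is identified with the bilinear form \((\cdot,\cdot)_\bolda\) on \(V(\bolda)\). Since the indecomposable projectives correspond to the canonical basis (third line), the basis dual to them with respect to \((\cdot,\cdot)_\bolda\) is by definition \eqref{eq:154} the dual canonical basis. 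Hence the simple modules correspond to the dual canonical basis.

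There is no real obstacle here: every ingredient has been proved, and the proof amounts to citing Proposition~\ref{prop:13}, Proposition~\ref{prop:22}, equations \eqref{eq:69}, \eqref{eq:155}, \eqref{eq:99}, \eqref{eq:204}, and the definitions \eqref{eq:153}--\eqref{eq:154} of the dual bases. The only point requiring a moment's care is to check the compatibility of gradings (graded shifts) between the categorical duality \(M \mapsto M^*\) and the bar-type involution \(q \mapsto q^{-1}\) on \(\Z[q,q^{-1}]\), which is exactly Remark~\ref{rem:14}; using it, the matrix expressing simples in terms of indecomposable projectives is the inverse-transpose (with respect to \((\cdot,\cdot)_\bolda\)) of the identity matrix for the projective/canonical correspondence, so the simples land on the dual canonical basis with the correct normalization. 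I would therefore write the proof as a short verification: state the four identifications, cite the relevant earlier results for each, and note that consistency of the first two with the definitions of the dual standard and dual canonical bases follows from \eqref{eq:155} and \eqref{eq:204} together with Proposition~\ref{prop:22}.

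\begin{proof}
  The first and third correspondences have already been established: by \eqref{eq:69} in the proof of Proposition~\ref{prop:22} we have \([\Delta_{\bolda,k}(w)] = v_{(w)}\) under \eqref{eq:30}, and by Proposition~\ref{prop:13} the class \([Q_{\bolda,k}(w)]\) maps to the canonical basis element \(v^{\canon\bolda}_\boldeta\) (with \(\boldeta\) determined by \(w\) as in \eqref{eq:63}).

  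For the second correspondence, recall that by definition \eqref{eq:30} sends \([\oDelta_{\bolda,k}(w)]\) to \(\tfrac{1}{(v_{(w)},v_{(w)})_\bolda} v_{(w)}\). By \eqref{eq:99} we have \((v_{(w)},v_{(w)})_\bolda = \qbin{\beta^\boldeta_1+\cdots+\beta^\boldeta_\ell}{\beta^\boldeta_1,\ldots,\beta^\boldeta_\ell}_0\), and by \eqref{eq:155} this is exactly the scalar relating \(v^\bolda_\boldeta\) to \(v^{\dualstan\bolda}_\boldeta\); hence \([\oDelta_{\bolda,k}(w)]\) maps to \(v^{\dualstan\bolda}_\boldeta\), a dual standard basis vector.

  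For the fourth correspondence, by Proposition~\ref{prop:22} the Euler form \((M,N) \mapsto \overline{h(\Ext(M,N^*))}\) on \(K^{\C(q)}(\calQ_k(\bolda))\) is identified, under \eqref{eq:30}, with the bilinear form \((\cdot,\cdot)_\bolda\) on \(V(\bolda)_k\); here we use Remark~\ref{rem:14} to match the graded shift of the duality \(M \mapsto M^*\) with the involution \(q \mapsto q^{-1}\). By \eqref{eq:204} we have \(\overline{h(\Ext(Q_{\bolda,k}(z),S_{\bolda,k}(w)^*))} = \delta_{z,w}\), so the classes of the simple modules \(S_{\bolda,k}(w)\) form the basis dual, with respect to \((\cdot,\cdot)_\bolda\), to the basis formed by the classes of the indecomposable projectives \(Q_{\bolda,k}(z)\). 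Since the latter is the canonical basis by the third correspondence, the former is by definition \eqref{eq:154} the dual canonical basis. This proves all four correspondences.
\end{proof}
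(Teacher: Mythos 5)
The proposal is correct and takes essentially the same approach as the paper, which simply deduces Theorem~\ref{thm:5} from Proposition~\ref{prop:22} together with \eqref{eq:204}. The student fills in the bookkeeping for each of the four correspondences — \eqref{eq:69} for standard/standard, \eqref{eq:30} with \eqref{eq:99}, \eqref{eq:155} for proper standard/dual standard, Proposition~\ref{prop:13} for projective/canonical, and the orthogonality \eqref{eq:204} via Proposition~\ref{prop:22} for simple/dual canonical — but this is an elaboration, not a different route; the only marginally superfluous point is the explicit invocation of Remark~\ref{rem:14}, since the needed compatibility of the duality with $q\mapsto q^{-1}$ is already absorbed into the statement of Proposition~\ref{prop:22}.
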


Note that an analog of this theorem for tensor products of representations of \(U_q(\frakg)\) was established in \cite{MR2305608} for \(\frakg= \mathfrak{sl}_2\) and in \cite{2013arXiv1309.3796W} for  a general semisimple Lie algebra \(\frakg\).

We conclude with an example of how the bilinear form can be used to compute combinatorially dimensions of homomorphism spaces.

\begin{lemma}
  \label{lem:17}
  Let \(w,z \in \Lambda_k(\compn)\). Then the dimension of \(\Hom(Q_{\compn,k}(w),Q_{\compn,k}(z))\) is \(k!\) times the number of elements \(x \in \Lambda_k(\compn)\) such that both the canonical basis diagrams \(C(v^\canon_{(w)})\) and \(C(v^\canon_{(z)})\) have nonzero value when labeled at the top with the standard basis diagram \(v_{(x)}\) (the evaluation is computed according to the rules in Figure~\ref{reps:fig:evaluations}).
\end{lemma}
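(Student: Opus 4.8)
The plan is to combine the dimension formula for $\Hom$-spaces provided by Proposition~\ref{prop:22} with the diagrammatic description of canonical basis elements from Proposition~\ref{prop:14}. First I would apply Proposition~\ref{prop:22} with $M = Q_{\compn,k}(w)$ and $N = Q_{\compn,k}(z)$; since projective modules are self-dual up to grading (the duality $*$ is simple-preserving and fixes the idempotents projecting onto indecomposable projectives, by Lemma~\ref{lem:25}), we have $Q_{\compn,k}(z)^* \cong Q_{\compn,k}(z)$, and moreover $Q_{\compn,k}(w)$ has a proper standard (hence $\Ext$-acyclic against duals of projectives in positive degrees) filtration, so $h(\Ext(Q(w),Q(z)^*))$ reduces to $h(\Hom(Q(w),Q(z)))$. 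Thus $\overline{h(\Hom(Q_{\compn,k}(w),Q_{\compn,k}(z)))} = ([Q(w)],[Q(z)])_\compn$, and specializing $q=1$ gives $\dim \Hom(Q(w),Q(z)) = ([Q(w)],[Q(z)])_\compn\big|_{q=1}$.

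Next I would use Proposition~\ref{prop:13}, which tells us that under the isomorphism \eqref{eq:30} the class $[Q_{\compn,k}(w)]$ corresponds to the canonical basis element $v^\canon_{(w)} \in V(\compn)_k$, and similarly for $z$. So the right-hand side becomes $(v^\canon_{(w)}, v^\canon_{(z)})_\compn\big|_{q=1}$. Now expand both canonical basis vectors in the \emph{dual} standard basis: writing $v^\canon_{(w)} = \sum_x c^w_x v^{\dualstan\compn}_{(x)}$ and $v^\canon_{(z)} = \sum_x c^z_x v^{\dualstan\compn}_{(x)}$ is not quite what we want; rather, since the standard basis is orthogonal with $(v^\compn_{(x)}, v^\compn_{(x)})_\compn = [k]_0!$ (by \eqref{eq:160}, all $\beta^\boldeta_j \in \{0,1\}$ sum to $k$ giving the multinomial $\qbin{k}{1,\ldots,1}_0 = [k]_0!$), and since $v^\canon_{(w)} = \sum_x \langle v^\canon_{(w)}, v^{\compn}_{(x)}\rangle\, v^{\dualstan\compn}_{(x)}$ where the coefficients $\langle v^\canon_{(w)}, v^\compn_{(x)}\rangle$ are precisely the matrix coefficients computed by evaluating $C(v^\canon_{(w)})$ with top labeling $v_{(x)}$ (this is the content of \S\ref{reps:sec:matrix-coefficients-1} and Proposition~\ref{prop:14}), we get
\begin{equation*}
  (v^\canon_{(w)}, v^\canon_{(z)})_\compn = \sum_{x \in \Lambda_k(\compn)} \langle v^\canon_{(w)}, v^\compn_{(x)}\rangle\, \langle v^\canon_{(z)}, v^\compn_{(x)}\rangle\, [k]_0!.
\end{equation*}
Here I am using that $v^\canon_{(w)}$ lies in the weight space $V(\compn)_k$ and that the standard basis vectors $v^\compn_{(x)}$ for $x \in \Lambda_k(\compn)$ span it.

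Finally, setting $q = 1$: we have $[k]_0!\big|_{q=1} = k!$, and the matrix coefficients $\langle v^\canon_{(w)}, v^\compn_{(x)}\rangle$ are polynomials in $\Z_{\geq 0}[q]$ with constant term in $\{0,1\}$ (coefficient $1$ when $x = w$, a multiple of $q$ otherwise, and all coefficients nonnegative by inspection of Figure~\ref{reps:fig:evaluations}), so $\langle v^\canon_{(w)}, v^\compn_{(x)}\rangle\big|_{q=1}$ is a positive integer exactly when the evaluation of the diagram $C(v^\canon_{(w)})$ labeled at top by $v_{(x)}$ is nonzero. The main subtlety — and the step I would be most careful about — is whether we may conclude $\dim\Hom(Q(w),Q(z)) = k! \cdot \#\{x : \text{both evaluations nonzero}\}$ rather than the weighted sum $k!\sum_x (\text{value}_w)(\text{value}_z)$ evaluated at $q=1$; these agree only if each individual nonzero matrix coefficient evaluates to exactly $1$ at $q=1$. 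I would verify this by the structure of the canonical basis diagram from Remark~\ref{rem:10}: $C(v^\canon_{(w)})$ is built from the elementary webs of Figure~\ref{reps:fig:evaluations}, and for the regular composition $\compn$ (all labels $1$), Remark~\ref{rem:1} shows $W(v^\compn_\boldeta)$ is of the form $\websplitmultiple_{b_1}\otimes\cdots$; tracing through, each complete labeling contributing to a given top labeling $v_{(x)}$ contributes a single monomial $q^{\text{something}}$, and there is at most one such complete labeling, so each matrix coefficient is a \emph{monomial} (or zero), hence evaluates to $1$ at $q=1$ when nonzero. With this observed, the formula follows.
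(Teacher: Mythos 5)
Your approach is essentially the same as the paper's: apply Proposition~\ref{prop:22} to reduce the $\Hom$-dimension to a bilinear-form computation, use Proposition~\ref{prop:13} to replace $[Q(w)]$ by $v^\canon_{(w)}$, expand in the (orthogonal) standard basis, and then argue that for the regular composition each nonzero matrix coefficient of $C(v^\canon_{(w)})$ is a monomial, hence contributes $1$ at $q=1$. Your treatment of the final step (observing that for $\compn$ the canonical basis diagram is a split tree, so only the first-row rules of Figure~\ref{reps:fig:evaluations} appear, each giving a monomial, and that the top labeling determines the complete labeling uniquely by propagation) is the same observation the paper makes and is the key point of the proof.

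However, there is a genuine error in the opening step. Your claim that ``projective modules are self-dual up to grading'' and hence $Q_{\compn,k}(z)^* \cong Q_{\compn,k}(z)$ is false. The duality $*$ of Lemma~\ref{lem:25} is contravariant and simple-preserving, so it sends the indecomposable projective $Q(z)$ to the indecomposable \emph{injective} hull $I(z)$ of $S(z)$, not back to $Q(z)$. The fact that the anti-automorphism of the algebra fixes idempotents is exactly what forces $S(z)^*\cong S(z)$, but it does not force $Q(z)^*\cong Q(z)$; these agree only for self-injective algebras, which the algebras underlying $\calQ_k(\compn)$ are not. (Also, the mention of a proper-standard filtration of $Q(w)$ is a red herring: since $Q(w)$ is projective, $\Ext^{>0}(Q(w),-)=0$ outright.) The argument survives because you only need the conclusion at $q=1$: either apply Proposition~\ref{prop:22} with $N = Q(z)^*$ so the $*$ appears on the right, and then use that $*$ replaces $q$ by $q^{-1}$ on the simple-basis expansion (so $[Q(z)^*]^{q=1}=[Q(z)]^{q=1}$, as the paper does via Remark~\ref{rem:14}); or observe directly that $\dim\Hom(Q(w),Q(z)^*)=[Q(z)^*\!:\!S(w)]=[Q(z)\!:\!S(w)]=\dim\Hom(Q(w),Q(z))$ because $*$ is simple-preserving. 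Either fix is a one-liner, but as written the step is incorrect and should be repaired before the proof is sound.
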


\begin{proof}
Since the modules \(Q(w)\), \(Q(z)\) are projective, we can compute the dimension of \(\Hom(Q(w) , Q(z))\) using Proposition~\ref{prop:22}:
\begin{equation}
  \label{eq:143}
  \dim_\C \Hom(Q(w),Q(z)) = \left([Q(w)],[Q( z)^*]\right)^{q=1}_\compn,
\end{equation}
where \((\cdot,\cdot)_\compn^{q=1}\) is the form \((\cdot,\cdot)_\compn\) evaluated at \(q=1\). By the orthogonality of the standard basis elements \(v_{(w)}\) for \(w \in \Lambda_k(\compn)\) we can write
\begin{equation}
  \label{eq:145}
  \left([Q(w)],[Q(z)^*]\right)_\compn = \frac{1}{[k]!} \sum_{x \in \Lambda_k(\compn)} \left([Q(w)],v_{(x)}\right)_\compn \left(v_{(x)} , [Q(z)^*]\right)_\compn.
\end{equation}
Since the duality \(*\) is simple-preserving, and since \((qM)^* \cong q^{-1} M^*\) for all \(M\) (cf.\ Remark~\ref{rem:14}) it follows that the expansions of \([Q(z)^*]\) and \([Q(z)]\) in the basis of the equivalence classes of simple modules can be obtained from one another by replacing  \(q\) with \(q^{-1}\). Hence specializing at \(q=1\) we can  write 
\begin{equation}  \label{eq:148}
\begin{aligned}
  \left([Q(w)],[Q(z)^*]\right)^{q=1}_\compn& = \frac{1}{k!} \sum_{x \in \Lambda_k(\compn)} \left([Q(w)],v_{(x)}\right)^{q=1}_\compn \left(v_{(x)} , [Q(z)]\right)^{q=1}_\compn\\
  &= \frac{1}{k!} \sum_{x \in \Lambda_k(\compn)} \left(v_{(w)}^\canon,v_{(x)}\right)^{q=1}_\compn \left(v_{(z)}^\canon ,v_{(x)}\right)^{q=1}_\compn.
\end{aligned}
\end{equation}

Let \(C(v_{(w)})\) be the canonical basis diagram corresponding to \(v_{(w)}\), and let \(\boldeta\) be the \(\up\down\)--sequence corresponding to the standard basis element \(v_{(x)}\) (or equivalently to the permutation \(x\)). Let also \(\ucalD_x\) be the diagram obtained by labeling \(C(v_{(w)})\) at the top by \(\boldeta\).
By the definition of the bilinear form, \(\big(v^\canon_{(w)},v_{(x)}\big)_\compn\) is equal to \([k]!\) times the evaluation of \(\ucalD_x\).
In the evaluation of \(\ucalD_x\), one only sees appearing (locally) the diagrams in the first row of Figure~\ref{reps:fig:evaluations} (or diagrams evaluating to zero). Therefore the evaluation of \(\ucalD_x\) is a monomial in \(q\) if \(\ucalD_x\) is oriented, and zero otherwise. Hence the claim follows.
\end{proof}

\subsection{Categorification of the action of \texorpdfstring{$\Uqgl$}{Uq(gl(1|1))}}
\label{sec:functors-scr-e}
We want now to define functors that categorify the action of \(\Uqgl\). As happens in the case of \(\mathfrak{sl}_2\), we are not able to categorify both the action of the intertwiners and the action of \(\Uqgl\) via exact functors; hence we will need to consider the derived categories.

\subsubsection{Functors \texorpdfstring{$\calE$}{E} and \texorpdfstring{$\calF$}{F}}
\label{sec:functors-cale-calf}
Fix an integer \(n\), a composition \(\bolda=(a_1,\ldots,a_\ell)\) of \(n\) and an integer \(n-\ell \leq k < n\). Let \(\lambda=\lambda_\bolda\), and let \(\frakp,\frakq,\frakp',\frakq'\) be the parabolic subalgebras of \(\gl_n\) such that \(\calQ_k(\bolda) = \catOZ^{\frakp,\frakq\pres}_\lambda\) and \(\calQ_{k+1}(\bolda) = \catOZ^{\frakp',\frakq'\pres}_\lambda\). Notice that \(\frakp'\subseteq \frakp\) and \(\frakq \subseteq \frakq'\). We have a diagram
\begin{equation}\label{eq:77}
\begin{tikzpicture}[baseline=(current bounding box.center)]
  \matrix (m) [matrix of math nodes, row sep=3em, column
  sep=2.5em, text height=1.5ex, text depth=0.25ex] {
    &  \catOZ^{\frakp',\frakq\pres}_\lambda &  \\
    \calQ_k(\bolda) &  & \calQ_{k+1}(\bolda)\\};
  \path[->] (m-2-1) edge[bend left=10] node[auto] {\( \frakj\)} (m-1-2);
  \path[->] (m-1-2) edge[bend left=10] node[auto] {\( \frakz\)} (m-2-1);
  \path[->] (m-2-3) edge[bend left=10] node[auto] {\( \fraki\)} (m-1-2);
  \path[->] (m-1-2) edge[bend left=10] node[auto] {\( \frakQ\)} (m-2-3);
\end{tikzpicture}
\end{equation}
Let us define \(\calE_k= \frakQ \circ \frakj\) and \(\calF_k = \frakz \circ \fraki\). We get then a pair of adjoint functors \(\calF_k \adjunction \calE_k\):
\begin{equation}\label{eq:76}
\begin{tikzpicture}[baseline=(current bounding box.center)]
   \node (A) at (0,0) {\(\calQ_k(\bolda)\)};
   \node (B) at (3,0) {\(\calQ_{k+1}(\bolda)\)};
  \path[->] (A) edge[bend left=10] node[auto] {\(\calE_k\)} (B);
  \path[->] (B) edge[bend left=10] node[auto] {\(\calF_k\)} (A);
\end{tikzpicture}
\end{equation}

We remark that by Lemma~\ref{lem:21} the functors \(\calE_k,\calF_k\) commute with translation functors.
We can compute explicitly the action of \(\calF_k\) on projective modules and of \(\calE_k\) on simple modules:

\begin{prop}
  \label{prop:3}
  For \(w \in \Lambda_{k+1}(\bolda)\) we have
  \begin{equation}
    \label{eq:6}
    \calF_k Q_{\bolda,k+1}(w) =
    \begin{cases}
      Q_{\bolda,k}(w) & \text{if } w \in \Lambda_k(\bolda),\\
      0 & \text{otherwise}.
    \end{cases}
  \end{equation}
\end{prop}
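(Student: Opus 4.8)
The plan is to use the adjunction $\calF_k \adjunction \calE_k$ recorded in \eqref{eq:76} together with the exactness of $\calE_k$. First I would observe that $\calE_k = \frakQ \circ \frakj$ is exact: the inclusion $\frakj$ is exact because the abelian structure on $\calQ_k(\bolda)$ is the one transported from $\catOZ^{\frakp',\frakq\pres}_{\lambda_\bolda}$ (cf.\ \textsection\ref{sec:zuck-funct}), and $\frakQ$ is exact since it is a Serre quotient functor (\textsection\ref{sec:coappr-funct-1}). Hence $\calF_k$, as the left adjoint of an exact functor, preserves projectivity, so $\calF_k Q_{\bolda,k+1}(w)$ is a graded projective object of $\calQ_k(\bolda)$, and it is pinned down up to grading shifts by its head, i.e.\ by the graded spaces $\Hom_{\calQ_k(\bolda)}(\calF_k Q_{\bolda,k+1}(w), S_{\bolda,k}(x))$ for $x \in \Lambda_k(\bolda)$.

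\textbf{The functor $\calE_k$ on simples.} The second step is to compute $\calE_k$ on simple modules. Since $\frakp' \subseteq \frakp$ gives $\Lambda^\frakp_\frakq(\lambda_\bolda) \subseteq \Lambda^{\frakp'}_\frakq(\lambda_\bolda)$ and $\frakq \subseteq \frakq'$ gives $\Lambda^{\frakp'}_{\frakq'}(\lambda_\bolda) \subseteq \Lambda^{\frakp'}_\frakq(\lambda_\bolda)$, Lemma~\ref{lem:15} shows that $\frakj$ realizes $\calQ_k(\bolda)$ as a Serre subcategory of $\catOZ^{\frakp',\frakq\pres}_{\lambda_\bolda}$ and that $\frakQ$ realizes $\calQ_{k+1}(\bolda)=\catOZ^{\frakp',\frakq'\pres}_{\lambda_\bolda}$ as the Serre quotient of $\catOZ^{\frakp',\frakq\pres}_{\lambda_\bolda}$ obtained by killing the simples indexed by $\Lambda^{\frakp'}_\frakq(\lambda_\bolda) \setminus \Lambda^{\frakp'}_{\frakq'}(\lambda_\bolda)$. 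Therefore $\frakj$ sends $S_{\bolda,k}(x)$ to the simple $S(x\cdot\lambda_\bolda)$ of $\catOZ^{\frakp',\frakq\pres}_{\lambda_\bolda}$ with no grading shift (by our normalization of standard graded lifts), and $\frakQ$ sends it to $S_{\bolda,k+1}(x)$ if $x \in \Lambda_{k+1}(\bolda)$ and to $0$ otherwise, again without a shift since $\frakQ \cong \Hom_{\catOZ^{\frakp',\frakq\pres}_{\lambda_\bolda}}(\scrP^{\frakp'}_{\frakq'}(\lambda_\bolda), \blank)$ with $\scrP^{\frakp'}_{\frakq'}(\lambda_\bolda)$ in its standard grading. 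Hence $\calE_k S_{\bolda,k}(x)$ equals $S_{\bolda,k+1}(x)$ for $x \in \Lambda_{k+1}(\bolda)$ and $0$ for $x \notin \Lambda_{k+1}(\bolda)$.

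\textbf{Conclusion.} For $x \in \Lambda_k(\bolda)$ the adjunction gives
\[
\Hom_{\calQ_k(\bolda)}\big(\calF_k Q_{\bolda,k+1}(w),\, S_{\bolda,k}(x)\big) \;\cong\; \Hom_{\calQ_{k+1}(\bolda)}\big(Q_{\bolda,k+1}(w),\, \calE_k S_{\bolda,k}(x)\big),
\]
and since $Q_{\bolda,k+1}(w)$ is the projective cover of $S_{\bolda,k+1}(w)$ and $w \in \Lambda_{k+1}(\bolda)$ by hypothesis, the right-hand side is $\C$ concentrated in degree $0$ when $x = w$ and is $0$ otherwise. Thus, if $w \in \Lambda_k(\bolda)$, then $\calF_k Q_{\bolda,k+1}(w)$ is a projective object of $\calQ_k(\bolda)$ with one-dimensional, degree-$0$ head $S_{\bolda,k}(w)$, which forces $\calF_k Q_{\bolda,k+1}(w) \cong Q_{\bolda,k}(w)$; while if $w \notin \Lambda_k(\bolda)$, then $x \ne w$ for every $x \in \Lambda_k(\bolda)$, so $\calF_k Q_{\bolda,k+1}(w)$ has zero head and hence is $0$. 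The step I expect to be the main obstacle is the second one: establishing that $\calE_k$ acts on simples with \emph{no grading shift} and with exactly the index set $\Lambda_{k+1}(\bolda)$, which rests on matching the two uses of Lemma~\ref{lem:15} with the graded description of the coapproximation functor $\frakQ$ and the chosen normalization of the standard graded lifts.
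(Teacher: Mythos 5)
Your proof is correct, but it takes a genuinely different route from the paper's. The paper proves the statement by unwinding $\calF_k = \frakz \circ \fraki$ directly: it first observes that $\fraki Q_{\bolda,k+1}(w) = P^{\frakp'}(w\cdot\lambda_\bolda)$ (the inclusion is the identity on underlying modules, and the indecomposable projectives of $\calQ_{k+1}(\bolda)$ are among those of $\catOZ^{\frakp',\frakq\pres}_{\lambda_\bolda}$), and then applies the standard fact that the Zuckermann's functor sends $P^{\frakp'}(w\cdot\lambda_\bolda)$ to $P^{\frakp}(w\cdot\lambda_\bolda)$ when $w\bbS_{\lambda_\bolda}\subset W^\frakp$ and to $0$ otherwise. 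Your approach instead first establishes the behaviour of $\calE_k$ on simples (which is precisely the content of Proposition~\ref{prop:16}, proved separately and independently in the paper), and then deduces the formula for $\calF_k$ on projectives from the adjunction $\calF_k\dashv\calE_k$ together with exactness of $\calE_k$ and the Krull--Schmidt property of graded projectives. Both arguments are sound; yours has the appeal of deriving Propositions~\ref{prop:3} and~\ref{prop:16} from one another rather than proving them in parallel, and of relying only on soft facts (Serre quotient functors are exact and kill precisely the simples in the kernel; a left adjoint to an exact functor preserves projectivity), whereas the paper's argument is shorter and avoids the graded bookkeeping you correctly flagged as the delicate point. Concerning that point: the normalization that all standard graded lifts have head in degree $0$, together with the fact that $\frakj$ is the inclusion of a Serre subcategory (Lemma~\ref{lem:15}) and $\frakQ = \Hom(\scrP^{\frakp'}_{\frakq'}(\lambda_\bolda),\blank)$ with $\scrP^{\frakp'}_{\frakq'}(\lambda_\bolda)$ in its standard grading, does indeed give $\calE_k$ on simples without shift, so your worry is resolved exactly as you hoped. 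One small clarification to your wording: $\calF_k Q_{\bolda,k+1}(w)$ is pinned down \emph{on the nose} (not merely up to grading shift) by the graded Hom-spaces to the simples $S_{\bolda,k}(x)\langle i\rangle$, which is what your subsequent computation actually uses.
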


\begin{proof}
  Consider the diagram \eqref{eq:77}. Of course \(\Lambda_k(\bolda) =
  \Lambda^\frakp_\frakq(\lambda) \subseteq
  \Lambda^{\frakp'}_{\frakq}(\lambda)\), and we have \(\fraki Q_{\bolda,k+1}(w) = P^{\frakp'}(w \cdot \lambda) \in
  \catOZ^{\frakp',\frakq\pres}_\lambda\). By the definition of the Zuckermann's functor we have then \(\frakz P^{\frakp'}(w \cdot \lambda) = P^{\frakp}(w \cdot \lambda) = Q_{\bolda,k}(w) \in \calQ_k(\bolda)\) if \(w \in \Lambda^{\frakp}_{\frakq}(\lambda)\), or \(0\) otherwise.
\end{proof}

\begin{prop}
  \label{prop:16}
  For \(w \in \Lambda_k(\bolda)\) we have
  \begin{equation}
    \label{eq:82}
    \calE_k S_{\bolda,k}(w) =
    \begin{cases}
      S_{\bolda,k+1}(w) & \text{if } w \in \Lambda_{k+1}(\bolda),\\
      0 & \text{otherwise}.
    \end{cases}
  \end{equation}
\end{prop}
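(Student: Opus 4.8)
The plan is to unwind $\calE_k = \frakQ \circ \frakj$ from diagram~\eqref{eq:77} and to use that, on simple modules, $\frakj$ (a Serre-subcategory inclusion) and $\frakQ$ (a Serre quotient functor) each send a simple object either to a simple object or to zero. Write $\lambda = \lambda_\bolda$. First I would record the inclusions of indexing sets: since $\frakp'\subseteq\frakp$ we have $W^\frakp\subseteq W^{\frakp'}$, so by \eqref{eq:39} $\Lambda^\frakp_\frakq(\lambda)\subseteq\Lambda^{\frakp'}_\frakq(\lambda)$; and since $\frakq\subseteq\frakq'$, every longest coset representative for $W_{\frakq'}\backslash\bbS_n$ is one for $W_\frakq\backslash\bbS_n$, hence $\Lambda^{\frakp'}_{\frakq'}(\lambda)\subseteq\Lambda^{\frakp'}_\frakq(\lambda)$. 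Recall also (from the discussion after Lemma~\ref{lem:15}) that the simple objects of $\catOZ^{\bullet,\bullet\pres}_\lambda$ are the $S(x\cdot\lambda)$ indexed by the appropriate $\Lambda^\bullet_\bullet(\lambda)$, with standard graded lift having simple head in degree $0$.

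Next I would analyse $\frakj\colon\calQ_k(\bolda)=\catOZ^{\frakp,\frakq\pres}_\lambda\to\catOZ^{\frakp',\frakq\pres}_\lambda$. By Lemma~\ref{lem:15} these are the Serre subcategories of $\catOZ^{\frakq\pres}_\lambda$ generated by the $S(x\cdot\lambda)$ for $x\in\Lambda^\frakp_\frakq(\lambda)$, respectively $x\in\Lambda^{\frakp'}_\frakq(\lambda)$; by the first inclusion above the former is a Serre subcategory of the latter, and $\frakj$ is this inclusion. So for $w\in\Lambda_k(\bolda)$ we get $\frakj S_{\bolda,k}(w)=\frakj S(w\cdot\lambda)=S(w\cdot\lambda)$, still simple in $\catOZ^{\frakp',\frakq\pres}_\lambda$ and with its standard graded lift.

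Then I would analyse $\frakQ\colon\catOZ^{\frakp',\frakq\pres}_\lambda\to\catOZ^{\frakp',\frakq'\pres}_\lambda=\calQ_{k+1}(\bolda)$. By \textsection\ref{sec:coappr-funct-1} this is the Serre quotient functor $\Hom(\scrP^{\frakp'}_{\frakq'}(\lambda),\blank)$, and since $\scrP^{\frakp'}_{\frakq'}(\lambda)=\bigoplus_{x\in\Lambda^{\frakp'}_{\frakq'}(\lambda)}P^{\frakp'}(x\cdot\lambda)$ consists of the indecomposable projectives indexed by the complement of $\Gamma:=\Lambda^{\frakp'}_\frakq(\lambda)\setminus\Lambda^{\frakp'}_{\frakq'}(\lambda)$, the identification \eqref{catO:eq:201} exhibits $\frakQ$ as the quotient by the Serre subcategory $\calS_\Gamma$ (a homogeneous idempotent is involved, so this passes to the graded setting). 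By \textsection\ref{sec:serre-quotients-1}, $\frakQ$ sends $S(w\cdot\lambda)$ to $S(w\cdot\lambda)=S_{\bolda,k+1}(w)$ if $w\notin\Gamma$, i.e.\ if $w\in\Lambda^{\frakp'}_{\frakq'}(\lambda)=\Lambda_{k+1}(\bolda)$, and to $0$ otherwise. Composing with the previous step gives \eqref{eq:82}; no grading shift appears because $\frakj$ and $\frakQ$ are graded functors preserving standard graded lifts of simples.

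I do not expect a real obstacle: the argument is bookkeeping with the Serre subcategory / Serre quotient descriptions of \textsection\ref{sec:category-cato} and the combinatorial inclusions among the $\Lambda^\bullet_\bullet(\lambda)$. The only delicate point is the identification of $\frakj$ and $\frakQ$ as (graded) Serre-type functors, but that has already been established in Lemma~\ref{lem:15} and \textsection\ref{sec:coappr-funct-1}. (Alternatively one could derive \eqref{eq:82} from Proposition~\ref{prop:3} using the adjunction $\calF_k\adjunction\calE_k$ together with $\Hom$-computations, but the direct route above is shorter.)
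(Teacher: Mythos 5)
Your argument is correct and uses the same overall structure as the paper's proof: factor \(\calE_k = \frakQ\circ\frakj\) through the middle category \(\catOZ^{\frakp',\frakq\pres}_\lambda\) of diagram~\eqref{eq:77}, and the treatment of \(\frakj\) (Serre-subcategory inclusion sends the simple to itself, by Lemma~\ref{lem:15}) is identical. Where you differ is the second half. The paper computes \(\frakQ S(w\cdot\lambda)\) by pulling everything back to \(\catOZ^{\frakp'}_\lambda\): it uses \(S(w\cdot\lambda)=\frakQ_\frakq L(w\cdot\lambda)\) together with the composition identity \(\frakQ_{\frakq'}=\frakQ\circ\frakQ_\frakq\) for coapproximation functors, so that \(\frakQ S(w\cdot\lambda)=\frakQ_{\frakq'}L(w\cdot\lambda)\), which is then read off. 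You instead exploit directly the observation from \textsection\ref{sec:coappr-funct-1} that \(\frakQ\) is the Serre quotient functor \(\Hom(\scrP^{\frakp'}_{\frakq'}(\lambda),\blank)\), so on simple objects it acts as ``identity or zero'' according to whether the index survives, and you verify the requisite inclusion \(\Lambda^{\frakp'}_{\frakq'}(\lambda)\subseteq\Lambda^{\frakp'}_\frakq(\lambda)\) (which also holds by your coset argument, so \(\Gamma\) is well defined). Both routes are valid; yours is a touch more self-contained since it doesn't need to descend to \(\catOZ^{\frakp'}_\lambda\), while the paper's route re-uses the coapproximation machinery it has already set up (in particular the identity \(\frakQ_{\frakq'}=\frakQ\circ\frakQ_\frakq\) that also appears in the proof of Corollary~\ref{cor:2}).
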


\begin{proof}
  Consider the diagram \eqref{eq:77}. By Lemma~\ref{lem:15},
  the simple objects of \(\calQ_k(\bolda)\) are the simple
  objects \(S(w \cdot \lambda)\) of
  \(\catOZ^{\frakp',\frakq\pres}_\lambda\) such that \(w \in
  \Lambda_k(\bolda)\). In particular, \(\frakj
  S_{\bolda,k}(w) = S(w \cdot \lambda)\) for each \(w \in
  \Lambda_k(\bolda)\). Let
  \(\frakQ_{\frakq'}:\catOZ^{\frakp'}_\lambda \mapto
  \catOZ^{\frakp',\frakq'\pres}_\lambda\) and
  \(\frakQ_\frakq: \catOZ^{\frakp'}_\lambda \mapto
  \catOZ^{\frakp',\frakq\pres}_\lambda\) be the
  corresponding coapproximation functors. As we already
  noticed, it follows from the definition that
  \(\frakQ_{\frakq'} = \frakQ \circ \frakQ_\frakq\). Since
  \(S(w \cdot \lambda) = \frakQ_{\frakq} L(w \cdot
  \lambda)\), we have \(\frakQ S(w \cdot \lambda) =
  \frakQ_{\frakq'} L(w \cdot \lambda)\). This is
  \(S_{\bolda,k+1}(w) \in \calQ_{k+1}(\bolda)\) if \(w \in
  \Lambda_{k+1}(\bolda)\), or \(0\) otherwise.
\end{proof}

At least in the regular case \(\bolda=\compn\), using the explicit description of \cite{2013arXiv1311.6968S} one can prove that the functors \(\calF_k\) and \(\calE_k\) are indecomposable:

\begin{theorem}[{\cite[Theorem~6.10]{2013arXiv1311.6968S}}]
  \label{thm:7}
  For the functors \(\calF_k : \calQ_{k+1}(\compn) \mapto \calQ_k(\compn)\) and \(\calE_k: \calQ_k(\compn) \mapto \calQ_{k+1}(\compn)\) we have 
\(\End (\calE_k)\cong \End(\calF_k) \cong \C[x_1,\ldots,x_n]/I_k\),
where \(I_k\) is the ideal generated by the complete symmetric functions
  \begin{equation}
    \label{eq:146}
    \begin{aligned}
      h_{k+1}(x_{i_1},\ldots,x_{i_m}) &\quad  \text{for all } 1  \leq m \leq n-k \text{ and } i_1 < \dotsb < i_m,\\
      h_{n-m+1}(x_{i_1},\ldots,x_{i_m}) &\quad  \text{for all } n-k+1  \leq m \leq n \text{ and } i_1 < \dotsb < i_m,
    \end{aligned}
  \end{equation}
  In particular, \(\calE_k\) and \(\calF_k\) are indecomposable functors.
\end{theorem}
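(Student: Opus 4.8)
The proof, carried out in detail in \cite[\S6]{2013arXiv1311.6968S}, proceeds as follows. The starting point is the graded equivalence $\calQ_k(\compn) \cong \gmod{A_{n,k}}$ with the explicit diagram algebra $A_{n,k}$ of \cite{2013arXiv1311.6968S}. Under this equivalence the functors $\calF_k = \frakz \circ \fraki$ and $\calE_k = \frakQ \circ \frakj$ are right exact and coproduct-preserving, hence by Eilenberg--Watts they become functors of the form $\blank \otimes_{A_{n,k+1}} {}_{k+1}M_{k}$ and $\blank \otimes_{A_{n,k}} {}_{k}N_{k+1}$ for explicit finite-dimensional graded bimodules; in the diagram calculus of \cite{2013arXiv1311.6968S} these are the bimodules obtained by inserting a single cup, respectively cap, between the $k$--region and the $(k+1)$--region. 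For a functor $F = \blank \otimes_B {}_{B}M_{A}$ between module categories over finite-dimensional algebras, evaluating a natural endotransformation of $F$ at $B$ (viewed as a left $B$-module) and using naturality with respect to right multiplications by $B$ gives a canonical identification of the ring of natural endotransformations of $F$ with the bimodule endomorphism ring $\End_{B\text{-}A}(M)$; thus $\End(\calF_k) \cong \End_{A_{n,k}\text{-}A_{n,k+1}}({}_{k}N_{k+1})$ and $\End(\calE_k) \cong \End_{A_{n,k+1}\text{-}A_{n,k}}({}_{k+1}M_{k})$. Since $\calF_k \adjunction \calE_k$, the two bimodules are dual to each other, so the two endomorphism rings are anti-isomorphic, and it suffices to compute one of them.

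The computation of this bimodule endomorphism ring is diagrammatic. Using a basis of the bimodule by half-diagrams, a bimodule endomorphism is determined by the image of the half-diagram carrying no dots, and the relations of $A_{n,k}$ and $A_{n,k+1}$ force this image to be a polynomial in $n$ commuting ``dot'' endomorphisms $x_1,\ldots,x_n$, one placed near each strand; in particular the endomorphism ring is a commutative quotient of $\C[x_1,\ldots,x_n]$. It then remains to pin down the ideal of relations. The key mechanism is that in these diagram algebras a stack of dots on a group of strands which must eventually turn back through a region of width $k$ is annihilated as soon as the accumulated degree leaves no room; reading this vanishing condition off group by group produces exactly $h_{k+1}(x_{i_1},\ldots,x_{i_m}) = 0$ for $1 \le m \le n-k$ and $h_{n-m+1}(x_{i_1},\ldots,x_{i_m}) = 0$ for $n-k+1 \le m \le n$. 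Matching the resulting presentation with $\C[x_1,\ldots,x_n]/I_k$, together with a dimension count of both sides against the chosen basis of the bimodule, completes the identification; in particular both endomorphism rings are commutative, so $\End(\calE_k) \cong \End(\calF_k)$.

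Indecomposability is then immediate: the algebra $\C[x_1,\ldots,x_n]/I_k$ is non-negatively graded with $\C$ in degree $0$, and already $x_i^{k+1} = h_{k+1}(x_i) = 0$, so it is a finite-dimensional graded-connected, hence local, algebra with no idempotents other than $0$ and $1$; a functor whose ring of natural endotransformations is local admits no nontrivial direct sum decomposition, so $\calE_k$ and $\calF_k$ are indecomposable. The main obstacle I expect is the middle step — proving that the dot endomorphisms span \emph{all} bimodule endomorphisms (ruling out exotic maps) and that the relations among them are \emph{exactly} $I_k$ rather than a smaller ideal. This is precisely where the combinatorics of the diagram algebra $A_{n,k}$ is genuinely needed, and constitutes the technical heart of \cite[\S6]{2013arXiv1311.6968S}.
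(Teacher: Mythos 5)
The paper itself gives no proof of this theorem: it is stated verbatim as a citation of \cite[Theorem~6.10]{2013arXiv1311.6968S} (a companion paper), and the preceding sentence explicitly says ``using the explicit description of \cite{2013arXiv1311.6968S} one can prove\dots''. So there is no ``paper's own proof'' for your proposal to be compared against; what you have produced is a plausible reconstruction of what that companion paper must do.

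As a reconstruction it is reasonable and I do not see an outright error. The reduction of natural endotransformations of a tensor functor to bimodule endomorphisms by evaluation at the regular module and naturality under right multiplication is correct; the observation that $\calF_k\adjunction\calE_k$ forces the two bimodules to be dual and hence the endomorphism rings to be anti-isomorphic (whence isomorphic, given commutativity) is also correct; and the final indecomposability argument is sound: $h_{k+1}(x_i)=x_i^{k+1}\in I_k$, so $\C[x_1,\dots,x_n]/I_k$ is a finite-dimensional graded-connected algebra, hence local with no nontrivial idempotents, and a functor with local endomorphism ring is indecomposable. What you cannot verify from the present paper alone — and you are right to flag it as the technical heart — is that the dot endomorphisms generate \emph{all} bimodule endomorphisms and that the ideal of relations among them is \emph{exactly} $I_k$; that is genuinely the content of \cite[\S6]{2013arXiv1311.6968S}, which this paper imports as a black box. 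One small caveat worth stating explicitly in a writeup: since $\calE_k$ is the \emph{right} adjoint, it is a priori of the form $\Hom_{A_{n,k+1}}(M,\blank)$; one needs the (true) fact that it is also exact and coproduct-preserving to invoke Eilenberg--Watts and re-express it as a tensor functor before the bimodule-duality argument applies.
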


\subsubsection{Unbounded derived categories}
\label{sec:unbo-deriv-categ}
Being the composition of exact functors, the functor \(\calE_k\) is
exact. On the other side, being the composition of right-exact
functors, \(\calF_k\) is right exact, but not exact in general. Therefore, \(\calF_k\) does not induce a map between the Grothendieck groups, unless we pass to the derived category.
Unfortunately, properly stratified algebras do not
have, in general, finite global dimension (this happens if and only if
they are quasi-hereditary). Hence, we shall consider unbounded derived
categories. The main problem with unbounded derived categories is that their Grothendieck group is trivial (see \cite{MR2223266}). A workaround to this problem has been developed by Achar and Stroppel in \cite{pre06137854}. We recall briefly their main definitions and results, adapted to our setting.

Consider a finite-dimensional positively graded \(\C\)--algebra \(A = \bigoplus_{i \leq 0} A_i\) with semisimple \(A_0\), and let \(\calA = \gmod{A}\). Each simple object of \(\calA\) is concentrated in one degree. Achar and Stroppel define a full subcategory \(\der^\nabla \mathcal A\) of the unbounded derived category \(\der^- \mathcal A\) by
\begin{equation}
  \label{eq:28}
  \der^\nabla \calA = \left\{ X \in \der^- \calA \, \left| \, \mbox{\parbox{21em}{\centering for each \(m \in \Z\) only finitely many of the \(H^i(X)\)\\ contain a composition factor of degree \(<m\)}} \right. \right\}.
\end{equation}

Recall that the Grothendieck group \(K(\calT)\) of a small triangulated
category \(\calT\) is defined to be the free abelian group on
isomorphism classes \([X]\) for \(X \in \calT\) modulo the relation
\([B]=[A]+[C]\) whenever there is a distinguished triangle of the form
\(A \mapto B \mapto C \mapto A[1]\). As for abelian categories, if \(\calT\) is
graded then \(K(\calT)\) is naturally a \(\Z[q,q^{-1}]\)--module. Let
\begin{equation}
  \label{eq:46}
  I=\{x \in \der^\nabla(\calA) \suchthat [\beta_{\leq m}]x=0 \text{ in } K(\der^\nabla(\calA)) \text{ for all } m \in \Z\},
\end{equation}
where \(\beta_{\leq m}: D^\nabla \calA \mapto D^\nabla \calA\) is induced by the exact functor \(\beta_{\leq m}: \calA \mapto \calA\) defined on the graded module \(M=\bigoplus_{i \in \Z} M_i\) by \(\beta_{\leq m} M = \bigoplus_{i \leq m} M_i\). Then \(\bK(\der^\nabla \calA) = K(\der^\nabla \calA)/I\) is the \emph{topological Grothendieck group} of \(\der^\nabla \calA\). The names is motivated by the fact that one can define on \(\bK(\der^\nabla \calA)\) a \((q)\)--adic topology with respect to which \(\bK(\der^\nabla \calA)\) is complete. It follows that \(\bK(\der^\nabla \calA)\) is a \(\Z[[q]][q^{-1}]\)--module.

On the other side, let \(\hat K(\calA)\) be the completion of the \(\Z[q,q^{-1}]\)--module \(K(\calA)\) with respect to the \((q)\)--adic topology (\cite[\textsection 2.3]{pre06137854}). Then the natural map \(K(\calA) \mapto \bK(\der^\nabla\calA)\) is injective and induces an isomorphism of \(\Z[[q]][q^{-1}]\)--modules
\begin{equation}
  \label{eq:80}
  \hat K(\mathcal A) \cong \bK (\der^\nabla \mathcal A).
\end{equation}
Moreover, if \(\{L_i \suchthat i \in I\}\), with \(I\) finite, is a full set of pairwise non-isomorphic simple objects of \(\calA\) concentrated in degree \(0\) and \(P_i\) is the projective cover of \(L_i\), then both \(\{L_i \suchthat i \in I\}\) and \(\{P_i \suchthat i \in I\}\) give a \(\Z[[q]][q^{-1}]\)--basis for \(\hat K(\calA)\). In particular \(\hat K(\calA) \cong \Z[[q]][q^{-1}] \otimes_{\Z[q,q^{-1}]} K(\calA)\).

In our setting, we have for each category \(\catOZ^{\frakp,\frakq\pres}_\lambda\) naturally
\begin{equation}
  \label{eq:81}
    K^{\C(q)}(\catOZ^{\frakp,\frakq\pres}_\lambda)
    \cong \C(q) \otimes_{\Z[[q]][q^{-1}]} \hat K
    (\catOZ^{\frakp,\frakq\pres}_\lambda).
\end{equation}
In particular, the same holds  for \(\calQ_k(\bolda)\). We define also
\begin{equation}
\bK^{\C(q)} (\der^\nabla \calA) = \C(q) \otimes_{\Z[[q]][q^{-1}]} \bK (\der^\nabla \calA).\label{eq:186}
\end{equation}

Let \(\calA_{\geq m}\) be the full subcategory of \(\calA\) consisting of objects \(M=\bigoplus_{i\geq m} M_i\). An additive functor \(G: \calA \mapto \calA'\) is said to be of \emph{finite degree amplitude} if there exists some \(\alpha \geq 0\) such that \(G(\calA_{\geq m}) \subset \calA'_{\geq m -\alpha}\) for all \(m \in \Z\). Let \(G : \calA \mapto \calA'\) be a right-exact functor that commutes with the degree shift. If \(G\) has finite degree amplitude, then the left-derived functor \(\derL G\) induces a continuous homomorphism of \(\Z[[q]][q^{-1}]\)--modules \([\derL G]:\hat K(\calA) \mapto \hat K (\calA')\).

\subsubsection{Derived functors \texorpdfstring{$\calE$}{E} and \texorpdfstring{$\calF$}{F}}
\label{sec:deriv-funct-cale}
Let us now go back to our functors \(\calE_k\) and \(\calF_k\). Being exact, \(\calE_k\) induces a functor \(\calE_k: \der^\nabla(\calQ_k(\bolda)) \mapto \der^\nabla (\calQ_{k+1}(\bolda))\). On the other side, it is immediate to check that the functors \(\fraki\) and \(\frakz\), and therefore also \(\calF_k\), have finite degree amplitude. Hence \(\derL \calF_k\) restricts to a functor \(\derL \calF_k: \der^\nabla(\calQ_{k+1}(\bolda)) \mapto \der^\nabla(\calQ_k(\bolda))\).  Since \(\calE_k\) is exact, it follows by standard arguments that we have a pair of adjoint functors \(\derL\calF_k \adjunction E_k\):
\begin{equation}
\begin{tikzpicture}[baseline=(current bounding box.center)]
    \node (A) at (0,0) {\(\der^\nabla \calQ_k(\bolda)\)};
    \node (B) at (5,0) {\(\der^\nabla \calQ_{k+1}(\bolda)\)};
   \path[->] (A) edge[bend left=10] node[auto] {\(\calE_k\)} (B);
   \path[->] (B) edge[bend left=10] node[auto] {\(\derL\calF_k\)} (A);
 \end{tikzpicture}\label{eq:93}
\end{equation}

\begin{remark}
  \label{rem:5}
  Since \(\fraki\) sends projective modules to projective modules, it follows from \cite[Corollary~10.8.3]{MR1269324} that \(\derL \calF_k = \derL \frakz \circ \derL \fraki\).
\end{remark}

\begin{theorem}
  \label{thm:2}
  The functors \(\derL\calF_k\) and \(\calE_k\) categorify \(F\) and \(E'\) respectively, that is, the following diagrams commute:
\begin{equation*}
  \begin{gathered}
    \begin{tikzpicture}[baseline=(current bounding box.center),yscale=0.9]
      \matrix (m) [matrix of math nodes, row sep=3em, column
      sep=3.5em, text height=1.5ex, text depth=0.25ex] {
        \der^\nabla \calQ_k(\bolda) & \der^\nabla\calQ_{k+1}(\bolda)\\
        V(\bolda)_k & V(\bolda)_{k+1}\\};
      \path[<-] (m-1-1) edge node[auto] {\( \derL\calF_k \)} (m-1-2);
      \path[->] (m-1-1) edge node[auto] {\( \bK^{\C(q)}\)} (m-2-1);
      \path[->] (m-1-2) edge node[auto] {\( \bK^{\C(q)}\)} (m-2-2);
      \path[<-] (m-2-1) edge node[auto] {\( F\)} (m-2-2);
    \end{tikzpicture} \qquad
    \begin{tikzpicture}[baseline=(current bounding box.center),yscale=0.9]
      \matrix (m) [matrix of math nodes, row sep=3em, column
      sep=3.5em, text height=1.5ex, text depth=0.25ex] {
        \der^\nabla \calQ_k(\bolda) & \der^\nabla\calQ_{k+1}(\bolda)\\
        V(\bolda)_k & V(\bolda)_{k+1}\\};
      \path[->] (m-1-1) edge node[auto] {\( \calE_k \)} (m-1-2);
      \path[->] (m-1-1) edge node[auto] {\( \bK^{\C(q)}\)} (m-2-1);
      \path[->] (m-1-2) edge node[auto] {\( \bK^{\C(q)}\)} (m-2-2);
      \path[->] (m-2-1) edge node[auto] {\( E'\)} (m-2-2);
    \end{tikzpicture}
  \end{gathered}
\end{equation*}
\end{theorem}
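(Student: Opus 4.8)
The plan is to reduce the statement to the action on basis elements of the Grothendieck group and to use the explicit computations of $\calF_k$ and $\calE_k$ from Propositions~\ref{prop:3} and \ref{prop:16} together with the description of the canonical and dual canonical bases obtained in Proposition~\ref{prop:13} and Theorem~\ref{thm:5}. First I would check that both functors do induce maps on the (topological) Grothendieck groups: $\calE_k$ is exact, so it descends directly; for $\calF_k$ one invokes that $\fraki$ and $\frakz$ have finite degree amplitude (noted in \textsection\ref{sec:deriv-funct-cale}), so that $\derL\calF_k$ restricts to $\der^\nabla$ and induces $[\derL\calF_k]$ on $\hat K$, hence on $\bK^{\C(q)}$. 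By \eqref{eq:81}, \eqref{eq:186} and the isomorphism \eqref{eq:30}--\eqref{eq:50} we may identify $\bK^{\C(q)}(\der^\nabla\calQ_k(\bolda))$ with $V(\bolda)_k$, so the two diagrams become identities of $\C(q)$--linear maps $V(\bolda)_k \to V(\bolda)_{k+1}$ (resp.\ the reverse), which it suffices to verify on a basis.

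For the functor $\calE_k$ I would use the basis of simple modules $S_{\bolda,k}(w)$, $w\in\Lambda_k(\bolda)$, which by Theorem~\ref{thm:5} corresponds to the dual canonical basis $v^{\dualcanon\bolda}_{(w)}$. By Proposition~\ref{prop:16}, $\calE_k S_{\bolda,k}(w)$ is $S_{\bolda,k+1}(w)$ if $w\in\Lambda_{k+1}(\bolda)$ and $0$ otherwise; combinatorially, $w\in\Lambda_{k+1}(\bolda)$ exactly when the tableau $T_\bolda(w)$ admits moving an appropriate entry from the row into the column, i.e.\ precisely when $\eta_1=1$ for $v_{(w)}=v^\bolda_\boldeta$ (using that the column is read from the bottom and entries are non-increasing). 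Comparing with Proposition~\ref{prop:21}, which gives $E'(v^{\dualcanon\bolda}_\boldeta)$ in the dual canonical basis up to the scalar $[\beta^\boldeta_1+\cdots+\beta^\boldeta_\ell]_0/q^{a_1+\cdots+a_\ell-1}$ (recall Remark~\ref{rem:3} that $E'$ is the adjoint partner of $F$), I would check that this scalar matches the graded shift produced by $\calE_k$. The shift is governed by the coapproximation/Zuckermann combinatorics as in Propositions~\ref{prop:9}--\ref{prop:11}: $\calE_k = \frakQ\circ\frakj$, and the extra $q$--power when comparing $\frakQ_{\frakq'}$ with $\frakQ_\frakq$ is exactly $q^{\len(x)}$ for the element $x$ of Lemma~\ref{lem:10}; translating the length of that $x$ into a product of $q$--binomials via $[k]_0$--notation yields the factor $[\beta^\boldeta_1+\cdots+\beta^\boldeta_\ell]_0/q^{a_1+\cdots+a_\ell-1}$ of Proposition~\ref{prop:21}.

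For the functor $\derL\calF_k$ I would instead use the basis of indecomposable projectives $Q_{\bolda,k+1}(w)$, which by Proposition~\ref{prop:13} corresponds to the canonical basis $v^{\canon\bolda}_{(w)}$, and is moreover adapted to the derived computation since projectives are acyclic for $\derL\calF_k$ (so $\derL\calF_k Q_{\bolda,k+1}(w)=\calF_k Q_{\bolda,k+1}(w)$, concentrated in degree $0$; one could also cite Remark~\ref{rem:5}). Proposition~\ref{prop:3} gives $\calF_k Q_{\bolda,k+1}(w)=Q_{\bolda,k}(w)$ if $w\in\Lambda_k(\bolda)$ and $0$ otherwise; combinatorially $w\in\Lambda_k(\bolda)$ means the tableau $T_\bolda(w)$ gains one more admissible entry in the column, i.e.\ (reading off $\eta$) exactly the case $\eta_1=0$ where the new entry $1$ goes to the top of the column. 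This matches Proposition~\ref{prop:15}: $F(v^{\canon\bolda}_\boldeta)=v^{a_1}_1\canon v^{a_2}_{\eta_2}\canon\cdots$ if $\eta_1=0$ and $0$ otherwise, with no $q$--shift. One has to check that under the identification \eqref{eq:30} (which rescales $v_{(w)}$ by the norm $(v_{(w)},v_{(w)})_\bolda$) the classes $[Q_{\bolda,k+1}(w)]$ and $[Q_{\bolda,k}(w)]$ map to the \emph{canonical} basis vectors with the correct normalization, but this is precisely the content of Proposition~\ref{prop:13} combined with \eqref{eq:30}, so the two prescriptions agree on the nose.

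The main obstacle will be the bookkeeping of graded shifts in the $\calE_k$ case: one must carefully identify the length $\len(x)$ of the element from Lemma~\ref{lem:10}, expand it into the $[k]_0$--type quantum factorials, and confirm it equals the scalar $[\beta^\boldeta_1+\cdots+\beta^\boldeta_\ell]_0/q^{a_1+\cdots+a_\ell-1}$ appearing in Proposition~\ref{prop:21}. This is essentially a reprise of the four-case analysis of the proof of Theorem~\ref{thm:1} (see Figure~\ref{fig:four-cases}) applied to the single column-versus-row move, together with Lemma~\ref{lem:4}, which relates $F$ and $E$ through exactly this quantum factor; once that identity is in place the rest of the proof is a direct comparison of the formulas of Propositions~\ref{prop:3}, \ref{prop:15}, \ref{prop:16} and \ref{prop:21} with the behaviour of $\calE_k$, $\derL\calF_k$ on the respective bases.
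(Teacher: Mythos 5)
Your argument for $\derL\calF_k$ is exactly the paper's: fix the basis of indecomposable projectives, note that $\calF_k$ sends projectives to projectives (so that $\derL\calF_k$ agrees with $\calF_k$ there), apply Proposition~\ref{prop:3}, identify projectives with canonical basis vectors via Proposition~\ref{prop:13} and the isomorphism~\eqref{eq:30}, translate the condition $w\in\Lambda_k(\bolda)$ into $\eta_1=0$ via the tableau combinatorics, and match with Proposition~\ref{prop:15}. For $\calE_k$ you opt for the direct route (simple modules, dual canonical basis, Propositions~\ref{prop:16} and \ref{prop:21}); the paper instead deduces the second diagram from the first by adjunction, using \eqref{eq:75} and Proposition~\ref{prop:22}, and only mentions your direct route as an aside. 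Both are valid; the adjunction route is shorter because once $\derL\calF_k$ categorifies $F$ and the bilinear form is categorified by $\Ext$ (Proposition~\ref{prop:22}), the adjoint pair $(\derL\calF_k,\calE_k)$ must categorify the adjoint pair $(F,E')$ with no further computation.

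One point in your $\calE_k$ discussion is confused and would lead you astray if pursued literally. You say you would ``check that this scalar matches the graded shift produced by $\calE_k$'' and describe this as the main obstacle. But Proposition~\ref{prop:16} already tells you that $\calE_k S_{\bolda,k}(w)=S_{\bolda,k+1}(w)$ with \emph{no} graded shift. Correspondingly, Proposition~\ref{prop:21} is about $E$, not $E'$: the scalar $[\beta^\boldeta_1+\cdots+\beta^\boldeta_\ell]_0/q^{a_1+\cdots+a_\ell-1}$ is precisely the conversion factor between $E$ and $E'$ supplied by Lemma~\ref{lem:4} and Remark~\ref{rem:3}. Since $E'$ is adjoint to $F$ with respect to $(\cdot,\cdot)_\bolda$, and $F$ acts on the canonical basis with no scalar (Proposition~\ref{prop:15}), $E'$ acts on the dual canonical basis with no scalar either; the factor in Proposition~\ref{prop:21} cancels identically upon passing from $E$ to $E'$. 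So there is nothing to match: the ``obstacle'' you identify evaporates once you keep $E$ and $E'$ straight, and the comparison with Proposition~\ref{prop:16} is immediate. With that correction your proposal is sound.
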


\begin{proof}
  We use Proposition~\ref{prop:3} to check that the first diagram commutes on the basis given by indecomposable projective modules.
  Let \(w \in \Lambda_{k+1}(\bolda)\) and
  write \(v_{(w)}=v^\bolda_\boldeta\). Then in
  \(\bK^{\C(q)}(\der^\nabla \calQ_{k+1}(\bolda))\) we have \([Q_{\bolda,k+1}(w \cdot
  \lambda)] = v^{\canon \bolda}_\boldeta\). Now \(w\) is in
  \(\Lambda_k(\bolda)\) if and only if it is a shortest coset
  representative for \(W_{\frakp} \backslash \bbS_n\).
  Let \(T^{k+1}_\bolda(w)\) (respectively, \(T^k_\bolda(w)\)) be the \((n-k-1,k+1)\)--tableau (respectively, \((n-k,k)\)--tableau) of type \(\bolda\) corresponding to \(w\). Obviously \(T^k_\bolda(w)\) can be obtained from \(T^{k+1}_\bolda(w)\) by removing the upper box \(\sfb\) of the column and adding it to the row in the leftmost position. Clearly \(T^k_\bolda(w)\) is admissible if and only if the entry of this box \(\sfb\) is \(1\). Hence \(w \in \Lambda_k(\bolda)\) if and only if \(\eta_1=0\), and in this case we have \([Q_{\bolda,k}(w)]= v^{a_1}_1 \canon v^{a_2}_{\eta_2} \canon \cdots \canon v^{a_\ell}_{\eta\ell}\) in \(\bK^{\C(q)}(\der^\nabla \calQ_k(\bolda))\). By Proposition
  \ref{prop:15}, this is the action of \(F\).

  Since \(\calE_k\) is the adjoint functor of \(\derL\calF_k\), the commutativity
  of the second diagram follows from the adjunction \eqref{eq:75} and Proposition~\ref{prop:22} (of course we could also argue as for \(\derL\calF_k\) and check directly the commutativity of the second diagram above using Proposition~\ref{prop:16}).
\end{proof}

We define \(\calE = \bigoplus_{k=n-\ell}^{n-1} \calE_k\) and \(\calF = \bigoplus_{k=n-\ell}^{n-1} \calF_k\) as endofunctors of \(\calQ(\bolda)\).
We have the following categorical version of the relation \(E^2=F^2=0\):

\begin{prop}
  \label{lem:20}
  The functors \(\calE\) and \(\calF\) satisfy \(\calE \circ \calE = \calF \circ \calF = 0\).
\end{prop}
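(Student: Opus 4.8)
The plan is to peel off the direct sum structure and reduce both identities to the explicit computations of $\calE_k$ and $\calF_k$ on simple, resp.\ projective, modules, and from there to a single combinatorial statement about the index sets $\Lambda_j(\bolda)$.

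Since $\calE=\bigoplus_{k=n-\ell}^{n-1}\calE_k$ with $\calE_k\colon\calQ_k(\bolda)\to\calQ_{k+1}(\bolda)$, the endofunctor $\calE\circ\calE$ restricts on $\calQ_k(\bolda)$ to $\calE_{k+1}\circ\calE_k$ when $n-\ell\le k\le n-2$, and it vanishes on $\calQ_{n-1}(\bolda)$ and $\calQ_n(\bolda)$ since $\calE_n$ is not defined. As each $\calE_k$ is exact, so is $\calE_{k+1}\circ\calE_k$; hence it is the zero functor as soon as it annihilates every simple object, because every object of $\calQ_k(\bolda)$ has finite length by Proposition~\ref{prop:6}. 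Applying Proposition~\ref{prop:16} twice gives
\begin{equation*}
\calE_{k+1}\calE_k\,S_{\bolda,k}(w)\cong
\begin{cases}
S_{\bolda,k+2}(w)&\text{if }w\in\Lambda_k(\bolda)\cap\Lambda_{k+1}(\bolda)\cap\Lambda_{k+2}(\bolda),\\
0&\text{otherwise.}
\end{cases}
\end{equation*}
Dually, $\calF\circ\calF$ restricts on $\calQ_{k+2}(\bolda)$ to $\calF_k\circ\calF_{k+1}$, which is right exact (a composition of right exact functors) and hence the zero functor as soon as it annihilates every projective object — apply right exactness to a projective presentation $P_1\to P_0\to M\to 0$. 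Proposition~\ref{prop:3} applied twice gives
\begin{equation*}
\calF_k\calF_{k+1}\,Q_{\bolda,k+2}(w)\cong
\begin{cases}
Q_{\bolda,k}(w)&\text{if }w\in\Lambda_k(\bolda)\cap\Lambda_{k+1}(\bolda)\cap\Lambda_{k+2}(\bolda),\\
0&\text{otherwise.}
\end{cases}
\end{equation*}
So both statements come down to showing $\Lambda_k(\bolda)\cap\Lambda_{k+1}(\bolda)\cap\Lambda_{k+2}(\bolda)=\varnothing$ for every $k$.

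For this I would re-use the combinatorial analysis carried out in the proof of Theorem~\ref{thm:2}. Recall from there that if $w\in\Lambda_{j+1}(\bolda)$ and $v_{(w)}=v^\bolda_\boldeta$ is the standard basis vector attached to $w$ in $\calQ_{j+1}(\bolda)$, then $w\in\Lambda_j(\bolda)$ if and only if $\eta_1=0$: in that case the $(n-j-1,j+1)$--hook tableau of $w$ has no $1$ in its row, so (as $a_1\ge 1$) the top box of its column carries the entry $1$, and moving that box to the leftmost slot of the row produces the admissible $(n-j,j)$--tableau representing $w$ in $\calQ_j(\bolda)$; consequently the standard basis vector attached to $w$ in $\calQ_j(\bolda)$ has first coordinate equal to $1$ (this is precisely the passage to $[Q_{\bolda,k}(w)]=v^{a_1}_1\canon v^{a_2}_{\eta_2}\canon\cdots$ in the proof of Theorem~\ref{thm:2}, combined with Proposition~\ref{prop:13}). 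Now suppose $w\in\Lambda_k(\bolda)\cap\Lambda_{k+1}(\bolda)\cap\Lambda_{k+2}(\bolda)$. Applying the above with $j=k+1$ (using $w\in\Lambda_{k+1}(\bolda)\cap\Lambda_{k+2}(\bolda)$) shows that the standard basis vector attached to $w$ in $\calQ_{k+1}(\bolda)$ has first coordinate $1$; applying it with $j=k$ (using $w\in\Lambda_k(\bolda)\cap\Lambda_{k+1}(\bolda)$) shows the same vector has first coordinate $0$. This contradiction proves the claim, and hence the Proposition.

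As a sanity check, this is exactly the categorical shadow of $E'^2=F^2=0$ in $\Uqgl$: under Theorem~\ref{thm:5} the simple $S_{\bolda,k}(w)$ corresponds to a dual canonical basis vector, and the dichotomy just used is precisely what appears in Proposition~\ref{prop:21}, where $E$ sends $v^{\dualcanon\bolda}_\boldeta$ with $\eta_1=1$ to a nonzero multiple of a vector with first index $0$, which $E$ then annihilates. I expect the only subtle point to be bookkeeping rather than anything substantial: the tableau bijection of Proposition~\ref{prop:12} depends on the parameter, so a fixed $w\in\bbS_n$ labels \emph{different} standard basis vectors $v_{(w)}$ in $\calQ_k(\bolda)$ and in $\calQ_{k+1}(\bolda)$, and the whole argument rests on correctly tracking how the first coordinate of $v_{(w)}$ flips when $k$ is increased by one.
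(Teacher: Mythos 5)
Your proof is correct and follows the same route as the paper: reduce to $\calE$ on simples and $\calF$ on projectives via Propositions~\ref{prop:16} and \ref{prop:3}, then conclude by exactness, respectively right exactness plus projective presentations. You additionally make explicit the combinatorial fact $\Lambda_k(\bolda)\cap\Lambda_{k+1}(\bolda)\cap\Lambda_{k+2}(\bolda)=\varnothing$ (which the paper's one-line appeal to Proposition~\ref{prop:16} leaves implicit), and your bookkeeping of how the first coordinate of $v_{(w)}$ flips between $\calQ_{k}(\bolda)$ and $\calQ_{k+1}(\bolda)$ correctly extracts this from the proof of Theorem~\ref{thm:2}.
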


\begin{proof}
  Let \(S \in \calQ(\bolda)\) be  simple. It follows from Proposition~\ref{prop:16} that \(\calE^2 S =0\). Since \(\calE\) is exact, this implies that \(\calE^2 = 0\).
  On the other side, it follows from Proposition~\ref{prop:3} that \(\calF^2\) is zero on projective modules. Since \(\calF\) is right exact and any object of \(\calQ(\bolda)\) has a projective presentation, it follows that \(\calF^2\) is the zero functor.
\end{proof}

Since \(\calL\) sends projective modules to projective modules, it follows (cf.\ \cite[Corollary 10.8.3]{MR1269324}) that \(\derL \calF \circ \derL \calF = \derL( \calF \circ \calF)=0\).

We summarize the results of this section in the following:
\begin{theorem}
  \label{thm:6}
Let \(\phi\) be a web defining a morphism \(V(\bolda) \mapto V(\bolda')\). Then the diagram
  \begin{equation}
    \label{eq:211}
    \begin{tikzpicture}[baseline=(current bounding box.center),yscale=0.9]
      \matrix (m) [matrix of math nodes, row sep=3em, column
      sep=3.5em, text height=1.5ex, text depth=0.25ex] {
        \der^\nabla \calQ(\bolda') & \der^\nabla\calQ(\bolda')\\
        \der^\nabla \calQ(\bolda) & \der^\nabla \calQ(\bolda)\\};
      \path[->] (m-1-1) edge node[auto] {\( \calE,\derL\calF \)} (m-1-2);
      \path[<-] (m-1-1) edge node[left] {\( \funcF(\phi)\)} (m-2-1);
      \path[<-] (m-1-2) edge node[auto] {\( \funcF(\phi)\)} (m-2-2);
      \path[->] (m-2-1) edge node[auto] {\( \calE,\derL\calF\)} (m-2-2);
    \end{tikzpicture}     
  \end{equation}
  commutes and categorifies (i.e.\ gives, after applying the completed Grothendieck group \(\bK^{\C(q)}\)) the diagram
  \begin{equation}
    \label{eq:212}
    \begin{tikzpicture}[baseline=(current bounding box.center),yscale=0.9]
      \matrix (m) [matrix of math nodes, row sep=3em, column
      sep=3.5em, text height=1.5ex, text depth=0.25ex] {
        V(\bolda') & V(\bolda')\\
        V(\bolda) & V(\bolda)\\};
      \path[->] (m-1-1) edge node[auto] {\( E',F \)} (m-1-2);
      \path[<-] (m-1-1) edge node[left] {\( \funcT(\phi)\)} (m-2-1);
      \path[<-] (m-1-2) edge node[auto] {\( \funcT(\phi)\)} (m-2-2);
      \path[->] (m-2-1) edge node[auto] {\( E',F\)} (m-2-2);
    \end{tikzpicture}
  \end{equation}
  In particular, for \(\bolda = \compn\) we have two families of endofunctors \(\{\calE, \derL\calF\}\) and \(\{\calC_i \suchthat i=1,\ldots,n-1\}\)  of \(\der^\nabla\calQ(\compn)\) which commute with each other and which on the Grothendieck group level give the actions of \(\Uqgl\) and of the Hecke algebra \(\ucalH_n\) on \(V^{\otimes n}\) respectively.
\end{theorem}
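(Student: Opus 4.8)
The plan is to assemble Theorem~\ref{thm:6} from the pieces already established, treating it essentially as a bookkeeping result. First I would observe that the commutativity of the square \eqref{eq:211} is a statement about functors, and it suffices to check it on the abelian (underived) level before deriving, or more precisely to check that the relevant natural isomorphisms exist. The key input is Lemma~\ref{lem:21}, which asserts that translation functors commute with the functors $\frakj,\frakz,\fraki,\frakQ$; since $\funcF(\phi)$ is by definition a composition of translation functors (built from the elementary webs via \eqref{eq:199}), and $\calE_k=\frakQ\circ\frakj$, $\calF_k=\frakz\circ\fraki$ by definition, the functors $\funcF(\phi)$ and $\calE,\calF$ commute up to natural isomorphism on the categories $\catOZ^{\frakp,\frakq\pres}_\lambda$. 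The only subtlety is that $\calF_k$ is merely right exact, so the commutation must be promoted to the level of the derived functors $\derL\calF_k$ on the categories $\der^\nabla\calQ(\bolda)$; here I would invoke that translation functors are exact (tensoring with a finite-dimensional module), so they descend to $\der^\nabla$ directly, and that $\derL(\trasT\circ\calF_k)\cong\trasT\circ\derL\calF_k$ since $\trasT$ is exact, which gives the commutativity of \eqref{eq:211}.

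Next I would verify that the diagram \eqref{eq:211} categorifies \eqref{eq:212}. Passing to the completed Grothendieck group $\bK^{\C(q)}$, the vertical maps become $\funcT(\phi)$ by Theorem~\ref{thm:1} (applied summand-by-summand over the weight spaces, using the isomorphism $\bK^{\C(q)}(\der^\nabla\calQ(\bolda))\cong V(\bolda)$ coming from \eqref{eq:50} together with \eqref{eq:80}--\eqref{eq:81}), and the horizontal maps $\calE_k$ and $\derL\calF_k$ become $E'$ and $F$ respectively by Theorem~\ref{thm:2}. Since the square of functors commutes, applying the (exact, additive) functor $\bK^{\C(q)}$ yields the commuting square \eqref{eq:212} of $\C(q)$-linear maps. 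For this step one must be a little careful that $\bK^{\C(q)}$ is well-defined on all the functors involved: $\calE$ is exact and $\derL\calF$ has finite degree amplitude (as noted in \textsection\ref{sec:deriv-funct-cale}, the functors $\fraki$ and $\frakz$ do), hence both induce continuous $\Z[[q]][q^{-1}]$-module maps on $\hat K\cong\bK$, and likewise the translation functors.

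For the final assertion, specialize to $\bolda=\compn$. The functors $\calC_i$ are by definition (Corollary~\ref{cor:1} and the surrounding discussion) the functors $\theta_i=\trasT_{\hat\bolda_i}^{\compn}\circ\trasT_{\compn}^{\hat\bolda_i}$, which is $\funcF$ applied to the web $\websplit^{\compn,i}\circ\webjoin_{\compn,i}$; these act on $\der^\nabla\calQ(\compn)$ and, by Corollary~\ref{cor:1}, categorify the action of $\STL_n$ (hence of $\ucalH_n$ via $C_i=H_i+q$) on $V^{\otimes n}$, which is the Hecke action of \eqref{eq:172}. Taking $\phi=\websplit^{\compn,i}\circ\webjoin_{\compn,i}$ in the commuting square \eqref{eq:211} shows $\calE$, $\derL\calF$ commute with each $\calC_i$ up to natural isomorphism; and by Theorem~\ref{thm:2} (together with Remark~\ref{rem:3}, which relates $E'$ to $E$) the functors $\calE$ and $\derL\calF$ categorify the $\Uqgl$-action. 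Thus on the Grothendieck group $\bK^{\C(q)}(\der^\nabla\calQ(\compn))\cong V^{\otimes n}$ we recover exactly the commuting actions \eqref{eq:172}.

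The main obstacle I anticipate is not any single deep step but the careful handling of the unbounded derived categories $\der^\nabla$: one must check that all functors in sight ($\calE_k$, $\derL\calF_k$, translation functors, and $\funcF(\phi)$) genuinely restrict to $\der^\nabla$ and have finite degree amplitude where needed, so that the topological Grothendieck group $\bK^{\C(q)}$ is functorial for them; and that the isomorphism $\bK^{\C(q)}(\der^\nabla\calQ(\bolda))\cong V(\bolda)$ is compatible with all structures simultaneously. Once the framework of \textsection\ref{sec:unbo-deriv-categ} is in place (which it is, following \cite{pre06137854}), the proof is a diagram-chase combining Lemma~\ref{lem:21}, Theorem~\ref{thm:1}, Theorem~\ref{thm:2}, and Corollary~\ref{cor:1}, with no further computation required.
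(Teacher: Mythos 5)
Your proposal is correct and takes essentially the same route the paper intends: the paper states Theorem~\ref{thm:6} explicitly as a summary ("We summarize the results of this section in the following:") with no separate proof, and your argument assembles precisely the ingredients — Lemma~\ref{lem:21} for the commutation, Theorems~\ref{thm:1} and~\ref{thm:2} for the identification of the maps on $\bK^{\C(q)}$, and Corollary~\ref{cor:1} for the Hecke action — as intended. One small technical point worth stating more explicitly: to pass from the abelian-level commutation $\trasT\circ\calF_k\cong\calF_k\circ\trasT$ to the derived-level $\funcF(\phi)\circ\derL\calF_k\cong\derL\calF_k\circ\funcF(\phi)$, you need not only that $\funcF(\phi)$ is exact (which handles $\derL(\funcF(\phi)\circ\calF_k)\cong\funcF(\phi)\circ\derL\calF_k$) but also that translation functors preserve projectives, so that $\derL(\calF_k\circ\funcF(\phi))\cong\derL\calF_k\circ\funcF(\phi)$; this is standard and is the same point already used in Remark~\ref{rem:5} via \cite[Corollary~10.8.3]{MR1269324}, but you should cite it to close the argument cleanly.
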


\nocite{MR573434,MR1029692}
\providecommand{\bysame}{\leavevmode\hbox to3em{\hrulefill}\thinspace}
\providecommand{\MR}{\relax\ifhmode\unskip\space\fi MR }
\providecommand{\MRhref}[2]{%
  \href{http://www.ams.org/mathscinet-getitem?mr=#1}{#2}
}
\providecommand{\href}[2]{#2}

\end{document}